\newtheorem{theorem}{Theorem}
\newtheorem*{theorem*}{Theorem}
\newtheorem{lemma}[theorem]{Lemma}
\newtheorem*{lemma*}{Lemma}
\newtheorem{proposition}[theorem]{Proposition}
\newtheorem*{proposition*}{Proposition}
\theoremstyle{definition}
\theoremstyle{remark}
\newcommand{\tr}{\operatorname{tr}}
\newcommand{\Disc}{\operatorname{Disc}}
\newcommand{\RE}{\operatorname{Re}}
\newcommand{\IM}{\operatorname{Im}}
\newcommand{\GCD}{\operatorname{GCD}}
\newcommand{\sgn}{\operatorname{sgn}}
\newcommand{\sym}{\operatorname{sym}}
\newcommand{\E}{\mathbf{E}}     
\newcommand{\one}{\mathbf{1}}
\newcommand{\bC}{\mathbb{C}}
\newcommand{\bQ}{\mathbb{Q}}
\newcommand{\bR}{\mathbb{R}}
\newcommand{\zed}{\mathbb{Z}}
\newcommand{\GL}{\mathrm{GL}}
\newcommand{\SL}{\mathrm{SL}}
\newcommand{\sE}{{\mathscr{E}}}
\newcommand{\sI}{{\mathscr{I}}}
\newcommand{\sL}{{\mathscr{L}}}
\newcommand{\sM}{{\mathscr{M}}}
\newcommand{\sN}{{\mathscr{N}}}
\newcommand{\sR}{{\mathscr{R}}}
\newcommand{\sS}{{\mathscr{S}}}
\newcommand{\fS}{\mathfrak{S}}
\title{Lower order terms in the shape of cubic fields}
\author{Robert D. Hough}
\address{Department of Mathematics, Stony Brook University, 100 Nicolls Road, Stony Brook, NY 11794}
\email{robert.hough@stonybrook.edu}
\author{Eun Hye Lee}
\address{Department of Mathematics, Texas Christian University, Tucker Technology Center,
Suite 358,
2840 West Bowie Street,
Fort Worth, Texas 76109}
\email{eun.hye.lee@tcu.edu }
\subjclass[2010]{Primary 11M41, 11F68, 11H06, 11E45, 12F05, 43A85, 43A90}
 \keywords{Cubic ring,  equidistribution, Eisenstein series, space of lattices, zeta function, prehomogeneous vector space}
\begin{document}

\begin{abstract}
We demonstrate equidistribution of the lattice shape of cubic fields when ordered by discriminant, giving an estimate in the Eisenstein series spectrum with a lower order main term.  The analysis gives a separate discussion of the contributions of reducible and irreducible binary cubic forms, following a method of Shintani.  Our work answers a question posed at the American Institute of Math by giving a precise geometric and spectral description of an evident barrier to equidistribution in the lattice shape.
\end{abstract}

\thanks{This material is based upon work supported by the National Science
Foundation under agreement DMS-1802336. Any opinions, findings and
conclusions or recommendations expressed in this material are those of the
authors and do not necessarily reflect the views of the National Science
Foundation.}

\thanks{Robert Hough is supported by an Alfred P. Sloan Foundation Research 
Fellowship and a Stony Brook Trustees Faculty Award}
\maketitle

\section{Introduction}

The image of the ring of integers of a degree $n$ number field $K$ in the canonical embedding is a rank $n$ lattice of covolume $2^{-r_2} |D|^{\frac{1}{2}}$ where $r_2$ is the number of complex embeddings and $D$ is the field discriminant.
It was noticed at a meeting at the American Institute of Math that there is an irregularity in the distribution of lattice shapes that appear as the ring of integers of cubic number fields.  Indeed, since the norm of an algebraic integer is a non-zero integer, there is a constraint on the shortest vector in this lattice.  The purpose of this article is to study this phenomenon in greater detail by studying the distribution of the lattice shape in the Eisenstein spectrum of the space of lattices.  See \cite{H19} for the corresponding analysis in the cuspidal spectrum, where, note, the irregularity of distribution does not appear.

Let $\Lambda_K$ denote the lattice shape of the ring of integers of a number field $K$, which is defined to be the orthogonal projection of the ring of integers in the space orthogonal to $\sigma(1)$ in the trace paring, rescaled to have co-volume 1.  The lattice shape is identified with a point in $\SL_{n-1}(\zed)\backslash \SL_{n-1}(\bR)$ by a choice of base point. This shape is known to be asymptotically equidistributed in the case of $S_n$ number fields of degrees 3,4 and 5 by work of Bhargava and Harron \cite{BH16}, which we make effective by giving a rate of convergence in the corresponding Weyl sums.  Let $\E_r$ denote the real analytic Eisenstein series for $\SL_2(\bR)$ with Laplace eigenvalue $\lambda = \frac{1}{4} + r^2$.
\begin{theorem}\label{main_theorem}
 Let $F$ be a smooth function of compact support on $\bR^+$. We have the following evaluation of Weyl sums
 \begin{align*}
  S^+(\E_r,F, X) &= \sum_{\substack{[K:\bQ] = 3, S_3}} F\left(\frac{\Disc K}{X}\right)\E_r(\Lambda_K) \\& = \tilde{F}\left(\frac{11 + z}{12} \right)X^{\frac{11 + z}{12}} \zeta\left(\frac{1-z}{3} \right)2^{\frac{z-1}{6}}3^{\frac{z-7}{4}}\pi^{\frac{1+2z}{6}} \cos\left(\frac{\pi(1-z)}{6} \right)\\&\times \frac{\Gamma\left(\frac{1-z}{3} \right)\Gamma\left(\frac{4-z}{6}\right)}{\Gamma\left(\frac{7-z}{6} \right)}\prod_p \left(1 - \frac{1}{p^{\frac{5+z}{3}}} - \frac{1}{p^{\frac{7+2z}{3}}} + \frac{1}{p^{\frac{13 + 2z}{3}}}\right)\\
  &+\frac{\xi(z)}{\xi(1+z)}\tilde{F}\left(\frac{11 - z}{12} \right)X^{\frac{11 - z}{12}} \zeta\left(\frac{1+z}{3} \right)2^{\frac{-z-1}{6}}3^{\frac{-z-7}{4}}\pi^{\frac{1-2z}{6}} \cos\left(\frac{\pi(1+z)}{6} \right)\\&\times \frac{\Gamma\left(\frac{1+z}{3} \right)\Gamma\left(\frac{4+z}{6}\right)}{\Gamma\left(\frac{7+z}{6} \right)}\prod_p \left(1 - \frac{1}{p^{\frac{5-z}{3}}} - \frac{1}{p^{\frac{7-2z}{3}}} + \frac{1}{p^{\frac{13 - 2z}{3}}}\right)+ O_\epsilon\left(X^{\frac{13}{15}+\epsilon}\right)\\
  S^-(\E_r,F, X) &= \sum_{\substack{[K:\bQ] = 3, S_3}} F\left(\frac{-\Disc K}{X}\right) \E_r(\Lambda_K) \\&= \frac{1}{3}\tilde{F}\left(\frac{11 + z}{12} \right)X^{\frac{11 + z}{12}} \zeta\left(\frac{1-z}{3} \right)2^{\frac{z-1}{6}}\pi^{\frac{1+2z}{6}} \cos\left(\frac{\pi(1-z)}{6} \right)\\&\times \frac{\Gamma\left(\frac{1-z}{3} \right)\Gamma\left(\frac{4-z}{6}\right)}{\Gamma\left(\frac{7-z}{6} \right)}\prod_p \left(1 - \frac{1}{p^{\frac{5+z}{3}}} - \frac{1}{p^{\frac{7+2z}{3}}} + \frac{1}{p^{\frac{13 + 2z}{3}}}\right)\\
  &+\frac{\xi(z)}{\xi(1+z)}\frac{1}{3}\tilde{F}\left(\frac{11 - z}{12} \right)X^{\frac{11 - z}{12}} \zeta\left(\frac{1+z}{3} \right)2^{\frac{-z-1}{6}}\pi^{\frac{1-2z}{6}} \cos\left(\frac{\pi(1+z)}{6} \right)\\&\times \frac{\Gamma\left(\frac{1+z}{3} \right)\Gamma\left(\frac{4+z}{6}\right)}{\Gamma\left(\frac{7+z}{6} \right)}\prod_p \left(1 - \frac{1}{p^{\frac{5-z}{3}}} - \frac{1}{p^{\frac{7-2z}{3}}} + \frac{1}{p^{\frac{13 - 2z}{3}}}\right) + O_\epsilon\left(X^{\frac{13}{15}+\epsilon}\right).
 \end{align*}

\end{theorem}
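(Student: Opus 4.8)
\emph{Strategy and parametrization.} The plan is to realize the Weyl sum as a Mellin--Barnes integral of an Eisenstein-twisted Shintani zeta function attached to the prehomogeneous vector space of binary cubic forms, and to obtain its analytic continuation by unfolding $\E_r$ against Shintani's global zeta integral, with the reducible forms supplying the main terms. By the Delone--Faddeev correspondence together with the Davenport--Heilbronn description of maximality, the $S_3$ cubic fields $K$ with $\pm\Disc K>0$ are parametrized by $\GL_2(\zed)$-classes of irreducible maximal integral binary cubic forms $f$ with $\pm\Disc f>0$, with $\Disc K=\Disc f$ and $\Lambda_K$ equal to the point $\mathrm{sh}(f)\in\SL_2(\zed)\backslash\bH$ cut out by the Hessian covariant of $f$; the correspondence is $\GL_2$-equivariant, so $\E_r(\Lambda_K)=\E_r(\mathrm{sh}(f))$. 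Writing $\tilde F$ for the Mellin transform of $F$, one has $S^\pm(\E_r,F,X)=\frac1{2\pi i}\int_{(\sigma)}Z^\pm(\E_r,s)\,\tilde F(s)\,X^s\,ds$ for $\sigma$ large, where $Z^\pm(\E_r,s)=\sum_f\E_r(\mathrm{sh}(f))\,|\Stab(f)|^{-1}\,|\Disc f|^{-s}$ runs over maximal irreducible forms of the relevant sign. A Davenport--Heilbronn / Belabas--Bhargava--Pomerance sieve --- equivalently, the Datskovsky--Wright adelic factorization of the Shintani zeta function over maximal forms --- converts the maximality condition into a product of local densities, and this is the origin of the Euler products $\prod_p\big(1-p^{-(5\pm z)/3}-p^{-(7\pm2z)/3}+p^{-(13\pm2z)/3}\big)$ in the statement, up to an error absorbed into the final bound.

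\emph{Unfolding the Eisenstein series.} Following Shintani, $Z^\pm(\E_r,s)$ --- with the sum extended over all nonzero forms --- is realized through a zeta integral over $\GL_2(\zed)\backslash\GL_2(\bR)$ with $\E_r$ inserted on the $\SL_2$-factor, the central variable carrying the $|\Disc|^{-s}$ weight, and a Schwartz function $\Phi$ carrying the smoothing; the integrand is not absolutely convergent at the cusp, since reducible forms accumulate there and $\E_r$ has polynomial growth, so Shintani's truncation and regularization are needed. Writing $\E_r=\sum_{\gamma\in B(\zed)\backslash\SL_2(\zed)}\IM(\gamma\,\cdot\,)^{s_0}$ with $s_0=\frac{1+z}{2}$ (so that $\tfrac{\xi(z)}{\xi(1+z)}$ is the associated scattering coefficient and $1-s_0=\tfrac{1-z}{2}$), the unfolding collapses the quotient to a Borel quotient $B(\zed)\backslash\GL_2(\bR)$, where after the Iwasawa decomposition the discriminant and the height of the Hessian become monomial in the remaining coordinates and the sum over forms is reduced along $N(\zed)$ and the torus. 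This splits $Z^\pm(\E_r,s)$ into a part supported on forms with nonvanishing leading coefficient, treated as irreducible, which is holomorphic past the main terms, and a part supported on reducible forms $f=\ell\cdot q$, which is evaluated in closed form.

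\emph{The reducible contribution and the residues.} For $f=\ell\cdot q$ the discriminant $|\Disc f|$ is monomial in $\Disc q$ and the resultant of $\ell$ and $q$, while $\mathrm{sh}(f)$ lies near the cusp at a height governed by the relative sizes of $\ell$ and $q$. The sum over primitive linear forms $\ell$ against the Eisenstein height weight $y^{(1\pm z)/2}$ and the discriminant weight produces the Riemann zeta factor $\zeta\!\left(\tfrac{1\mp z}{3}\right)$; the sum over integral binary quadratic forms $q$, split according to $\sgn\Disc q$ and taken with the class-number/unit weighting, together with the archimedean integral over the unipotent and torus directions against $\Phi$, produces the Gamma ratio $\Gamma\!\left(\tfrac{1\mp z}{3}\right)\Gamma\!\left(\tfrac{4\mp z}{6}\right)/\Gamma\!\left(\tfrac{7\mp z}{6}\right)$, the explicit powers of $2$, $3$ and $\pi$, and the factor $\cos\!\left(\tfrac{\pi(1\mp z)}{6}\right)$ coming from $\Gamma$-reflection formulas in the archimedean calculation (equivalently, in Shintani's functional equation for $\xi^\pm$). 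The two pieces $\tilde F\!\left(\tfrac{11+z}{12}\right)X^{(11+z)/12}$ and $\tfrac{\xi(z)}{\xi(1+z)}\tilde F\!\left(\tfrac{11-z}{12}\right)X^{(11-z)/12}$ emerge exactly as the images of the two terms $y^{s_0}$ and $\tfrac{\xi(z)}{\xi(1+z)}y^{1-s_0}$ in the constant term of $\E_r$ at the cusp, so that $Z^\pm(\E_r,s)$ acquires simple poles at both $s=\tfrac{11+z}{12}$ and $s=\tfrac{11-z}{12}$, and moving the contour in the Mellin inversion past these two poles gives the stated main terms; the discrepancy between $S^+$ and $S^-$ (the overall factor $\tfrac13$ and the absent power of $3$) records the ratio of residues and local densities of $\xi^+$ versus $\xi^-$ at the real versus the complex place, exactly as in the Davenport--Heilbronn relation.

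\emph{Error term and principal obstacle.} The remainder $O_\epsilon(X^{13/15+\epsilon})$ collects the error in the maximality and reducibility sieves together with a convexity estimate for the analytically continued irreducible part of $Z^\pm(\E_r,s)$ on the line $\RE s=\tfrac{13}{15}+\epsilon$, across which the contour is shifted. The principal obstacle is the analytic continuation itself, carried out uniformly in the spectral parameter $z$: the non-decay of $\E_r$ forces a renormalization of the Shintani zeta integral in the cusp in which Shintani's usual ``secondary'' terms are modulated by the two constant-term exponents of $\E_r$, and one must show that the continued function has polynomial growth in $s$, that the expected rightmost pole at $s=1$ (the cubic-field count) is \emph{absent} because the regularized average of $\E_r$ over the space of shapes vanishes, and that the only poles surviving in $\RE s>\tfrac{13}{15}$ are the two at $s=\tfrac{11\pm z}{12}$ with exactly the residues above. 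Keeping the archimedean and local factors under control throughout, and matching all constants, is the technical core; compare \cite{H19} for the parallel cuspidal case, where this cusp barrier does not appear, and \cite{BH16} for the underlying equidistribution.
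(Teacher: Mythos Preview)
Your high-level outline---express the Weyl sum as a Mellin integral of an Eisenstein-twisted Shintani zeta function, continue past the half-plane of convergence, sieve to maximal irreducible orders, shift the contour---matches the paper. But the mechanism you describe for producing the poles is not the one the paper uses, and your attribution of the main terms to the reducible forms is backwards.

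The paper does not unfold $\E_r$ in the Rankin--Selberg sense. It applies Poisson summation to the lattice sum to obtain Shintani's split functional equation $Z_q^\pm = Z_q^{\pm,+} + \hat Z_q^{\pm,+} + Z_q^{\pm,0}$, with the first two pieces entire and all poles sitting in the singular piece $Z_q^{\pm,0}$ built from the dual singular set $\hat L_0=\{0\}\sqcup L_0(I)\sqcup \hat L_0(II)$. The Eisenstein series is split as constant term plus non-constant term and paired with each stratum, regularized by Shintani's incomplete Eisenstein series $\sE(\psi,w;g)$. The poles at $s=\tfrac{11\pm z}{12}$ arise from $\Theta^{(1),c}_q$: the constant-term pairing with the \emph{dual} singular forms $(0,0,0,m)$, where the $m$-sum gives $\zeta\!\big(\tfrac{1\mp z}{3}\big)$ and the archimedean integral $\Sigma_2(\hat f,\cdot)$ gives the $\Gamma$- and cosine factors. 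The reducible forms play the opposite role: they are \emph{subtracted} (fields correspond to irreducible forms), and the paper shows that the constant-term reducible zeta function $Z_{1,c,q}^{\pm,r}$ has simple poles at $s=\tfrac{5\pm z}{4}$ which exactly \emph{cancel} those produced by $\Theta^{(2),c}_q$ (dual singular forms of type $(0,0,3m,n)$). Miss this cancellation and you would predict spurious terms of order $X^{5/4}$, larger than the Weyl sum.

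Finally, the exponent $\tfrac{13}{15}$ is not a convexity bound on a fixed line but the output of a specific balance in the Taniguchi--Thorne orbital sieve. The paper carries a family $Z_q^\pm$ indexed by squarefree $q$, truncates the M\"obius sum at $q\le Q$, bounds the dual piece $\hat Z_q^{\pm,+}$ by $O(Q^{4+\epsilon}X^\epsilon)$ via the Fourier-coefficient estimate $\sum_{|m|\le Y}\sum_i|\hat\sN_q(\hat x_{i,m})|\ll q^{-7+\epsilon}Y$, bounds the tail $q>Q$ by $O(X^{13/12}/Q^{1-\epsilon})$, and optimizes at $Q=X^{13/60}$. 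An adelic Datskovsky--Wright factorization alone does not supply this $q$-uniformity; you need the explicit Fourier transforms $\hat\sN_q$ of the non-maximality conditions on both the cubic and the reducible (quadratic) side.
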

Note that the number of cubic fields with discriminant of size at most $X$ grows linearly in $X$, so that the above theorem obtains a power saving estimate in the Weyl sums.  Combined with the estimate of the cuspidal Weyl sums in \cite{H19} where we obtained a bound $O\left(X^{\frac{2}{3} + \epsilon}\right)$, this gives a quantitative proof of the asymptotic equidistribution in the cubic case.

The proof of Theorem \ref{main_theorem} is based on an analysis of the orbital twisted Shintani zeta functions, combined with a sieve.  This method of proving asymptotic results for cubic fields was introduced by Taniguchi and Thorne \cite{TT13a}, \cite{TT13b}, see also \cite{BST13}.  Here we extend the Eisenstein twisted zeta functions introduced in \cite{HL20} to the orbital case, obtaining their poles and residues and matching off a pole arising from reducible binary cubics.  A detailed discussion of the poles and residues appears in Section \ref{twisted_zeta_function_section}.  We give a heuristic explanation of the existence of the poles and of the secondary main term and its size following our discussion of the parameterization in homogeneous coordinates of the space of binary cubic forms.

\subsection{Related work} The asymptotic count for the number of cubic fields of bounded discriminant was determined by Davenport and Heilbronn \cite{DH71}. Terr \cite{T97} first proved the asymptotic equidistribution of the shape of cubic fields.  Sato and Shintani \cite{SS74}, \cite{S72}, \cite{S75} introduced zeta functions enumerating integral orbits in prehomogeneous vector spaces, such as the space of binary cubic forms, which are in bijection with cubic rings up to isomorphism due to works of Delone and Fadeev \cite{DF64} and Gan-Gross-Savin \cite{GGS02}.  Orbital versions of these zeta functions were first used to prove the Davenport-Heilbronn Theorem by Taniguchi and Thorne \cite{TT13a}, \cite{TT13b}.  The first author introduced twisted versions of the zeta functions, twisting by an automorphic form evaluated on the lattice shape \cite{H17} and used these to prove the cuspidal equidistribution of the lattice shape of cubic fields in \cite{H19}.  The authors evaluate the poles and residues of the Eisenstein twist in \cite{HL20}, and use these methods here to prove the equidistribution in the Eisenstein spectrum.  On the binary cubic form side, the Eisenstein spectrum studies roughly the joint distribution of the first and fourth covariants of a binary cubic form.  The second author has studied the joint equidistribution of the first and second co-variants in \cite{L19}.

\section*{Notation and conventions}

We abbreviate the contour integral \begin{equation}\frac{1}{2\pi i} \int_{c-i\infty}^{c + i \infty} \psi(z) dz = \oint_{\RE(z) = c} \psi(z)dz.\end{equation} Denote $e(x)=e^{2\pi i x}$, $c(x) = \cos(2\pi x)$, $s(x) = \sin(2\pi x)$.
The argument uses the following pair of standard Mellin transforms.  Write $K_\nu$ for the $K$-Bessel function.
For $\RE(s)> |\RE \nu|$, (\cite{I02}, p.205)
\begin{equation}
 \int_0^\infty K_\nu(x)x^{s-1}dx = 2^{s-2}\Gamma\left(\frac{s + \nu}{2}\right)\Gamma\left(\frac{s-\nu}{2}\right).
\end{equation}
We use the formula
\begin{equation}
 K_{\frac{s}{2}}(2) = \int_0^\infty e^{-t^2 -\frac{1}{t^2}} t^{s-1} dt.
\end{equation}
For $0 < \RE(s) < 1$, (\cite{B53}, p.13)
\begin{equation}
 \int_0^\infty \cos(x)x^{s-1}dx = \Gamma(s)\cos\left( \frac{\pi}{2}s\right).
\end{equation}
For functions $f$ on Euclidean space $\bR^d$, and $t \in \bR^\times$ we use the notation $f^t(x) = f(t\cdot x)$.  Under Fourier transform, this satisfies $\widehat{f^t}(\xi)= \frac{1}{t^d} \hat{f}\left(\frac{\xi}{t}\right)$.

The following  groups are used. 
\begin{itemize}
 \item $G_\bR = \GL_2(\bR)$
 \item $G^1 = \SL_2(\bR)$
 \item $G^+ = \{g \in \GL_2(\bR): \det g>0\}$
 \item $G_\zed = \GL_2(\zed)$
 \item $\Gamma = \SL_2(\zed)$
 \item $\Gamma_\infty = \left\{ \begin{pmatrix} \pm 1 &\\ n & \pm 1\end{pmatrix} : n \in \zed\right\}$
 \item $A = \{a_t: t \in \bR_{>0}\},$ $ a_t = \begin{pmatrix} t &\\ & \frac{1}{t}\end{pmatrix}$
 \item $N = \{n_x: x \in \bR\},$ $n_x = \begin{pmatrix} 1 &0 \\ x &1 \end{pmatrix}$
 \item $N' = \{\nu_x: x \in \bR\},$ $\nu_x =\begin{pmatrix} 1 &x\\0&1 \end{pmatrix}$
 \item $K = \{k_\theta: \theta \in \bR/ \zed\},$ $k_\theta = \begin{pmatrix} \cos 2\pi \theta & \sin 2\pi\theta \\ -\sin 2\pi\theta & \cos 2\pi\theta\end{pmatrix}$
 \item $B = \left\{\begin{pmatrix} b_{11} &0\\b_{21} & b_{22}\end{pmatrix}: b_{11}, b_{21}, b_{22} \in \bR, b_{11}b_{22} \neq 0 \right\}$, $B^+ = \{b \in B: b_{11}, b_{22}>0\}$
 \item $d_\lambda = \begin{pmatrix} \lambda &\\ & \lambda \end{pmatrix}$.
\end{itemize}

We follow Shintani's conventions \cite{S72} regarding integrals and automorphic forms on $\SL_2(\bR)$. The Iwasawa decomposition is $G = KAN$ with Haar measure, for $f \in L^1(G^1)$,
\begin{align}
 \int_{G^1} f(g) dg &=  \int_0^{1}\int_{-\infty}^\infty \int_0^\infty f(k_\theta a_t n_u)\frac{dt}{t^3}du d\theta
\end{align}
and for $f \in L^1(G^+)$,
\begin{equation}
 \int_{G^+}f(g)dg = \int_0^\infty \int_{G^1} f\left(\begin{pmatrix} \lambda &\\ &\lambda\end{pmatrix} g \right)dg \frac{d\lambda}{\lambda}.
\end{equation}
Given a group element $g$, write $k(g), t(g), u(g)$ for the elements of $K, A, N$ in the representation of $g$ in the Iwasawa decomposition.  
We use several times the Siegel set $\fS_C$ 
\begin{equation}
 \fS = \left\{n_u a_t k_\theta: \theta \in \bR, t <2, |u| \leq \frac{1}{2}\right\}
\end{equation}
which satisfies $\Gamma \cdot \fS = G^1$.
For any $r \in \bR$, define the semi-norm 
\begin{equation}
 \mu(r)(f) = \sup_{g \in \fS_{\frac{1}{2}}} t(g)^r |f(g)|.
\end{equation}
Let $C(G^1/\Gamma, r) = \{f \in C(G^1/\Gamma), \mu(r)(f)<\infty\}$.  

\section{Review of Eisenstein series}

Shintani's normalization of the Eisenstein series makes this right $\Gamma$ and left $K$ invariant, 
\begin{equation}\label{def_real_analytic_eisenstein}
E(z,g) = \sum_{\gamma \in \Gamma/\Gamma_\infty}
t(g \gamma)^{z+1}.
\end{equation}
 The function $E(z,g)$ satisfies the functional equation
\begin{equation}
\xi(z + 1)E(z,g) = \xi(1-z)E(-z,g); \qquad \xi(z) =
\pi^{-\frac{z}{2}}\Gamma\left(\frac{z}{2}\right)\zeta(z)
\end{equation}
and has a Fourier development in $z \neq 0$ given by
\begin{align}\label{eisenstein_fourier_dev}
E(z,g)&= t^{1+z}+ t^{1-z}\frac{\xi(z)}{\xi(z+1)}\\\notag & \qquad+
\frac{4t}{\xi(z+1)} \sum_{m=1}^\infty
\eta_{\frac{z}{2}}(m)K_{\frac{z}{2}}(2\pi m t^2)\cos 2\pi m u,\\\notag
\eta_{\frac{z}{2}}(m)&=\sum_{ab = m}\left(\frac{a}{b} \right)^{\frac{z}{2}},
\end{align}
with $K_\nu$ the $K$ Bessel function.  We use frequently that $E(z, g) = E(z, (g^{-1})^t)$. The Riemann $\xi$ function $\xi(z)$ satisfies the functional equation $\xi(z) = \xi(1-z)$.

We also use the incomplete Eisenstein series to regularize integrals in the same way as Shintani. Let $\Psi$ denote the space of entire
functions such that for all $\psi \in \Psi$, for all $-\infty < C_1 < C_2 <
\infty$, for all $ N >0$,
\begin{equation}
 \sup_{C_1 < \RE (w) < C_2}\left(1 + (\IM w)^2\right)^N |\psi(w)| < \infty.
\end{equation}
For $\psi \in \Psi$ and $\RE(w) > 1$ choose $1 < c < \RE (w)$ and set
\begin{equation}
\sE(\psi, w; g) = \oint_{\RE (z) = c}\psi(z)\frac{E(z,g)}{w-z}dz.
\end{equation}
Shintani Lemma 2.9 gives the following estimates.
\begin{lemma}
 We have
 \begin{enumerate}
  \item $\sE(\psi, w; g) \in C(G^1/\Gamma, \RE w-1)$
  \item For a fixed $\psi$,
  \begin{equation}
   \sup_{1 \leq w \leq M, g \in \fS_{\frac{1}{2}}} |(w-1)\sE(\psi, w;g)| < \infty, (M>1)
  \end{equation}
  \item $\lim_{w \to 1^+} (w-1)\sE(\psi, w;g) = \frac{\psi(1)}{\xi(2)}.$
 \end{enumerate}

\end{lemma}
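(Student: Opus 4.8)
The plan is to prove the three claims at once by shifting the contour defining $\sE(\psi,w;g)$ to the left of $z=1$ and extracting the residue of the Eisenstein series there. Two facts about $E(z,g)$ do all the work. First, from the Fourier development \eqref{eisenstein_fourier_dev}, $E(z,\cdot)$ is holomorphic in $z$ on $\RE(z)>0$ apart from a simple pole at $z=1$ whose residue is the constant function $\frac{1}{\xi(2)}$: the term $t^{1+z}$ and the non-constant modes (which carry the factor $\frac{1}{\xi(z+1)}$ and the $K$-Bessel function, both regular at $z=1$) are holomorphic there, while $t^{1-z}\frac{\xi(z)}{\xi(z+1)}$ contributes $\frac{1}{\xi(2)}$ because $\xi$ has a simple pole of residue $1$ at $z=1$. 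Second, in any fixed vertical strip $0<c'\le\RE(z)\le c$ the continued $E(z,g)$ is of moderate growth on the Siegel set: it grows at most polynomially in $\IM(z)$, locally uniformly in $g$, with the near-cusp bound $|E(z,g)|\ll_{c}t(g)^{1-\RE(z)}$ coming from the dominant piece of the constant term, the non-constant modes being negligible near the cusp by the exponential decay of $K_\nu$; equivalently $E(z,\cdot)\in C(G^1/\Gamma,\RE(z)-1)$. Both inputs may also be quoted from Shintani.

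Granting these, claim (1) is immediate. For the contour $\RE(z)=c$ with $1<c<\RE(w)$ the integrand of $\sE(\psi,w;g)$ is majorised by $|\psi(z)|\,|E(z,g)|\,|w-z|^{-1}$, and on this line $|w-z|\ge\RE(w)-c>0$ while $\int_{\RE(z)=c}|\psi(z)|(1+|\IM(z)|)^{O(1)}\,|dz|<\infty$ since $\psi\in\Psi$ decays faster than any polynomial. Hence $|\sE(\psi,w;g)|\ll t(g)^{1-c}$ uniformly on the Siegel set, the integral converges locally uniformly in $g$ so $g\mapsto\sE(\psi,w;g)$ is continuous, and right $\Gamma$-invariance is inherited from $E$. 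Thus $\sE(\psi,w;\cdot)\in C(G^1/\Gamma,c-1)$, and since $c-1<\RE(w)-1$ and $t$ is bounded on $\fS_{\frac{1}{2}}$ this gives $\sE(\psi,w;\cdot)\in C(G^1/\Gamma,\RE(w)-1)$.

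For (2) and (3) I would move the contour from $\RE(z)=c\in(1,\RE(w))$ to $\RE(z)=c'\in(0,1)$. In the strip $c'<\RE(z)<c$ the integrand $\psi(z)\frac{E(z,g)}{w-z}$ has a single pole, the simple pole at $z=1$ (the point $z=w$ lies to the right, as $\RE(w)>c$), and the horizontal segments at $\IM(z)=\pm T$ vanish as $T\to\infty$ by the super-polynomial decay of $\psi$ against the polynomial growth of $E(z,g)$ in $\IM(z)$. Collecting the residue $\frac{\psi(1)}{\xi(2)(w-1)}$ yields
\[
 \sE(\psi,w;g)=\frac{\psi(1)}{\xi(2)(w-1)}+\oint_{\RE(z)=c'}\psi(z)\frac{E(z,g)}{w-z}\,dz ,
\]
so that $(w-1)\sE(\psi,w;g)=\frac{\psi(1)}{\xi(2)}+(w-1)\oint_{\RE(z)=c'}\psi(z)\frac{E(z,g)}{w-z}\,dz$. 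On the new line $c'-1<0$ and $t$ is bounded on $\fS_{\frac{1}{2}}$, so $E(z,\cdot)\in C(G^1/\Gamma,c'-1)$ gives $|E(z,g)|\ll(1+|\IM(z)|)^{O(1)}$ uniformly for $g\in\fS_{\frac{1}{2}}$, and $|w-z|\ge 1-c'>0$ for $w\in[1,M]$; hence the remaining integral is bounded by a constant depending only on $\psi$ and $c'$, uniformly in $w\in(1,M]$ and $g\in\fS_{\frac{1}{2}}$, which is (2). Letting $w\to1^{+}$, the factor $w-1$ kills the bounded integral, which itself converges to $\oint_{\RE(z)=c'}\psi(z)\frac{E(z,g)}{1-z}\,dz$, leaving the limit $\frac{\psi(1)}{\xi(2)}$, which is (3).

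The only real obstacle is the two structural inputs on $E(z,g)$: its meromorphic continuation with a unique simple pole at $z=1$ of residue $\frac{1}{\xi(2)}$, and the joint control of its growth in $\IM(z)$ and near the cusp needed both to justify the contour shift and to make the estimates uniform in $w$ and $g$. Once these are in hand, via \eqref{eisenstein_fourier_dev} together with standard bounds for $\zeta$, $\Gamma$, and $K_\nu$, or by citation to Shintani, what remains is the residue bookkeeping and the elementary estimates above.
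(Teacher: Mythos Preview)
Your argument is correct. The paper does not give its own proof of this lemma but simply quotes it as Lemma~2.9 of Shintani \cite{S72}; your contour-shifting argument, extracting the residue $\frac{\psi(1)}{\xi(2)(w-1)}$ at $z=1$ and bounding the remaining integral on a line $\RE(z)=c'\in(0,1)$, is precisely Shintani's method there, and the two structural inputs you isolate (the simple pole of $E(z,g)$ at $z=1$ with residue $1/\xi(2)$, and the joint polynomial control in $\IM z$ together with the cusp bound $|E(z,g)|\ll t(g)^{1-\RE z}$) are exactly what his proof uses.
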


The corollary to Lemma 2.9 in \cite{S72} states that, for $f \in L^1(G^1/\Gamma, dg)$,
\begin{equation}
 \lim_{w \to 1^+} (w-1) \int_{G^1/\Gamma} f(g) \sE(\psi, w;g)dg = \frac{\psi(1)}{\xi(2)} \int_{G^1/\Gamma} f(g)dg.
\end{equation}
Similarly, 
\begin{lemma}
For $0 < c < w$ and $f \in L^1(G^1/\Gamma_\infty)$,
\begin{equation}
 \lim_{w \to 0^+} w \int_{G^1/\Gamma_\infty} f(g) \oint_{\RE(z) = c}\frac{\psi(\alpha)t(g)^\alpha}{\alpha (w-\alpha)} d\alpha dg= \psi(0)\int_{G^1/\Gamma_\infty} f(g)dg.
\end{equation}
 
\end{lemma}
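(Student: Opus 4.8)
The plan is to treat the inner integral as a Mellin--Barnes integral in the single Iwasawa coordinate $t(g)$, in the spirit of the corollary to Lemma 2.9 of \cite{S72} recalled above, but now with a genuine simplification: since $\Gamma_\infty\subset\{\pm I\}N$, the coordinate $t(\cdot)$ is right $\Gamma_\infty$-invariant, so
\[
 \Phi_w(g):=\oint_{\RE(\alpha)=c}\frac{\psi(\alpha)\,t(g)^\alpha}{\alpha(w-\alpha)}\,d\alpha
\]
is a well-defined function on $G^1/\Gamma_\infty$ and there is no Eisenstein summation to perform. Since $\psi\in\Psi$ decays faster than any polynomial in vertical strips, while $|\alpha|^{-1}|w-\alpha|^{-1}$ is integrable along, and uniformly bounded on, any vertical line avoiding $\{0,w\}$, all the contour shifts below are legitimate and yield absolutely convergent line integrals; the integrand's only poles are the simple poles at $\alpha=0$, with residue $\psi(0)/w$, and at $\alpha=w$, with residue $-\psi(w)t(g)^w/w$.

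The key point is that $\Phi_w$ is bounded for each fixed $w$ but only with a bound of size $w^{-1}$ as $w\to 0^+$, so $w\,f\,\Phi_w$ admits no $w$-uniform dominating function on all of $G^1/\Gamma_\infty$; I would therefore split the domain at $t(g)=1$ and shift the contour in opposite directions on the two pieces. On $\{t(g)\ge 1\}$, moving $\RE(\alpha)=c$ leftward past $\alpha=0$ to a fixed line $\RE(\alpha)=-\delta$ (any $\delta>0$) gives $\Phi_w(g)=\psi(0)/w+G_w(g)$ with $|G_w(g)|\le C_\delta\,t(g)^{-\delta}\le C_\delta$ uniformly in $0<w<1$. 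On $\{t(g)<1\}$, moving $\RE(\alpha)=c$ rightward past $\alpha=w$ to a fixed line $\RE(\alpha)=A>1$ gives $\Phi_w(g)=\psi(w)t(g)^w/w+H_w(g)$ with $|H_w(g)|\le C_A\,t(g)^A\le C_A$ uniformly in $0<w<1$.

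Using these representations I would write
\begin{align*}
 w\int_{G^1/\Gamma_\infty}\! f(g)\Phi_w(g)\,dg
 &=\psi(0)\!\!\int_{t(g)\ge 1}\!\!\! f(g)\,dg\;+\;w\!\!\int_{t(g)\ge 1}\!\!\! f(g)G_w(g)\,dg\\
 &\quad+\int_{t(g)<1}\!\!\! f(g)\,\psi(w)\,t(g)^w\,dg\;+\;w\!\!\int_{t(g)<1}\!\!\! f(g)H_w(g)\,dg.
\end{align*}
The two terms carrying the factor $w$ are each $\le C\,w\,\|f\|_{L^1}$ and vanish as $w\to 0^+$; the term $\psi(0)\int_{t(g)\ge 1}f$ does not depend on $w$; and for the last term $|\psi(w)t(g)^w|\le C$ on $\{t(g)<1,\ 0<w<1\}$ while $\psi(w)t(g)^w\to\psi(0)$ pointwise, so dominated convergence with majorant $C|f|\in L^1$ gives $\int_{t(g)<1}f\,\psi(w)t(g)^w\to\psi(0)\int_{t(g)<1}f$. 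The two surviving main terms sum to $\psi(0)\int_{G^1/\Gamma_\infty}f$, which is the assertion.

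The only genuinely necessary idea is the cut at $t(g)=1$ together with the choice to shift left for $t(g)\ge 1$ and right for $t(g)<1$: near $t(g)\to 0$ the left-shifted bound on $\Phi_w$ degenerates and near $t(g)\to\infty$ the right-shifted one does, so neither single Mellin--Barnes representation controls $w\,f\,\Phi_w$ globally. Everything else is residue bookkeeping and absolute convergence coming from $\psi\in\Psi$.
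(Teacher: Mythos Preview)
Your proof is correct and follows essentially the same approach as the paper's: both split according to the size of $t(g)$, shift the contour left past $\alpha=0$ on the large-$t$ piece and right past $\alpha=w$ on the small-$t$ piece, and then pass to the limit using the uniform bounds on the remaining line integrals. The only cosmetic difference is that the paper cuts at an arbitrary $t=\epsilon$ rather than at $t=1$, and is considerably more terse about the dominated-convergence step that you spell out explicitly.
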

\begin{proof}
Let $w< \frac{1}{2}$,  $F_w(t) = \oint_{\RE(z) = c}\frac{\psi(\alpha)t^\alpha}{\alpha (w-\alpha)} d\alpha$.
 Let $\epsilon > 0$.  For $t > \epsilon$, shift the contour left to the line $\RE(\alpha) = -1$, where the integral is uniformly bounded in $w$ and $\epsilon$.  A pole is passed at 0 with residue $\frac{\psi(0)}{w}$.  If $t \leq \epsilon$, shift the contour right to $\RE(\alpha) = 1$, passing a pole at $w$ with residue $\frac{\psi(w)t^w}{w}$ with an integral that is uniformly bounded in $w$.  Letting $w \to 0$ obtains the claim.
 
\end{proof}

Let $f \in \sS(G^1)$ (Schwarz class) be bi-$K$-invariant, that is, for any $g \in G^1$ and $k_{\theta_1}, k_{\theta_2}$, $f(g) = f(k_{\theta_1}gk_{\theta_2})$. 
For imaginary $z = i \gamma$, $E(i\gamma, g) = \E_{\frac{1 + \gamma^2}{4}}(g^{t})$, which is left invariant under $\Gamma$. Let $\E_r^c$ be the constant term in the Fourier expansion, and $\E_r^n = \E_r - \E_r^c$.
As a right convolution operator $f$ acts on $\E_r$ as multiplication by a scalar.  To check this, note that
\begin{equation}
 \E_r*f (g_0) = \int_{gh = g_0} \E_r(g)f(h)dh
\end{equation}
is left $\Gamma$ invariant and right $K$ invariant.  Also, it is an eigenfunction of the Laplacian and Hecke operators, with the same eigenvalues as $\E_r$.  It follows by multiplicity one that the convolution is a multiple of $\E_r$, $\E_r * f(g_0) = \Lambda_{f,r} \E_r(g_0)$, where $\Lambda_{f,r}$ is a scalar depending on $f$ and $r$. The following lemma determines the eigenvalue.

\begin{lemma}
 We have
 \[
  \E_r * f = \left( \int_{G^1} f(g) t(g)^{1+z}dg\right) \E_r. 
 \]
For the choice $f(g) = \exp\left(-\tr g^t g\right)$ the eigenvalue is $\sqrt{\pi}K_{\frac{z}{2}}(2)$.
\end{lemma}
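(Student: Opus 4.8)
The previous paragraph already reduces the statement to computing the scalar $\Lambda_{f,r}$ in $\E_r*f=\Lambda_{f,r}\E_r$, and once the first identity is proved the Bessel evaluation is a short Iwasawa integral; so the plan has two halves. For the first half I would realise the Eisenstein series as a half-sum of Euclidean norms. Reading off the Iwasawa decomposition $g=k_\theta a_t n_u$ gives $t(g)=\|g e_2\|^{-1}$ for $e_2=(0,1)^{t}$, and $\gamma\mapsto\gamma e_2$ identifies $\Gamma/\Gamma_\infty$ with the primitive vectors of $\zed^{2}$ taken up to sign; hence
\begin{equation*}
E(z,g)=\tfrac12\sum_{v}\|gv\|^{-(1+z)},\qquad \E_r(g)=E(z,g^{t})=\tfrac12\sum_{v}\|g^{t}v\|^{-(1+z)},
\end{equation*}
the sums over primitive $v\in\zed^{2}$, converging absolutely for $\RE(z)>1$. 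I would run the whole argument in that range and finish by analytic continuation in $z$; the divergence of the norm sum on the imaginary axis is the only reason a limiting step is needed at all.

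With $\RE(z)>1$, using the paper's $\E_r*f(g_0)=\int_{G^1}\E_r(g_0h^{-1})f(h)\,dh$ and interchanging sum and integral,
\begin{equation*}
\E_r*f(g_0)=\tfrac12\sum_{v}\int_{G^1}\bigl\|(h^{t})^{-1}g_0^{t}v\bigr\|^{-(1+z)}f(h)\,dh=\tfrac12\sum_{v}I\bigl(g_0^{t}v\bigr),\quad I(w):=\int_{G^1}\|(h^{t})^{-1}w\|^{-(1+z)}f(h)\,dh.
\end{equation*}
The key point is that $I$ is homogeneous of degree $-(1+z)$ and radial: $I(cw)=c^{-(1+z)}I(w)$ for $c>0$ is immediate, and the substitution $h\mapsto hr$ for $r\in K=\SO(2)$, together with $r^{t}=r^{-1}$, right $K$-invariance of $f$, and unimodularity of $\SL_2(\bR)$, gives $I(rw)=I(w)$. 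Therefore $I(w)=I(e_1)\|w\|^{-(1+z)}$, so $\E_r*f(g_0)=I(e_1)\,\tfrac12\sum_v\|g_0^{t}v\|^{-(1+z)}=I(e_1)\,\E_r(g_0)$. Finally $I(e_1)=\int_{G^1}t(h)^{1+z}f(h)\,dh$, because for $h=\left(\begin{smallmatrix}a&b\\c&d\end{smallmatrix}\right)\in\SL_2(\bR)$ one has $(h^{t})^{-1}e_1=(d,-b)^{t}$ and hence $\|(h^{t})^{-1}e_1\|^{2}=b^{2}+d^{2}=t(h)^{-2}$; after analytic continuation in $z$ this is the first identity and pins down $\Lambda_{f,r}$.

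For the second half I would take $f(g)=\exp(-\tr g^{t}g)$ and compute $\int_{G^1}f(g)t(g)^{1+z}\,dg$ directly. Since $g^{t}g=n_u^{t}a_t^{2}n_u$, one gets $\tr(g^{t}g)=t^{2}+(1+u^{2})/t^{2}$ for $g=k_\theta a_t n_u$, so by the Haar measure formula the integral equals
\begin{equation*}
\int_0^{1}\!\!\int_{\bR}\!\!\int_0^{\infty}e^{-t^{2}-(1+u^{2})/t^{2}}\,t^{z-2}\,dt\,du\,d\theta=\int_0^{\infty}e^{-t^{2}-1/t^{2}}t^{z-2}\Bigl(\int_{\bR}e^{-u^{2}/t^{2}}\,du\Bigr)dt.
\end{equation*}
The inner Gaussian integral is $\sqrt{\pi}\,t$, leaving $\sqrt{\pi}\int_0^{\infty}e^{-t^{2}-1/t^{2}}t^{z-1}\,dt=\sqrt{\pi}\,K_{z/2}(2)$ by the Mellin identity recorded in the notation section, which is the claimed eigenvalue.

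I do not expect a genuine conceptual obstacle; the main difficulty is bookkeeping — matching the paper's conventions (which unipotent defines $\Gamma_\infty$, the transpose relating $\E_r$ to $E(z,\cdot)$, and the formula $t(g)=\|ge_2\|^{-1}$), and, more substantively, justifying the interchange of sum and integral together with the analytic continuation in $z$ across the line $\RE(z)=1$ of absolute convergence; the trace and Gaussian integrals themselves are elementary.
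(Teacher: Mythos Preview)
Your proof is correct and takes a genuinely different route from the paper's. The paper identifies the eigenvalue by pairing both $\E_r$ and $\E_r*f$ against an arbitrary test function $\psi\in C_c^\infty(\Gamma\backslash G^1/K)$: unfolding the Eisenstein sum in each inner product reduces both to the same Mellin transform $\tilde\psi_0(-1+z)$ of the constant term of $\psi$, times (in the second case) the claimed scalar $\int_{G^1}f(g)t(g)^{1+z}\,dg$. You instead expand $\E_r$ directly as a lattice sum over primitive vectors and observe that the resulting kernel $I(w)=\int_{G^1}\|(h^t)^{-1}w\|^{-(1+z)}f(h)\,dh$ is radial (by right $K$-invariance of $f$) and homogeneous, hence a scalar multiple of $\|w\|^{-(1+z)}$. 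Your argument is more elementary and self-contained --- it never leaves the realm of absolutely convergent sums until the final continuation step, and it makes transparent why bi-$K$-invariance of $f$ is exactly what is needed. The paper's inner-product approach, on the other hand, is closer to the general spectral-theoretic picture (it is essentially computing the spherical transform by unfolding) and sidesteps the analytic continuation in $z$ since the unfolded integrals make sense and agree without reference to the domain of convergence of the Eisenstein sum. The explicit Iwasawa computation of the eigenvalue for $f(g)=\exp(-\tr g^tg)$ is the same in both.
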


\begin{proof}
 Let $\psi \in C_c^\infty(\Gamma \backslash G^1/K)$ be a smooth test function and let $\psi_0$ be the constant term in its Fourier expansion in the parabolic direction.  The Petersson inner product of $\E_r$ with $\psi$ is a Mellin transform of $\psi_0$,
 \begin{align*}
  \int_{\Gamma \backslash G^1} \E_r(g) \overline{\psi(g)} dg &= \int_{\Gamma \backslash G^1} \sum_{\gamma \in \Gamma_\infty \backslash \Gamma} t(\gamma g)^{1+z} \overline{\psi(\gamma g)} dg\\
  &= \int_{\Gamma_\infty \backslash G^1} t(g)^{1 + z} \overline{\psi(g)} dg\\
  &= \int_0^\infty \psi_0(t) t^{-1 + z} \frac{dt}{t} = \tilde{\psi}_0(-1+z).
 \end{align*}
Next we calculate the inner product with the convolution $\E_r * f$,
\begin{align*} 
 \int_{\Gamma \backslash G^1} (\E_r * f)(g) \overline{\psi(g)} dg &= \int_{\Gamma \backslash G^1} \int_{G^1} \E_r(h) f(h^{-1}g) \overline{\psi(g)} dh dg\\
 &= \int_{\Gamma \backslash G^1} \int_{G^1} \sum_{\gamma \in \Gamma_\infty \backslash \Gamma} t(h)^{1+z} f(h^{-1}\gamma g) \overline{\psi(\gamma g)} dh dg\\
 &= \int_{\Gamma_\infty \backslash G^1} \int_{G^1} t(h)^{1+z} f(h^{-1}g) \overline{\psi(g)}dh dg\\
 &= \int_0^\infty \frac{dt_1}{t_1^3} \int_0^\infty \frac{dt_2}{t_2^3} \int_{-\infty}^\infty du t_1^{1+z} f\left(\begin{pmatrix} \frac{t_2}{t_1} & \frac{u}{t_1t_2}\\ 0 & \frac{t_1}{t_2} \end{pmatrix}\right) \psi_0(t_2).
\end{align*}
After a change of coordinates we obtain
\[
 \tilde{\psi}_0(-1+z) \int_{0}^\infty \frac{dt}{t} t^{z} \int_{-\infty}^\infty du f\left(\begin{pmatrix} \frac{1}{t} & u \\ 0 & t\end{pmatrix}\right) = \tilde{\psi}_0(-1+z) \int_{0}^\infty \frac{dt}{t} t^{1+z} \int_{-\infty}^\infty du f\left(n_u a_t\right).
\]
Let $f(g) = \exp(-\tr(g^tg))$ so that the eigenvalue may be written
\[
 \int_{0}^\infty \frac{dt}{t} t^{z} \int_{-\infty}^\infty du \exp\left(-t^2 - \frac{1}{t^2} - u^2\right).
\]
Choose $H(z) = e^{-z}$ to find that the integral is $\sqrt{\pi}K_{\frac{z}{2}}(2)$.
\end{proof}

Because we have kept Shintani's convention regarding the action on forms, which uses the $KAN$ decomposition, but also use the modern convention that automorphic forms are left $\Gamma$ and right $K$ invariant, we often find it more convenient to write the convolution with the Eisenstein series in the equivalent form
\[
 \E_r *f(g_0) = \E_r \odot f(g_0) = \int_{h}\E_r(h^t)f(hg_0)dh
\]
which uses $\E_r(g) = \E_r((g^{-1})^t).$ Call $\E_r \odot f$ the \emph{pairing} of $\E_r$ and $f$.

The following lemma controls the contribution of the constant and  non-constant terms of the Eisenstein series to the pairing.  
\begin{lemma}\label{non_constant_term_bound_convolution}
For $((g_0)^{-1})^t = \nu_{x_0}a_{t_0}k_{\theta_0}$ varying in the Siegel set $\{t_0 \geq \frac{1}{2}, |x_0| \leq \frac{1}{2}\}$, the pairing of the non-constant terms $\E_r^n \odot f(g_0) = \int_h \E_r^n(h)f(hg_0)= O_{f,A}(t_0^{-A})$.  For $((g_0)^{-1})^t$ varying in the complement of a Siegel set $\{t_0 \leq 10, |x_0| \leq \frac{1}{2}\}$, the pairing of the constant term satisfies $\E_r^c \odot f(g_0) = O(1)$.
\end{lemma}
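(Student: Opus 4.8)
Write $z=i\gamma$ for the relevant spectral parameter, so that $\E_r(h^t)=E(z,h)$, and split $E(z,\cdot)=E(z,\cdot)^{c}+E(z,\cdot)^{n}$ according to the Fourier development \eqref{eisenstein_fourier_dev} into its constant term $E(z,g)^{c}=t(g)^{1+z}+t(g)^{1-z}\tfrac{\xi(z)}{\xi(z+1)}$ and the remaining Fourier modes. By the definition of the pairing,
\[
 \E_r^n\odot f(g_0)=\int_{G^1}E(z,h)^{n}\,f(hg_0)\,dh,\qquad \E_r^c\odot f(g_0)=\int_{G^1}E(z,h)^{c}\,f(hg_0)\,dh.
\]
The plan is to substitute $h\mapsto hg_0^{-1}$ and use $g_0^{-1}=\bigl((g_0^{-1})^{t}\bigr)^{t}=(\nu_{x_0}a_{t_0}k_{\theta_0})^{t}=k_{-\theta_0}a_{t_0}n_{x_0}$, so that the two pairings become $\int_h E(z,hk_{-\theta_0}a_{t_0}n_{x_0})^{n}f(h)\,dh$ and the same with the constant term. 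In the present conventions $t(g)^{-1}=\|ge_2\|$ with $e_2$ the second standard basis vector, and $k_{-\theta_0}a_{t_0}n_{x_0}$ is already in Iwasawa form $KAN$ with $A$-component $a_{t_0}$; since $\sigma_{\min}(h)^{-1}=\sigma_{\max}(h)=\|h\|$ (operator norm) on $\SL_2(\bR)$, this gives $t(hk_{-\theta_0}a_{t_0}n_{x_0})\in[\,t_0/\|h\|,\,t_0\|h\|\,]$. I would then invoke three standard facts: (i) from \eqref{eisenstein_fourier_dev}, $K_{\nu}(x)\ll_{\nu}x^{-1/2}e^{-x}$ for $x\ge1$, $|\cos 2\pi mu|\le1$ and $|\eta_{z/2}(m)|\le d(m)$, the rapid decay $|E(z,g)^{n}|\ll_{\gamma,N}t(g)^{-N}$ whenever $t(g)\ge1$; (ii) the moderate growth of $E(z,\cdot)$, which with $|E(z,g)^{c}|\le2t(g)\le2\|g\|$ (here $|\xi(z)/\xi(z+1)|=1$ by $\xi(z)=\xi(1-z)$) gives $|E(z,g)^{n}|\ll_{\gamma}\|g\|$ for all $g$; and (iii) the Schwartz decay $|f(h)|\ll_{f,M}\|h\|^{-M}$ for every $M$, with $\int_{\|h\|>R}\|h\|^{-s}\,dh\ll_s R^{2-s}$ on $\SL_2(\bR)$ for $s>2$.

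For the non-constant term one may assume $t_0$ large, the range $\tfrac12\le t_0=O(1)$ following at once from (ii) and $\int_{G^1}\|h\|\,|f(h)|\,dh<\infty$. Split the $h$-integral at $\|h\|=\sqrt{t_0}$. On $\{\|h\|\le\sqrt{t_0}\}$ the bound above gives $t(hk_{-\theta_0}a_{t_0}n_{x_0})\ge\sqrt{t_0}\ge1$, so (i) yields $|E(z,hk_{-\theta_0}a_{t_0}n_{x_0})^{n}|\ll_{\gamma,N}t_0^{-N/2}$ and this range contributes $\ll_{f,\gamma,N}t_0^{-N/2}$. On $\{\|h\|>\sqrt{t_0}\}$ one bounds the Eisenstein factor by (ii), using $\|hk_{-\theta_0}a_{t_0}n_{x_0}\|\ll\|h\|\,t_0$ (valid as $t_0\ge\tfrac12$, $|x_0|\le\tfrac12$), and $f$ by (iii), so this range contributes $\ll_{f,\gamma,M}t_0\int_{\|h\|>\sqrt{t_0}}\|h\|^{1-M}\,dh\ll_{f,\gamma,M}t_0^{(5-M)/2}$. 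Taking $N,M$ large in terms of $A$ gives $\E_r^n\odot f(g_0)=O_{f,A}(t_0^{-A})$.

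For the constant term one is on the complement of a Siegel set, so $t_0\le10$; then $t(hk_{-\theta_0}a_{t_0}n_{x_0})\le10\|h\|$ and $|E(z,hk_{-\theta_0}a_{t_0}n_{x_0})^{c}|\le20\|h\|$ by the displayed formula for $E(z,g)^{c}$, whence
\[
 |\E_r^c\odot f(g_0)|\le20\int_{G^1}\|h\|\,|f(h)|\,dh=O_f(1).
\]

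The one genuinely delicate step is the first estimate: one must see that the decay in \eqref{eisenstein_fourier_dev} is uniform in the $N$- and $K$-coordinates of $g$ (it is, since $E(z,\cdot)$ is left $K$-invariant and $|\cos 2\pi mu|\le1$) and that it is strong enough to survive the $\|h\|^{\pm1}$ distortion of the cuspidal height, and then balance the super-polynomial decay of the Schwartz $f$ against the (at most linear) growth of $E(z,\cdot)$ over the non-compact tail $\|h\|>\sqrt{t_0}$; fixing the splitting radius $\sqrt{t_0}$ and the exponents $N,M$ is the content of the argument. The constant-term bound is routine.
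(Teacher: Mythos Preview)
Your argument is correct and follows essentially the same strategy as the paper's proof: both split the integral at the scale $\sqrt{t_0}$, exploit the exponential cuspidal decay of $\E_r^n$ on the piece near the cusp, and the Schwartz decay of $f$ on the complementary piece; the constant-term bound is routine in both. The only difference is bookkeeping: the paper keeps the integration variable in the Eisenstein argument, writes $h=k_\theta a_t n_u$ in Iwasawa coordinates, and uses the bi-$K$-invariance of $f$ to obtain the explicit form $f(hg_0)=f\bigl(n_{(u-x_0)/t^2}a_{t/t_0}\bigr)$, then splits on the Iwasawa height $t=t(h)$; you instead substitute $h\mapsto hg_0^{-1}$ and control $t(hg_0^{-1})$ via the operator-norm sandwich $t_0/\|h\|\le t(hg_0^{-1})\le t_0\|h\|$, splitting on $\|h\|$. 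Both routes give the same $O_{A}(t_0^{-A})$ and $O(1)$ bounds.
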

This lemma is used to handle reducible forms, for which $\theta_0 = 0$.
\begin{proof}
 As $t \to \infty$ in the cusp, the non-constant part of the Eisenstein series decays exponentially in $t$, which is faster than any polynomial.  In the equation $h g_0$ of the convolution, write
 $h=  k_\theta a_t n_u,$ 
 \[
  hg_0 =k_\theta a_t n_u \left(n_{-x_0} a_{\frac{1}{t_0}}k_{-\theta_0}\right)
 \]
so, using the bi-K invariance of $f$,
$f(hg_0) = f\left(n_{\frac{u-x_0}{t^2}}a_{\frac{t}{t_0}}\right)$.
 The non-constant term claim now follows due to the fact that $f$ is Schwarz class, e.g. split on whether $t \geq \sqrt{t_0}$ or not, and bound with $\E_r^n$ in the case $t$ is large, and with $f$ in the case $t$ is small.

To handle the case of the constant term, in which $t_0$ is bounded, again write $f(h g_0) = f\left(\frac{u-x_0}{t^2}a_{\frac{t}{t_0}}\right)$ and now use that the constant term $\E_r^c$ is bounded for $t = O(1)$ and grows polynomially, $O(t)$.  The claim now follows using the rapid decay of $f$, which is Schwarz class so decays faster than any polynomial in $t$ and $t^{-1}$.
\end{proof}

\section{Cubic rings and binary cubic forms}

A cubic ring $R$ over $\zed$ is a free rank three $\zed$ module with a ring multiplication.  Delone-Fadeev and Gan-Gross-Savin established a discriminant-preserving bijection between cubic rings up to isomorphism and the space $\sym^3(\zed^2)$ of binary cubic forms. In the identification, maximal cubic rings whose associated form is irreducible over $\zed$ correspond with rings of integers in cubic number fields.

Given a form $x (v,w) = av^3 + bv^2w + cvw^2 + dw^3$ in the space $V_{\bR}$ of real binary cubic forms, $g \in  \GL_2(\bR)$ acts by $g\cdot x(v,w) = x((v,w)g)$. There is a bilinear pairing 
\[
 \langle x, y\rangle = x_4y_1 - \frac{1}{3}x_3y_2 + \frac{1}{3}x_2y_3 - x_1y_4.
\]
Let $P(x) = x_2^2x_3^2 +18x_1x_2x_3x_4 -4x_1x_3^3 -4x_2^3x_4 -27x_1^2x_4^2$ be the discriminant.  
The discriminant scales under the action by a factor of $\chi(g)=\det(g)^6$. The space $V_{\bR}$ decomposes into two open orbits $V_+$ and $V_-$ having positive and negative discriminant, and the singular set $S$ where the discriminant is 0.   
We identify $V_+ = G^+ \cdot x_+$, $V_- = G^+ \cdot x_-$ by choosing base points
\begin{equation}
 x_+ = \left(0,\frac{3}{(108)^{\frac{1}{4}}},0,\frac{-1}{(108)^{\frac{1}{4}}}\right), \qquad x_- = \left(0,\frac{1}{\sqrt{2}},0,\frac{1}{\sqrt{2}}\right).
\end{equation}
The point $x_+$ has stabilizer of order 3 generated by the rotation of $\frac{2\pi}{3}$, while $x_-$ has trivial stabilizer.  
We use several times that $d_\lambda n_u a_t k_\theta \cdot x_+$ has first coefficient $a = \frac{3 \cos^2 \theta \sin \theta - \sin^3 \theta}{108^{\frac{1}{4}}} \lambda^3 t^3$ proportional to $t^3$, and similarly  $d_\lambda n_u a_t k_\theta \cdot x_-$ has first coefficient $a$ equal to $\frac{\sin \theta}{\sqrt{2}}\lambda^3 t^3$ while $d_\lambda n_u a_t \cdot x_+$ has $a = 0$ and $b = \frac{3}{108^{\frac{1}{4}}} \lambda^3 t$ while $d_\lambda n_u a_t \cdot x_-$ has $a = 0$ and $b = \frac{1}{\sqrt{2}}\lambda^3 t$.  Since we choose $\lambda \asymp X^{\frac{1}{4}}$ for a discriminant of size $\asymp X$, if the resulting form is integral, it follows that if $a \neq 0$ then $|a| \geq 1$ implies $t \gg X^{-\frac{1}{12}}$ while if $a = 0$ but $b \neq 0$ then $|b| \geq 1$ implies $t \gg X^{-\frac{1}{4}}$.  Thus in either case there is a bound on the points that can occur in the Siegel set $\fS$ when representing integral forms of discriminant of size $X$.

We have the group integrals (\cite{S72}, Proposition 2.4)
\[
 \int_{g \in G^+} f(g \cdot x_+)dg = \frac{1}{4\pi} \int_{V_+} f(x) \frac{dx}{P(x)}
\]
and
\[
 \int_{g \in G^+} f(g \cdot x_-) dg = \frac{1}{12\pi} \int_{V_-} f(x) \frac{dx}{|P(x)|}.
\]
In particular,
\begin{align*}
 \int_{V_+} f(x) dx &= \frac{1}{4\pi} \int_{g \in G^+} f(g \cdot x_+) \chi(g) dg,\\
 \int_{V_-} f(x) dx &= \frac{1}{12\pi} \int_{g \in G^+} f(g \cdot x_-) \chi(g) dg.
\end{align*}
The Fourier transform is defined by
\begin{align*}
 \hat{f}(\xi) = \int_{V_\bR} f(x) e^{-2\pi i \langle x, \xi\rangle}dx.
\end{align*}

In this paper it suffices to restrict attention to test functions which are bi-$K$-invariant and factor through the determinant.  Let $f_G$ be the function on $G^1$, $f_G(g) = \exp\left(-\tr g^t g\right)$, and extend $f_G$ to $G^+$ independent of the determinant.  Let $f_D \in C_c^\infty(\bR^+)$.  Define $f_-$ supported on $V_-$ and $f_+$ supported on $V_+$ by
\[
 f_-(g\cdot x_-) = f_G(g)f_D(\chi(g)), \qquad f_+(g \cdot x_+) = f_G(g)f_D(\chi(g)).
\]
For such functions, the Fourier transform $\hat{f}$ is left-$K$-invariant, since
\begin{align*}
 \hat{f}(k_\theta \cdot \xi) &= \int_{V_{\bR}} f(x) e^{-2\pi i \langle x, k_{\theta}\cdot \xi \rangle} dx\\
 &= \int_{V_{\bR}} f(x) e^{-2\pi i \langle k_{-\theta} x, \xi\rangle} dx = \hat{f}(\xi).
\end{align*}

The following integrals of the Fourier transform of $f_{\pm}$ are used.

Define $\Sigma_2(f, z) = \int_{0}^\infty f(0,0,0,t)t^{z-1} dt.$  This satisfies \begin{equation}\Sigma_2(f^t, z) = t^{-z} \Sigma_2(f,z).\end{equation}
\begin{lemma}\label{Sigma_2_lemma} We have 
 \begin{align*} 
  \Sigma_2\left(\hat{f}_-,z\right) &= 2^{-\frac{z}{2}} \pi^{1-z} \cos\left(\frac{\pi z}{2} \right)\frac{\Gamma(z)\Gamma\left(\frac{1+z}{2} \right)}{\Gamma\left(1 + \frac{z}{2}\right)}\tilde{f}_D\left(1 - \frac{z}{4}\right) K_{\frac{1-3z}{2}}(2),\\
  \Sigma_2\left(\hat{f}_+,z\right)& = 3^{\frac{3z}{4}-1}2^{-\frac{z}{2}} \pi^{1-z} \cos\left(\frac{\pi z}{2} \right)\frac{\Gamma(z)\Gamma\left(\frac{1+z}{2} \right)}{\Gamma\left(1 + \frac{z}{2}\right)}\tilde{f}_D\left(1 - \frac{z}{4}\right) K_{\frac{1-3z}{2}}(2).
 \end{align*}
\end{lemma}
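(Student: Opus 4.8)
The plan is to evaluate $\Sigma_2(\hat{f}_\pm,z)$ directly, exploiting the very special shape of the point $(0,0,0,t)$, which reduces the problem to a product of one-dimensional integrals. First observe that at $\xi=(0,0,0,\tau)$ the bilinear form degenerates to $\langle x,\xi\rangle=-\tau x_1$, so that $\hat{f}_\pm(0,0,0,\tau)=\int_{V_\pm}f_\pm(x)\,e(\tau x_1)\,dx$ and
\[
 \Sigma_2(\hat{f}_\pm,z)=\int_0^\infty\tau^{z-1}\int_{V_\pm}f_\pm(x)\,e(\tau x_1)\,dx\,d\tau .
\]
Since $f_\pm$ is Schwartz class, so is $\hat{f}_\pm$, and the outer integral converges absolutely for $\RE z>0$; I would justify interchanging the two integrations by inserting a convergence factor $e^{-\varepsilon\tau}$ and letting $\varepsilon\to 0^+$ (equivalently, by working in the strip $0<\RE z<1$, where the $\tau$-integral is a genuine improper integral, and then continuing analytically). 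The key structural point is that $f_\pm$ is an \emph{even} function of $x$: writing $x=g\cdot x_\pm$ and using $-I\in G^+$ together with the bi-$K$-invariance of $f_G$ one gets $f_\pm(-x)=f_\pm(x)$, so the odd part of $e(\tau x_1)$ integrates to zero, and the standard Mellin transform $\int_0^\infty\cos(x)x^{s-1}\,dx=\Gamma(s)\cos(\tfrac{\pi}{2}s)$ (valid for $0<\RE s<1$) collapses the $\tau$-integral and gives
\[
 \Sigma_2(\hat{f}_\pm,z)=(2\pi)^{-z}\,\Gamma(z)\cos\!\left(\tfrac{\pi z}{2}\right)\int_{V_\pm}f_\pm(x)\,|x_1|^{-z}\,dx .
\]

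Next I pull the remaining integral back to the group. Shintani's orbit-integral identities express $\int_{V_\pm}F(x)\,dx$ as a constant multiple of $\int_{G^+}F(g\cdot x_\pm)\chi(g)\,dg$, and I parametrize $g=d_\lambda n_u a_t k_\theta$ with the corresponding Haar measure (in this ordering of the Iwasawa factors, $\tfrac{d\lambda}{\lambda}\,t\,dt\,du\,d\theta$). Substituting $\chi(g)=\lambda^{12}$, the Gaussian $f_G(g)=\exp\!\big(-(1+u^2)t^2-t^{-2}\big)$ (computed from $f_G(g)=\exp(-\tr g^tg)$ and bi-$K$-invariance), and the first-coordinate formulas already recorded in the text, namely $(g\cdot x_-)_1=\tfrac{\sin 2\pi\theta}{\sqrt2}\,\lambda^3 t^3$ and $(g\cdot x_+)_1=\tfrac{3\cos^2(2\pi\theta)\sin(2\pi\theta)-\sin^3(2\pi\theta)}{108^{1/4}}\lambda^3 t^3=\tfrac{\sin 6\pi\theta}{108^{1/4}}\,\lambda^3 t^3$, the integral over $G^+$ factors into a $\lambda$-, a $\theta$-, a $u$- and a $t$-integral, each elementary. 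The $\lambda$-integral equals $\tfrac{1}{12}\,\tilde{f}_D\!\big(1-\tfrac{z}{4}\big)$ after the substitution $v=\lambda^{12}$; the $\theta$-integral is $\int_0^1|\sin 2\pi m\theta|^{-z}\,d\theta$ with $m=1$ for $x_-$ and $m=3$ for $x_+$, and for $m=3$ it reduces by $1$-periodicity to the $m=1$ case, a Beta integral evaluating to a ratio of $\Gamma$-values; the $u$-integral is Gaussian, contributing $\sqrt\pi/t$; and the $t$-integral then becomes $\int_0^\infty t^{-3z}e^{-t^2-t^{-2}}\,dt=K_{\frac{1-3z}{2}}(2)$ via the displayed formula $K_{s/2}(2)=\int_0^\infty e^{-t^2-t^{-2}}t^{s-1}\,dt$.

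Collecting these factors and simplifying the resulting combination of $\Gamma$-functions by the reflection and duplication formulas yields the asserted closed form for $\Sigma_2(\hat{f}_-,z)$. The computation for $\Sigma_2(\hat{f}_+,z)$ is word for word the same apart from two constants: the coefficient of $\lambda^3 t^3$ in $(g\cdot x_+)_1$ is $108^{-1/4}=2^{-1/2}3^{-3/4}$ in place of $2^{-1/2}$, so raising $|\cdot|^{-z}$ contributes an extra factor $3^{3z/4}$; and Shintani's orbit normalization for $x_+$ carries a factor $\tfrac13$ relative to $x_-$, reflecting that $x_+$ has an order-$3$ stabilizer (the rotation by $2\pi/3$) while $x_-$ has trivial stabilizer. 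Together these produce the overall ratio $\Sigma_2(\hat{f}_+,z)=3^{3z/4-1}\,\Sigma_2(\hat{f}_-,z)$, which is exactly the relation between the two formulas in the statement.

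The one genuinely delicate step is the interchange of integrations in the first paragraph, together with the branch of $\tau\mapsto\tau^{z-1}e(\tau x_1)$: the $\tau$-integral converges only conditionally, and it is precisely the evenness of $f_\pm$ --- not any sign property of $x_1$ --- that makes the sign of $x_1$ cancel and turns the oscillatory integral into the clean $\Gamma(z)\cos(\tfrac{\pi z}{2})$ factor. Everything else is bookkeeping of constants; all the identities above are established first in the strip $0<\RE z<1$ and then extended by analytic continuation.
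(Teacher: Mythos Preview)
Your proof is correct and follows essentially the same approach as the paper: reduce $\Sigma_2(\hat f_\pm,z)$ via evenness and the cosine Mellin transform to $(2\pi)^{-z}\Gamma(z)\cos(\tfrac{\pi z}{2})\int_{V_\pm}f_\pm(x)|x_1|^{-z}dx$, pull back to $G^+$ using Shintani's orbit identities, and factor the resulting integral into $\lambda$-, $\theta$-, $u$-, and $t$-integrals. The only cosmetic difference is that you parametrize in the $NAK$ order (Haar measure $t\,dt\,du\,d\theta$, Gaussian $e^{-(1+u^2)t^2-t^{-2}}$) while the paper uses $ANK$ (Haar measure $\tfrac{dt}{t}\,du\,d\theta$, Gaussian $e^{-t^2-(1+u^2)/t^2}$); the extra factor of $t$ from your Haar measure is exactly cancelled by the $\sqrt\pi/t$ from your $u$-integral versus the paper's $t\sqrt\pi$, so both routes land on $\int_0^\infty t^{-3z}e^{-t^2-t^{-2}}dt=K_{(1-3z)/2}(2)$.
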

\begin{proof}
 Note that the left-$K$-invariance causes $f_{\pm}$ to be even. Calculate
 \begin{align*}
  \Sigma_2\left(\hat{f}_{\pm}, z\right) &= \int_0^\infty \hat{f}_{\pm}(0,0,0,t)t^{z}\frac{dt}{t}\\
  &= \int_0^\infty t^z \frac{dt}{t} \int_{x \in V_{\bR}} f_{\pm}(x) \cos(2\pi  x_1 t)\\
  &= (2\pi)^{-z}\cos\left(\frac{\pi z}{2}\right) \Gamma(z)\int_{x \in V_{\bR}} f_{\pm}(x) |x_1|^{-z}.
  \end{align*}
  In the case of $f_-$, write this as 
  \begin{align*}
  &(2\pi)^{-z}\cos\left(\frac{\pi z}{2}\right) \Gamma(z) \Bigl(12\pi \int_{g \in G^+}f_G(g) |(g\cdot x_-)_1|^{-z} f_D(\chi(g))\chi(g) dg \Bigr) .
 \end{align*}
In the $ANK$ decomposition, Haar measure is $\frac{dt}{t}du d\theta$.  In the negative discriminant case, $x_- = \frac{1}{\sqrt{2}}(0, 1, 0, 1)$ and the first coefficient of $a_t n_u k_\theta \cdot x_-$ is $\frac{t^3 \sin 2\pi \theta}{\sqrt{2}}$.  
Using that $f$ is right $K$ invariant, integrate in $\theta$ using 
\[
 \int_0^1 |\sin(2\pi \theta)|^{-z} d\theta = \frac{\Gamma\left( \frac{1+z}{2} \right)}{\sqrt{\pi}\Gamma\left(1 + \frac{z}{2} \right)}.
\]
Thus the negative discriminant case is given by
\begin{align*}
 &2^{\frac{z}{2}}(2\pi)^{-z} \cos\left(\frac{\pi z}{2} \right) \Gamma(z) 12 \pi \\&\times \int_0^\infty \frac{d\lambda}{\lambda} f_D(\lambda^{12}) \lambda^{12-3z} \int_0^1 |\sin(2\pi \theta)|^{-z} d\theta \int_{-\infty}^\infty du \int_0^\infty \frac{dt}{t} t^{-3z} \exp\left(-t^2 -\frac{1}{t^2} -\frac{u^2}{t^2} \right)\\
 &= 2^{-\frac{z}{2}} \pi^{1-z} \cos\left(\frac{\pi z}{2} \right)\frac{\Gamma(z)\Gamma\left(\frac{1+z}{2} \right)}{\Gamma\left(1 + \frac{z}{2}\right)}\tilde{f}_D\left(1 - \frac{z}{4}\right) K_{\frac{1-3z}{2}}(2).
\end{align*}

In the case of $f_+$, write the integral as
  \begin{align*}
  &(2\pi)^{-z}\cos\left(\frac{\pi z}{2}\right) \Gamma(z) \Bigl(4\pi \int_{g \in G^+}f(g) |(g\cdot x_+)_1|^{-z} \chi(g) dg \Bigr) .
 \end{align*}
In the positive discriminant case, use that the stabilizer of $x_+$ is the rotation group generated by rotation by $\frac{2\pi}{3}$.  The first coefficient of $a_t n_u k_\theta \cdot x_+$ is $\frac{t^3 \sin 6\pi \theta }{(108)^{\frac{1}{4}}}$.  Thus the positive discriminant case is given by $3^{\frac{3z}{4}-1}$ times the integral in the negative discriminant case.  This obtains the lemma.

\end{proof}

Define 
\begin{equation}
 \Sigma_3(f, s) = \int_0^\infty dt\int_{-\infty}^\infty du f(0,0, t,u)t^{s-1}.
\end{equation}
This satisfies \begin{equation}\Sigma_3\left(f^t, s\right) = t^{-s-1}\Sigma_3(f,s).\end{equation}

\begin{lemma}\label{Sigma_3_lemma}
 Let, for $x \in \bR^3$, $f_{\pm,0}(x) = f_{\pm}(0,x)$. We have
 \[
  \Sigma_3\left(\hat{f}_{\pm}, s\right) = 3^s\pi^{-s+\frac{1}{2}} \frac{\Gamma\left(\frac{s}{2}\right)}{2\Gamma\left(\frac{1-s}{2}\right)} \int_{x = (x_2, x_3, x_4)} f_{\pm,0}(x) |x_2|^{-s}.
 \]

\end{lemma}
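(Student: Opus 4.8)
The plan is to mimic the computation carried out for $\Sigma_2$ in the proof of Lemma~\ref{Sigma_2_lemma}. First I would unfold the Fourier transform against the bilinear pairing: for $\xi = (0,0,t,u)$ one has $\langle x,\xi\rangle = \tfrac13 x_2 t - x_1 u$, so
\[
 \hat{f}_{\pm}(0,0,t,u) = \int_{V_\bR} f_{\pm}(x)\, e^{-2\pi i(x_2 t/3 - x_1 u)}\, dx .
\]
Since $f_D \in C_c^\infty(\bR^+)$ is supported away from $0$, the discriminant of $g\cdot x_\pm$ is bounded away from $0$ on the support of $f_\pm$; hence $f_{\pm}$ vanishes in a neighbourhood of the singular locus $\{P=0\}$ and, together with the Gaussian decay coming from $f_G$, this gives $f_{\pm}\in\sS(V_\bR)$ and $\hat{f}_{\pm}\in\sS(V_\bR)$. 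This Schwarz-class membership is what legitimizes the interchanges below once one restricts to the strip $0<\RE(s)<1$ and continues meromorphically in $s$ afterward.

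Next I would integrate out the variable $u$. Splitting the argument of $f_\pm$ as $x_1$ together with $(x_2,x_3,x_4)$ and applying one-variable Fourier inversion in $x_1$,
\[
 \int_{-\infty}^\infty \hat{f}_{\pm}(0,0,t,u)\,du = \int_{\bR^3} f_{\pm}(0,x_2,x_3,x_4)\, e^{-2\pi i x_2 t/3}\, dx = \int_{\bR^3} f_{\pm,0}(x)\, e^{-2\pi i x_2 t/3}\, dx .
\]
Because $f_{\pm}$, hence $f_{\pm,0}$, is even (as already noted in the proof of Lemma~\ref{Sigma_2_lemma}, using $-I = k_{1/2}\in K$), the substitution $x\mapsto -x$ lets me replace the phase $e^{-2\pi i x_2 t/3}$ by $\cos(2\pi x_2 t/3)$. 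Then I would interchange the $t$- and $x$-integrals and apply the cosine Mellin transform $\int_0^\infty \cos(at)\,t^{s-1}\,dt = \Gamma(s)\cos(\pi s/2)\,|a|^{-s}$, valid for $0<\RE(s)<1$, with $a = 2\pi x_2/3$, to obtain
\[
 \Sigma_3(\hat{f}_{\pm},s) = \Gamma(s)\cos(\pi s/2)\,(2\pi/3)^{-s}\int_{x=(x_2,x_3,x_4)} f_{\pm,0}(x)\,|x_2|^{-s} .
\]

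It then remains only to repackage the archimedean constant. By the Legendre duplication formula $\Gamma(s) = \pi^{-1/2}2^{s-1}\Gamma(s/2)\Gamma((s+1)/2)$ together with the reflection formula in the shape $\Gamma((s+1)/2)\cos(\pi s/2) = \pi/\Gamma((1-s)/2)$, one finds $\Gamma(s)\,2^{-s}\cos(\pi s/2) = \tfrac12\pi^{1/2}\,\Gamma(s/2)/\Gamma((1-s)/2)$, and absorbing the leftover $3^s\pi^{-s}$ produces exactly $3^s\pi^{-s+1/2}\,\Gamma(s/2)/(2\Gamma((1-s)/2))$, as claimed. Note that, in contrast with Lemma~\ref{Sigma_2_lemma}, the two sign cases need no separate treatment here, since the right-hand side retains the integral of $f_{\pm,0}$ rather than collapsing onto the base points $x_\pm$. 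The only genuinely delicate point is the one flagged above: in the strip $0<\RE(s)<1$ the inner $t$-integral $\int_0^\infty \cos(at)\,t^{s-1}\,dt$ converges only conditionally, so the interchanges with the $x$- and $u$-integrations must be justified by a standard regularization — e.g. inserting a factor $e^{-\varepsilon t}$, letting $\varepsilon\to 0^+$, and using the rapid decay of $\hat{f}_{\pm}$ and of the partial Fourier transform of $f_{\pm,0}$ — after which both sides extend to meromorphic functions of $s$ that agree on the strip. This is the same level of care already implicit in the proof of Lemma~\ref{Sigma_2_lemma}, so I expect no obstacle beyond this bookkeeping.
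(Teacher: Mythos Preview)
Your proof is correct and follows essentially the same route as the paper: integrate out $u$ via Fourier inversion in $x_1$, use evenness to pass to cosine, take the Mellin transform in $t$, and then rewrite $(2\pi/3)^{-s}\Gamma(s)\cos(\pi s/2)$ via the duplication and reflection formulas. The paper presents these steps more tersely (collapsing the $u$-integration and the switch to $f_{\pm,0}$ into a single displayed line), while you spell out the Schwartz-class justification and the regularization of the conditionally convergent cosine Mellin integral; the substance is identical.
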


\begin{proof}
 Using the bilinear pairing $\langle x, y\rangle= x_4y_1 - \frac{1}{3}x_3y_2 +\frac{1}{3}x_2y_3 - x_1y_4$, and the fact that $f$ is even, calculate
 \begin{align*}
  \Sigma_3\left(\hat{f}_{\pm},s\right) &= \int_{x = (x_2, x_3, x_4)} \int_0^\infty \frac{dt}{t} f_{\pm}(0, x_2, x_3, x_4)t^s \cos\left(2\pi  \frac{tx_2}{3}\right)dx\\
  &= 3^{s}(2\pi)^{-s}\cos\left(\frac{\pi s}{2}\right)\Gamma(s) \int_{x = (x_2, x_3, x_4)} f_{\pm,0}(x) |x_2|^{-s} dx.
 \end{align*}
Combined with the formula $\cos \left(\frac{\pi s}{2}\right) = \frac{\pi}{\Gamma\left(\frac{s+1}{2}\right)\Gamma\left(\frac{1-s}{2}\right)}$ and the formula 
\[
 \Gamma(s) = \frac{\Gamma\left(\frac{s}{2}\right)\Gamma\left(\frac{s+1}{2}\right)}{2^{1-s}\sqrt{\pi}}
\]
this proves the lemma.
\end{proof}

\begin{lemma} \label{Phi_0_lemma}
 We have
 \begin{align*}
  \int_{x= (x_2,x_3, x_4)}f_{-,0}(x)|x_2|^s dx &= 2^{\frac{-1-s}{2}} \tilde{f}_D\left(\frac{3 + s}{4} \right)\sqrt{\pi} K_{\frac{s-2}{2}}(2),\\
  \int_{x = (x_2, x_3, x_4)} f_{+,0}(x)|x_2|^s dx &=3^{\frac{s-1}{4}}2^{\frac{-1-s}{2}} \tilde{f}_D\left(\frac{3 + s}{4} \right)\sqrt{\pi} K_{\frac{s-2}{2}}(2).
 \end{align*}

\end{lemma}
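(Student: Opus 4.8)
The plan is to realize the hyperplane slice $\{x_1=0\}$ of $V_{\bR}$, on which $f_{\pm,0}$ is supported, as an orbit of the subgroup $B^+$, and thereby to reduce the resulting three-dimensional integral to the Mellin transform $\tilde{f}_D$ times a $K$-Bessel integral. Writing a generic slice point as $(0,b,c,d)$, with associated cubic form $w\,q(v,w)$ where $q(v,w)=bv^2+cvw+dw^2$, one has $P(0,b,c,d)=b^2(c^2-4bd)$, so $f_{-,0}$ (resp.\ $f_{+,0}$) is supported on $\{c^2-4bd<0\}$ (resp.\ $\{c^2-4bd>0\}$), with $b\neq 0$ in either case. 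Since $k_{1/2}=-I$ lies in $K$ and acts on binary cubic forms by $x\mapsto -x$ (odd degree), the left-$K$-invariance of $f_\pm$ forces $f_{\pm,0}$ to be even, so it suffices to integrate over the half-slice $\{b>0\}$ and double.

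On this half-slice I use the parameterization $(0,b,c,d)=d_\lambda n_u a_t\cdot x_\pm$ with $(\lambda,u,t)\in\bR_{>0}\times\bR\times\bR_{>0}$, the coordinate formulas having been recorded above (e.g.\ $b=\frac{1}{\sqrt{2}}\lambda^3 t$ for $x_-$ and $b=\frac{3}{108^{1/4}}\lambda^3 t$ for $x_+$). These determine $\lambda^3 t$ and $u=c/(2b)$ at once; then $t$ is recovered from the equation expressing $d$, which admits a unique positive solution in $t^{-4}$ precisely under the discriminant condition above, and finally $\lambda$, so $(\lambda,u,t)\mapsto(b,c,d)$ is a diffeomorphism onto the half-slice. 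Because $f_G$ is extended to $G^+$ independently of the determinant and $\det(d_\lambda n_u a_t)=\lambda^2$, one has $f_G(d_\lambda n_u a_t)=f_G(n_u a_t)=\exp\!\big(-t^2(1+u^2)-t^{-2}\big)$ while $\chi(d_\lambda n_u a_t)=\lambda^{12}$, so $f_{\pm,0}(0,b,c,d)=\exp\!\big(-t^2(1+u^2)-t^{-2}\big)f_D(\lambda^{12})$ and $|b|^s$ is a constant times $\lambda^{3s}t^s$. The one step requiring care is the Jacobian $\partial(b,c,d)/\partial(\lambda,u,t)$: on expanding the $3\times 3$ determinant and cancelling the $t^2u^2$ cross terms it collapses to a constant times $\lambda^8 t^{-2}$; the ratio of these constants for $x_+$ versus $x_-$, together with the corresponding ratio in $|b|^s$, is what produces the factor $3^{(s-1)/4}$ distinguishing the two identities.

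With these substitutions the triple integral factors as a product. The $\lambda$-integral is $\int_0^\infty f_D(\lambda^{12})\lambda^{3s+8}\,d\lambda$, which after the substitution $\mu=\lambda^{12}$ becomes $\frac{1}{12}\tilde{f}_D\!\left(\frac{3+s}{4}\right)$. The $(u,t)$-integral is $\int_{\bR}\int_0^\infty\exp\!\big(-t^2(1+u^2)-t^{-2}\big)t^{s-2}\,dt\,du$; carrying out the Gaussian integral in $u$ contributes $\sqrt{\pi}\,t^{-1}$, and the remaining $\int_0^\infty\exp(-t^2-t^{-2})\,t^{s-3}\,dt$ equals $K_{\frac{s-2}{2}}(2)$ by the formula $K_{\sigma/2}(2)=\int_0^\infty e^{-t^2-1/t^2}t^{\sigma-1}\,dt$ recalled in the notation section. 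Assembling the numerical constants then gives both identities. The main obstacle is thus purely computational --- the clean evaluation of the $3\times 3$ Jacobian and the bookkeeping of the powers of $2$ and $3$ coming from the base points $x_\pm$ --- with no conceptual difficulty beyond the reduction to the $B^+$-orbit.
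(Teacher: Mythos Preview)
Your approach is correct and essentially the same as the paper's: both parametrize the slice $\{x_1=0\}$ as a $B^+$-orbit of $x_\pm$, compute the $3\times3$ Jacobian, and evaluate the resulting factored integral as $\tilde{f}_D$ times a Gaussian-in-$u$ times the Bessel integral $K_{(s-2)/2}(2)$. The only organizational difference is that the paper first passes to the binary \emph{quadratic} action $(d_\lambda a_t n_u)_2$ on the dropped-coordinate form (with scaling $\lambda^2$), computes the volume form there, and then makes the substitution $\lambda\mapsto\sqrt{\lambda^3/t}$ to recover the cubic parametrization; you go directly via the cubic action $d_\lambda n_u a_t$, which is slightly more economical and yields the same Jacobian $\propto\lambda^8 t^{-2}$.
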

\begin{proof}
 Let $g_2 \cdot x$ denote the action of $G^+$ on binary quadratic forms.  Identify $x_+, x_-$ with points in the space of binary quadratic forms by dropping the first coefficient.  The action of $(d_\lambda a_t n_u)_2$ on $x_{\pm}$ is given by
 \begin{align*}
  (d_\lambda a_t n_u)_2 \cdot x_- &= \left( \frac{\lambda^2 t^2}{\sqrt{2}}, \frac{2\lambda^2 u}{\sqrt{2}}, \frac{\lambda^2 (1 + u^2)}{\sqrt{2}t^2}\right),\\
  (d_\lambda a_t n_u)_2 \cdot x_+ &= \left(\frac{3\lambda^2t^2}{(108)^{\frac{1}{4}}}, \frac{6 \lambda^2 u}{(108)^{\frac{1}{4}}}, \frac{\lambda^2(-1 + 3u^2)}{(108)^{\frac{1}{4}}t^2} \right).
 \end{align*}
 In the case of $V_-$, the volume form is equal to \[|dx_1 \wedge dx_2 \wedge dx_3| = \frac{\lambda^5 }{2^{\frac{5}{2}}t} |dt \wedge du \wedge d\lambda|,\] while on $V_+$ the volume form is given by
 \[
  |dx_1 \wedge dx_2 \wedge dx_3| = \frac{1}{3^{\frac{1}{4}}} \frac{1}{2^{\frac{5}{2}}} \frac{\lambda^5}{t} |dt \wedge du \wedge d\lambda|.
 \]
Thus the integral over $V_-$ is given by
\begin{align*}
& 2^{\frac{5-s}{2}} \int_{0}^\infty \frac{d\lambda}{\lambda} \lambda^{6+2s} \int_0^\infty \frac{dt}{t} t^{2s} \int_{-\infty}^\infty du  f_{-,0}((d_\lambda a_t n_u)_2 \cdot x_-)\\
&= 3\cdot 2^{\frac{3-s}{2}} \int_0^\infty \frac{d\lambda}{\lambda} \lambda^{9 + 3s} \int_0^\infty \frac{dt}{t} t^{-3+s} \int_{-\infty}^\infty du f_{-,0} \left( \left( d_{\sqrt{\frac{\lambda^3}{t}}} a_t n_u\right)_2 \cdot x_-\right)\\
&=3\cdot 2^{\frac{3-s}{2}} \int_0^\infty \frac{d\lambda}{\lambda} \lambda^{9 + 3s} \int_0^\infty \frac{dt}{t} t^{-3+s} \int_{-\infty}^\infty du f_- \left( \left( d_{\lambda} a_t n_u\right)_3 \cdot x_-\right)\\
&= 3 \cdot 2^{\frac{3-s}{2}} \int_0^\infty \frac{d\lambda}{\lambda} \lambda^{9 + 3s} f_D(\lambda^{12}) \int_0^\infty \frac{dt}{t} t^{-3+s} \int_{-\infty}^\infty du \exp\left(-t^2 - \frac{1}{t^2} - \frac{u^2}{t^2}\right)\\
&= 2^{\frac{-1-s}{2}} \tilde{f}_D\left(\frac{3 + s}{4} \right)\sqrt{\pi} K_{\frac{s-2}{2}}(2).
\end{align*}
The integral over $V_+$ differs from the above by a factor of $3^{\frac{s-1}{4}}$.
\end{proof}

Write $L$ for space of integral binary cubic forms and $\hat{L}$ for the dual forms, which have middle coefficients divisible by 3. For each $m  \neq 0$ let $h(m)$ be the class number of forms of discriminant $m$ and $\hat{h}(m)$ the class number of dual forms.  
Shintani obtained the following lemma regarding singular integral forms.
\begin{lemma}\label{fibration_lemma} The singular forms  $\hat{L}_0$ are the disjoint union
\begin{align}
\hat{L}_0 &= \{0\}\sqcup\bigsqcup_{m=1}^\infty \bigsqcup_{\gamma \in
	\Gamma/\Gamma \cap N} \{\gamma \cdot (0,0,0,m)\}\sqcup \bigsqcup_{m=1}^\infty
\bigsqcup_{n=0}^{3m-1}\bigsqcup_{\gamma \in \Gamma}\{\gamma \cdot (0,0, 3m,n)\}.
\end{align}
Let
\begin{align}L_0(I) &= \bigsqcup_{m=1}^\infty \bigsqcup_{\Gamma/\Gamma\cap
	N}\{\gamma
\cdot (0,0,0,m)\},\\ \notag
\hat{L}_0(II) &= \bigsqcup_{m=1}^\infty
\bigsqcup_{n=0}^{3m-1}\bigsqcup_{\gamma \in \Gamma}\{\gamma \cdot (0,0,
3m,n)\}.\end{align}
\end{lemma}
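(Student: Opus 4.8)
This is Shintani's classification lemma (see \cite{S72}); I would reconstruct it by classifying the nonzero singular integral forms according to the multiplicity pattern of their roots over $\overline{\bQ}$ and then intersecting with the dual lattice $\hat{L}$. The starting point is that a nonzero binary cubic has $\Disc = 0$ exactly when it has a repeated linear factor over $\overline{\bQ}$, and that for an integral (indeed rational) form such a factor is automatically rational --- via the Euclidean algorithm over $\bQ$ applied to $f$ and one of its partial derivatives --- so it may be taken primitive integral. Hence a nonzero singular integral form has either a triple rational root or a double-but-not-triple rational root. Since the root-multiplicity profile is an invariant of the $\GL_2(\bR)$ action, these two classes, together with $\{0\}$, are automatically pairwise disjoint and each is $\Gamma$-stable; it then suffices to produce $\Gamma$-orbit representatives and compute stabilizers in each class.

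For the triple-root class I would write $f = c\,\ell^3$ with $\ell$ primitive integral and $c \neq 0$, observe that $\ell^3$ already has its two middle coefficients divisible by $3$ so that $c\ell^3 \in \hat{L}$ for every such $c$ and $\ell$, then apply $\gamma \in \Gamma$ carrying $\ell$ to $w$ (possible since $\ell$ is primitive) to reach $(0,0,0,c)$, and use $-I$ to normalize to $c = m \geq 1$. A one-line expansion of $\gamma \cdot (mw^3)$ shows it equals $mw^3$ exactly when $\gamma = n_c$, so $\Stab_\Gamma((0,0,0,m)) = \Gamma \cap N$; since the content of the coefficient vector equals $m$ and is a $\Gamma$-invariant, distinct values of $m$ give distinct orbits. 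This produces the first family, with orbit in bijection with $\Gamma/(\Gamma \cap N)$.

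For the double-but-not-triple class I would move the (rational) double root to $[1:0]$ by a suitable $\gamma \in \Gamma$, reaching $(0,0,x_3,x_4)$ with $x_3 \neq 0$ (else the root is triple); because $\Gamma$ preserves $\hat{L}$, the dual condition forces $3 \mid x_3$, say $x_3 = 3m$. Conjugating by $n_c$ sends $(0,0,3m,x_4) \mapsto (0,0,3m,x_4 + 3mc)$ and $-I$ sends it to $(0,0,-3m,-x_4)$, so I normalize to $m \geq 1$ and $0 \leq x_4 = n < 3m$. Since $[1:0]$ is the unique double root of $(0,0,3m,n)$, any stabilizing $\gamma$ fixes $[1:0]$, hence lies in $\pm(\Gamma \cap N)$, and the shift and sign-flip computations just recorded show the stabilizer is trivial. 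To separate distinct pairs $(m,n)$ I would use that the Hessian covariant of $(0,0,3m,n)$ is a scalar multiple of $m^2 w^2$, whose content recovers $m$, after which the residue of $n$ modulo $3m$ is itself a $\Gamma$-invariant. This produces the second family, with orbit in bijection with $\Gamma$.

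The main obstacle is not any one computation but the bookkeeping that makes the union disjoint: checking the two families do not overlap, pinning down the exact stabilizers, and --- the genuinely content-specific subtlety --- tracking the dual condition so that it is the \emph{third} coefficient that is forced to be a multiple of $3$, which is what makes the index set $0 \leq n < 3m$ rather than $0 \leq n < m$. Assembling the three cases and noting that every singular dual form falls into exactly one of them then yields the stated disjoint union.
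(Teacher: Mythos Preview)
Your reconstruction is correct and is essentially the standard argument; the paper itself does not give a proof but attributes the lemma to Shintani \cite{S72}, so there is no in-paper proof to compare against. One minor remark: your final step separating distinct pairs $(m,n)$ via the Hessian covariant and ``the residue of $n$ modulo $3m$'' is fine, but it is more direct to note that any $\gamma$ carrying $(0,0,3m,n)$ to $(0,0,3m',n')$ must fix the unique double root $[1:0]$, hence lies in $\pm(\Gamma\cap N)$, and then the shift and sign computations you already recorded immediately force $m=m'$ and $n=n'$ in the normalized range---no auxiliary covariant is needed.
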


The forms of positive discriminant break into two classes, the first of which
have stability group in $\Gamma$ which is trivial, and the second having
stability group of order 3.

For $m \neq 0$ let $\{g_{i,m}\}_{1 \leq i \leq h(m)} \subset G^+,$ (resp.\ $\{\hat{g}_{i,m}\}_{1 \leq i \leq \hat{h}(m)}$) be such that 
 \begin{equation}\{x_{i,m} = g_{i,m}\cdot x_{\sgn \;m}\}_{1 \leq i \leq h(m)}\end{equation} (resp.\ $\{\hat{x}_{i,m} = \hat{g}_{i,m}\cdot x_{\sgn \;m}\}_{1 \leq i \leq \hat{h}(m)}$)  are
representatives for the classes of binary cubic forms of discriminant $m$ (resp.\ classes of dual forms). The points $g_{i,m}$ naturally identify the cubic ring associated with $x_{i,m}$ with its lattice shape  in $\Gamma \backslash G^1/K$.    Set $\Gamma(i,m)< \Gamma$ the
stability group of $x_{i,m}$, similarly $\hat{\Gamma}(i,m)$. We observe the convention that the matrices $g_{i,m}$ lie in a fundamental domain for $\Gamma \backslash G^1$ contained in the Siegel set $\fS = \{n_u a_t k_\theta: \theta \in \bR, t < 2, |u| \leq \frac{1}{2}\}$.  Thus $(g_{i,m}^{-1})^t$ lies in the standard fundamental domain for e.g. the upper half plane modulo $\Gamma$, contained in the Siegel set $\fS' = \{\nu_x a_t k_\theta: \theta \in \bR, t> \frac{1}{2}, |x| \leq \frac{1}{2}\}$.  In practice the automorphic forms are evaluated on $(g_{i,m}^{-1})^t$ and are applied on the standard fundamental domain.

\subsection{Reducible forms}

Due to the growth of the Eisenstein series in the cusp, and the fact that a significant contribution of forms reducible over $\zed$ occur in the cusp, in this work it is necessary to give special treatment to the reducible forms.  Evidently every non-degenerate reducible form $x$ is equivalent under $\Gamma$ to a form of type $\sR = \{bv^2w + cvw^2 + dw^3: 0 \leq c < 2b\}$.  If $c^2 - 4bd$ is not a square, then $x$ has a unique representative in $\sR$, while if $c^2 - 4bd$ is a square then there are one or three forms equivalent to $x$ in $\sR$ according as the stabilizer subgroup of $x$ in $\Gamma$ has size three or one, see the discussion in \cite{S75}, pp.\ 45-46.

Following Shintani, \cite{S75}, our treatment of reducible forms identifies $\sR$ with the prehomogeneous vector space $\sym^2(\bR^2)$ of binary quadratic forms acted on by the group of lower triangular matrices \[B^+ = \left\{ \begin{pmatrix} g_{11}& \\g_{21} & g_{22} \end{pmatrix}: g_{11}, g_{22} > 0\right\}.\]
Given a binary quadratic form $x = av^2 + bvw + cw^2$ associated to symmetric matrix $Q = \begin{pmatrix} a & \frac{b}{2}\\ \frac{b}{2} & c\end{pmatrix}$ the action of $g \in B^+$ is written $\rho(g) \cdot x$ or $g \cdot x$ for short, and maps
\[
 Q \mapsto gQg^t.
\]
There are two invariants, the discriminant $D = b^2 - 4ac$ and the first coefficient $x_1=a$.  Let $\chi(g) = (\det g)^2$ and $\chi_1(g) = g_{11}^2$.  We have \[\Disc( g \cdot x) = \chi(g) \Disc(x), \qquad (g \cdot x)_1 = \chi_1(g) x_1.\] The contragredient representation of $\rho$ is $\rho^*(g) = \frac{1}{\det(g)} \rho(g)$.
When $g \in B^+$ is represented in coordinates $g = \begin{pmatrix} t_1 & \\ u & t_2 \end{pmatrix}$, Shintani uses the Haar measure  $dg = t_1^{-2} t_2^{-1} dt_1dt_2 du$ which satisfies 
\begin{equation}
\int_{B^+} f\left(\begin{pmatrix} t_1 & \\ u & t_2 \end{pmatrix} \right) \frac{dt_1}{t_1^2}\frac{dt_2}{t_2}du = 2 \int_0^\infty \frac{d\lambda}{\lambda}\int_0^\infty \frac{dt}{t^3} \int_{-\infty}^\infty du f(d_\lambda a_t n_u).
\end{equation}
We use the latter normalization, so that the orbital zeta functions in our work differ by a factor of $\frac{1}{2}$ from Shintani's.
There is a bilinear pairing on $\sym^2(\bR^2)$ given by $[x,y] = x_1y_3 - \frac{1}{2} x_2y_2 + x_3y_1$. Let $L_r = \sym^2(\zed^2)$ with dual forms $\hat{L}_r = \sym^2(\zed^2)^*$.  Thus dual integral forms have even middle coefficient.

The singular points of the representation $(V, B^+, \rho)$ are the union $S \cup S_1$ where $S = \{\Disc = 0\}$, $S_1 = \{x_1 = 0\}$. Let $L_{0,r}$ be the singular integral forms, and $\hat{L}_{0,r}$ the singular dual integral forms.
\begin{lemma}\label{reducible_singular_fibration_lemma}
 The set $\hat{L}_{0,r}$ is the disjoint union
 \begin{align}
  &\{(0,0,0)\} \sqcup \{(0,0,\ell): \ell \in \zed \setminus \{0\}\} \sqcup B_\zed^+ \cdot \{(0, 2m, n): m \neq 0, 0 \leq n <|2m|\} \\ \notag &\sqcup B_\zed^+ \{\ell(b^2, 2bd, d^2): (b,d) = 1, \ell \neq 0, 0 \leq d < b\}.\end{align}
  Let
  \begin{align*}
 L_{0,r}(I)&= \{(0,0,\ell): \ell \in \zed \setminus \{0\}\}\\
 \hat{L}_{0,r}(II) &= \bigsqcup_{m \neq 0} \bigsqcup_{0 \leq n < |2m|} \bigsqcup_{\gamma \in B_\zed^+}  \{\gamma \cdot (0, 2m, n)\}\\
 \hat{L}_{0,r}(III)&= \bigsqcup_{\ell \neq 0} \bigsqcup_{b \neq 0} \bigsqcup_{\substack{0 \leq d < b, \\(b,d)=1}} \bigsqcup_{\gamma \in B_{\zed}^+} \{\gamma \cdot\ell(b^2, 2bd, d^2)\}.
  \end{align*}

\end{lemma}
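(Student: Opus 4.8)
The plan is to classify directly the dual integral forms lying in the singular set $S\cup S_1$ and to read off each $B_\zed^+$-orbit from the explicit group action, in the same spirit as Lemma~\ref{fibration_lemma}. Here $B_\zed^+=B^+\cap\GL_2(\zed)$ is the group of lower unipotent integer matrices $\begin{pmatrix}1&0\\u&1\end{pmatrix}$ with $u\in\zed$; on a binary quadratic form with symmetric matrix $Q$ it acts by $Q\mapsto gQg^t$, and both the discriminant $\Disc=x_2^2-4x_1x_3$ and the leading coefficient $x_1$ are $B_\zed^+$-invariant (the characters $\chi,\chi_1$ being trivial on $B_\zed^+$). One records at the outset that every representative listed in the statement has even middle coefficient, hence lies in $\hat L_r$, so the task reduces to showing that the listed orbits exhaust $\hat L_{0,r}$ and are pairwise disjoint.

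I would first dispose of the hyperplane $\{x_1=0\}$. A dual integral form with $x_1=0$ has the shape $(0,2m,n)$ with $m,n\in\zed$. If $m=0$ it is the zero form or $(0,0,\ell)$ with $\ell\neq 0$; since a lower unipotent matrix fixes the vector $(0,1)^t$, every form $(0,0,\ast)$ is fixed by $B_\zed^+$, so each such form is its own orbit, producing $\{(0,0,0)\}$ together with $L_{0,r}(I)$. If $m\neq 0$, a one-line computation gives $\begin{pmatrix}1&0\\u&1\end{pmatrix}\cdot(0,2m,n)=(0,2m,n+2mu)$, so the orbit of $(0,2m,n)$ is $\{(0,2m,n+2mu):u\in\zed\}$; reducing $n$ modulo $|2m|$ selects the unique representative with $0\le n<|2m|$, producing $\hat L_{0,r}(II)$. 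Note these forms have $\Disc=4m^2\neq0$, so they are singular only by virtue of $x_1=0$.

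Next I would treat $\{x_1\neq0\}$, where singularity forces $\Disc=0$, i.e.\ the symmetric matrix of the form has rank one. Writing that matrix as $\ell\,\mathbf{v}\mathbf{v}^t$ with $\mathbf{v}=(b,d)^t$ a primitive integer vector, and taking $b>0$ after replacing $\mathbf{v}$ by $-\mathbf{v}$ if needed, one checks that $\ell=x_1/b^2$ is a nonzero integer: choosing $r,s,t\in\zed$ with $rb^2+sbd+td^2=1$ (possible since $\gcd(b,d)=1$) gives $\ell=rx_1+s(x_2/2)+tx_3\in\zed$. Hence $x=\ell(b^2,2bd,d^2)$. The action on the linear factor is $\mathbf{v}\mapsto g\mathbf{v}$, that is $(b,d)\mapsto(b,d+bu)$, so the orbit is parameterized by $d$ modulo $b$ with $\ell$ and $b$ held fixed, and the representative with $0\le d<b$ and $\gcd(b,d)=1$ is unique, producing $\hat L_{0,r}(III)$.

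Disjointness and exhaustion are then immediate: the three conditions $x_1=x_2=0$, $x_1=0\neq x_2$, and $x_1\neq0$ partition $\hat L_{0,r}$ and are each $B_\zed^+$-invariant, separating $\{(0,0,0)\}\sqcup L_{0,r}(I)$, $\hat L_{0,r}(II)$ and $\hat L_{0,r}(III)$; within each family the listed representatives are pinned down by the $B_\zed^+$-invariants $x_2=2m$ and $n\bmod 2m$ for family $(II)$, and by $\ell=\sgn(x_1)\gcd(x_1,x_2/2,x_3)$, $b=\sqrt{x_1/\ell}$ and $d\bmod b$ for family $(III)$. I expect the only step requiring genuine care to be the integral rank-one factorization of a discriminant-zero dual form together with the sign normalization of $\mathbf{v}$; the rest is bookkeeping parallel to Shintani's discussion in \cite{S75}.
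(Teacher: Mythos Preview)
Your proof is correct and self-contained. The paper itself does not give a proof but simply refers the reader to Shintani \cite{S75}, Lemma~4; your argument is essentially a reconstruction of Shintani's classification, carried out directly. The only point worth a remark is your verification that $\ell$ is an integer in the rank-one case: writing $Q=\ell\,\mathbf{v}\mathbf{v}^t$ with $\mathbf{v}=(b,d)^t$ primitive and then using a B\'ezout relation $rb^2+sbd+td^2=1$ to express $\ell=rx_1+s(x_2/2)+tx_3\in\zed$ is clean and exactly what is needed, since the dual condition guarantees $x_2/2\in\zed$.
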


\begin{proof}
 See \cite{S75}, Lemma 4.
\end{proof}

Let $V_{2,\pm}= \{x \in \sym^2(\bR): \pm \Disc(x) > 0\}$. Define 
\[
 \Phi_\pm(f,s_1,s_2) = \int_{V_{2, \pm}} f(x) |x_1|^{s_1} |\Disc(x)|^{s_2} dx.
\]
In the case that $f = f_{\pm, 0}$ from the previous section, and $s_2 = 0$, by Lemma \ref{Phi_0_lemma},
\begin{align*}
 3^s \pi^{-s + \frac{1}{2}} \frac{\Gamma\left(\frac{s}{2}\right)}{2\Gamma\left(\frac{1-s}{2}\right)}\Phi_+(f_{+,0}, s, 0) &= \Sigma_3\left(\hat{f}_+, -s\right),\\
 3^s \pi^{-s + \frac{1}{2}} \frac{\Gamma\left(\frac{s}{2}\right)}{2\Gamma\left(\frac{1-s}{2}\right)}\Phi_-(f_{-,0}, s, 0) &= \Sigma_3\left(\hat{f}_-, -s\right).
\end{align*}

Also,
\[
 \Sigma(f,s) = \int_{-\infty}^\infty \int_{-\infty}^\infty f(t, 2u, t^{-1}u^2) |t|^{s-1}dt du.
\]
This is holomorphic in $\RE(s)>\frac{1}{2}$.

\section{Conditions of maximality}

A cubic ring is maximal if it is properly contained in another cubic ring.  The condition of maximality may be checked locally, that is a cubic ring $R$ is maximal if and only if $R \otimes \zed_p$ is maximal for every prime $p$.  At the level of binary cubic forms, Delone and Fadeev identify the non-maximal set as a list of congruences modulo $p^2$.  A binary cubic form $x = av^3 + bv^2w + cvw^2 + dw^3$ is non-maximal at a prime $p$ if one of the following two conditions is met:
\begin{enumerate}
 \item The coefficients of $x$ are divisible by $p$.
 \item The form is $\GL_2(\zed)$ equivalent to a form with $p^2|a$ and $p|b$.
\end{enumerate}

Let $\sN_p$ be the indicator function that a form $x \in V_{\zed/p^2\zed}$ is non-maximal at prime $p$. Define the discrete Fourier transform of $\sN_p$ by, for $\xi \in (\zed/p^2 \zed)^4$,
\begin{equation}
 \hat{\sN}_p(\xi) = \frac{1}{p^8} \sum_{x \in V_{\zed/p^2 \zed}} \sN_p(x) e\left(\frac{\langle x, \xi \rangle}{p^2} \right).
\end{equation}

\begin{lemma}
 Let $p$ be prime. For $m, n \in \zed$ we have the evaluations
 \begin{align*}
  \hat{\sN}_p(0,0,0,m) &= \left\{\begin{array}{lll} p^{-2}+p^{-3}-p^{-5}, && p^2|m\\
   p^{-3} -p^{-5}, && p^2 \nmid m                               
\end{array}\right.\\
\hat{\sN}_p(0,0,3m,n) &= \left\{\begin{array}{lll} p^{-2} +p^{-3}-p^{-5}, && p^2|(m,n)\\
p^{-3}-p^{-5}, && p^2\nmid (m,n), p|m\\
0, &&p \nmid m
 \end{array} \right..
 \end{align*}

\end{lemma}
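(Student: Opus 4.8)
\emph{Proof strategy.} The plan is to exploit the symmetries of $\sN_p$ to cut the problem down to a finite list of exponential sums, and then to evaluate those sums by an elementary count of non-maximal forms with a prescribed leading coefficient.

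First I would record the relevant invariance. Both conditions defining non-maximality are phrased up to $\GL_2(\zed)$-equivalence, depend only on congruences modulo $p^2$, and are unaffected by multiplying all four coefficients by a unit; hence $\sN_p$ is invariant under $\GL_2(\zed/p^2\zed)$ and under the scaling action of $(\zed/p^2\zed)^\times$ on the whole form. Since the pairing $\langle\cdot,\cdot\rangle$ intertwines these with the contragredient actions, $\hat{\sN}_p$ is invariant under the corresponding dual actions. By Lemma~\ref{fibration_lemma} every singular dual form lies in the orbit of $0$, of some $(0,0,0,m)$, or of some $(0,0,3m,n)$, and the residual scalings normalize the level: $\hat{\sN}_p(0,0,0,m)$ depends on $m$ only through whether $p^2\mid m$, $p\,\|\,m$, or $p\nmid m$, and $\hat{\sN}_p(0,0,3m,n)$ depends on $(m,n)$ only through $v_p(m)$ (capped at $2$) together with, when $p\mid m$, whether $p^2\mid(m,n)$; when $v_p(m)\ge 2$ the form $(0,0,3m,n)$ reduces modulo $p^2$ to the first family. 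So it suffices to evaluate $\hat{\sN}_p$ at $0$, at $(0,0,0,1)$, at $(0,0,0,p)$, and at $(0,0,3m,n)$ for $(m,n)$ running over a short list with $v_p(m)\le 1$. Using $\langle x,(0,0,0,m)\rangle=-m x_1$ and $\langle x,(0,0,3m,n)\rangle=m x_2-n x_1$, each of these is an exponential sum, in the residue of $x_1$ (resp.\ of $x_1,x_2$), of the number of non-maximal forms modulo $p^2$.

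The value at $0$ is the local density of non-maximal binary cubic forms, the classical $1-(1-p^{-2})(1-p^{-3})=p^{-2}+p^{-3}-p^{-5}$ of Davenport--Heilbronn. For the other values I would split the two conditions by inclusion--exclusion, $\sN_p=\one_{\sC_1}+\one_{\sC_2}-\one_{\sC_1\cap\sC_2}$, where $\sC_1$ is the content-divisible-by-$p$ locus and $\sC_2$ the $\GL_2(\zed/p^2\zed)$-orbit of $\{p^2\mid a,\ p\mid b\}$. The transform of $\one_{\sC_1}$ factors coordinatewise into elementary complete sums, giving $\widehat{\one_{\sC_1}}(0,0,0,m)=p^{-4}\one[p\mid m]$ and $\widehat{\one_{\sC_1}}(0,0,3m,n)=p^{-4}\one[p\mid m]\one[p\mid n]$. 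For $\sC_2$ I would use the concrete description: a form $x\notin\sC_1$ lies in $\sC_2$ exactly when its reduction modulo $p$ has a repeated linear factor, vanishing at some $\bar u$, with a primitive lift $u$ such that $p^2\mid x(u)$, and this $\bar u$, when it exists, is unique (condition (2) forces $\overline{(1,0)g}$ to be a double root of $\bar x$). Moving $\bar u$ to a standard point and solving the lift congruence shows the lift condition is a single linear congruence, so each eligible reduction $\bar x$ (one with a double or triple root) has exactly $p^2$ of its $p^3$ lifts in $\sC_2$; since among the $p^3$ reductions with a fixed unit leading coefficient exactly $p^2$ have a repeated root, this gives $\#\{x: x_1\ \text{a fixed unit},\ x\in\sC_2\}=p^2\cdot p^2=p^4$, whence $\hat{\sN}_p(0,0,0,1)=p^{-3}-p^{-5}$; the analogous counts (together with the density) give $\hat{\sN}_p(0,0,0,p)=p^{-3}-p^{-5}$, and carrying $x_2$ along yields the $(0,0,3m,n)$ values $p^{-3}-p^{-5}$ in the cases $p\mid m$.

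The vanishing $\hat{\sN}_p(0,0,3m,n)=0$ for $p\nmid m$ is cleanest to see directly. After normalizing, it asserts $\sum_{x\bmod p^2}\sN_p(x)e(x_2/p^2)=0$. Invariance of $\sN_p$ under $n_s$, which fixes $x_1$ and sends $x_2\mapsto x_2+3sx_1$, lets me average over $s\bmod p^2$ and collapse the sum (for $p\ne 3$) to $\sum_{p^2\mid x_1}\sN_p(x)e(x_2/p^2)$; invariance under $\diag(1,t)$, which fixes $x_1$ and scales $x_2$ by $t$, then lets me average over units $t$ and reduce further, via the Ramanujan sum $c_{p^2}$, to $\sum_{p^2\mid x_1,\ p^2\mid x_2}\sN_p(x)-\tfrac1{p-1}\sum_{p^2\mid x_1,\ v_p(x_2)=1}\sN_p(x)$. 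But every form with $p^2\mid x_1$ and $p\mid x_2$ is non-maximal, since condition (2) already holds with $g=\mathrm{id}$; so these counts equal $p^4$ and $(p-1)p^4$ and cancel. The prime $p=3$ is handled by the same idea with the roles of $x_1$ and $x_2$ symmetrized.

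I expect the main obstacle to be the $\sC_2$ count in the third step: one must organize the eligible reductions $\bar x\bmod p$ carefully by factorization type (triple root, genuine double root, and, for $\sC_1\cap\sC_2$, content divisible by $p$ with a rational root of $\overline{x/p}$), propagate correctly the $p$-power weights and the factors of $3$ coming from the ``middle coefficient divisible by $3$'' normalization of dual forms, and check that the three pieces of the inclusion--exclusion recombine to exactly $p^{-2}+p^{-3}-p^{-5}$, $p^{-3}-p^{-5}$, or $0$; the prime $p=3$, where this normalization interacts with the level, must be verified by the same bookkeeping.
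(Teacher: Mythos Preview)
Your approach is sound and far more substantive than what the paper actually does here: the paper's entire proof is a citation to Taniguchi--Thorne \cite{TT13b} for $p>3$ together with a computer verification in SciPy for $p=2,3$. So you are not reproducing the paper's argument but supplying a genuine one, organized around the $\GL_2(\zed/p^2\zed)\times(\zed/p^2\zed)^\times$-invariance of $\sN_p$, inclusion--exclusion on $\sC_1,\sC_2$, and an elegant averaging trick for the vanishing case. The vanishing computation for $p\nmid m$, $p\neq 3$ is correct as you wrote it.

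One bookkeeping slip to flag. Knowing only $M_0=\#\{x:x_1=\text{fixed unit},\ x\in\sC_2\}=p^4$ does \emph{not} yet give $\hat{\sN}_p(0,0,0,1)$; it gives $\hat{\sN}_p(0,0,0,p)$. Indeed, writing $A=\hat{\sN}_p(0)$, $B=\hat{\sN}_p(0,0,0,p)$, $C=\hat{\sN}_p(0,0,0,1)$, Fourier inversion on the $x_1$-line yields $M_0=p^6(A-B)$ (the Ramanujan sum $c_{p^2}(1)$ kills the $C$-term), so $M_0=p^4$ and the Davenport--Heilbronn density $A=p^{-2}+p^{-3}-p^{-5}$ give $B=p^{-3}-p^{-5}$. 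To pin down $C$ you need one more count, e.g.\ $M_1=\#\{x:x_1=p,\ \sN_p(x)=1\}$, which your ``analogous counts'' would produce (and which indeed equals $p^4$); this is the step that requires handling $\sC_1\cap\sC_2$ nontrivially. This is a fillable gap, not a flaw in the strategy. Likewise, the $p=3$ case in your vanishing argument is genuinely delicate (the averaging over $n_s$ only forces $3\mid x_1$, not $9\mid x_1$, so the ``every such form is non-maximal'' step does not go through verbatim), and your one-line remark about symmetrizing $x_1,x_2$ would need to be made precise; the paper simply checks $p=2,3$ by machine.
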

\begin{proof}
 For $p = 2, 3$ this was checked in SciPy.  For $p > 3$ this is proved in \cite{TT13b}.
\end{proof}

Extend $\sN_p$ to $\sN_q$ multiplicatively when $q = p_1p_2...p_n$ is square-free, that is
\begin{equation}
 \sN_q(x) = \prod_{i=1}^n \sN_{p_i}(x).
\end{equation}
We have, \cite{H19},
$
 \hat{\sN}_q(\xi) = \prod_{p|q} \hat{\sN}_p(\xi).
$
The following theorem is proved in \cite{TT13b}. 
\begin{theorem}
 Uniformly in $q$ and $Y$, for all $\epsilon>0$,
 \[
  \sum_{0 < |m| \leq Y}\sum_{i=1}^{\hat{h}(m)} \left|\hat{\sN}_q(\hat{x}_{i,m})\right| \ll_\epsilon q^{-7+\epsilon}Y.
 \]

\end{theorem}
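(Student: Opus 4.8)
The plan is to turn the weighted class‑number sum into a congruence‑sorted lattice‑point count over the dual lattice and then combine multiplicativity of $\hat{\sN}_q$ with the explicit $p$‑adic structure furnished by the previous lemma. First I would pass from the class representatives to a fundamental domain: since non‑maximality at $p$ is a $\GL_2(\zed)$‑invariant condition and $\hat{\sN}_q$ depends on $\hat x$ only modulo $q^2$, the function $\hat{\sN}_q$ is $\Gamma$‑invariant on the dual lattice $\hat{L}$, and because the $\hat x_{i,m}$ were chosen inside the Siegel set $\fS$, summing over classes equals (up to negligible order‑three stabilizer and boundary corrections) $\sum |\hat{\sN}_q(\hat x)|$ over $\hat x \in \hat{L}\cap\mathcal F$ with $0 < |\Disc \hat x| \le Y$, where $\mathcal F$ is a fundamental domain for $\Gamma$ on $V_+ \sqcup V_-$.

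Next I would sort by the residue $\bar\xi = \hat x \bmod q^2$ and use $\hat{\sN}_q(\bar\xi) = \prod_{p\mid q}\hat{\sN}_p(\bar\xi)$ together with the Chinese remainder theorem, reducing matters to bounding $\sum_{\bar\xi \bmod q^2}\left(\prod_{p\mid q}|\hat{\sN}_p(\bar\xi)|\right) N(\bar\xi;Y)$, where $N(\bar\xi;Y)$ counts forms of $\hat{L}$ in the coset $\bar\xi + q^2\hat{L}$ that lie in $\mathcal F$ with $|\Disc| \le Y$. The two ingredients are then: (a) a count $N(\bar\xi;Y) = c\,Y q^{-8} + (\text{error})$, uniform in $\bar\xi$, obtained from Davenport's geometry‑of‑numbers lemma for the space of binary cubic forms, in the refined form that isolates the cusp, where forms reducible over $\zed$ contribute $O(Y^{1-\delta})$ controlled uniformly in $q$; and (b) the evaluation $\sum_{\bar\xi \bmod q^2}|\hat{\sN}_q(\bar\xi)| = \prod_{p\mid q}\|\hat{\sN}_p\|_{\ell^1(V_{\zed/p^2\zed})}$. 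For (b) I would use $\GL_2(\zed/p^2\zed)$‑invariance to propagate the values of the preceding lemma from the singular models $(0,0,0,m)$ and $(0,0,3m,n)$ to all orbits: apart from the contribution $p^{-4}\one\{p\mid\hat x\}$, which adds $O(1)$ to the $\ell^1$‑norm, $\hat{\sN}_p$ is supported on the locus of forms that are $\equiv c\,\ell^3\bmod p^2$ for a linear form $\ell$, together with deeper strata of smaller total mass; this is $\asymp p^4$ residues carrying $|\hat{\sN}_p|\asymp p^{-3}$, so $\|\hat{\sN}_p\|_{\ell^1}\ll_\epsilon p^{1+\epsilon}$ and hence $\sum_{\bar\xi}|\hat{\sN}_q(\bar\xi)|\ll_\epsilon q^{1+\epsilon}$. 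Feeding this into (a), the main term contributes $\ll Y q^{-8}\cdot q^{1+\epsilon} = q^{-7+\epsilon}Y$, which is the asserted bound. (Alternatively one may run the same computation through a level‑$q$ twisted dual Shintani zeta function and a Tauberian argument, but the same uniformity issues reappear there.)

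The hard part will be the count in step (a): one needs it uniformly in $\bar\xi$ and with a $q$‑dependence of the error mild enough to survive the weighting by $\prod_{p\mid q}|\hat{\sN}_p(\bar\xi)|$ in \emph{every} range of $Y$ relative to $q$. The delicate point is structural: $|\hat{\sN}_q|$ is largest precisely on the most degenerate residues modulo $q^2$, which is exactly where integral dual forms in a fundamental domain are scarcest — indeed, in the main body of $\mathcal F$ a form with $\hat x \equiv c\,\ell^3\bmod q^2$ and coefficients $\ll Y^{1/4}$ must be the singular form $c\,\ell^3$ itself, so for $Y \ll q^{8}$ there are essentially none with $0 < |\Disc|\le Y$. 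Making this cancellation between the size of the Fourier coefficients and the sparsity of the forms supporting them effective and uniform is where Davenport's argument, in the form refined by Taniguchi and Thorne (careful cusp analysis and separation of reducible forms), is indispensable.
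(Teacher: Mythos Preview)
The paper does not prove this theorem; it simply quotes it from Taniguchi--Thorne \cite{TT13b}. Your proposal therefore goes well beyond what the paper itself supplies, and the comparison is really against \cite{TT13b}.

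Your outline is in the right spirit and matches the architecture of \cite{TT13b}: stratify dual forms by residue modulo $q^2$, use multiplicativity $\hat{\sN}_q=\prod_{p\mid q}\hat{\sN}_p$, combine an $\ell^1$-bound $\sum_{\bar\xi}|\hat{\sN}_p(\bar\xi)|\ll p^{1+\epsilon}$ with a count of integral dual forms in each coset. Your structural claim---that, apart from a piece of size $p^{-4}$ supported on $\{p\mid\xi\}$, $\hat{\sN}_p$ is essentially supported on residues $\equiv c\,\ell^3\bmod p^2$ with values $\asymp p^{-3}$ there---is the key input and is correct, but note that it is \emph{not} derivable from the lemma in this paper, which only evaluates $\hat{\sN}_p$ on the two singular models $(0,0,0,m)$ and $(0,0,3m,n)$. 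You need the full $\GL_2(\zed/p^2\zed)$-orbit computation carried out in \cite{TT13a}, \cite{TT13b} to justify both the support claim and the $\ell^1$-bound; the paper's reducible analogue $\sum_{\xi}|\hat{\sN}_q^r(\xi)|=O(q^{1+\epsilon})$ is suggestive but not the same statement.

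More seriously, your step (a) is a placeholder for the entire difficulty rather than a reduction of it. A uniform count $N(\bar\xi;Y)=cYq^{-8}+(\text{error})$ with an error bounded independently of $\bar\xi$ by anything like $Y^{1-\delta}$ gives, after weighting, $q^{1+\epsilon}Y^{1-\delta}$, which is useless for $Y\ll q^{8}$. Your closing remark---that the residues carrying large $|\hat{\sN}_q|$ force their integral representatives in the Siegel domain to be essentially singular and hence absent---is exactly the mechanism that saves the day, but making it uniform across all ranges of $Y/q^8$ and all strata is real work. In practice Taniguchi--Thorne do not run one Davenport count with a black-box error; they stratify the support of $\hat{\sN}_p$ by orbit type and, for each stratum, count the corresponding integral forms directly (exploiting that forms near $c\,\ell^3$ have discriminant small relative to their height), so main term and error are never separated. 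Your plan becomes their proof once that stratified counting is written out, but as it stands the sentence ``obtained from Davenport's geometry-of-numbers lemma \ldots\ in the refined form'' is an invocation, not an argument.
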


The reducible forms which are non-maximal are classified in the following lemma.

\begin{lemma}
 Those forms $bv^2w + cvw^2 + dw^3$, $b,c,d \in \zed^3$ which are non-maximal at $p > 2$ have one of the following forms.
 \begin{enumerate}
  \item $b \equiv 0 \bmod p$.
  \item $\{\beta w(v + \delta w)^2: \beta \in (\zed/p^2\zed)^\times,  \delta \in \zed/p^2 \zed\}$.
 \end{enumerate}
If $p=2$ then replace the second item with
\begin{enumerate}
 \item [(2')] $\{\beta w (v + \delta w)(v +( \delta + 2\gamma)w): \beta \in \{1,3\}, \delta \in \{0,1\}, \gamma \in \{0,1\}\}$.
\end{enumerate}

\end{lemma}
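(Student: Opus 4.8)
The plan is to unwind the Delone--Fadeev maximality criterion for a reducible form $f = bv^2 w + cvw^2 + dw^3 = w\cdot q(v,w)$, where $q(v,w) = bv^2 + cvw + dw^2$, and to reduce the question to the root structure of the binary quadratic $q$ modulo $p$ and modulo $p^2$; note that the cubic coefficient of $f$ vanishes identically.

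First I would reformulate the two non-maximality conditions. If $p\mid b$ the form already lies in item (1) of the lemma, so assume $p\nmid b$; then condition (1) (all coefficients divisible by $p$) fails, and non-maximality of $f$ means condition (2) holds, namely $f$ is $\GL_2(\zed)$-equivalent to a form with $p^2\mid a'$ and $p\mid b'$. Writing $g = \begin{pmatrix}\alpha & \beta\\\gamma & \delta\end{pmatrix}\in\GL_2(\zed)$ and using the action $g\cdot f(v,w) = f((v,w)g)$, one has $a' = f(\alpha,\beta)$ and $b' = \gamma f_v(\alpha,\beta) + \delta f_w(\alpha,\beta)$, with $f_v,f_w$ homogeneous of degree $2$. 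Euler's identity $\alpha f_v(\alpha,\beta) + \beta f_w(\alpha,\beta) = 3f(\alpha,\beta) = 3a'\equiv 0\bmod p^2$, together with $p\mid b'$ and $\det g = \pm 1$ (so $g$ is invertible modulo $p$), forces $f_v(\alpha,\beta)\equiv f_w(\alpha,\beta)\equiv 0\bmod p$. Hence condition (2) is equivalent to the existence of a primitive $(\alpha,\beta)$ such that $[\alpha:\beta]$ is a zero of $f$ modulo $p^2$ and a singular point of $\{f=0\}$ modulo $p$.

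Next I would locate this singular point. From $f = wq$ one gets $f_v = wq_v$ and $f_w = q + wq_w$, so $f_w(1,0) = b\not\equiv 0\bmod p$; thus $[1:0]$ is not singular and $(\alpha,\beta)$ must have $\beta$ a unit modulo $p$, hence modulo $p^2$. Dehomogenizing, with $x = \alpha\beta^{-1}\in\zed/p^2\zed$, the singularity condition says $x$ is a double root of $bv^2+cv+d$ modulo $p$, while (using $f=wq$ and that $\beta$ is a unit) the vanishing condition says $bx^2+cx+d\equiv 0\bmod p^2$. For $p>2$ this is quickly finished: $2b$ is a unit modulo $p^2$, $r := -c(2b)^{-1}$ satisfies $2br+c=0$ and $x\equiv r\bmod p$, so the degree-two expansion $b(r+pt)^2+c(r+pt)+d = (br^2+cr+d) + pt(2br+c) + bp^2t^2$ gives $bx^2+cx+d\equiv br^2+cr+d = -(c^2-4bd)(4b)^{-1}\bmod p^2$; vanishing modulo $p^2$ therefore forces $c^2-4bd\equiv 0\bmod p^2$, and completing the square shows $q\equiv b(v+\delta w)^2\bmod p^2$ with $\delta = c(2b)^{-1}$, i.e.\ $f\equiv b\,w(v+\delta w)^2$. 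This is exactly item (2), with $\beta=b\in(\zed/p^2\zed)^\times$.

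The case $p=2$ is the main obstacle: the square-completion step degenerates and the discriminant $c^2-4bd$ no longer governs the root structure modulo $4$. Here I would argue directly. For $b$ odd, $q$ reduces modulo $2$ to $v^2+cvw+dw^2$, which has a repeated root modulo $2$ exactly when $c$ is even (for $c$ odd it is $v(v+w)$ or irreducible over $\mathbb{F}_2$, with distinct roots), so non-maximality already forces $c$ even, the double root being $[0:1]$ or $[1:1]$ according to the parity of $d$. One then finishes by a finite check over $b\in\{1,3\}$, $c\in\{0,2\}$, $d\in\{0,1,2,3\}$: for each such $f$ and each primitive lift $(\alpha,\beta)$ of the double root one tests whether $f(\alpha,\beta)\equiv 0\bmod 4$ (equivalently, the same machine verification used for $p=2,3$ above applies), and this isolates exactly the eight forms $\beta w(v+\delta w)(v+(\delta+2\gamma)w)$, $\beta\in\{1,3\}$, $\delta,\gamma\in\{0,1\}$ --- each reducing modulo $2$ to $\beta w(v+\delta w)^2$, consistent with the double-root analysis, but genuinely finer modulo $4$. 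This gives item (2$'$) and completes the classification.
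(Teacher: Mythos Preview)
Your argument is correct, and it takes a genuinely different route from the paper's. The paper works ``backwards'': it starts from a form $p^2av^3 + pbv^2w + cvw^2 + dw^3$ in the canonical non-maximal shape and tracks what happens under an arbitrary $\GL_2(\zed)$ change of variables, splitting into cases according to $c,d \bmod p$ and chasing coordinates until the transformed reducible form is seen to be of type (1) or (2). Your approach instead reformulates condition (2) intrinsically as the existence of a primitive point which is a zero of $f$ modulo $p^2$ and a singular point of $\{f=0\}$ modulo $p$ --- the Euler identity plus invertibility of $g$ giving this cleanly --- and then reads off the constraint on $q(v,w)=bv^2+cvw+dw^2$ directly (for $p>2$: the unique critical point $r=-c/(2b)$ must be a zero modulo $p^2$, forcing $p^2\mid c^2-4bd$ and hence $q\equiv b(v+\delta w)^2$). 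This geometric criterion is more conceptual and sidesteps the coordinate bookkeeping; it also makes transparent why the quadratic's discriminant governs the answer for odd $p$. The paper's hands-on approach, by contrast, exhibits the normalizing transformations explicitly, which is closer to Delone--Fadeev's original presentation. At $p=2$ both arguments reduce to a finite enumeration, and your observation that $c$ must be even (since $f_v\equiv cw^2\bmod 2$) cuts the case check in half before the machine verification.
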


\begin{proof}
 The criterion for a binary cubic form $aw^3 + bw^2v + cwv^2 + dv^3$ to represent a cubic ring non-maximal at $p$ is that either the form is divisible by $p$, or the form is $\GL$ equivalent to a form with $p^2|a$ and $p|b$.  When $a = 0$, the first condition is contained in the second, so we consider only this case.
 
 Consider those forms equivalent to a form $x = p^2 a v^3 + pb v^2w + c vw^2 + dw^3$.  If $p$ divides $c$ and $d$ then any equivalent form has coefficients divisible by $p$.  This is absorbed in the first case of the lemma, which is evidently non-maximal.  If, instead, $c \equiv 0 \bmod p$ but $d \not \equiv 0 \bmod p$, make the change of coordinates 
 \[
  v \mapsto \alpha v + \beta w, \qquad w \mapsto \gamma v + \delta w.
 \]
This maps $x$ to $g \cdot x \equiv d(\gamma v + \delta w)^3 \bmod p$.  For the first coefficient to vanish, $\gamma \equiv 0 \bmod p$.  This obtains a form of type (1) from the lemma.

Finally, suppose $c \not \equiv 0 \bmod p$.  After making a change of type 
\[
 v \mapsto \alpha v + \beta w, \qquad w \mapsto \delta w
\]
the form $x$ is equivalent to a form
\[
 x' \equiv bp v^2 w + vw^2 \bmod p^2.
\]
Under the change 
\[
 v \mapsto \alpha v + \beta w, \qquad w \mapsto \gamma v + \delta w
\]
this maps to a form equivalent modulo $p^2$ to
\[
 bp(\alpha v + \beta w)^2 (\gamma v + \delta w)+ (\alpha v + \beta w) (\gamma v + \delta w)^2 \bmod p^2. 
\]
The first coefficient is equivalent to $bp\alpha^2 \gamma + \alpha \gamma^2 \bmod p^2$.
If $p|\gamma$ then the transformed form is of type (1). If $p \nmid \gamma$ then $p^2|\alpha$.  The transformed form is now equivalent to 
\[
 \beta w (\gamma v + \delta w)^2 + bp \beta w (\gamma v + \delta w) \bmod p^2.
\]
By adjusting the form by a form divisible by $p^3$ all such forms are attained.  These forms are non-maximal and are of type (2) if $p > 2$ and (2') in the case $p = 2$.
\end{proof}

Write the non-maximal set as the disjoint union $\sM_1 \sqcup \sM_2$ where
\begin{align*}
 \sM_1 &= \{b v^2w + cvw^2 + dw^3: p|b\}\\
 \sM_2 &= \{\beta w (v + \delta w)^2: \beta \in (\zed/p^2\zed)^\times, \delta \in \zed/p^2\zed\},
\end{align*}
and
\[
 \sM_2' = \{\beta w (v + \delta w)(v +( \delta + 2\gamma)w): \beta \in \{1,3\}, \delta \in \{0,1\}, \gamma \in \{0,1\}\}
\]
in the case $p = 2$.

Thus $\sN_p^r = \one_{\sM_1} + \one_{\sM_2}$, identified as a function on $\sym^2(\zed)$. For square-free $q$, let $\sN_q^r(x) = \prod_{p|q} \sN_p^r(x)$.  Since $\sM_1$ and $\sM_2$ are invariant under dilation, 
\[
 \hat{\sN}_q^r(\xi) = \prod_{p|q} \hat{\sN}_p^r(\xi).
\]

Let the discrete Fourier transform of a function on $(\zed/p^2\zed)^3$ as 
\begin{equation}
 \hat{f}(\xi) = \frac{1}{p^6}\sum_{x \in (\zed/p^2\zed)^3} f(x) e\left(\frac{x_1\xi_3 -\frac{1}{2}x_2\xi_2 + x_3\xi_1}{p^2} \right).
\end{equation}
The discrete Fourier transforms of the indicator functions of $\sM_1$ and $\sM_2$, $\sM_2'$ satisfy the following.

\begin{lemma}
 The discrete Fourier transforms of $\sM_1$ is given by
 \begin{align*}
  \hat{\one}_{\sM_1}(\xi_1, 2\xi_2, \xi_3) = \left\{\begin{array}{lll} \frac{1}{p} && \xi_1 = \xi_2 = 0, p|\xi_3\\ 0 && \text{otherwise}\end{array}\right.
 \end{align*}
 and, for $p = 2$,
 \begin{align*} 
  \hat{\one}_{\sM_2'}(\xi_1, 2\xi_2, \xi_3) &= \frac{1}{16} \Biggl[\one(\xi_3 \equiv 0 \bmod 4 \wedge \xi_2 \equiv 0 \bmod 2) - \one(\xi_3 \equiv 2 \bmod 4 \wedge \xi_2 \equiv 0 \bmod 2)\\& + \one(\xi_3 - 2\xi_2 + \xi_1 \equiv 0 \bmod 4 \wedge \xi_1 + \xi_2 \equiv 0 \bmod 2) \\&- \one(\xi_3-2\xi_2 + \xi_1 \equiv 2 \bmod 4 \wedge \xi_1 + \xi_2 \equiv 0 \bmod 2) \Biggr]. 
 \end{align*}
If $\xi = p \xi'$ is divisible by $p$, the sum is
\begin{align*}
 \hat{\one}_{\sM_2}(p\xi_1', 2p\xi_2', p\xi_3') = \left\{\begin{array}{lll} \frac{p-1}{p^3} && \xi' \equiv 0 \bmod p \\ \frac{-1}{p^3} && \xi_1', \xi_2' \equiv 0 \bmod p, \xi_3' \not \equiv 0 \bmod p\\
\frac{\#\{\xi_1' \delta^2 - 2\xi_2' \delta +\xi_3' \equiv 0 \bmod p\} - 1}{p^3}&& \xi_1' \not \equiv 0 \bmod p\\
0 && \text{otherwise}\end{array}
 \right..
\end{align*}
If $\xi_1$ is divisible by $p$, but either $\xi_2$ or $\xi_3$ is not, the sum vanishes.  
If $\xi_1$ is not divisible by $p$, then the sum has value
\begin{align*}
 \hat{\one}_{\sM_2}(\xi) &=  \left\{\begin{array}{lll}\frac{p-1}{p^4} && \xi_1\xi_3 \equiv \xi_2^2 \bmod p^2\\ \frac{-1}{p^4} && \xi_1\xi_3 \not \equiv \xi_2^2 \bmod p^2, \xi_1\xi_3 \equiv \xi_2^2 \bmod p\\ 0 && \xi_1 \xi_2 \not \equiv \xi_2^2 \bmod p \end{array}  \right..
\end{align*}
\end{lemma}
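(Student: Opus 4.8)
The plan is to evaluate each of the three discrete Fourier transforms directly as a finite exponential sum, treating $\sM_1$, $\sM_2$, and (for $p=2$) $\sM_2'$ in turn. For $\sM_1$, identified with $\{x \in (\zed/p^2\zed)^3 : p \mid x_1\}$, I would sum the defining formula over $x_2$ and $x_3$ unrestricted, obtaining factors $p^2\,\one(\xi_2 \equiv 0 \bmod p^2)$ and $p^2\,\one(\xi_1 \equiv 0 \bmod p^2)$, and over $x_1 \in p\zed/p^2\zed$, obtaining $p\,\one(p\mid\xi_3)$; dividing by $p^6$ yields the stated dichotomy. For $\sM_2$ I would parameterize its points via $\beta w(v+\delta w)^2$, i.e. $x = (\beta, 2\beta\delta, \beta\delta^2)$ with $\beta \in (\zed/p^2\zed)^\times$ and $\delta \in \zed/p^2\zed$, noting that for $p$ odd this parameterization is injective so $\one_{\sM_2}$ is genuinely $\{0,1\}$-valued and $\hat{\one}_{\sM_2}(\xi_1,2\xi_2,\xi_3) = p^{-6}\sum_{\beta}\sum_\delta e\!\left(\beta Q(\delta)/p^2\right)$ with $Q(\delta) = \xi_1\delta^2 - 2\xi_2\delta + \xi_3$. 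The inner sum over units $\beta$ is the Ramanujan sum $p^2\one(p^2\mid n) - p\one(p\mid n)$, and since each root of $Q$ modulo $p$ lifts to exactly $p$ residues modulo $p^2$, the whole expression collapses to $(N_2 - N_1)/p^4$, where $N_k = \#\{\delta \bmod p^k : Q(\delta)\equiv 0 \bmod p^k\}$.

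It then remains to count solutions of the quadratic congruence $Q(\delta)\equiv 0$ in the various regimes. When $p\nmid\xi_1$ I would complete the square, $\xi_1 Q(\delta) = (\xi_1\delta - \xi_2)^2 - D$ with $D = \xi_2^2 - \xi_1\xi_3$, so $N_k$ equals the number of square roots of $D$ modulo $p^k$; the three possibilities $p^2\mid D$, $v_p(D)=1$, $p\nmid D$ (equivalently $\xi_1\xi_3\equiv\xi_2^2$ modulo $p^2$, modulo $p$ but not $p^2$, and not modulo $p$) give $N_2 - N_1 = p-1,\ -1,\ 0$ respectively, which are exactly the three lines of the concluding display. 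When $p\mid\xi_1$ the reduction of $Q$ mod $p$ is affine: if also $p\nmid\xi_2$, the unique root mod $p$ lifts uniquely mod $p^2$ by a direct Hensel computation so $N_1 = N_2 = 1$; if $p\mid\xi_2$ but $p\nmid\xi_3$, then $Q$ is a nonzero constant mod $p$ and $N_1 = N_2 = 0$; in both of these sub-cases the transform vanishes, which is the assertion ``$\xi_1$ divisible by $p$ but $\xi_2$ or $\xi_3$ not.'' Finally, if $p$ divides all of $\xi_1,\xi_2,\xi_3$, writing $\xi_i = p\xi_i'$ gives $Q = pQ'$ with $Q'(\delta) = \xi_1'\delta^2 - 2\xi_2'\delta + \xi_3'$, whence $N_1 = p$ and $N_2 = p\cdot\#\{\delta \bmod p : Q'(\delta)\equiv 0\bmod p\}$, producing the value $(\#\{\cdots\}-1)/p^3$; specializing the count when all $\xi_i'$ vanish mod $p$ (giving $p$), when only $\xi_3'$ is a unit (giving $0$), and when $Q'$ is affine mod $p$ (giving $1$, the ``otherwise'' case) yields the remaining lines.

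For the prime $p=2$ and $\sM_2'$, I would expand $\beta w(v+\delta w)(v+(\delta+2\gamma)w)$ to the triple $(\beta,\,2\beta(\delta+\gamma),\,\beta(\delta^2+2\delta\gamma))$ modulo $4$ and let $\beta$ run over $\{1,3\}$ and $\delta,\gamma$ over $\{0,1\}$; using $\sum_{\beta\in\{1,3\}} e(\beta n/4) = 2\bigl(\one(n\equiv 0\bmod 4) - \one(n\equiv 2\bmod 4)\bigr)$ reduces $\hat{\one}_{\sM_2'}$ to $\tfrac1{32}$ times a sum over $(\delta,\gamma)\in\{0,1\}^2$ of $\one(R\equiv 0\bmod 4) - \one(R\equiv 2\bmod 4)$, where $R(\delta,\gamma) = \xi_3 - 2(\delta+\gamma)\xi_2 + (\delta^2+2\delta\gamma)\xi_1$. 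Then I would use the congruences $R(0,1)\equiv R(0,0) + 2\xi_2$ and $R(1,1)\equiv R(1,0) + 2(\xi_2 - \xi_1)\pmod 4$ to fuse the two $\delta=0$ terms into $2\,\one(\xi_2\text{ even})$ times the $\xi_3$-indicator and the two $\delta=1$ terms into $2\,\one(\xi_1+\xi_2\text{ even})$ times the $(\xi_3 - 2\xi_2 + \xi_1)$-indicator, which is precisely the claimed two-line formula.

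The bulk of this is routine exponential-sum bookkeeping; the step I expect to require the most care is the case analysis for $\sM_2$ --- keeping the solution counts modulo $p$ and modulo $p^2$ correctly aligned through every degenerate sub-case --- together with the collapsing of the four $(\delta,\gamma)$-terms at $p=2$, which is where the parity conditions on $\xi_2$ and on $\xi_1+\xi_2$ enter.
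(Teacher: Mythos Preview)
Your proposal is correct. For $\sM_1$ and for $\sM_2'$ at $p=2$ it is essentially identical to the paper's argument (the paper calls $\sM_1$ ``straightforward'' and handles $\sM_2'$ by the same parameterized sum, merely collapsing over $\gamma$ before $\beta$ rather than in your order).

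For $\sM_2$ with $p$ odd you take a genuinely different route. You sum over $\beta\in(\zed/p^2\zed)^\times$ first, invoking the Ramanujan-sum identity to reduce everything to the single formula $(N_2-N_1)/p^4$ with $N_k=\#\{\delta\bmod p^k:Q(\delta)\equiv 0\}$, and then do a uniform case analysis on the quadratic congruence. The paper instead treats the cases separately at the level of the exponential sum: when $p\nmid\xi_1$ it writes $\delta=a+bp$ and sums in $b$ to collapse to a single residue $a\equiv\xi_2\xi_1^{-1}$, leaving a Ramanujan sum in $\beta$ against $\xi_3-\xi_2^2\xi_1^{-1}$; when $p\mid\xi_1$ it uses the shifts $\delta\mapsto\delta+p$ and $\beta\mapsto\beta+p$ to force $p\mid\xi_2$ and then $p\mid\xi_3$. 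Your approach has the advantage of a single organizing identity that covers all sub-cases at once, at the cost of a slightly longer solution-counting discussion; the paper's is more ad hoc but makes each individual case a one-line sum. Both are elementary and of comparable length.
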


\begin{proof}
 The evaluation of $\hat{\one}_{\sM_1}$ is straightforward.  In the case that $p = 2$, the sum over $\sM_2'$ is
 \begin{align*}
  &\frac{1}{64} \sum_{\beta \in \left(\zed/4\zed\right)^\times} \sum_{\delta \in \{0,1\}}\sum_{\gamma \in \{0,1\}} e\left(\frac{\beta(\xi_3 - 2\xi_2(\delta+\gamma) + \xi_1 (\delta^2+2\gamma \delta))}{4} \right)\\
  &= \frac{1}{32} \sum_{\beta \in (\zed/4\zed)^\times} e\left(\frac{\beta \xi_3}{4} \right)\one(\xi_2 \equiv 0 \bmod 2)\\& + \frac{1}{32} \sum_{\beta \in (\zed/4\zed)^\times} e\left(\frac{\beta(\xi_3 - 2\xi_2 + \xi_1)}{4} \right)\one(\xi_1 + \xi_2 \equiv 0 \bmod 2). 
 \end{align*}
This obtains the evaluation.

In the case $p \geq 3$, when $\xi = p \xi'$
\begin{align*}
 \hat{\one}_{\sM_2}(\xi') &= p^{-4}\sum_{\beta \in (\zed/p\zed)^\times} \sum_{\delta \in \zed/p\zed} e\left(\frac{\beta(\xi_3' - 2\delta \xi_2' + \delta^2 \xi_1')}{p} \right).
\end{align*}
If $\xi_1' \equiv 0 \bmod p$ then the sum vanishes unless $\xi_2' \equiv 0 \bmod p$.  In this case, the sum is $\frac{-1}{p^3}$ if $\xi_3' \not \equiv 0 \bmod p$ and $\frac{p-1}{p^3}$ if $\xi_3' \equiv 0 \bmod p$.  If $\xi_1' \not \equiv 0 \bmod p$, then the sum is
$\frac{\# \{\delta: \xi_1' \delta^2 - 2\xi_2' \delta + \xi_3' \equiv 0 \bmod p\} - 1}{p^3}$.

If $\xi_1$ is divisible by $p$, then the sum vanishes on summing over shifts of $\delta$ by $p$, unless $\xi_2$ is divisible by $p$.  If both $\xi_1$ and $\xi_2$ are divisible by $p$.  The sum now vanishes on summing in translates of $\beta$ by $p$ unless $\xi_3$ is divisible by $p$.  Thus assume $\xi_1$ is not divisible by $p$.  

Write $\delta = a + bp$ with $0 \leq a, b < p$ and sum in $b$.  This obtains
\begin{align*}
 \hat{\one}_{\sM_2}(\xi) &= p^{-6} \sum_{\beta \in (\zed/p^2\zed)^\times} \sum_{0 \leq a, b < p} e\left(\frac{\beta(\xi_3 -2(a + bp)\xi_2 + (a^2 + 2abp)\xi_1)}{p^2} \right)\\
 &= p^{-5} \sum_{\beta \in (\zed/p^2\zed)^\times} e\left(\frac{\beta( \xi_3 - \xi_2^2\xi_1^{-1})}{p^2} \right)\\
 &= \left\{\begin{array}{lll}\frac{p-1}{p^4} && \xi_1\xi_3 \equiv \xi_2^2 \bmod p^2\\ \frac{-1}{p^4} && \xi_1\xi_3 \not \equiv \xi_2^2 \bmod p^2, \xi_1\xi_3 \equiv \xi_2^2 \bmod p\\ 0 && \xi_1 \xi_2 \not \equiv \xi_2^2 \bmod p \end{array}  \right..
\end{align*}

\end{proof}

\begin{lemma} Summed over a complete period mod $q^2$, 
\[
 \sum_{\xi = (\xi_1, 2\xi_2, \xi_3) \bmod q^2} |\hat{\sN}_q^r(\xi)| = O(q^{1+\epsilon}).
\]

\end{lemma}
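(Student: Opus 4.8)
The plan is to exploit the multiplicativity $\hat{\sN}_q^r(\xi) = \prod_{p \mid q}\hat{\sN}_p^r(\xi)$ recorded above, together with the Chinese Remainder Theorem, to factor the sum over residues into a product of local sums, then to bound each local factor by $O(p)$ using the explicit formulas of the preceding lemma (and of the lemma evaluating $\hat{\one}_{\sM_1}$), and finally to assemble the bound from $\prod_{p\mid q} p = \operatorname{rad}(q) \leq q$ together with the standard estimate $\omega(q) = O(\log q/\log\log q)$.

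First I would observe that, for each prime $p$, the summand $\hat{\sN}_p^r(\xi_1, 2\xi_2, \xi_3)$ depends only on $(\xi_1, \xi_2, \xi_3)$ modulo $p^2$: for odd $p$ this is immediate since $2$ is a unit mod $p^2$, while for $p = 2$ the formula for $\hat{\one}_{\sM_2'}$ involves $\xi_2$ only through $\xi_2 \bmod 2$. Hence $(\xi_1,\xi_2,\xi_3) \mapsto \hat{\sN}_q^r(\xi_1, 2\xi_2, \xi_3)$ is a product of local functions under the isomorphism $(\zed/q^2\zed)^3 \cong \prod_{p\mid q}(\zed/p^2\zed)^3$, and therefore
\[
 \sum_{\xi = (\xi_1, 2\xi_2, \xi_3)\bmod q^2}|\hat{\sN}_q^r(\xi)| = \prod_{p\mid q}\Bigl(\sum_{\xi = (\xi_1, 2\xi_2,\xi_3)\bmod p^2}|\hat{\sN}_p^r(\xi)|\Bigr).
\]
It then suffices to prove the uniform local bound $\sum_{\xi\bmod p^2}|\hat{\sN}_p^r(\xi)| = O(p)$ with an absolute implied constant.

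For the local bound I would write $\hat{\sN}_p^r = \hat{\one}_{\sM_1} + \hat{\one}_{\sM_2}$ (with $\sM_2'$ in place of $\sM_2$ when $p = 2$) and use the triangle inequality. From the explicit formula, $\hat{\one}_{\sM_1}$ is supported on the $p$ residues with $\xi_1 = \xi_2 = 0$ and $p\mid\xi_3$, with constant value $1/p$, so $\sum_\xi|\hat{\one}_{\sM_1}(\xi)| = 1$. For $\hat{\one}_{\sM_2}$ I would run through the cases of the preceding lemma. When $p\mid\xi$ there are $O(p^3)$ residues at which the transform is nonzero, each of size $O(p^{-3})$, contributing $O(1)$. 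When $p\nmid\xi_1$ the transform vanishes unless $\xi_1\xi_3 \equiv \xi_2^2 \bmod p$, and there one must separate the thin locus $\xi_1\xi_3 \equiv \xi_2^2 \bmod p^2$ ($O(p^4)$ residues, transform of size $(p-1)/p^4$) from its complement inside $\xi_1\xi_3 \equiv \xi_2^2\bmod p$ ($O(p^5)$ residues, transform of size $1/p^4$); these contribute $O(p) + O(p) = O(p)$. All remaining residues contribute $0$, and the case $p = 2$ with $\sM_2'$ is a finite, $p$-independent computation bounded by an absolute constant. This gives $\sum_\xi|\hat{\sN}_p^r(\xi)| = O(p)$ uniformly in $p$.

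Assembling, $\sum_{\xi\bmod q^2}|\hat{\sN}_q^r(\xi)| \leq \prod_{p\mid q}(Cp) = C^{\omega(q)}\prod_{p\mid q}p \leq C^{\omega(q)}q = O_\epsilon(q^{1+\epsilon})$, since $C^{\omega(q)} = O_\epsilon(q^\epsilon)$. The step I expect to be the main obstacle is the residue bookkeeping for $\hat{\one}_{\sM_2}$ when $p\nmid\xi_1$: obtaining the sharp $O(p)$ requires splitting the quadric $\xi_1\xi_3 \equiv \xi_2^2 \bmod p$ according to whether the congruence already holds mod $p^2$, since the transform drops by a factor of $p$ off the thinner locus. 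A cruder argument (bounding the support of $\hat{\one}_{\sM_2}$ by $O(p^5)$ residues and invoking Plancherel, $\sum_\xi|\hat{\one}_{\sM_2}(\xi)|^2 = p^{-6}|\sM_2| = O(p^{-2})$, via Cauchy--Schwarz) would yield only $O(p^{3/2})$ per prime and hence the insufficient $O(q^{3/2+\epsilon})$.
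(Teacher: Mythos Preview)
Your proposal is correct and follows exactly the approach the paper sketches: factor over primes via the multiplicativity of $\hat{\sN}_q^r$, bound each local sum over residues mod $p^2$ by $O(p)$ using the explicit evaluations in the preceding lemma, handle $p=2$ as a finite check, and assemble via $\prod_{p\mid q}p = q$ (since $q$ is squarefree) and $C^{\omega(q)} = O_\epsilon(q^\epsilon)$. Your case-by-case residue count for $\hat{\one}_{\sM_2}$ is accurate and in fact gives more detail than the paper, which simply says the bound follows from summing the preceding lemma over all classes and multiplying over $p\mid q$.
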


\begin{proof}
 This is the result of summing over all congruence classes mod $p^2$ in the lemma above for a given $p$, then multiplying over all $p |q$.  The corresponding statement holds trivially at 2.
\end{proof}

The previous lemma implies the following evaluation at singular forms.
\begin{lemma}
 For $p = 2$, when $(b,d) = 1$,
 \begin{align*}
  \hat{\sN}_{2}^r(\ell(b^2, 2bd, d^2)) &= \left\{\begin{array}{lll} \frac{5}{8} && 4|\ell\\
  \frac{3}{8} && 2\|\ell, 2|b\\
  \frac{1}{16} && 2\nmid \ell, b, 2|d\\
  -\frac{1}{16} && 2 \nmid \ell, b,d\\ 0 && \text{otherwise}\end{array}\right.\\
  \hat{\sN}_2^r(0,2m, n) &= \left\{\begin{array}{lll}\frac{5}{8} && 4|m,n\\ \frac{3}{8} && 4|m, 2\|n\\ \frac{1}{8} && 2\|m, 4|n\\ -\frac{1}{8} && 2\|m,n\\
  0 && \text{otherwise}\end{array}\right..
 \end{align*}
For $p > 2$, when $(b,d)=1$,
\begin{align*}
 \hat{\sN}_p^r(\ell(b^2, 2bd, d^2)) &= \left\{\begin{array}{lll}\frac{1}{p}+\frac{1}{p^2}-\frac{1}{p^3} && p^2|\ell\\ \frac{1}{p}-\frac{1}{p^3} && p\| \ell, p|b\\
\frac{1}{p^3}-\frac{1}{p^4} && p\nmid \ell, b\\
0 && \text{otherwise}                                             
\end{array}\right.\\
 \hat{\sN}_p^r(0,2m,n) &=\left\{\begin{array}{lll} \frac{1}{p}+\frac{1}{p^2} - \frac{1}{p^3} && p^2|m,n\\
 \frac{1}{p}-\frac{1}{p^3} && p^2|m, p\|n\\
 0 && \text{otherwise}
 \end{array}\right..
\end{align*}

\end{lemma}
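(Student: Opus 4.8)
The plan is to obtain all four stated evaluations by specializing the discrete Fourier transform computation of the previous lemma, using $\hat{\sN}_p^r = \hat{\one}_{\sM_1} + \hat{\one}_{\sM_2}$ (with $\sM_2$ replaced by $\sM_2'$ when $p = 2$). Writing a dual form as $\xi = (\xi_1, 2\xi_2, \xi_3)$, the two families of singular dual forms from Lemma~\ref{reducible_singular_fibration_lemma} are $\xi = (\ell b^2, 2\ell bd, \ell d^2)$ with $(b,d) = 1$ and $\xi = (0, 2m, n)$. The fact that makes the first family tractable is the \emph{exact} identity $\xi_1\xi_3 = \xi_2^2$ in $\zed$, together with the factorization $\xi_1\delta^2 - 2\xi_2\delta + \xi_3 = \ell\,(b\delta - d)^2$. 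The identity forces the congruence $\xi_1\xi_3 \equiv \xi_2^2$ — to any power of $p$ — that governs $\hat{\one}_{\sM_2}$ in the branch $p \nmid \xi_1$, so there $\hat{\one}_{\sM_2} = \tfrac{p-1}{p^4}$; the factorization shows that when $p \| \ell$ and $p \nmid b$ the quadratic in $\delta$ occurring in the ``$p \mid \xi$'' branch of that lemma is a unit times a perfect square with the single root $\delta \equiv d b^{-1} \bmod p$, forcing $\hat{\one}_{\sM_2} = 0$ there. For the second family $\xi_1 = 0$ is automatically divisible by $p$, so only the branches with $p \mid \xi_1$ occur.

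For $p$ odd I would run a case analysis on the $p$-adic valuations of $\ell$ and $b$ (respectively of $m$ and $n$): the valuation of $\xi_1 = \ell b^2$ selects which branch of the previous lemma supplies $\hat{\one}_{\sM_2}$, and the identity and factorization above evaluate it (yielding $\tfrac{p-1}{p^4}$, $0$, $-\tfrac1{p^3}$, or $\tfrac{p-1}{p^3}$ in the relevant subcases). To this I add $\hat{\one}_{\sM_1}$, which is $\tfrac1p$ exactly when $\xi_1 \equiv \xi_2 \equiv 0 \bmod p^2$ and $p \mid \xi_3$ — for the first family when $p^2 \mid \ell$ or ($p \| \ell$ and $p \mid b$), for the second when $p^2 \mid m$ and $p \mid n$ — and $0$ otherwise. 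Adding the two contributions reproduces the tables; the ``otherwise $= 0$'' entries are exactly the mixed configurations ($p \| \ell$ with $p \nmid b$; $p \nmid \ell$ with $p \mid b$, which forces $p \nmid d$; $p \| m$; and so on) in which both $\hat{\one}_{\sM_1}$ and $\hat{\one}_{\sM_2}$ vanish.

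The case $p = 2$ follows the same outline but needs honest reduction modulo $4$. Using that an odd square is $\equiv 1 \bmod 8$ and an even square $\equiv 0 \bmod 4$, and that $(b,d) = 1$ excludes $b, d$ both even, I would tabulate $\xi_1, \xi_2, \xi_3 \bmod 4$ against the parities of $\ell, b, d$ and substitute into the four-indicator formula for $\hat{\one}_{\sM_2'}$; the relation $\xi_3 - 2\xi_2 + \xi_1 = \ell(d-b)^2 \equiv 0 \bmod 4$ when $b \equiv d \bmod 2$ simplifies its last two terms. For the family $(0, 2m, n)$ one has $n - 2m \equiv n \bmod 4$ whenever $2 \mid m$, which collapses $\hat{\one}_{\sM_2'}$ to $\tfrac18\bigl(\one(4 \mid n) - \one(2 \| n)\bigr)\one(2 \mid m)$, and adding $\hat{\one}_{\sM_1} = \tfrac12\,\one(4 \mid m,\, 2 \mid n)$ produces the four nonzero entries. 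I expect the only genuine obstacle to be this $p = 2$ bookkeeping: in several cases two of the four indicators of $\hat{\one}_{\sM_2'}$ are simultaneously nonzero and partially cancel, so the mod-$4$ arithmetic has to be carried carefully to get the signs of the $\tfrac1{16}$ and $\tfrac18$ entries right.
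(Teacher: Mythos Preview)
Your approach is exactly what the paper intends: it states only that ``the previous lemma implies the following evaluation at singular forms,'' and your proposal carries out precisely that specialization, using $\hat{\sN}_p^r = \hat{\one}_{\sM_1} + \hat{\one}_{\sM_2}$ and the explicit branch formulas. The two structural observations you isolate---the exact identity $\xi_1\xi_3 = \xi_2^2$ for $\xi = \ell(b^2,2bd,d^2)$, and the factorization $\xi_1\delta^2 - 2\xi_2\delta + \xi_3 = \ell(b\delta-d)^2$---are the key points that make the case analysis close, and your identification of which $(\ell,b)$ and $(m,n)$ valuation patterns feed into which branch of the previous lemma is correct.
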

Note that $\hat{\sN}_2^r(\ell(b^2, 2bd, d^2))$ is not $B_\zed^+$ invariant but that the other functions are invariant under $B_\zed^+$.

Introduce the orbital zeta functions in the space of binary quadratic forms,
\begin{equation}
 Z_{q,r}(f, s_1, s_2) = \int_{B^+/B_\zed^+} \chi_1(g)^{s_1}\chi(g)^{s_2} \sum_{x \in L_r} \sN_{q,r}(x) f(\rho(g)\cdot x) dg.
\end{equation}
Also, define
\begin{align*}
Z_{q,r}^+(f,s_1, s_2) &= \int_{B^+/B_\zed^+, \chi(g)>1} \chi_1(g)^{s_1}\chi(g)^{s_2} \sum_{x \in L_r'} \sN_{q,r}(x) f(\rho(g)\cdot x) dg,\\
\hat{Z}_{q,r}^+(\hat{f}, s_1, s_2) &= \int_{B^+/B_\zed^+, \chi(g)>1} \chi_1(g)^{s_1}\chi(g)^{s_2} \sum_{x \in \hat{L}_r'} \hat{\sN}_{q,r}(x) \hat{f}\left(\rho^*(g) \cdot \frac{x}{q^2} \right)dg.
\end{align*}
Shintani \cite{S75} studied the poles and residues of this function in the case $q = 1$.  The following is the analogue of \cite{S75} Lemma 4.
\begin{lemma}
Suppose $f$ vanishes on the singular forms in $L_r$.  We have
\begin{align*}
 Z_{q,r}(f,s_1, s_2) &= Z_{q,r}^+(f, s_1, s_2) + \hat{Z}_{q,r}^+\left(\hat{f},s_1, \frac{3}{2}-s_1-s_2\right)\\& + \frac{1}{8}\sum_{q_1q_2 = q} q_2^{-2} \prod_{p|q_1} p^{-s_1} \left(1 - \frac{1}{p^2}\right) \frac{\zeta(s_1)}{s_2-1} (\Phi_+ + \Phi_-)(s_1-1, 0) \\
 &+\frac{ \Sigma\left(\hat{f}, s_1-1\right)}{8s_1+8s_2 -12} \sum_{q_1q_2q_3 = q'} \prod_{p|q_1}\left(\frac{1}{p} + \frac{1}{p^2} - \frac{1}{p^3} \right)\prod_{p|q_2} \left(\frac{1}{p} - \frac{1}{p^3} \right)\prod_{p|q_3} \left(\frac{1}{p^3}-\frac{1}{p^4} \right)\\&\times\sum_{\substack{\ell \in \zed \setminus \{0\}\\ (\ell, q_2q_3)=1}} \sum_{\substack{b=1\\ (b,q_3)=1}}^\infty \frac{\omega_{2,q}(\ell, b)\phi(q_2b) q^{2s_1}}{(q_1^2q_2^3b^2|\ell|)^{s_1}}.
\end{align*}

\end{lemma}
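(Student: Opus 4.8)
The plan is to follow Shintani's treatment of the $q=1$ case in \cite{S75} (his Lemma~4), carrying the sieve weights $\sN_{q,r}$ through each step. First I would use the hypothesis that $f$ vanishes on the singular forms of $L_r$ to replace $\sum_{x \in L_r}$ by $\sum_{x \in L_r'}$, and then cut the integral over $B^+/B_\zed^+$ along the hypersurface $\chi(g)=1$. On the region $\chi(g)>1$ the integrand is, term by term, that of $Z_{q,r}^+(f,s_1,s_2)$, so that piece is accounted for at once; everything else comes from the region $\chi(g)\le 1$.

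On $\chi(g)\le 1$ I would restore the singular forms to the lattice sum (they are killed by $f$, so this is harmless) and apply the $B^+$-equivariant Poisson summation formula in the form adapted to the $q^2$-periodic weight $\sN_{q,r}$: decomposing $x$ into residues modulo $q^2$, Poisson-summing the sublattice $q^2\sym^2(\zed)$, and recombining the residue sum through the discrete Fourier transform $\hat{\sN}_{q,r}$ (with the pairing $[x,y]=x_1y_3-\tfrac12 x_2y_2+x_3y_1$ and the Jacobian $\chi(g)^{-3/2}$ of the three-dimensional representation) turns $\sum_{x\in L_r}\sN_{q,r}(x)f(\rho(g)x)$ into $\chi(g)^{-3/2}\sum_{\xi\in\hat{L}_r}\hat{\sN}_{q,r}(\xi)\hat{f}(\rho^*(g)\xi/q^2)$. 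I would then perform the involution $g\mapsto g^\iota$ on $B^+$ that exchanges $\{\chi(g)\le 1\}$ with $\{\chi(g)>1\}$ and intertwines $\rho$ with $\rho^*$; tracking $\chi_1$ and $\chi$ through it sends the weight $\chi_1^{s_1}\chi^{s_2-3/2}$ to $\chi_1^{s_1}\chi^{3/2-s_1-s_2}$, and the part of the resulting $\xi$-sum supported on $\hat{L}_r'$ is exactly $\hat{Z}_{q,r}^+(\hat{f},s_1,\tfrac32-s_1-s_2)$, the second term of the claimed identity.

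It remains to extract the part of the $\xi$-sum supported on $\hat{L}_{0,r}$, which I would split according to Lemma~\ref{reducible_singular_fibration_lemma} as $\hat{L}_{0,r}=\{0\}\sqcup L_{0,r}(I)\sqcup\hat{L}_{0,r}(II)\sqcup\hat{L}_{0,r}(III)$. The contributions of $\xi=0$ and of type~I vanish, as in \cite{S75} -- the vanishing of $f$ on the singular set is imposed precisely to annihilate these boundary terms. For type~II I would unfold the $B_\zed^+$-orbit of $(0,2m,n)$ against the integral, evaluate the sieve factor using the displayed values of $\hat{\sN}_p^r(0,2m,n)$ in the lemma preceding the statement, and carry out the $\lambda$-, $t$- and $u$-integrations: summing over $m$ and over $p\mid q$ produces $\zeta(s_1)$ times $\sum_{q_1q_2=q}q_2^{-2}\prod_{p\mid q_1}p^{-s_1}(1-p^{-2})$, the $\lambda$-integral contributes the pole factor $\tfrac1{s_2-1}$, and the remaining integral over binary quadratic forms with vanishing first coefficient splits into its positive- and negative-discriminant parts and is evaluated by Lemma~\ref{Phi_0_lemma} as $\tfrac18(\Phi_++\Phi_-)(s_1-1,0)$ (the overall constant reflecting our normalization of the Haar measure on $B^+$, which differs from Shintani's by $\tfrac12$). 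For type~III I would unfold the $B_\zed^+$-orbit of the perfect squares $\ell(b^2,2bd,d^2)=\ell(bv+dw)^2$; the three cases of $\hat{\sN}_p^r(\ell(b^2,2bd,d^2))$ in the same lemma yield the Euler products $\prod_{p\mid q_1}(\tfrac1p+\tfrac1{p^2}-\tfrac1{p^3})$, $\prod_{p\mid q_2}(\tfrac1p-\tfrac1{p^3})$, $\prod_{p\mid q_3}(\tfrac1{p^3}-\tfrac1{p^4})$ with the coprimality constraints $(\ell,q_2q_3)=1$ and $(b,q_3)=1$, the failure of $B_\zed^+$-invariance at $p=2$ being absorbed into the weight $\omega_{2,q}(\ell,b)$ and the restriction of the product to the odd part $q'$ of $q$; the integral over the square forms collapses to $\Sigma(\hat{f},s_1-1)$ by the very definition of $\Sigma$, whose integrand $f(t,2u,t^{-1}u^2)$ parametrizes these orbits, the $\lambda$-integral gives $\tfrac1{8s_1+8s_2-12}$, and the leftover divisor sums over $\ell$ and $b$ form the displayed Dirichlet series. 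Adding the four pieces to $Z_{q,r}^+$ and $\hat{Z}_{q,r}^+$ gives the identity.

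The routine analytic points -- justifying Poisson summation on a half-region, convergence of all the integrals in a suitable range of $(s_1,s_2)$, and meromorphic continuation -- go through verbatim as in \cite{S75}. The real work, and the step I expect to be the main obstacle, is the arithmetic bookkeeping on the type~II and especially the type~III orbits: pushing the multiplicative weights $\hat{\sN}_{q,r}$ through the $B_\zed^+$-coset decompositions so that the local factors at each $p\mid q$ emerge with the correct coprimality side conditions, and handling the non-$B_\zed^+$-invariance of $\hat{\sN}_2^r$ on the type~III family; matching the resulting Euler products and divisor sums against the closed forms in the statement is where the computation must be done with care.
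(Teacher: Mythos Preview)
Your overall architecture---Poisson on the region $\chi(g)\le 1$, then stratify the dual singular set via Lemma~\ref{reducible_singular_fibration_lemma}---is exactly the paper's, and your handling of type~III is correct. But you have interchanged the roles of type~I and type~II, and with them the mechanisms that produce the two remaining terms.

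It is type~II, the $B_\zed^+$-orbits of $(0,2m,n)$, that \emph{vanishes}. After unfolding, the $u$-integral runs over all of $\bR$ against $\hat f\bigl(0,\,2m/\lambda^2,\,(n+2mq_1u)/(\lambda^2q_1t^2)\bigr)$; integrating in $u$ undoes one leg of the Fourier transform (recall the pairing couples $\xi_3$ with $x_1$) and returns $f$ evaluated on forms with first coordinate $0$, which is killed by the singular-vanishing hypothesis. The values $\hat\sN_p^r(0,2m,n)$ never need to be summed.

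Conversely, type~I, the points $(0,0,\ell)$, is what produces the $\zeta(s_1)(\Phi_++\Phi_-)(s_1-1,0)/(s_2-1)$ term. These points are $N$-fixed, so the $u$-integral over $[0,1)$ is trivial; one then \emph{Poisson-sums the $\ell$-sum itself}, turning $\sum_{\ell\in\zed}\hat f(0,0,\ell/(t^2\lambda^2q_1))$ into $t^2\lambda^2q_1\sum_{\ell\in\zed}\int f(t^2\lambda^2q_1\ell,x_2,x_3)\,dx_2\,dx_3$. The $\ell=0$ term here (which simultaneously accounts for the $\xi=0$ contribution) is what the singular hypothesis kills, and the $\ell\neq 0$ terms yield, after the $t$- and $\lambda$-integrals, the factor $\zeta(s_1)/(s_2-1)$ together with $\int f(x)|x_1|^{s_1-1}\,dx=(\Phi_++\Phi_-)(f,s_1-1,0)$ and the displayed $q$-product. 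Lemma~\ref{Phi_0_lemma} plays no role here; $\Phi_\pm$ enters only through its definition.
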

\begin{proof}
 By Poisson summation (see \cite{S75}, Lemma 4)
 \begin{align*}
  Z_{q,r}(f,s_1,s_2) &= Z_{q,r}^+(f, s_1, s_2) + \hat{Z}_{q,r}^+\left(\hat{f}, s_1, \frac{3}{2}-s_1-s_2 \right)\\
  &+ \int_{B^+/B_{\zed}^+, \chi(g) \leq 1} \chi_1(g)^{s_1}\chi(g)^{s_2-\frac{3}{2}} \sum_{x \in \hat{L}_r \setminus \hat{L}_r'} \hat{\sN}_{q}^r(x)\hat{f}(\rho^*(g)\cdot x) dg. 
 \end{align*}
 First treat the singular forms in $\hat{L}_{0,r}(III)$, which recall has the decomposition
\begin{align*}
 \hat{L}_{0,r}(III) = \bigsqcup_{\ell \neq 0} \bigsqcup_{b>0} \bigsqcup_{\substack{0 \leq d < b\\ (b,d)=1}} \bigsqcup_{\gamma \in B_\zed^+} \{\gamma \cdot \ell(b^2, 2bd, d^2)\}.
\end{align*}
Let $\omega_{2,q}(\ell, b)$ have value 1 if $2\nmid q$ and have the values given if $2|q$,
\begin{align*}
 \omega_{2,q}(\ell,b) = \left\{\begin{array}{lll} \frac{5}{8} && 4|\ell\\ \frac{3}{8} && 2\|\ell, 2|b\\ 0 && \text{otherwise}\end{array}\right..
\end{align*}
Factor the odd part $q' = \frac{q}{(q,2)}$ of $q$ as $q_1q_2q_3$ where $q_1^2|\ell$, if $p|q_2$ then $p\|\ell$ and $q_2|b$ and $\gcd(q_3, \ell b)=1$.  Thus,
\begin{equation}
 \hat{\sN}_{q'}^r(\ell(b^2, 2bd, d^2)) = \prod_{p|q_1}\left(\frac{1}{p} +\frac{1}{p^2} - \frac{1}{p^3}\right) \prod_{p|q_2}\left(\frac{1}{p}-\frac{1}{p^3}\right) \prod_{p|q_3} \left(\frac{1}{p^3} - \frac{1}{p^4}\right).
\end{equation}
Thus the contribution from $\hat{L}_{0,r}(III)$ is
\begin{align*}
 &\sum_{q_1q_2q_3 = q'} \prod_{p|q_1}\left(\frac{1}{p} + \frac{1}{p^2} - \frac{1}{p^3} \right)\prod_{p|q_2} \left(\frac{1}{p} - \frac{1}{p^3} \right)\prod_{p|q_3} \left(\frac{1}{p^3}-\frac{1}{p^4} \right)\int_{B^+, \chi(g) \leq 1} \chi_1(g)^{s_1}\chi(g)^{s_2 - \frac{3}{2}}\\
&\times  \sum_{\substack{\ell \in \zed \setminus \{0\}\\ (\ell, q_2q_3)=1}} \sum_{\substack{ b= 1\\ (b, q_3)=1}}^\infty \omega_{2,q}(\ell,b)\sum_{\substack{0 \leq d < q_2b\\ (d, q_2b) = 1}}\hat{f}\left(\rho^*(g) \cdot \frac{q_1^2q_2\ell}{q^2}(q_2^2 b^2, 2q_2bd, d^2) \right) dg\\
&=\sum_{q_1q_2q_3 = q'} \prod_{p|q_1}\left(\frac{1}{p} + \frac{1}{p^2} - \frac{1}{p^3} \right)\prod_{p|q_2} \left(\frac{1}{p} - \frac{1}{p^3} \right)\prod_{p|q_3} \left(\frac{1}{p^3}-\frac{1}{p^4} \right)\int_0^1 \frac{d\lambda}{\lambda}\int_0^\infty \frac{dt}{t^3} \int_{-\infty}^\infty du\\
&\times (\lambda t)^{2s_1} \lambda^{4s_2 - 6}\sum_{\substack{\ell \in \zed \setminus \{0\}\\ (\ell, q_2q_3)=1}} \sum_{\substack{ b= 1\\ (b, q_3)=1}}^\infty \frac{\omega_{2,q}(\ell,b)\phi(q_2b)}{q_2b} \hat{f}\left(\frac{q_1^2q_2\ell}{q^2\lambda^2}\left(t^2 q_2^2 b^2, 2q_2b u, \frac{u^2}{t^2} \right) \right).
\end{align*}
After a change in variable in $u, t$, this is given by
\begin{align*}
 &\frac{1}{2} \sum_{q_1q_2q_3 = q'} \prod_{p|q_1}\left(\frac{1}{p} + \frac{1}{p^2} - \frac{1}{p^3} \right)\prod_{p|q_2} \left(\frac{1}{p} - \frac{1}{p^3} \right)\prod_{p|q_3} \left(\frac{1}{p^3}-\frac{1}{p^4} \right)\\
 &\times \int_0^1 \frac{d\lambda}{\lambda} \lambda^{2s_1 + 4s_2 - 6}\int_0^\infty \frac{dt}{t} t^{s_1-1} \int_{-\infty}^\infty du \sum_{\substack{\ell \in \zed \setminus\{0\}\\ (\ell, q_2q_3)=1}} \sum_{\substack{b = 1\\ (b,q_3)=1}}^\infty \frac{\omega_{2,q}(\ell, b) \phi(q_2b)}{(q_2b)^{s_1}}\hat{f}\left(\frac{q_1^2q_2^2 \ell b}{q^2 \lambda^2}\left(t, 2u, \frac{u^2}{t} \right) \right)\\
 &=\frac{1}{2}\sum_{q_1q_2q_3 = q'} \prod_{p|q_1}\left(\frac{1}{p} + \frac{1}{p^2} - \frac{1}{p^3} \right)\prod_{p|q_2} \left(\frac{1}{p} - \frac{1}{p^3} \right)\prod_{p|q_3} \left(\frac{1}{p^3}-\frac{1}{p^4} \right)\\
 &\times \int_0^1 \frac{d\lambda}{\lambda} \lambda^{4s_1 + 4s_2 - 6}\int_0^\infty \frac{dt}{t} t^{s_1-1} \int_{-\infty}^\infty du \sum_{\substack{\ell \in \zed \setminus\{0\}\\ (\ell, q_2q_3)=1}} \sum_{\substack{b = 1\\ (b,q_3)=1}}^\infty \frac{\omega_{2,q}(\ell, b) \phi(q_2b)q^{2s_1}}{(q_1^2q_2^3b^2|\ell|)^{s_1}}\hat{f}\left(\sgn(\ell)\left(t, 2u, \frac{u^2}{t} \right) \right)\\
 &=\frac{ \Sigma\left(\hat{f}, s_1-1\right)}{8s_1+8s_2 -12} \sum_{q_1q_2q_3 = q'} \prod_{p|q_1}\left(\frac{1}{p} + \frac{1}{p^2} - \frac{1}{p^3} \right)\prod_{p|q_2} \left(\frac{1}{p} - \frac{1}{p^3} \right)\prod_{p|q_3} \left(\frac{1}{p^3}-\frac{1}{p^4} \right)\\&\times\sum_{\substack{\ell \in \zed \setminus \{0\}\\ (\ell, q_2q_3)=1}} \sum_{\substack{b=1\\ (b,q_3)=1}}^\infty \frac{\omega_{2,q}(\ell, b)\phi(q_2b) q^{2s_1}}{(q_1^2q_2^3b^2|\ell|)^{s_1}}.
\end{align*}

It remains to handle the terms in $\hat{L}_{0,r}(I)$ and $\hat{L}_{0,r}(II)$.  In the case of $\hat{L}_{0,r}(II)$, let $\varpi_{2,q}(m,n)$ be 1 if $2 \nmid q$ and $\hat{\sN}_2^r(0, 2m,n)$ otherwise.  Recall that for $p > 2$, \[\hat{\sN}_p^r(0,2m,n) = \left\{\begin{array}{lll} \frac{1}{p} +\frac{1}{p^2} - \frac{1}{p^3} && p^2|m,n\\ \frac{1}{p} - \frac{1}{p^3} && p^2|m, p\|n\\ 0 && \text{otherwise}\end{array}\right..\]
The contribution of $\hat{L}_{0,r}(II)$ is
\begin{align*}
& \int_{B^+/B_{\zed}^+} \chi(g)^{s_1}\chi(g)^{s_2 -\frac{3}{2}} \sum_{m \neq 0} \sum_{0 \leq n < |2m|} \sum_{\gamma \in B_\zed^+} \hat{\sN}_q^r(0, 2m, n) \hat{f}\left(g\cdot \gamma \left(0, \frac{2m}{q^2}, \frac{n}{q^2}\right)\right) dg\\
&= \sum_{q_1q_2 = \frac{q}{(q,2)}} \frac{1}{q_2^2} \prod_{p|q_2} \left(\frac{1}{p}-\frac{1}{p^3}\right) \int_0^1 \lambda^{2s_1+4s_2-6} \frac{d\lambda}{\lambda} \int_0^\infty t^{2s_1-2}\frac{dt}{t} \int_{-\infty}^\infty du \\&\times\sum_{m \neq 0} \sum_{0 \leq n < |2m|q_1}\varpi_{2,q}(m,n) \hat{f}\left(0, \frac{2m}{\lambda^2}, \frac{n + 2mq_1u}{\lambda^2 q_1 t^2} \right).
\end{align*}
Integration in $u$ vanishes, since $f$ vanishes on the singular set. (check if need to regularize this)

For $p \geq 2$, \[\hat{\sN}_p^r(0,0,\ell) = \left\{\begin{array}{lll} \frac{1}{p} + \frac{1}{p^2} - \frac{1}{p^3} && p^2 |\ell\\ \frac{1}{p} - \frac{1}{p^3} && p\| \ell\\ 0 && \text{otherwise} \end{array} \right..\]
Integrating away the parabolic direction obtains the contribution
\begin{align*}
 &\sum_{q_1q_2 = q} q_2^{-2} \prod_{p|q_1} \left(\frac{1}{p} - \frac{1}{p^3}\right)\int_0^1 \lambda^{2s_1 + 4s_2 - 6}\frac{d\lambda}{\lambda}\int_0^\infty t^{2s_1-2} \frac{dt}{t} \sum_{\ell \in \zed}\hat{f}\left(0,0, \frac{\ell}{t^2\lambda^2 q_1} \right)\\
 &= \sum_{q_1q_2 = q} q_2^{-2} \prod_{p|q_1} \left(1 - \frac{1}{p^2}\right) \int_0^1 \lambda^{2s_1 + 4s_2-4} \frac{d\lambda}{\lambda} \int_0^\infty t^{2s_1} \frac{dt}{t} \sum_{\ell \in \zed \setminus \{0\}} \int_{x_2, x_3 \in \bR}f(t^2 \lambda^2 q_1 \ell,x_2, x_3)\\
 &= \frac{1}{8}\sum_{q_1q_2 = q} q_2^{-2} \prod_{p|q_1} p^{-s_1} \left(1 - \frac{1}{p^2}\right) \frac{\zeta(s_1)}{s_2-1} (\Phi_+ + \Phi_-)(s_1-1, 0)  .
\end{align*}
\end{proof}

\begin{lemma}
 Let $f = bv^2w + cvw^2 + dw^3$ have $D^2 = c^2 -4bd$ which is square.  Let $p$ be prime.  Then $f$ is non-maximal at $p$ if and only if either $p|b$ or $p|D$.
\end{lemma}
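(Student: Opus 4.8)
I would begin by writing $f = w\cdot Q(v,w)$ with $Q(v,w) = bv^2 + cvw + dw^2$, so that $\mathrm{disc}(Q) = c^2 - 4bd = D^2$, and reduce the statement to the classification of reducible non-maximal forms established above: $f$ is non-maximal at $p$ if and only if $f \bmod p^2$ lies in $\sM_1 \sqcup \sM_2$ (with $\sM_2$ replaced by $\sM_2'$ when $p=2$); that is, if and only if $p \mid b$, or $p$ is odd and $Q \equiv \beta(v+\delta w)^2 \bmod p^2$ for some unit $\beta$ and some $\delta$, or $p=2$ and $(b,c,d)\bmod 4$ realizes a form of $\sM_2'$. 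If $p \mid b$ then $f$ already has $p^2 \mid a = 0$ and $p \mid b$, so $f$ is non-maximal at $p$, and the right-hand condition "$p \mid b$ or $p \mid D$" also holds; so it suffices to treat $p \nmid b$, where I must show $f$ is non-maximal at $p$ precisely when $p \mid D$.

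For $p$ odd and $p \nmid b$ this follows by completing the square mod $p^2$: since $2b$ is a unit,
\[
 Q \equiv b\bigl(v + (2b)^{-1}c\, w\bigr)^2 + \frac{4bd - c^2}{4b}\,w^2 \pmod{p^2},
\]
so $Q$ is a unit times the square of a linear form mod $p^2$ exactly when $4bd - c^2 \equiv 0 \bmod p^2$, i.e. when $D^2 \equiv 0 \bmod p^2$, i.e. when $p \mid D$; here the only role of the hypothesis that $c^2-4bd$ is a square is to make the integer $D$ meaningful. For $p=2$ and $b$ odd I would split on the parity of $c$. If $c$ is odd then $D^2 = c^2 - 4bd$ is odd, so $D$ is odd and $2 \nmid D$ and the right-hand condition fails; and since every form in $\sM_2'$ has even middle coefficient, $(b,c,d)\bmod 4$ cannot realize such a form, so $f$ is maximal at $2$ — both sides fail, as required.

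The remaining case, $p=2$ with $b$ odd and $c$ even, is the crux. Here $D^2 = c^2-4bd \equiv 0 \bmod 4$, so $D$ is even and $2\mid D$, and I must show $f$ is non-maximal at $2$. Because $D \in \zed$ and $c,D$ are both even, I have the \emph{integral} factorization
\[
 bQ = \left(bv + \tfrac{c-D}{2}w\right)\left(bv + \tfrac{c+D}{2}w\right);
\]
reducing mod $4$ and using that $b$ is a unit gives $Q \equiv b(v+\rho_1 w)(v+\rho_2 w) \bmod 4$ with $\rho_1,\rho_2 \in \zed$ and $\rho_1+\rho_2 \equiv b^{-1}c$ even, hence $\rho_1 \equiv \rho_2 \bmod 2$. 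Translating $(\rho_1,\rho_2)$ by a common even integer changes $(v+\rho_1 w)(v+\rho_2 w)$ by a multiple of $4$ (because $\rho_1+\rho_2$ is even), so I may take $\rho_1 \in \{0,1\}$; then $\rho_2 \equiv \rho_1 + 2\gamma \bmod 4$ with $\gamma \in \{0,1\}$, and with $\beta := b\bmod 4 \in \{1,3\}$ I obtain $f \equiv \beta\,w(v+\rho_1 w)(v+(\rho_1+2\gamma)w) \bmod 4$, which is exactly the shape defining $\sM_2'$. Hence $f$ is non-maximal at $2$. I expect this last case to be the main obstacle: the perfect-square hypothesis is genuinely needed here — for instance $(b,c,d)=(1,0,1)$ has $c$ even but $c^2-4bd = -4$ is not a square, and $w(v^2+w^2)$ is maximal at $2$ — and it enters precisely through the existence of the integral linear factorization of $bQ$ that survives reduction mod $4$.
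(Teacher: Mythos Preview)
Your proof is correct and follows essentially the same route as the paper: reduce to the classification of reducible non-maximal forms via $\sM_1$, $\sM_2$, $\sM_2'$, handle $p\mid b$ trivially, and for $p\nmid b$ identify membership in $\sM_2$ (resp.\ $\sM_2'$) with $p\mid D$. For odd $p$ your completing-the-square argument is exactly the paper's ``double root mod $p^2$'' in explicit form.

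Where you differ is at $p=2$. The paper simply asserts that if $2\mid D$ then $f'$ has a double root modulo $2$ and hence lies in $\sM_2'$; but $\sM_2'$ is a condition modulo $4$, so this step, as written, suppresses a verification. Your argument supplies it cleanly: the square hypothesis gives the integral factorization $bQ=(bv+\tfrac{c-D}{2}w)(bv+\tfrac{c+D}{2}w)$, which survives reduction mod $4$ and, after normalizing $\rho_1$ and $\rho_2$, lands exactly in the $\sM_2'$ list. This is more transparent than a brute-force check of residues mod $16$, and your remark that the square hypothesis is genuinely needed (via the example $(b,c,d)=(1,0,1)$) is on point. So your treatment of $p=2$ is a modest improvement in rigor over the paper's, while the overall strategy is the same.
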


\begin{proof}
First suppose $p$ is odd.
 If $p|b$ then the form meets condition $\sM_1$.  If $p|D$ then the quadratic form $f' = bv^2 + cvw + dw^2$ has a double root modulo $p^2$, so that condition $\sM_2$ is met.  Conversely, either $p|b$ or $p|D$ if condition $\sM_1$ or $\sM_2$ is met. 
 
 Next if $p = 2$ and $p|b$ then $f$ satisfies condition $\sM_1$.  If $p|D$ then $f'$ has a double root modulo 2, and hence satisfies condition $\sM_2'$.  Conversely, if either $\sM_1$ or $\sM_2'$ is satisfied then $2|bD$.
\end{proof}

As in \cite{S75} let $A(m,n)$ be the number of distinct solutions to the congruence $x^2 \equiv n \bmod m$. Let, see \cite{S75} Lemma 7,
\begin{align*}
 Z_{q,r,\square}(f, s_1, s_2) &= \int_{B^+/B_{\zed}^+}\chi_1(g)^{s_1}\chi(g)^{s_2} \sum_{x \in L_r, \Disc(x) = \square} \sN_{q,r}(x)f(\rho(g)\cdot x) dg\\
 Z_{q,r,\square}(f,s_1,s_2) &= \frac{1}{2}\Xi_q(s_1, s_2)\Phi_+(f, s_1-1, s_2-1).
\end{align*}
By the previous lemma,
\begin{align*}
 \Xi_q(s_1, s_2) &= \frac{1}{2} \sum_{n,m=1}^\infty \frac{A(4m,n^2)}{m^{s_1}n^{2s_2}}\one(q|mn)\\
 &= \frac{1}{2} \sum_{q_1q_2 = q} \sum_{\substack{m,n=1\\ \GCD(m, q_2)=1}}^\infty \frac{A(4q_1m, q_2^2n^2)}{(q_1m)^{s_1}(q_2n)^{2s_2}}.
\end{align*}
\begin{lemma}
 Let $p >2$ be prime.  Let $j \geq 0$ and $\alpha \geq 1$ and let $p^j$ be the largest power of $p$ dividing $n$.  The number of solutions of $x^2 \equiv n^2 \bmod p^{\alpha}$ is
 \begin{align*}
  A(p^{2\alpha}, n^2) &= \left\{\begin{array}{lll}p^\alpha && j \geq \alpha\\ 2p^j && j < \alpha \end{array}\right.\\
  A(p^{2\alpha+1}, n^2) &= \left\{\begin{array}{lll} p^{\alpha} && j > \alpha\\ 2p^j && j \leq \alpha \end{array}\right..
 \end{align*}
For $\alpha \geq 2$, the number of solutions to $x^2 \equiv n^2 \bmod 2^{\alpha}$ is
\begin{align*}
 A(2^{2\alpha}, n^2) &= \left\{\begin{array}{lll}2^\alpha && j \geq \alpha-2\\ 2^{j+2} && j < \alpha -2  \end{array} \right.\\
 A(2^{2\alpha + 1}, n^2) &= \left\{ \begin{array}{lll}2^\alpha && j \geq \alpha\\ 2^{j+2} && j < \alpha \end{array} \right..
\end{align*}

\end{lemma}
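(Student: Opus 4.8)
The plan is to analyze the single congruence $x^2 \equiv n^2 \bmod p^m$, with $m = 2\alpha$ or $m = 2\alpha+1$, by splitting on whether $2j \geq m$ or $2j < m$; for a given $n$ (hence a given $j = v_p(n)$) exactly one of these holds, and in each regime the count reduces to an elementary computation with $p$-adic valuations. Throughout I would write $n = p^j n'$ with $p \nmid n'$.

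First, the \emph{degenerate} regime $2j \geq m$: here $p^m \mid n^2$, so the congruence is simply $v_p(x^2) \geq m$, i.e.\ $v_p(x) \geq \lceil m/2\rceil$, which has exactly $p^{\lfloor m/2\rfloor} = p^\alpha$ solutions modulo $p^m$ (the floor is $\alpha$ for both parities of $m$). Next, the \emph{nondegenerate} regime $2j < m$: any solution must have $v_p(x^2) = v_p(n^2) = 2j$, for otherwise $v_p(x^2 - n^2) = \min(v_p(x^2), 2j) \leq 2j < m$, a contradiction; hence $v_p(x) = j$, and writing $x = p^j u$ with $p \nmid u$ turns the congruence into $u^2 \equiv n'^2 \bmod p^{k}$ with $k := m - 2j \geq 1$, a congruence among units. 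If this has $c_k$ unit solutions modulo $p^k$, then each lifts in $p^{(m-j)-k} = p^j$ ways to a residue $x = p^j u \bmod p^m$, so the nondegenerate count equals $c_k\, p^j$.

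For $p$ odd, $(\zed/p^k\zed)^\times$ is cyclic of even order for every $k \geq 1$, so squaring is exactly two-to-one onto its image and $c_k = 2$, giving a nondegenerate count of $2p^j$. Since $2j < m$ means $j < \alpha$ when $m = 2\alpha$ and $j \leq \alpha$ when $m = 2\alpha+1$, while $2j \geq m$ means $j \geq \alpha$, resp.\ $j > \alpha$, combining the two regimes yields the two displayed formulas immediately.

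For $p = 2$ the only change is the value of $c_k$. Using that $n'$ is odd, so $n'^2 \equiv 1 \bmod 8$, together with $(\zed/2^k\zed)^\times \cong \zed/2\zed \times \zed/2^{k-2}\zed$ for $k \geq 3$ (where squaring is four-to-one), one gets $c_1 = 1$, $c_2 = 2$, and $c_k = 4$ for $k \geq 3$; hence the nondegenerate count is $2^j$, $2^{j+1}$, or $2^{j+2}$ according as $m - 2j$ equals $1$, $2$, or is $\geq 3$. The last step is to collate these with the degenerate value $2^\alpha$: for $m = 2\alpha$ with $\alpha \geq 2$, the value is $2^\alpha$ when $j \geq \alpha$, then $2^{j+1} = 2^\alpha$ when $j = \alpha - 1$, then $2^{j+2} = 2^\alpha$ when $j = \alpha - 2$, and $2^{j+2}$ when $j < \alpha - 2$, which is exactly $A(2^{2\alpha}, n^2) = 2^\alpha$ for $j \geq \alpha - 2$ and $2^{j+2}$ otherwise; the case $m = 2\alpha+1$ is identical with the boundary sliding to $j = \alpha$, since there $m - 2j = 1$ precisely when $j = \alpha$. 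I expect this $p = 2$ boundary bookkeeping — reconciling the three nondegenerate values $2^j, 2^{j+1}, 2^{j+2}$ with the degenerate cap $2^\alpha$ via $2^{(\alpha-1)+1} = 2^{(\alpha-2)+2} = 2^\alpha$ — to be the only (mild) obstacle; everything else follows at once from the structure of $(\zed/p^k\zed)^\times$.
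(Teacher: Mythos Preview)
Your proof is correct. The paper actually states this lemma without proof, so there is nothing to compare against; your argument via the dichotomy $2j \ge m$ versus $2j < m$, the reduction $x = p^j u$ to a unit congruence $u^2 \equiv n'^2 \bmod p^{m-2j}$, and the use of the structure of $(\zed/p^k\zed)^\times$ to evaluate $c_k$ is exactly the natural route and all the case bookkeeping (including the absorption of the boundary values $k=1,2$ at $p=2$ into the $2^\alpha$ branch) is handled correctly.
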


\begin{lemma}
 Let $\RE(s_1) = \sigma_1, \RE(s_2) = \sigma_2$.  In the domain $\sigma_1 > 1$, $2\sigma_2 > 1$, $2\sigma_1 + 2\sigma_2 -1 > 1$, 
 \begin{align*}
  |\Xi_q(s_1, s_2)| \ll \zeta(\sigma_1)^2\zeta(2\sigma_2)\zeta(2\sigma_1 + 2\sigma_2 -1) \sum_{q_0q_1q_2=q} \frac{1}{q_0^{2\sigma_1 + 2\sigma_2-1}q_1^{\sigma_1}q_2^{2\sigma_2}}.
 \end{align*}

\end{lemma}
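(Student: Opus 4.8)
\textit{Proof plan.} The plan is to pass to real arguments, expand $\Xi_q$ as an Euler product using the multiplicativity of $(m,n)\mapsto A(4m,n^2)$, dominate the ``main'' product by the three zeta factors through a termwise comparison of local factors, and peel off the sum over $q_0q_1q_2=q$ from the local factors at primes dividing $q$, using crucially that the sieve modulus $q$ is squarefree. Since $A(4m,n^2)\ge 0$ and $|m^{-s_1}|=m^{-\sigma_1}$, $|n^{-2s_2}|=n^{-2\sigma_2}$, the defining Dirichlet series gives $|\Xi_q(s_1,s_2)|\le\Xi_q(\sigma_1,\sigma_2)$, so it suffices to bound $\Xi_q$ at the real point $(\sigma_1,\sigma_2)$ in the region $\sigma_1>1$, $2\sigma_2>1$, $2\sigma_1+2\sigma_2-1>1$, where all terms are nonnegative. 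By the Chinese Remainder Theorem $A(4m,n^2)=\prod_p A_p(v_p(4m),v_p(n))$, where $A_p(\beta,j)$ is the number of square roots of $p^{2j}\cdot(\text{unit})^2$ modulo $p^\beta$; these are read off from the formulas for $A(p^{2\alpha},n^2)$ and $A(p^{2\alpha+1},n^2)$ in the preceding lemma (for $p=2$ the first argument is $v_2(m)+2$). As $q$ is squarefree, $q\mid mn$ is exactly the condition that $(v_p(m),v_p(n))\neq(0,0)$ for every $p\mid q$. Hence
$\Xi_q(\sigma_1,\sigma_2)=\tfrac12\bigl(\prod_p L_p\bigr)\prod_{p\mid q}\bigl(1-c_p/L_p\bigr)$,
where $L_p=\sum_{a,b\ge0}A_p(v_p(4p^a),b)\,p^{-a\sigma_1-2b\sigma_2}$ is the local factor and $c_p\in\{1,2\}$ is its constant term (the $(a,b)=(0,0)$ contribution).

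Next I would bound $\prod_p L_p$. Writing $x=p^{-\sigma_1}$, $y=p^{-2\sigma_2}$, the $p$-th Euler factor of $\zeta(s_1)^2\zeta(2s_2)\zeta(2s_1+2s_2-1)$ at the real point equals $(1-x)^{-2}(1-y)^{-1}(1-px^2y)^{-1}=\sum_{a,b\ge0}\Bigl(\sum_{j=0}^{\min(\lfloor a/2\rfloor,b)}(a-2j+1)p^j\Bigr)p^{-a\sigma_1-2b\sigma_2}$. Using the explicit formulas for $A_p$ one verifies the termwise inequality $A_p(a,b)\le\sum_{j=0}^{\min(\lfloor a/2\rfloor,b)}(a-2j+1)p^j$ for all $a,b\ge0$ at odd $p$; at $p=2$ one instead notes that $L_2$ is bounded by an absolute constant (its series converges geometrically in the region, since $2^{-\sigma_1},2^{-2\sigma_2},2^{-(2\sigma_1+2\sigma_2-1)}<\tfrac12$) while the corresponding $\zeta$ Euler factor is $\ge1$. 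Therefore $\prod_p L_p\ll\zeta(\sigma_1)^2\zeta(2\sigma_2)\zeta(2\sigma_1+2\sigma_2-1)$ with an absolute implied constant, and in particular this product converges in the region. This step is Shintani's computation of $\Xi_q$ for $q=1$ (cf.\ \cite{S75}, Lemma 7), recast so that the order and location of the poles are transparent.

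Finally I would treat the factor over $p\mid q$. From $(1-t)^{-1}\le1+2t$ for $0\le t\le\tfrac12$ and the termwise bound above, $L_p-c_p\le L_p-1\le(1-x)^{-2}(1-y)^{-1}(1-px^2y)^{-1}-1\ll p^{-\sigma_1}+p^{-2\sigma_2}+p^{-(2\sigma_1+2\sigma_2-1)}$ uniformly; since $L_p\ge1$ this gives $1-c_p/L_p\le L_p-c_p\ll p^{-\sigma_1}+p^{-2\sigma_2}+p^{-(2\sigma_1+2\sigma_2-1)}$. Multiplying over $p\mid q$ and expanding the product — legitimate precisely because $q$ is squarefree, so each prime enters exactly one of three factors — yields $\prod_{p\mid q}(1-c_p/L_p)\ll\prod_{p\mid q}\bigl(p^{-(2\sigma_1+2\sigma_2-1)}+p^{-\sigma_1}+p^{-2\sigma_2}\bigr)=\sum_{q_0q_1q_2=q}q_0^{-(2\sigma_1+2\sigma_2-1)}q_1^{-\sigma_1}q_2^{-2\sigma_2}$. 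Combined with the previous paragraph this proves the lemma.

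The main obstacle is the termwise comparison $A_p(a,b)\le\sum_{j=0}^{\min(\lfloor a/2\rfloor,b)}(a-2j+1)p^j$ in the middle step, i.e.\ arranging Shintani's local computation so that $\prod_pL_p$ is dominated by exactly $\zeta(\sigma_1)^2\zeta(2\sigma_2)\zeta(2\sigma_1+2\sigma_2-1)$ and nothing larger. A crude bound such as $A_p(a,b)\le2\,p^{\min(\lfloor a/2\rfloor,b)}$ is \emph{not} good enough: it would put coefficient $2$ on $p^{-(2\sigma_1+2\sigma_2-1)}$ and thus produce a factor $\zeta(2\sigma_1+2\sigma_2-1)^2$, which is not dominated by $\zeta(\sigma_1)^2\zeta(2\sigma_2)\zeta(2\sigma_1+2\sigma_2-1)$ as $\sigma_1+\sigma_2\to1^+$. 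One must use the exact values (for instance $A_p(2,j)=p$, not $2p$, for $j\ge1$), and handle $p=2$ separately because of the factor $4$ in $A(4m,n^2)$.
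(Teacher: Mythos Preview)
The paper states this lemma without proof, so there is nothing to compare against; your argument stands on its own and is essentially correct. The reduction to real parts, the Euler factorization using multiplicativity of $A(4m,n^2)$, the identification of the condition $q\mid mn$ (for squarefree $q$) with ``remove the constant term at each $p\mid q$,'' and the final expansion of $\prod_{p\mid q}(p^{-\sigma_1}+p^{-2\sigma_2}+p^{-(2\sigma_1+2\sigma_2-1)})$ into the sum over $q_0q_1q_2=q$ are all sound. Your termwise inequality $A_p(a,b)\le\sum_{j\le\min(\lfloor a/2\rfloor,b)}(a-2j+1)p^j$ checks out against the explicit values in the preceding lemma, and your remark that the crude bound $2p^{\min(\lfloor a/2\rfloor,b)}$ would produce the wrong zeta factor is a nice diagnostic.

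One small point to tighten: in the last step you bound $1-c_p/L_p\ll p^{-\sigma_1}+p^{-2\sigma_2}+p^{-(2\sigma_1+2\sigma_2-1)}$ with an absolute implied constant $C$, and then multiply over $p\mid q$. This produces an extra factor $C^{\omega(q)}$ in front of the sum over $q_0q_1q_2=q$, so strictly speaking your bound is $C^{\omega(q)}$ times the right-hand side rather than an absolute constant times it. This is harmless for the application (it is $O_\epsilon(q^\epsilon)$ and the lemma is only used to see holomorphy and polynomial $q$-dependence in $\RE(s)>3/4$), but if you want the statement exactly as written you should either note that Shintani's explicit evaluation gives $\prod_p L_p=\zeta(\sigma_1)^2\zeta(2\sigma_2)\zeta(2\sigma_1+2\sigma_2-1)/\zeta(2\sigma_1)$ up to a bounded $2$-adic factor (so $L_p\le$ the zeta local factor with constant $1$ for odd $p$), or simply absorb $C^{\omega(q)}$ into the implied constant by adjusting the exponents by $\epsilon$.
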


\section{Twisted Shintani $\sL$-functions}\label{twisted_zeta_function_section}
As in the previous work \cite{H17}, we rely on automorphic twists of the zeta functions introduced by Shintani.

Define, for squarefree $q$,
\begin{align}
 \sL^+_q(\E_r, s) &= \sum_{m=1}^\infty \frac{1}{m^s}  \sum_{i=1}^{h(m)}  \frac{\sN_q(x_{i,m})\E_r(g_{i,m})}{|\Gamma(i,m)|}\\
 \notag \sL^-_q(\E_r, s) &= \sum_{m=1}^\infty \frac{1}{m^s} \sum_{i=1}^{h(-m)}\sN_q(x_{i,-m}) \E_r(g_{i,-m}).
\end{align}
As in the previous section, let $f_G$ be defined on $G^1$ by $f_G(g) = \exp\left(-\tr g^t g\right)$ and extend $f_G$ to $G^+$ independent of the determinant.  Let $f_D(x) \in C_c^\infty(\bR^+)$.  Define 
\begin{equation}
 f_{\pm}(g\cdot x_{\pm}) = f_G(g)f_D(\chi(g)).
\end{equation}
The twisted orbital integrals are given by
\begin{align}
 Z^{\pm}_q(f_{\pm}, \E_r, L; s) &= \int_{G^+/\Gamma} \chi(g)^s \E_r(g^{-1}) \sum_{x \in L}\sN_q(x) f_{\pm}(g\cdot x) dg.
 \end{align}

 \begin{lemma}\label{factorization_lemma}
  In $\RE(s)>1$, 
  \begin{equation}
   Z^{\pm}_q(f_{\pm},\E_r, L;s) = \frac{\sqrt{\pi}K_{\frac{z}{2}}(2)}{12} \sL^{\pm}_q(\E_r,s)\tilde{f}_D(s).
  \end{equation}

 \end{lemma}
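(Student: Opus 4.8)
The plan is a standard unfold--then--convolve computation. Since $f_\pm$ is supported on the open orbit $V_\pm$, only the nonsingular integral forms with $\pm\Disc>0$ contribute to the sum over $L$, and these decompose as $\bigsqcup_m\bigsqcup_{i=1}^{h(\pm m)}\Gamma\cdot x_{i,m}$ with $\Gamma\cdot x_{i,m}\simeq\Gamma/\Gamma(i,m)$. Because $\sN_q$ is $\GL_2(\zed)$-invariant (non-maximality being an isomorphism invariant of the associated cubic ring), I would pull $\sN_q(x_{i,m})$ out of each orbit sum and write the integrand of $Z^\pm_q$ as $\sum_m\sum_i \sN_q(x_{i,m})\sum_{\gamma\in\Gamma/\Gamma(i,m)}\chi(g)^s\E_r(g^{-1})f_\pm(g\gamma\cdot x_{i,m})$. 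First I would unfold the $\gamma$-sum against $\int_{G^+/\Gamma}$: since $\chi$ is trivial on $\SL_2(\zed)$ and $\E_r$ is left $\Gamma$-invariant, for each pair $(i,m)$ this collapses $\int_{G^+/\Gamma}$ to $\int_{G^+/\Gamma(i,m)}$ of $\chi(g)^s\E_r(g^{-1})f_\pm(g\cdot x_{i,m})\,dg$.

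Next I would bring the base point $x_{\sgn m}$ into view. Writing $x_{i,m}=g_{i,m}\cdot x_{\sgn m}$, from $\Disc(x_{i,m})=\chi(g_{i,m})\Disc(x_{\sgn m})$ and $\Disc(x_+)=1$, $\Disc(x_-)=-1$ one gets $\chi(g_{i,m})=|m|$, so $f_\pm(g\cdot x_{i,m})=f_G(g\,g_{i,m})\,f_D(\chi(g)\,|m|)$. A short check shows this integrand is invariant under $g\mapsto g\gamma$ for $\gamma\in\Gamma(i,m)$: indeed $g_{i,m}^{-1}\Gamma(i,m)g_{i,m}\subseteq\Stab_{G^+}(x_{\sgn m})$, which is a subgroup of $K$ (of order $3$, generated by rotation by $\frac{2\pi}{3}$) when $m>0$ and is trivial when $m<0$, and $f_G$ is right $K$-invariant; meanwhile $\chi(g\gamma)=\chi(g)$ and $\E_r((g\gamma)^{-1})=\E_r(g^{-1})$. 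Hence $\int_{G^+/\Gamma(i,m)}=\frac1{|\Gamma(i,m)|}\int_{G^+}$, which is exactly how the stabilizer weight $|\Gamma(i,m)|^{-1}$ enters $\sL_q^+$ (and is harmless for $\sL_q^-$). I would then split $G^+=\bR^+\times G^1$ by $g=d_\lambda g^1$, $dg=\frac{d\lambda}{\lambda}dg^1$, $\chi(g)=\lambda^{12}$, using that $\E_r$ is insensitive to the central factor $d_\lambda$ and that $f_G$ is extended independent of the determinant; this factors the $G^+$-integral as the product of $\int_0^\infty\lambda^{12s}f_D(\lambda^{12}|m|)\,\frac{d\lambda}{\lambda}$ and $\int_{G^1}\E_r((g^1)^{-1})\,f_G(g^1 g_{i,m}^0)\,dg^1$, where $g_{i,m}^0\in G^1$ is the unit-determinant part of $g_{i,m}$.

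The first factor equals $\frac1{12}\,|m|^{-s}\,\tilde f_D(s)$ after the substitution $u=\lambda^{12}|m|$, whose Jacobian supplies the constant $\frac1{12}$. For the second factor, $\E_r((g^1)^{-1})=\E_r((g^1)^t)$, so it is the pairing $\E_r\odot f_G(g_{i,m}^0)$; by the lemma computing the convolution eigenvalue of $\E_r$ against $f_G(g)=\exp(-\tr g^tg)$ this equals $\sqrt\pi\,K_{z/2}(2)\,\E_r(g_{i,m}^0)=\sqrt\pi\,K_{z/2}(2)\,\E_r(g_{i,m})$. Multiplying the pieces and summing over $m$ and $i$ reproduces the defining Dirichlet series of $\sL_q^\pm(\E_r,s)$, giving the factorization.

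There is no deep obstacle; the work lies in keeping the normalizations consistent (the identity $\chi(g_{i,m})=|m|$, the $\frac1{12}$, the stabilizer weight and its presence in $\sL_q^+$ but not $\sL_q^-$) and in justifying the interchanges of summation and integration. For the latter I would establish absolute convergence of the original integral in $\RE(s)>1$: the Gaussian decay of $f_G$ together with the compact support of $f_D$ confines $f_\pm$ to forms whose discriminant lies in a fixed compact subset of $\bR^+$, with Schwarz decay in the $G^1$-directions, so that $\sum_{x\in L}|f_\pm(g\cdot x)|$ is controlled just as in the untwisted Shintani zeta function, whose abscissa of convergence is $1$; the Eisenstein twist contributes only the polynomial growth of $\E_r$ in the cusp, which does not move the abscissa. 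This convergence argument is the most delicate step to write out, but it is routine.
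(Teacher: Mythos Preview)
Your proposal is correct and follows essentially the same route as the paper's proof: decompose $L$ into $\Gamma$-orbits indexed by $(i,m)$, unfold against $\int_{G^+/\Gamma}$, substitute $x_{i,m}=g_{i,m}\cdot x_{\sgn m}$ and $f_\pm=f_G\cdot f_D$, then factor the resulting $G^+$-integral into the $\lambda$-integral $\frac{1}{12}|m|^{-s}\tilde f_D(s)$ and the $G^1$-pairing $\E_r\odot f_G(g_{i,m})=\sqrt{\pi}K_{z/2}(2)\,\E_r(g_{i,m})$. Your write-up is in fact more careful than the paper's---you make explicit the $\Gamma(i,m)$-invariance of the integrand via $g_{i,m}^{-1}\Gamma(i,m)g_{i,m}\subseteq\Stab(x_{\sgn m})\subseteq K$ and the right $K$-invariance of $f_G$, and you address absolute convergence---but these are elaborations of the same argument rather than a different one.
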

\begin{proof}
 Calculate
 \begin{align*}
  Z^+_q(f_+, \E_r, L; s) &= \int_{G^+/\Gamma} \chi(g)^s \E_r(g^{-1}) \sum_{x \in L}\sN_q(x) f_+(g\cdot x) dg\\
  &=\int_{G^+/\Gamma} \chi(g)^s \E_r(g^{-1}) \sum_{m=1}^\infty \sum_{i=1}^{h(m)} \frac{\sN_q(x_{i,m})}{|\Gamma(i,m)|} \sum_{\gamma \in \Gamma} f_+(g\gamma g_{i,m} \cdot x_+) dg\\
  &= \int_{G^+} \chi(g)^s \E_r(g^{-1}) \sum_{m=1}^\infty \sum_{i=1}^{h(m)} \frac{\sN_q(x_{i,m})}{|\Gamma(i,m)|} f_G(g g_{i,m} ) f_D(\chi(g) m)\\
  &= \frac{\tilde{f}_D(s)}{12} \sum_{m=1}^\infty \frac{1}{m^s} \sum_{i=1}^{h(m)} \frac{\sN_q(x_{i,m})\E_r(g_{i,m})}{|\Gamma(i,m)|}\int_{G^1} f_G(g) t(g)^{1+z} dg\\
  &= \sqrt{\pi} K_{\frac{z}{2}}(2) \frac{\tilde{f}_D(s)}{12} \sL^+_q(\E_r, s).
 \end{align*}
The proof in the case $Z^-_q$ is similar.
\end{proof}

 Define
 \begin{align}
  Z^{\pm, +}_q(f_{\pm}, \E_r, L; s) &= \int_{G^+/\Gamma, \chi(g) \geq 1} \chi(g)^s \E_r(g^{-1}) \sum_{x \in L}\sN_q(x)  f_{\pm}(g\cdot x) dg\\
 \notag \hat{Z}^{\pm, +}_q(\hat{f}_{\pm}, \E_r, \hat{L}; 1-s) &= \int_{G^+/\Gamma, \chi(g) \geq 1} \chi(g)^{1-s} \E_r(g^{-1}) \sum_{x \in \hat{L} \setminus \hat{L}_0}  \hat{\sN}_q(x)\hat{f}_{\pm}\left(g \cdot \frac{x}{q^2}\right) dg\\
 \notag Z^{\pm, 0}_q(\hat{f}_{\pm}, \E_r, \hat{L}; s) &= \int_{G^+/\Gamma, \chi(g) \leq 1} \chi(g)^{s-1} \E_{r}(g^{-1}) \sum_{x \in \hat{L}_0}\hat{\sN}_q(x)  \hat{f}_{\pm}\left(g^\iota \cdot \frac{x}{q^2}\right) dg.
\end{align}
As before, the first two integrals are entire, due to the rapid decay of $f$ and $\hat{f}$.
The last integral is equal to 
\begin{align*}
 Z^{\pm, 0}_q\left(\hat{f}_{\pm}, \E_r, \hat{L}; s \right) &= \int_{G^+/\Gamma, \chi(g) \leq 1} \chi(g)^{s-1} \E_r(g^{-1}) \sum_{x \in \hat{L}_0}\hat{\sN}_q(x) \hat{f}_{\pm}\left(g^\iota \cdot \frac{x}{q^2}\right) dg\\
 &= \int_0^1\frac{d\lambda}{\lambda} \lambda^{12s-12}\int_{G^1/\Gamma} \E_{r}(g^{-1}) \sum_{x \in \hat{L}_0}\hat{\sN}_q(x) \hat{f}_{\pm}^{\lambda^{-3}}\left(g \cdot \frac{x}{q^2}\right) dg.
\end{align*}

 The orbital integral satisfies a split functional equation, which is a result of applying the Poisson summation formula.
\begin{lemma}\label{split_functional_equation}
 We have
 \begin{equation}
  Z^{\pm}_q(f_{\pm}, \E_r, L;s) = Z^{\pm, +}_q(f_{\pm}, \E_r, L; s) + \hat{Z}^{\pm, +}_q(\hat{f}_{\pm}, \E_r, \hat{L}; 1-s) + Z^{\pm, 0}_q(\hat{f}_{\pm}, \E_r, \hat{L}; s).
 \end{equation}

\end{lemma}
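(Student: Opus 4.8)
This is the Poisson‑summation functional equation of Sato–Shintani type, with the Eisenstein factor $\E_r(g^{-1})$ carried along as an inert weight. The plan is to work first in a right half‑plane of $s$ in which $Z^{\pm}_q(f_{\pm},\E_r,L;s)$ converges absolutely, split the $g$‑integral along $\chi(g)=1$, apply Poisson summation on the piece $\chi(g)\le 1$, separate out the singular dual forms, and flip the remaining piece back to $\chi(g)\ge 1$ by the involution under which $\E_r$ is invariant. Since the four pieces on the right are individually meromorphic (the first two entire by the rapid decay of $f_{\pm}$ and $\hat f_{\pm}$, the third treated in the subsequent sections), the identity then propagates by uniqueness of meromorphic continuation.

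\textbf{Step 1: split the domain.} Write $\int_{G^+/\Gamma}=\int_{\chi(g)\ge 1}+\int_{\chi(g)\le 1}$. The first contribution is by definition $Z^{\pm,+}_q(f_{\pm},\E_r,L;s)$ (note $f_{\pm}$ vanishes on the singular set, so replacing $L$ by $L\setminus L_0$ changes nothing there).

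\textbf{Step 2: Poisson summation on $\chi(g)\le1$.} On the remaining piece I would apply Poisson summation to the full periodic lattice sum $\sum_{x\in L}\sN_q(x)f_{\pm}(g\cdot x)$. Decomposing $\sN_q$ into indicator functions of residue classes modulo $q^2$ and Poisson summing each class, using that the Jacobian of $x\mapsto g\cdot x$ on $V_{\bR}\cong\bR^4$ is $\chi(g)=(\det g)^6$, one obtains
\[
 \sum_{x\in L}\sN_q(x)f_{\pm}(g\cdot x)=\frac{1}{\chi(g)}\sum_{\xi\in\hat L}\hat\sN_q(\xi)\,\hat f_{\pm}\!\left(g^\iota\cdot\frac{\xi}{q^2}\right),
\]
where $g^\iota$ is the contragredient action determined by the pairing $\langle\cdot,\cdot\rangle$. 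Substituting this into $\int_{\chi(g)\le1}\chi(g)^s\E_r(g^{-1})(\cdots)\,dg$ converts it to $\int_{\chi(g)\le1}\chi(g)^{s-1}\E_r(g^{-1})\sum_{\xi\in\hat L}\hat\sN_q(\xi)\hat f_{\pm}(g^\iota\cdot\xi/q^2)\,dg$; the interchange of summation with the $g$‑integral is legitimised by the absolute, locally uniform convergence of both sides, which is where the Schwartz decay of $f_{\pm}$ and $\hat f_{\pm}$ against the at‑most‑polynomial growth of $\E_r$ in the cusp is used. Now split $\hat L=\hat L_0\sqcup(\hat L\setminus\hat L_0)$. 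The $\hat L_0$‑part is exactly $Z^{\pm,0}_q(\hat f_{\pm},\E_r,\hat L;s)$; factoring $g=d_\lambda g_1$ with $g_1\in G^1$ and using $\chi(d_\lambda)=\lambda^{12}$ together with $d_\lambda^\iota$ acting as $\lambda^{-3}$ reproduces its $\lambda$‑rescaled form. For the $\hat L\setminus\hat L_0$‑part I would change variables by the $\iota$‑inversion $g\mapsto h$, under which $\chi(h)=\chi(g)^{-1}$ (so $\{\chi(g)\le1\}$ becomes $\{\chi(h)\ge1\}$ and $\chi(g)^{s-1}$ becomes $\chi(h)^{1-s}$), the $\iota$‑action on $\hat f_{\pm}$ becomes the plain action, and — crucially — the identity $\E_r(g)=\E_r((g^{-1})^t)$ turns $\E_r(g^{-1})$ into $\E_r(h^{-1})$. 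The result is precisely $\hat Z^{\pm,+}_q(\hat f_{\pm},\E_r,\hat L;1-s)$. Collecting the three contributions gives the stated identity.

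\textbf{Main obstacle.} The delicate point is the Poisson step and its bookkeeping: one must pin down the contragredient involution for $\sym^3(\bR^2)$ and keep the $\det$‑twist in $\langle\cdot,\cdot\rangle$ straight so that the Jacobian $\chi(g)^{-1}$ and the exponent passage $s\rightsquigarrow s-1\rightsquigarrow 1-s$ come out exactly, and—most of all—justify the interchange of the lattice sum with the $g$‑integral on $\{\chi(g)\le1\}$ even though $\E_r(g^{-1})$ is unbounded in the cusp and $\hat f_{\pm}$, unlike $f_{\pm}$, is not compactly supported in the determinant. Boundedness of $\chi(g)$ on the support of $f_D$ together with the Schwartz decay of $\hat f_{\pm}$ in the $G^1$‑directions is what makes this work, but it is the step that must be argued with care; the matching of the $\hat L_0$‑terms with the precise definition of $Z^{\pm,0}_q$ (including the $\iota$ and the scaling by $q^{-2}$) is the other place where constants must be checked rather than quoted.
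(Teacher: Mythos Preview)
Your approach is correct and is exactly what the paper intends: the paper does not supply a proof but simply remarks that the identity ``is a result of applying the Poisson summation formula,'' i.e.\ the standard Sato--Shintani splitting at $\chi(g)=1$, Poisson summation on the piece $\chi(g)\le 1$, separation of singular from nonsingular dual forms, and the $\iota$-inversion to flip the nonsingular piece back to $\chi\ge 1$. Your write-up supplies the details the paper omits, and your identification of the convergence/interchange step as the point requiring care is accurate.
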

The main proposition to be proved in this section is as follows.

Write $f \sim g$ if $f-g$ is entire.
\begin{proposition}\label{pole_proposition}
 In the case that $f$ is supported on $V_-$, 
 \begin{align*}
  &Z_q^{-, 0}(f_-, \E_r, L;s) \sim \sqrt{\pi}K_{\frac{z}{2}}(2) \sum_{q_1q_2 = q} q_2^{-2}\prod_{p|q_1}(p^{-3}-p^{-5})\\&\times \Biggl[\frac{(q_1^2q_2)^{\frac{1-z}{3}}\zeta\left(\frac{1-z}{3} \right)2^{\frac{z-1}{6}}3^{-1}\pi^{\frac{1+2z}{6}}}{12s-11-z} \cos\left(\frac{\pi(1-z)}{6} \right)\frac{\Gamma\left(\frac{1-z}{3} \right)\Gamma\left(\frac{4-z}{6}\right)}{\Gamma\left(\frac{7-z}{6}\right)}\tilde{f}_D\left(\frac{11+z}{12}\right) \\& + \frac{\xi(z)}{\xi(1+z)}  
  \frac{(q_1^2q_2)^{\frac{1+z}{3}}\zeta\left(\frac{1+z}{3} \right)2^{\frac{-z-1}{6}}3^{-1}\pi^{\frac{1-2z}{6}}}{12s-11+z} \cos\left(\frac{\pi(1+z)}{6} \right)\frac{\Gamma\left(\frac{1+z}{3} \right)\Gamma\left(\frac{4+z}{6}\right)}{\Gamma\left(\frac{7+z}{6}\right)}\tilde{f}_D\left(\frac{11-z}{12}\right)\\
  &+\frac{\zeta(3+z)q_1^{-z}2^{\frac{-5-z}{2}}}{12s - 15-3z}\tilde{f}_D\left(\frac{5+z}{4} \right) +\frac{\xi(z)}{\xi(1+z)}\frac{\zeta(3-z)q_1^{z}2^{\frac{-5+z}{2}}}{12s - 15+3z}\tilde{f}_D\left(\frac{5-z}{4} \right)\Biggr].
 \end{align*}
In the case that $f$ is supported on $V_+$,
\begin{align*}
 & Z_q^{+,0}(f_+, \E_r, L;s) \sim \sqrt{\pi}K_{\frac{z}{2}}(2)\sum_{q_1q_2=q}q_2^{-2}\prod_{p|q_1}(p^{-3}-p^{-5}) \\&\times\Biggl[\frac{(q_1^2q_2)^{\frac{1-z}{3}}\zeta\left(\frac{1-z}{3}\right)3^{\frac{z-7}{4}} 2^{\frac{z-1}{6}}\pi^{\frac{1+2z}{6}}}{12s-11-z} \cos\left(\frac{\pi(1-z)}{6} \right)\frac{\Gamma\left(\frac{1-z}{3} \right)\Gamma\left(\frac{4-z}{6}\right)}{\Gamma\left(\frac{7-z}{6}\right)}\tilde{f}_D\left(\frac{11+z}{12}\right) \\& + \frac{\xi(z)}{\xi(1+z)}  
  \frac{(q_1^2q_2)^{\frac{1+z}{3}}\zeta\left(\frac{1+z}{3}\right)3^{\frac{-z-7}{4}}2^{\frac{-z-1}{6}}\pi^{\frac{1-2z}{6}}}{12s-11+z} \cos\left(\frac{\pi(1+z)}{6} \right)\frac{\Gamma\left(\frac{1+z}{3} \right)\Gamma\left(\frac{4+z}{6}\right)}{\Gamma\left(\frac{7+z}{6}\right)}\tilde{f}_D\left(\frac{11-z}{12}\right)\\
  &+\frac{\zeta(3+z)q_1^{-z}2^{\frac{-5-z}{2}}3^{\frac{1+z}{4}}}{12s - 15-3z}\tilde{f}_D\left(\frac{5+z}{4} \right) +\frac{\xi(z)}{\xi(1+z)}\frac{\zeta(3-z)q_1^z2^{\frac{-5+z}{2}}3^{\frac{1-z}{4}}}{12s - 15+3z}\tilde{f}_D\left(\frac{5-z}{4} \right)\Biggr].
 \end{align*}
\end{proposition}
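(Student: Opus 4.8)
The plan is to expand $Z^{\pm,0}_q(\hat f_\pm,\E_r,\hat L;s)$ over the singular set $\hat L_0$ by Lemma~\ref{fibration_lemma} and to read off the poles from the constant term of the Eisenstein series after unfolding each orbit. First I would record the identity $\E_r(g^{-1})=E(i\gamma,g)$, which is forced by the two stated relations $E(i\gamma,g)=\E_{\frac{1+\gamma^2}{4}}(g^t)$ and $\E_r(g)=\E_r((g^{-1})^t)$. Thus, after the substitution $\chi(g)\mapsto\lambda^{12}$ used to rewrite $Z^{\pm,0}_q$, the quantity is $\int_0^1\lambda^{12s-12}\frac{d\lambda}{\lambda}$ times a Rankin--Selberg pairing of $E(i\gamma,\cdot)$ against the theta series $\sum_{x\in\hat L_0}\hat\sN_q(x)\hat f_\pm^{\lambda^{-3}}(g\cdot x/q^2)$ on $\SL_2(\zed)\backslash\SL_2(\bR)$. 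Decompose $\hat L_0=\{0\}\sqcup\hat L_0(I)\sqcup\hat L_0(II)$. The $x=0$ term is a constant multiple of a regularized period of $E(i\gamma,\cdot)$ and of $\int_0^1\lambda^{12s-12}\frac{d\lambda}{\lambda}$; handled inside Shintani's incomplete Eisenstein series regularization (the lemmas of the review section, which are also what legitimise the unfoldings below) it contributes an entire function and is discarded.

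For $\hat L_0(I)$ I would use the $\Gamma$-invariance of $\hat\sN_q$ together with the fact that the fixed line $w$ has a parabolic stabilizer to unfold the orbit sum, reducing the contribution to $\sum_{m\ge1}\hat\sN_q(0,0,0,m)$ times an integral over a Borel quotient of $E(i\gamma,\cdot)$ against the transported test function. In Shintani's $KAN$-coordinates, left-$K$-invariance of $\hat f_\pm$ reduces that test function to a function of the split coordinate $t$ alone, and the unfolded integral pairs $E(i\gamma,\cdot)$ over a full unipotent period; by the Fourier development~\eqref{eisenstein_fourier_dev} this extracts precisely the constant term $t^{1+z}+t^{1-z}\xi(z)/\xi(z+1)$, the $t^{1-z}$ piece carrying the factor $\xi(z)/\xi(z+1)$ that produces the mirror pair of main terms. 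The remaining $t$-integral is, by definition, $\Sigma_2(\hat f_\pm,\cdot)$ at $\tfrac{1-z}{3}$ and $\tfrac{1+z}{3}$; the $m$-sum factors over primes, from the explicit value of $\hat\sN_p(0,0,0,m)$, into $\zeta(\tfrac{1\mp z}{3})$ times the combinatorial factor $\sum_{q_1q_2=q}q_2^{-2}\prod_{p\mid q_1}(p^{-3}-p^{-5})$ of the statement; and the $\lambda$-integral over $(0,1)$ contributes the simple poles $1/(12s-11\mp z)$. Inserting the closed forms of Lemma~\ref{Sigma_2_lemma}, whose $\Gamma$-, cosine- and power-of-$2$ factors already match the statement term for term, and collecting the elementary powers of $3$ (including the $\tfrac13$ from the $t$-substitution and the $3^{3z/4-1}$ separating $\hat f_+$ from $\hat f_-$), one obtains the first two bracketed terms; the factor $\sqrt\pi K_{z/2}(2)$ is exactly the Bessel factor $K_{\frac{1-3w}{2}}(2)=K_{z/2}(2)$ of Lemma~\ref{Sigma_2_lemma} at $w=\tfrac{1\mp z}{3}$ together with the excess $\sqrt\pi$, matching Lemma~\ref{factorization_lemma}.

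For $\hat L_0(II)$ I would run the same scheme with the base point $(0,0,3m,n)$ in place of $(0,0,0,m)$: the relevant stabilizer is the parabolic fixing the repeated linear factor $w$, so after unfolding one integrates out the two transverse directions, producing the integral $\Sigma_3(\hat f_\pm,\cdot)$, which by Lemmas~\ref{Sigma_3_lemma} and~\ref{Phi_0_lemma} evaluates, at the relevant argument, to a product of $\tilde f_D(\tfrac{5\mp z}{4})$, $K_{z/2}(2)$ and elementary powers of $2$ and $3$; meanwhile the double sum over $m$ and $n\in[0,3m)$ weighted by $\hat\sN_q(0,0,3m,n)$ collapses, by the explicit formula for $\hat\sN_p(0,0,3m,n)$, to $\zeta(3\mp z)$ times the analogous $q$-factor. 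The $\lambda$-integral now yields the poles $1/(12s-15\mp 3z)$, and assembling gives the last two bracketed terms; throughout, the only difference between the $V_-$ and $V_+$ statements is the power of $3$ recorded in Lemmas~\ref{Sigma_2_lemma},~\ref{Sigma_3_lemma},~\ref{Phi_0_lemma}, which ultimately comes from the factor $3^{3z/4-1}$ (resp.\ $3^{(s-1)/4}$) between the two base points $x_+$ and $x_-$.

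I expect the main obstacle to be making these Rankin--Selberg unfoldings legitimate: the pairing of $E(i\gamma,\cdot)$ (which grows only polynomially in the cusp) with the singular theta series is not absolutely convergent after the orbit sum is unfolded, so the whole manipulation must be carried out within Shintani's regularized framework, and the Dirichlet series in $m$ (and in $m,n$) appear only in analytically continued form, which is where care is needed rather than in the bookkeeping. A secondary difficulty is organising the $\hat L_0(II)$ orbit correctly against the parabolic stabilizer so that the two transverse directions disentangle into the $\Sigma_3$-integral while the $\hat\sN_q$-twist is tracked prime by prime; the remaining $\Gamma$-function manipulations needed to land on the stated closed form are lengthy but routine.
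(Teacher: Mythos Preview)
Your outline is essentially the paper's: decompose $\hat L_0=\{0\}\sqcup L_0(I)\sqcup\hat L_0(II)$, unfold each orbit inside Shintani's incomplete-Eisenstein regularization, and read off the poles from the $\lambda$-integral once the $t$-integral has been identified with $\Sigma_2$ or $\Sigma_3$. For $\{0\}$ and $L_0(I)$ your description matches the paper almost verbatim; in particular the fact that $(0,0,0,m)$ is $N$-fixed does make the unipotent period extract the constant term, so $\Theta^{(1),n}_q=0$ and only $\Theta^{(1),c}_q$ survives, giving the two poles at $s=\tfrac{11\pm z}{12}$ via Lemma~\ref{Sigma_2_lemma}.

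The gap is in your treatment of $\hat L_0(II)$. The base point $(0,0,3m,n)$ is \emph{not} $N$-fixed: $n_u$ shifts $n\mapsto n+3mu$. Hence the unipotent period does \emph{not} kill the non-constant term of $\E_r$, and your sentence ``after unfolding one integrates out the two transverse directions, producing $\Sigma_3$'' skips the real work. In the paper this splits into two genuinely separate computations. First, $\Theta^{(2),n}_q$ is nonzero and must be shown entire on its own (Lemma~\ref{Theta_n_lemma}); this is a double contour integral in $(w_1,w_2)$ built from $\Sigma_1^\pm$, a Bessel--Mellin weight $W_\lambda$, and the arithmetic factor $\hat G_{\lambda,q}$, none of which appears in your sketch. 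Second, for $\Theta^{(2),c}_q$ the mechanism is not a direct collapse of the $(m,n)$-sum to $\zeta(3\pm z)$. What actually happens is that, inside the regularization, the sum over $0\le n<3mq_1$ combined with the $u$-integral selects the Fourier modes of the \emph{incomplete} Eisenstein series $\sE(\psi,w;\cdot)$ at frequencies divisible by $3mq_1$; the constant mode of $\sE$ gives $\zeta(v-3\mp z)\,\Sigma_3(\hat f,v-3\mp z)$, the residue at $v=1$ produces $\zeta(-2\mp z)\,\Sigma_3(\hat f,-2\mp z)$, and only after applying the functional equation of $\zeta$ together with Lemmas~\ref{Sigma_3_lemma} and~\ref{Phi_0_lemma} does one reach $\zeta(3\pm z)$ and $\tilde f_D(\tfrac{5\pm z}{4})$. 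The higher Fourier modes of $\sE$ must be checked separately to be holomorphic at $w=1$.

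So your identification of the ``main obstacle'' is correct, but you have underestimated it: for $\hat L_0(II)$ the regularization is not just a legitimizing device but is where essentially all of the computation lives, and the non-constant-term piece $\Theta^{(2),n}_q$ is an additional, independent calculation you have not accounted for.
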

\begin{proof} 
 This is a result of combining Lemmas \ref{Theta_n_lemma}, \ref{Theta_1_c_lemma}, and \ref{Theta_2_c_lemma} below.
\end{proof}

Combined with the factorization formula in Lemma \ref{factorization_lemma}, and the split functional equation in Lemma \ref{split_functional_equation}, this proves Theorem \ref{main_theorem}.

\subsection{The singular integral}
As in \cite{H19}, set 
\begin{equation}
 J_q\left(\hat{f}\right)(g) = \sum_{x \in \hat{L}_0}\hat{\sN}_q(x)  \hat{f}\left(g\cdot \frac{x}{q^2}\right).
\end{equation}

The following lemma is proved in \cite{H19}.
\begin{lemma}
 Suppose for some $A>4$ that $\hat{f}(x) \ll \frac{1}{(1+ \|x\|)^A}$.  Then $J_q\left(\hat{f}\right) \in C(G^1/\Gamma, A-6)$.
\end{lemma}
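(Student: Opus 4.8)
The statement to be proved is that $J_q(\hat{f}) \in C(G^1/\Gamma, A-6)$ whenever $\hat{f}(x) \ll (1+\|x\|)^{-A}$ with $A > 4$. The plan is to use the explicit fibration of the singular dual forms $\hat{L}_0$ from Lemma~\ref{fibration_lemma}, namely $\hat{L}_0 = \{0\} \sqcup \bigsqcup_m \bigsqcup_{\gamma \in \Gamma/\Gamma \cap N} \{\gamma \cdot (0,0,0,m)\} \sqcup \bigsqcup_m \bigsqcup_{n} \bigsqcup_{\gamma \in \Gamma} \{\gamma \cdot (0,0,3m,n)\}$, and to estimate the contribution of each stratum separately after unfolding the $\Gamma$-sums against the Siegel set. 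First I would note continuity of $J_q(\hat{f})$ on $G^1/\Gamma$, which is immediate since $\hat{f}$ is continuous and each of the three pieces converges locally uniformly in $g$ under the decay hypothesis (this is where $A > 4$ enters, exactly as in the convergence of Eisenstein series). The substantive part is the growth bound: one must show $\sup_{g \in \fS_{1/2}} t(g)^{A-6} |J_q(\hat{f})(g)| < \infty$, i.e. that $|J_q(\hat{f})(g)| = O(t^{6-A})$ as $t \to 0$ in the Siegel set $\fS = \{n_u a_t k_\theta : t < 2, |u| \le \tfrac12\}$.

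The key computation is the action of $g = n_u a_t k_\theta$ on the base vectors of the strata. For the rank-one stratum $L_0(I)$, the orbit $\Gamma/\Gamma\cap N \cdot (0,0,0,m)$ consists of forms $(0,0,0,m)$ translated around; writing $g \cdot \gamma \cdot (0,0,0,m)$ in coordinates and using that the relevant coefficient is proportional to $m t^{\pm 3}$ or $m t^{\pm 1}$ scaled by $q^{-2}$, the sum over $m \ge 1$ and over the $\Gamma\cap N$-cosets becomes, after the decay bound on $\hat{f}$, a convergent sum whose size is governed by the smallest nonzero value the coordinates can take. A short analysis — split the $\gamma$-sum into the identity coset (giving a sum $\sum_m (1 + \|g\cdot(0,0,0,m)\|)^{-A}$) and the nontrivial cosets — shows each piece is $O(t^{6-A})$: the worst case arises when $g$ compresses the vector so that many terms are $O(1)$-size, and counting those terms produces the power $t^{6-A}$, the number $6$ being the total scaling exponent coming from the four coordinates under $a_t$ together with the $q^{-2}$ normalization playing no role in the $t$-dependence. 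The stratum $\hat{L}_0(II)$, indexed by $\bigsqcup_{m}\bigsqcup_{n=0}^{3m-1}\bigsqcup_{\gamma \in \Gamma}$, is handled the same way but now the full $\Gamma$-sum must be unfolded against the Siegel set as in the treatment of Eisenstein series, again producing at worst the same exponent. The contribution of $\{0\}$ is just $\hat{\sN}_q(0)\hat{f}(0)$, a harmless constant.

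The step I expect to be the main obstacle is the uniform-in-$g$ bookkeeping for the stratum $\hat{L}_0(II)$: here the $\Gamma$-sum is genuinely two-dimensional, and one needs to track how $g = n_u a_t k_\theta$ — including the rotation $k_\theta$, which can rotate a "thin" direction into a "fat" one — interacts with the lattice of points $\gamma \cdot (0,0,3m,n)$. The clean way around this is to reduce to the standard Eisenstein-series counting lemma: parametrize $\Gamma/(\Gamma\cap N)$ by coprime pairs $(c,d)$, bound $\hat{f}(g \cdot \gamma \cdot v)$ by $(1 + |(\text{lowest coordinate})|)^{-A}$ times a Gaussian-type factor in the remaining coordinates, and observe that the lowest coordinate is a linear form in $(c,d)$ whose coefficients scale like $t^{-1}$ or $t^{-3}$ in the Siegel set; summing $(1+\|\cdot\|)^{-A}$ over the lattice then gives, via the usual lattice-point count, a bound $O(t^{6-A})$ provided $A > 6$, and for $4 < A \le 6$ the same argument yields membership in $C(G^1/\Gamma, A-6)$ with the (then non-restrictive) exponent $A - 6 \le 0$. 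Throughout, the factor $\hat{\sN}_q(x)$ is bounded (indeed $O_\epsilon(q^\epsilon)$ by the evaluations in the earlier lemmas on singular forms), so it only affects the implied constant and not the shape of the bound. Assembling the three strata completes the proof.
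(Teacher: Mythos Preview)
The paper does not supply its own proof of this lemma; it simply cites \cite{H19}, where the argument is carried out in detail. Your sketch follows what is almost certainly the same route as that reference: decompose $\hat{L}_0$ via the fibration lemma into $\{0\}$, the rank-one orbits $L_0(I)$, and the rank-two orbits $\hat{L}_0(II)$; use the $\Gamma$-invariance of $\hat{\sN}_q$ to pull it outside the orbit sums; and then bound each stratum by a lattice-point count in the Siegel set, producing the growth $O(t^{6-A})$ in the cusp. This is the natural and essentially only approach, so there is nothing to contrast.

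One small clarification on your handling of $\hat{L}_0(II)$: you are not literally unfolding an integral against the Siegel set. Rather, since $J_q(\hat{f})$ is already right $\Gamma$-invariant, you are directly bounding, for fixed $g \in \fS$, the sum $\sum_{\gamma \in \Gamma} \hat{f}\bigl(g\gamma \cdot (0,0,3m,n)/q^2\bigr)$, and the Eisenstein-series analogy enters only in the lattice-point counting that controls this sum. With that adjustment your plan is sound.
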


The object of interest is 
\begin{equation}
 \sI_q\left(\hat{f}, \E_r\right) = \int_{G^1/\Gamma} \E_r\left(g^{-1}\right) J_q\left(\hat{f}\right)(g) dg
\end{equation}
since
\begin{equation}
 Z^{\pm, 0}_q\left(\hat{f}_{\pm}, \E_r, \hat{L};s \right) = \int_0^1 \lambda^{12s-12}\sI_q\left(\hat{f}_{\pm}^{\lambda^{-3}} ,\E_r\right) \frac{d\lambda}{\lambda}.
\end{equation}

In order to gain convergence in later integrals, we reinterpret this as a limit of an integral against an incomplete Eisenstein series as in \cite{S72},
\begin{equation}
 \sI_q\left(\hat{f}, \E_r\right) = \frac{\xi(2)}{\psi(1)}\lim_{w \to 1^+} (w-1) \int_{G^1/\Gamma} \sE(\psi, w; g) \E_r\left(g^{-1}\right) J_q\left(\hat{f}\right)(g) dg.
\end{equation}

Write the singular forms in $\hat{L}_0$ as
\begin{align}
 \hat{L}_0 &= \{0\} \sqcup \bigsqcup_{m=1}^\infty \bigsqcup_{\gamma \in \Gamma/\Gamma\cap N} \{\gamma \cdot (0,0,0,m)\} \sqcup \bigsqcup_{m=1}^\infty \bigsqcup_{n\in \zed} \bigsqcup_{\gamma \in \Gamma/\Gamma\cap N} \{\gamma \cdot (0,0,3m,n)\}\\
 \notag &= \{0\} \sqcup L_0(I) \sqcup \hat{L}_0(II).
\end{align}
Write 
\begin{equation}
 \sI_q\left(\hat{f}, \E_r\right) = \Theta^{(0)}_q(\E_r) + \Theta^{(1)}_q(\E_r) + \Theta^{(2)}_q(\E_r)
\end{equation}
as the sum of three limits.   
Since $\E_r$ is mean 0 on $\Gamma \backslash G^1/K$, $\Theta^{(0)} = 0$.  In the remaining two pieces it is necessary to separate the contributions of the constant term of the Eisenstein series and the non-constant terms.

Write
\begin{align}
 \Theta^{(1)}_q(\E_r) &= \frac{\xi(2)}{\psi(1)}\lim_{w \to 1^+} (w-1)\int_{G^1/\Gamma}\sE(\psi, w; g) \E_r(g^t) \sum_{x \in L_0(I)} \hat{\sN}_q(x) \hat{f}\left(g\cdot \frac{x}{q^2} \right)dg\\
 \notag &= \frac{\xi(2)}{\psi(1)}\lim_{w \to 1^+} (w-1)\\ \notag &\times\int_{G^1/\Gamma \cap N}\sE(\psi, w; g) \E_r(g^t) \sum_{m=1}^\infty \hat{\sN}_q\left(0,0,0,m\right) \hat{f}\left(g \cdot \left(0,0,0, \frac{m}{q^2}\right)\right) dg.
\end{align}
Define
\begin{align}
 \Theta^{(1),c}_q(\E_r)&= \frac{\xi(2)}{\psi(1)}\lim_{w \to 1^+} (w-1)\\ \notag &\times\int_{G^1/\Gamma \cap N} \sE(\psi, w; g)\E_r^c(g^t) \sum_{m=1}^\infty \hat{\sN}_q\left(0,0,0,m\right) \hat{f}\left(g \cdot \left(0,0,0, \frac{m}{q^2}\right)\right) dg\\
 \notag \Theta^{(1),n}_q(\E_r)&= \frac{\xi(2)}{\psi(1)}\lim_{w \to 1^+} (w-1)\\ \notag &\times\int_{G^1/\Gamma \cap N} \sE(\psi, w; g)\E_r^n(g^t) \sum_{m=1}^\infty  \hat{\sN}_q\left(0,0,0,m\right) \hat{f}\left(g \cdot \left(0,0,0, \frac{m}{q^2}\right)\right) dg.
\end{align}
Similarly, 
\begin{align}
 \Theta^{(2)}_q(\E_r) &= \frac{\xi(2)}{\psi(1)}\lim_{w \to 1^+} (w-1)\\ \notag &\times\int_{G^1/\Gamma}\sE(\psi, w; g) \E_r(g^t) \sum_{x \in \hat{L}_0(II)} \hat{\sN}_q(x) \hat{f}\left(g\cdot \frac{x}{q^2} \right)dg\\
 \notag &= \frac{\xi(2)}{\psi(1)}\lim_{w \to 1^+} (w-1)\\ \notag &\times\int_{G^1/\Gamma \cap N}\sE(\psi, w; g) \E_r(g^t) \sum_{m=1}^\infty \sum_{n\in \zed} \hat{\sN}_q(0,0,3m,n) \hat{f}\left(g \cdot \left(0,0,\frac{3m}{q^2}, \frac{n}{q^2}\right)\right) dg
\end{align}
and
\begin{align}
 \Theta^{(2),c}_q(\E_r)&= \frac{\xi(2)}{\psi(1)}\lim_{w \to 1^+} (w-1)\\ \notag &\times\int_{G^1/\Gamma \cap N} \sE(\psi, w; g)\E_r^c(g^t) \sum_{m=1}^\infty \sum_{n\in\zed}\hat{\sN}_q(0,0,3m,n) \hat{f}\left(g \cdot \left(0,0,\frac{3m}{q^2}, \frac{n}{q^2}\right)\right) dg\\
 \notag \Theta^{(2),n}_q(\E_r)&= \frac{\xi(2)}{\psi(1)}\lim_{w \to 1^+} (w-1)\\ \notag &\times\int_{G^1/\Gamma \cap N}\sE(\psi, w; g) \E_r^n(g^t) \sum_{m=1}^\infty \sum_{n \in \zed}\hat{\sN}_q(0,0,3m,n) \hat{f}\left(g \cdot \left(0,0,\frac{3m}{q^2}, \frac{n}{q^2}\right)\right) dg.
\end{align}

Due to the exponential decay of $\E_r^n(g^t$) in the cusp, we may in fact obtain 
\begin{align}
 \Theta^{(1),n}_q(\E_r) &= \int_{G^1/\Gamma \cap N} \E_r^n(g^t) \sum_{m=1}^\infty  \hat{\sN}_q(0,0,0,m)\hat{f}\left(g \cdot \left(0,0,0, \frac{m}{q^2}\right)\right) dg\\ \notag
 \Theta^{(2),n}_q(\E_r)&=\int_{G^1/\Gamma \cap N} \E_r^n(g^t) \sum_{m=1}^\infty \sum_{n \in \zed}\hat{\sN}_q(0,0,3m,n) \hat{f}\left(g \cdot \left(0,0,\frac{3m}{q^2}, \frac{n}{q^2}\right)\right) dg.
\end{align}

\subsection{The non-constant term}

\begin{lemma}
 We have $\Theta_q^{(1),n}(\E_r) = 0$.
\end{lemma}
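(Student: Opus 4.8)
The plan is to integrate out the horocyclic (unipotent) variable first, exploiting that the type-I singular forms are fixed by the unipotent radical while the non-constant part of the Eisenstein series has vanishing zeroth Fourier coefficient in that variable.

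First I would record that, by the relation $\E_r(g^t) = E(z,g)$ recalled in the review of Eisenstein series, the function $\E_r^n(g^t)$ is exactly the non-constant part of $g \mapsto E(z,g)$; in the Iwasawa coordinates $g = k_\theta a_t n_u$ the Fourier development of $E(z,g)$ gives
\[
 \E_r^n\big((k_\theta a_t n_u)^t\big) = \frac{4t}{\xi(z+1)}\sum_{m=1}^\infty \eta_{\frac{z}{2}}(m)\,K_{\frac{z}{2}}(2\pi m t^2)\,\cos 2\pi m u ,
\]
so that this quantity is invariant under $u \mapsto u+1$ and has mean zero in $u$ over a period. Next I would observe that the type-I singular forms $(0,0,0,m) = m w^3$ are fixed by $N$, since the change of variables $(v,w)\mapsto(v+uw,w)$ induced by $n_u$ fixes $w^3$; consequently
\[
 \Phi(g) := \sum_{m=1}^\infty \hat{\sN}_q(0,0,0,m)\,\hat f\!\left(g\cdot\Big(0,0,0,\tfrac{m}{q^2}\Big)\right)
\]
satisfies $\Phi(gn_u) = \Phi(g)$ for all $u \in \bR$, so $\Phi(k_\theta a_t n_u)$ does not depend on $u$.

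I would then parametrize $G^1/(\Gamma\cap N)$ by $\{k_\theta a_t n_u : \theta\in[0,1),\ t>0,\ u\in[0,1)\}$ with Haar measure $\frac{dt}{t^3}\,du\,d\theta$, using that $\Gamma\cap N=\{n_j:j\in\zed\}$ translates the Iwasawa coordinate $u$ by integers. Before exchanging the sum and the integral I would check absolute convergence: since $a_t\cdot(0,0,0,m/q^2)=(0,0,0,t^{-3}m/q^2)$, the rapid decay of $\hat f$ shows that $\Phi(k_\theta a_t)$ decays faster than any power of $t$ as $t\to 0$ and grows at most like $t^3$ as $t\to\infty$, while $\E_r^n(g^t)$ is at most polynomially large near $t=0$ and decays exponentially deep in the cusp; against $\frac{dt}{t^3}$ this is comfortably integrable, and $\theta$ and $u$ run over compact intervals. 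With Fubini in hand, integrating in $u$ first and using that $\Phi$ is $u$-independent while $\int_0^1\cos 2\pi m u\,du=0$ for $m\ge 1$ gives $\Theta^{(1),n}_q(\E_r)=0$.

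There is no essential conceptual difficulty here; the only point demanding a little care is the convergence bookkeeping needed to justify interchanging the sum over $m$ with the integral, i.e.\ a bound for $\sum_{m\ge 1}|\hat{\sN}_q(0,0,0,m)|\,\big|\hat f\big(k_\theta a_t\cdot(0,0,0,m/q^2)\big)\big|$ uniform enough in $t$, which is routine from the Schwarz-type decay of $\hat f$ and the exponential decay of $\E_r^n$ in the cusp already invoked to reach this form of $\Theta^{(1),n}_q$.
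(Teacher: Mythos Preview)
Your proposal is correct and takes essentially the same approach as the paper, which merely cites Lemma~12 of \cite{H19} for the details. The mechanism you identify---that the type-I singular forms $(0,0,0,m)$ are fixed by $N$ so that $\Phi$ is independent of $u$, while $\E_r^n(g^t)=E^n(z,g)$ has vanishing zeroth Fourier coefficient in $u$---is exactly the point, and your convergence check matches the paper's remark that the exponential decay of $\E_r^n$ in the cusp already suffices to drop the regulator.
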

\begin{proof}
 This follows as in the proof of Lemma 12 of \cite{H19}.  
\end{proof}

Define 
\begin{equation}\hat{G}_{\lambda,q}(x) =\sum_{\ell, m=1}^\infty \frac{\eta_{\frac{z}{2}}(3\ell mq)}{\ell^{1+x}(3mq)^{1 + 3x}}.
\end{equation}

\begin{lemma}
  For $\epsilon>0$, the function $\hat{G}_{\lambda,q}(x)$ is bounded on $\{x: \RE(x) \geq \epsilon\}$ by a constant depending only upon $\epsilon$.
\end{lemma}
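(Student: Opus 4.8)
The plan is to bound the defining series term by term, using only the triangle inequality together with the absolute convergence of two standard Dirichlet series. First I would use that for the Eisenstein series $\E_r$ the spectral parameter $z = i\gamma$ is purely imaginary, so that for every factorization $ab = n$ one has $\left|(a/b)^{z/2}\right| = 1$ and hence
\[
 \left|\eta_{\frac{z}{2}}(n)\right| = \left|\sum_{ab = n}\left(\tfrac{a}{b}\right)^{\frac{z}{2}}\right| \le \sum_{ab = n} 1 = d(n),
\]
the divisor function, uniformly in $\gamma$. Writing $\sigma = \RE(x) \ge \epsilon$, this already gives $\left|\hat{G}_{\lambda,q}(x)\right| \le \sum_{\ell, m \ge 1} d(3\ell m q)\,\ell^{-1-\sigma}(3mq)^{-1-3\sigma}$, all terms being nonnegative.

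Next I would invoke submultiplicativity of the divisor function, $d(3\ell m q) \le d(3)\,d(\ell)\,d(m)\,d(q) = 2\,d(\ell)\,d(m)\,d(q)$, which decouples the double sum and factors the bound as
\[
 \left|\hat{G}_{\lambda,q}(x)\right| \le \frac{2\, d(q)}{(3q)^{1+3\sigma}}\left(\sum_{\ell=1}^\infty \frac{d(\ell)}{\ell^{1+\sigma}}\right)\left(\sum_{m=1}^\infty \frac{d(m)}{m^{1+3\sigma}}\right) = \frac{2\, d(q)}{(3q)^{1+3\sigma}}\,\zeta(1+\sigma)^2\,\zeta(1+3\sigma)^2.
\]
Since $\zeta$ is positive and decreasing on $(1,\infty)$ and $\sigma \ge \epsilon$, the two zeta factors are bounded by $\zeta(1+\epsilon)^2$ and $\zeta(1+3\epsilon)^2$, which are finite and depend only on $\epsilon$.

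It then remains to control the prefactor $d(q)\,(3q)^{-1-3\sigma}$ uniformly in $q$, and this is the one point I would check with care, since the divisor factor itself grows with $q$. Here the trivial bound $d(q) \le q$ together with $3\sigma \ge 3\epsilon > 0$ suffices:
\[
 \frac{d(q)}{(3q)^{1+3\sigma}} \le \frac{q}{q^{1+3\sigma}} = q^{-3\sigma} \le q^{-3\epsilon} \le 1 .
\]
Combining the three displays gives $\left|\hat{G}_{\lambda,q}(x)\right| \le 2\,\zeta(1+\epsilon)^2\,\zeta(1+3\epsilon)^2$ on the whole half-plane $\RE(x) \ge \epsilon$, a bound independent of $q$, $\lambda$ and $\gamma$; in particular the series defining $\hat{G}_{\lambda,q}$ converges absolutely there. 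There is no real obstacle here beyond recording that the divisor-function growth attached to $3\ell m q$ is always absorbed by the denominator, the rest being the routine manipulation of absolutely convergent Dirichlet series.
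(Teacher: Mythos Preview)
Your proof is correct and follows essentially the same approach as the paper, which simply notes that $\eta_{\frac{z}{2}}(n)$ grows slower than any power of $n$. You have made this explicit by bounding $|\eta_{z/2}(n)|\le d(n)$, decoupling via submultiplicativity, and carefully checking uniformity in $q$; this is a more detailed execution of the same idea rather than a different method.
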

\begin{proof}
This follows since the divisor function $\eta_{\frac{z}{2}}(n)$ grows slower than any power of $n$.
\end{proof}

Let 
\begin{align}
 W_\lambda(w_1, w_2) &= \frac{1}{2}\frac{\Gamma(1-w_2)\cos\left(\frac{\pi}{2}(1-w_2) \right)}{(2\pi)^{\frac{1 + w_1 + w_2}{2}}} \tilde{K}_{\frac{z}{2}}\left(\frac{w_1 + 3w_2-1}{2} \right);\\
 \notag & \tilde{K}_\nu(s) = 2^{s-2}\Gamma\left(\frac{s+\nu}{2} \right)\Gamma\left(\frac{s-\nu}{2} \right).
\end{align}

\begin{lemma}
 $W_\lambda$ is holomorphic in $\RE(w_1 + 3w_2)>1$, $\RE(w_2)<1$.  Let $0 < \epsilon < \frac{1}{2}$.  For $\epsilon \leq \RE(w_2) \leq 1-\epsilon$,
 \begin{equation}
  \left|\Gamma(1-w_2)\cos\left(\frac{\pi}{2}(1-w_2)\right)\right| \ll |w_2|^{\frac{1}{2}- \RE(w_2)}.
 \end{equation}

\end{lemma}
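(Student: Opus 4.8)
The plan is to reduce the lemma to standard facts about the Gamma function. The statement has two parts: (i) holomorphy of $W_\lambda$ on $D=\{\RE(w_1+3w_2)>1,\ \RE(w_2)<1\}$, and (ii) the growth bound for $\Gamma(1-w_2)\cos(\tfrac{\pi}{2}(1-w_2))$ on the vertical strip $\epsilon\le\RE(w_2)\le1-\epsilon$. The common algebraic input is the identity
\[\Gamma(1-w_2)\cos\!\left(\tfrac{\pi}{2}(1-w_2)\right)=\Gamma(1-w_2)\sin\!\left(\tfrac{\pi w_2}{2}\right)=\sqrt{\pi}\,2^{-w_2}\,\frac{\Gamma\!\left(\tfrac{1-w_2}{2}\right)}{\Gamma\!\left(\tfrac{w_2}{2}\right)},\]
which I would derive from the reflection formula $\Gamma(w_2)\Gamma(1-w_2)=\pi/\sin(\pi w_2)$, the Legendre duplication formula, and the relation $\cos(\tfrac{\pi w_2}{2})=\pi/(\Gamma(\tfrac{1+w_2}{2})\Gamma(\tfrac{1-w_2}{2}))$ already used in the proof of Lemma~\ref{Sigma_3_lemma}.

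For (i): the factors $(2\pi)^{(1+w_1+w_2)/2}$ and the $2^{s-2}$ occurring inside $\tilde{K}_{z/2}$, with $s=\tfrac{w_1+3w_2-1}{2}$, are entire and nowhere zero, so they may be dropped. By the displayed identity, $\Gamma(1-w_2)\cos(\tfrac{\pi}{2}(1-w_2))$ is $\Gamma(\tfrac{1-w_2}{2})$ times an entire nonvanishing function, and the poles of $\Gamma(\tfrac{1-w_2}{2})$ lie at $w_2\in\{1,3,5,\dots\}$, all with real part $\ge1$; hence this factor is holomorphic on $\{\RE(w_2)<1\}$. Finally $\tilde{K}_{z/2}(s)=2^{s-2}\Gamma(\tfrac{s+z/2}{2})\Gamma(\tfrac{s-z/2}{2})$: since $z=i\gamma$ is purely imaginary for $\E_r$, the two Gamma arguments have real part $\tfrac14(\RE(w_1+3w_2)-1)$, which is positive on $D$, so $\tilde{K}_{z/2}(s)$ is holomorphic and nonzero there. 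Multiplying, $W_\lambda$ is holomorphic on $D$.

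For (ii): write $w_2=\sigma+it$ with $\epsilon\le\sigma\le1-\epsilon$, so by the displayed identity $|\Gamma(1-w_2)\cos(\tfrac{\pi}{2}(1-w_2))|\asymp_\epsilon|\Gamma(\tfrac{1-w_2}{2})|/|\Gamma(\tfrac{w_2}{2})|$. Stirling's formula gives, uniformly for $\sigma$ in the strip as $|t|\to\infty$, $|\Gamma(\tfrac{1-w_2}{2})|=(1+o(1))\sqrt{2\pi}\,|t/2|^{-\sigma/2}e^{-\pi|t|/4}$ and $|\Gamma(\tfrac{w_2}{2})|=(1+o(1))\sqrt{2\pi}\,|t/2|^{(\sigma-1)/2}e^{-\pi|t|/4}$; the exponential factors cancel, leaving a ratio $\asymp_\epsilon|t|^{1/2-\sigma}$. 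Thus for $|t|$ larger than a threshold depending on $\epsilon$ one has $|\Gamma(1-w_2)\cos(\tfrac{\pi}{2}(1-w_2))|\asymp_\epsilon|t|^{1/2-\sigma}\asymp|w_2|^{1/2-\sigma}$, using $|w_2|\asymp|t|$ and that $\tfrac12-\sigma$ ranges in the bounded interval $[\epsilon-\tfrac12,\tfrac12-\epsilon]$. On the complementary compact set $\{|t|\le T_\epsilon\}$ the left side is continuous and pole-free (the poles $w_2\in\{1,3,\dots\}$ have real part $\ge1$), hence bounded, while $|w_2|^{1/2-\sigma}$ is bounded below there, so the bound holds with an $\epsilon$-dependent constant. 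Combining the two ranges gives the claim.

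The one step that needs genuine care is the uniformity in $\sigma$ of Stirling's asymptotics across the strip together with the exact cancellation of the $e^{-\pi|t|/4}$ factors; everything else is elementary Gamma-pole bookkeeping, where it is essential that $z$ is purely imaginary so that the poles of $\tilde{K}_{z/2}$ remain on $\{\RE(s)\le0\}$ and stay out of $D$.
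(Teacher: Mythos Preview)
Your argument is correct. The paper itself does not give a proof here but defers to \cite{H19}, Lemma 14; your derivation via the identity
\[
\Gamma(1-w_2)\cos\!\left(\tfrac{\pi}{2}(1-w_2)\right)=\sqrt{\pi}\,2^{-w_2}\,\frac{\Gamma\!\left(\tfrac{1-w_2}{2}\right)}{\Gamma\!\left(\tfrac{w_2}{2}\right)}
\]
together with Stirling's formula (uniform on vertical strips) is exactly the standard route to such estimates, and your pole bookkeeping for both the $\Gamma(1-w_2)\cos(\cdot)$ factor and for $\tilde{K}_{z/2}$ (using that $z$ is purely imaginary) is accurate.
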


\begin{proof}
 See \cite{H19}, Lemma 14.
\end{proof}

Define $\Sigma_1^{\pm}(f, z_1, z_2) = \int_0^\infty \int_0^\infty f(0,0,t, \pm u) t^{z_1-1}u^{z_2 - 1} dt du$.
\begin{lemma}
 If $f$ is Schwarz class, then $\Sigma_1^{\pm}(f, z_1, z_2)$ is holomorphic in $\RE(z_1), \RE(z_2) >0$.  In this domain, for $\sigma_1, \sigma_2 > 0$,
 \begin{equation}
  \left|\Sigma_1^{\pm}(f, \sigma_1 + it_1, \sigma_2 + it_2)\right| \ll_{\sigma_1, \sigma_2, A_1, A_2} \frac{1}{(1+|t_1|)^{A_1}(1 + |t_2|)^{A_2}}.
 \end{equation}
For $t>0$, if $f^t(x) = f(tx)$ then $\Sigma_1^{\pm}(f^t, z_1, z_2) = t^{-z_1 - z_2}\Sigma_1^{\pm}(f, z_1, z_2).$
\end{lemma}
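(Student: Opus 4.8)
The plan is to recognize $\Sigma_1^{\pm}(f,z_1,z_2)$ as the two‑dimensional Mellin transform over the positive quadrant of the function $g(t,u) := f(0,0,t,\pm u)$, and to exploit the fact that $g$ is the restriction of the Schwarz function $f$ to a two‑dimensional coordinate subspace of $V_\bR$, hence is itself Schwarz on $\bR^2$: for every $j,k\ge 0$ and every $N$ one has $|\partial_t^j\partial_u^k g(t,u)|\ll_{j,k,N}(1+t^2+u^2)^{-N}$. From this, absolute convergence of the defining integral in $\RE(z_1),\RE(z_2)>0$ is immediate — near $t=0$ the weight $t^{\sigma_1-1}$ is integrable since $\sigma_1>0$ and $g$ is bounded, while for $t$ large the rapid decay of $g$ beats any power, and symmetrically in $u$. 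Holomorphy in that region then follows by differentiating under the integral sign, the convergence being locally uniform, or by Morera's theorem together with Fubini.

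Next I would prove the vertical decay by repeated integration by parts. Integrating by parts $A_1$ times in $t$ and $A_2$ times in $u$, every boundary contribution vanishes — at the origin because $t^{z_1+j}\to 0$ for $\RE(z_1)>0$ (resp.\ $u^{z_2+k}\to 0$), and at infinity by the Schwarz decay of $\partial_t^{j}g$ (resp.\ $\partial_u^{k}g$) — leaving the identity
\[
 \Sigma_1^{\pm}(f,z_1,z_2) = \frac{(-1)^{A_1+A_2}}{\prod_{j=0}^{A_1-1}(z_1+j)\prod_{k=0}^{A_2-1}(z_2+k)}\int_0^\infty\int_0^\infty \partial_t^{A_1}\partial_u^{A_2}g(t,u)\,t^{z_1+A_1-1}u^{z_2+A_2-1}\,dt\,du,
\]
valid throughout $\RE(z_1),\RE(z_2)>0$. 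The remaining double integral converges absolutely and is bounded in terms of $\sigma_1,\sigma_2,A_1,A_2$ only, again by the Schwarz bound on $\partial_t^{A_1}\partial_u^{A_2}g$; meanwhile for $\RE(z_i)=\sigma_i$ we have $|\prod_{j=0}^{A_1-1}(z_1+j)|\gg_{\sigma_1,A_1}(1+|t_1|)^{A_1}$ and likewise in $t_2$. Combining these yields the asserted bound $\ll_{\sigma_1,\sigma_2,A_1,A_2}(1+|t_1|)^{-A_1}(1+|t_2|)^{-A_2}$.

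The scaling identity I would obtain by the change of variables $t\mapsto t/\tau$, $u\mapsto u/\tau$ in the integral for $\Sigma_1^{\pm}(f^{\tau},z_1,z_2)$: since $f^{\tau}(0,0,t,\pm u)=f(0,0,\tau t,\pm\tau u)$, the substitution contributes a Jacobian factor $\tau^{-2}$ together with $\tau^{-(z_1-1)-(z_2-1)}$ from the monomial weights, for a net $\tau^{-z_1-z_2}$, as claimed.

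I do not anticipate a genuine obstacle. The only points needing care are the vanishing of the boundary terms in the integrations by parts — this is precisely where the hypothesis $\RE(z_i)>0$ enters, in tandem with the rapid decay of $f$ — and the elementary observation that the restriction of a Schwarz function to a coordinate subspace is again Schwarz. In effect this lemma is the two‑variable analogue of the one‑dimensional Mellin‑transform estimates already recorded in the Notation section, applied after this restriction; the argument for $\Sigma_2$ and $\Sigma_3$ in the earlier lemmas is structurally the same.
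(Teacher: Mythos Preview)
Your proof is correct and is precisely the standard two-variable Mellin-transform argument one expects here: absolute convergence in the quadrant $\RE(z_1),\RE(z_2)>0$ gives holomorphy, repeated integration by parts (with boundary terms vanishing by the positivity of the real parts and the Schwarz decay) yields the polynomial decay in vertical strips, and the scaling identity is a change of variables. The paper does not give its own proof but simply refers to \cite{H19}, Lemma 15; your argument is the natural proof and is doubtless what that reference contains.
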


\begin{proof}
 See \cite{H19}, Lemma 15.
\end{proof}

\begin{lemma}
We have
 \begin{align}
  \hat{\Theta}_q^{(2),n}(\E_r) &= \frac{4}{\xi(z+1)} \sum_{\epsilon = \pm}\sum_{q_1q_2=q} q_1^{-2}\prod_{p|q_2} \left(p^{-3}-p^{-5} \right)\\&\times \oiint_{\substack{\RE(w_1, w_2)\\ = (1, \frac{1}{2})}} q_2^{2(w_1 + w_2)} \Sigma_1^\epsilon\left(\hat{f}, w_1, w_2\right) W_\lambda(w_1, w_2) \hat{G}_{\lambda, q_2}\left(\frac{w_1 + w_2 - 1}{2} \right)dw_1 dw_2.
 \end{align}

\end{lemma}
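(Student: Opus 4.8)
The plan is to open the arithmetic weight $\hat\sN_q(0,0,3m,n)$, unfold the sum against the fundamental domain of $\Gamma\cap N$, and then extract the double contour integral by Poisson summation in the last coordinate together with the Fourier development \eqref{eisenstein_fourier_dev} of $\E_r^n$; this is the orbital refinement of the corresponding $q=1$ computation in \cite{H19}, the new ingredient being the way the modulus $q$ propagates. From the evaluations of $\hat\sN_p(0,0,3m,n)$ one reads off
\[
 \hat\sN_p(0,0,3m,n) = (p^{-3}-p^{-5})\,\one(p\mid m) + p^{-2}\,\one(p^2\mid m)\,\one(p^2\mid n),
\]
so multiplicativity gives $\hat\sN_q(0,0,3m,n)=\sum_{q_1q_2=q}q_1^{-2}\prod_{p\mid q_2}(p^{-3}-p^{-5})\,\one(q_1^2q_2\mid m)\,\one(q_1^2\mid n)$. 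Substituting $m\mapsto q_1^2q_2m$ and $n\mapsto q_1^2n$ identifies $\big(0,0,\tfrac{3m}{q^2},\tfrac{n}{q^2}\big)$ with $\big(0,0,\tfrac{3m}{q_2},\tfrac{n}{q_2^2}\big)$ and reduces $\Theta^{(2),n}_q(\E_r)$ to $\sum_{q_1q_2=q}q_1^{-2}\prod_{p\mid q_2}(p^{-3}-p^{-5})$ times the primitive integral with residual modulus $q_2$,
\[
 I(q_2)=\int_{G^1/\Gamma\cap N}\E_r^n(g^t)\sum_{m\geq1}\sum_{n\in\zed}\hat f\Big(g\cdot\big(0,0,\tfrac{3m}{q_2},\tfrac{n}{q_2^2}\big)\Big)\,dg .
\]

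Next I would evaluate $I(q_2)$ in Iwasawa coordinates $g=k_{-\theta}a_yn_x$. The relation $E(z,g)=\E_r(g^t)$ with \eqref{eisenstein_fourier_dev} gives $\E_r^n(g^t)=\tfrac{4y}{\xi(z+1)}\sum_{k\geq1}\eta_{z/2}(k)K_{z/2}(2\pi ky^2)\cos(2\pi kx)$, while the left $K$-invariance of $\hat f$ and the explicit action of $a_yn_x$ on a form of type $(0,0,c,d)$ give $\hat f\big(g\cdot(0,0,c,d)\big)=\hat f\big(0,0,\tfrac{c}{y},\tfrac{cx+d}{y^3}\big)$; with $c=\tfrac{3m}{q_2}$ the $n$-sum becomes $\sum_{n\in\zed}\hat f\big(0,0,\tfrac{3m}{q_2y},\tfrac{3mq_2x+n}{q_2^2y^3}\big)$, the $\theta$-integral is trivial and $x$ runs over $[0,1)$. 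Poisson summation in $n$ turns this sum into $q_2^2y^3\sum_{j\in\zed}e(3jmq_2x)\int_{\bR}\hat f\big(0,0,\tfrac{3m}{q_2y},\eta\big)e(-jq_2^2y^3\eta)\,d\eta$, and integrating $e(3jmq_2x)\cos(2\pi kx)$ over $x\in[0,1)$ forces $k=3|j|mq_2$; writing $j=\epsilon\ell$ with $\epsilon=\pm$ and $\ell\geq1$ produces the factor $\tfrac12\eta_{z/2}(3\ell mq_2)$, which explains the Dirichlet series $\hat{G}_{\lambda,q_2}$.

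Then I would carry out the remaining analytic integrals by Mellin inversion. Splitting $\int_{\bR}d\eta$ according to the sign $\epsilon$ of $\eta$ and inserting the two-dimensional Mellin inversion $\hat f(0,0,t,\epsilon u)=\oiint\Sigma_1^\epsilon(\hat f,w_1,w_2)\,t^{-w_1}u^{-w_2}\,dw_1\,dw_2$ along $\RE(w_1,w_2)=(1,\tfrac12)$, the $u$-integral is an elementary $\Gamma$-integral and summing over the Poisson sign collapses the two phases into $2\Gamma(1-w_2)\cos\!\big(\tfrac{\pi}{2}(1-w_2)\big)(2\pi\ell q_2^2y^3)^{-(1-w_2)}$. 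The $y$-integral, after $v=y^2$, is a Mellin transform of $K_{z/2}$ and yields $\tfrac12(2\pi\cdot3\ell mq_2)^{-\frac{w_1+3w_2-1}{2}}\tilde{K}_{z/2}\!\big(\tfrac{w_1+3w_2-1}{2}\big)$; the surviving powers of $2\pi$ combine to $(2\pi)^{-\frac{1+w_1+w_2}{2}}$, so that together with the $\tfrac{4}{\xi(z+1)}$ from the Eisenstein normalization exactly $\tfrac{4}{\xi(z+1)}W_\lambda(w_1,w_2)$ appears. The residual sum over $\ell,m$ carries the powers $\ell^{-\frac{w_1+w_2+1}{2}}(3m)^{-\frac{3(w_1+w_2)-1}{2}}$, which equals $q_2^{\frac{3(w_1+w_2)-1}{2}}\hat{G}_{\lambda,q_2}\!\big(\tfrac{w_1+w_2-1}{2}\big)$, and recombining this with the $q_2$-powers coming from the Poisson Jacobian, from $\big(\tfrac{3m}{q_2}\big)^{-w_1}$ and from $(2\pi\ell q_2^2)^{-(1-w_2)}$ produces precisely $q_2^{2(w_1+w_2)}$. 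Reassembling yields the asserted identity.

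The hard part will be the bookkeeping, in two senses. First, the many constants must be tracked consistently: the factor $\xi(z+1)^{-1}$, the powers of $2\pi$, the factors of $\tfrac12$ (from the cosine and from $v=y^2$), the Jacobians of the Iwasawa and Poisson changes of variable, and above all the several powers of $q_2$ that must recombine to $q_2^{2(w_1+w_2)}$. Second, every interchange---of the $(\ell,m)$-sum with the $\eta$- and $y$-integrals, of the Poisson summation, and of the two Mellin inversions---must be justified by absolute convergence; this is exactly what the preceding lemmas provide: the rapid decay of $\Sigma_1^\epsilon$ in the imaginary directions, the bound on $\Gamma(1-w_2)\cos\!\big(\tfrac{\pi}{2}(1-w_2)\big)$ together with the $\Gamma$-decay of $\tilde{K}_{z/2}$ inside $W_\lambda$, and the boundedness of $\hat{G}_{\lambda,q_2}$ for $\RE\big(\tfrac{w_1+w_2-1}{2}\big)\geq\epsilon$, all valid on $\RE(w_1,w_2)=(1,\tfrac12)$, while the $y$-integral converges at $y\to0$ from the Schwarz decay of $\hat f$ in its third slot and at $y\to\infty$ from the exponential decay of $K_{z/2}$. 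Finally one records, as already noted after the definition of $\Theta^{(2),n}_q$, that the incomplete-Eisenstein regularization may be dropped here because $\E_r^n$ decays rapidly in the cusp, so $I(q_2)$ is absolutely convergent from the outset.
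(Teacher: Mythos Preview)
Your proof is correct and follows precisely the approach of \cite{H19}, Lemma 16, which is all the paper invokes here; the new ingredient, namely the decomposition $\hat{\sN}_q(0,0,3m,n)=\sum_{q_1q_2=q}q_1^{-2}\prod_{p\mid q_2}(p^{-3}-p^{-5})\,\one(q_1^2q_2\mid m)\,\one(q_1^2\mid n)$ and the ensuing substitution reducing to a primitive integral with modulus $q_2$, is handled correctly. Your tracking of the $q_2$-powers and of the $2\pi$- and $3$-factors into $W_\lambda$ and $\hat G_{\lambda,q_2}$ checks out, and the convergence justifications you list are exactly the ones the surrounding lemmas supply.
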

\begin{proof}
 See \cite{H19} Lemma 16, where the details are the same in the case of an even form.
\end{proof}

\begin{lemma}\label{Theta_n_lemma}
We have the evaluation
\begin{align*}
 &Z_q^{\pm, 0, n}(\hat{f}, \E_r, \hat{L}; s) = \frac{4}{\xi(z+1)}\sum_{\epsilon = \pm} \sum_{q_1q_2 = q} q_1^{-2} \prod_{p|q_2}\left(p^{-3} - p^{-5}\right) \\&\times\oiint_{\substack{\RE(w_1,w_2)\\ = (1, \frac{1}{2})}}\frac{q_2^{2(w_1 + w_2)}}{12s-12+3w_1 + 3w_2}\Sigma^\epsilon\left(\hat{f}, w_1, w_2\right) W_\lambda(w_1, w_2) \hat{G}_{\lambda, q_2}\left(\frac{w_1+w_2-1}{2}\right) dw_1 dw_2
\end{align*}
and has holomorphic continuation to $\bC$.
\end{lemma}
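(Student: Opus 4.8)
The plan is to reduce this to the formula for $\hat\Theta_q^{(2),n}(\E_r)$ established above, to carry out the one remaining integral in $\lambda$, and then to obtain the analytic continuation by shifting the $w_1$-contour. By definition $Z^{\pm,0}_q(\hat f,\E_r,\hat L;s)=\int_0^1\lambda^{12s-12}\sI_q(\hat f_\pm^{\lambda^{-3}},\E_r)\frac{d\lambda}{\lambda}$, and $\sI_q(\hat f_\pm^{\lambda^{-3}},\E_r)=\Theta^{(0)}_q+\Theta^{(1)}_q+\Theta^{(2)}_q$ with the three pieces built from $\hat f_\pm^{\lambda^{-3}}$ and $\Theta^{(0)}_q=0$. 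Passing to the contributions of the constant and non-constant terms of $\E_r$, the non-constant part is $Z_q^{\pm,0,n}(\hat f,\E_r,\hat L;s)=\int_0^1\lambda^{12s-12}\bigl(\Theta^{(1),n}_q+\Theta^{(2),n}_q\bigr)\frac{d\lambda}{\lambda}$, and since $\Theta^{(1),n}_q=0$ only the $\Theta^{(2),n}_q$ term survives. Inserting the formula for $\hat\Theta_q^{(2),n}$ with $\hat f$ replaced by $\hat f_\pm^{\lambda^{-3}}$, the entire $\lambda$-dependence collapses into the prefactor $\Sigma_1^\epsilon(\hat f_\pm^{\lambda^{-3}},w_1,w_2)=\lambda^{3(w_1+w_2)}\Sigma_1^\epsilon(\hat f_\pm,w_1,w_2)$ supplied by the homogeneity recorded in the $\Sigma_1^\pm$ lemma.

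Next I would interchange the $\lambda$-integral with the double contour integral over $\RE(w_1,w_2)=(1,\tfrac12)$. This is legitimate by Fubini's theorem: on those contours $\Sigma_1^\epsilon(\hat f_\pm,w_1,w_2)$ decays faster than any polynomial in $\IM(w_1),\IM(w_2)$, the factor $W_\lambda(w_1,w_2)$ decays super-exponentially through the two $\Gamma$-factors of $\tilde K_{z/2}$ by Stirling, and $\hat G_{\lambda,q_2}\bigl(\tfrac{w_1+w_2-1}{2}\bigr)$ is bounded because $\RE\bigl(\tfrac{w_1+w_2-1}{2}\bigr)=\tfrac14>0$, while $\int_0^1\lambda^{\RE(12s-12+3(w_1+w_2))-1}d\lambda<\infty$ for $\RE(s)>\tfrac58$. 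Performing $\int_0^1\lambda^{12s-13+3(w_1+w_2)}d\lambda=\bigl(12s-12+3(w_1+w_2)\bigr)^{-1}$, valid for $\RE(s)>\tfrac58$, produces exactly the asserted identity.

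For the continuation to $s\in\bC$, observe that $s$ enters the right-hand side only through $\bigl(12s-12+3(w_1+w_2)\bigr)^{-1}$, whose single pole as a function of $w_1$ lies at $\RE(w_1)=\tfrac72-4\RE(s)$. Fix a compact set $\Omega\subset\bC$ and shift the $w_1$-contour rightward to $\RE(w_1)=\sigma_1>\max\{1,\ \tfrac72-4\min_{s\in\Omega}\RE(s)\}$, keeping $\RE(w_2)=\tfrac12$. No singularity of the integrand is crossed in this shift: $\Sigma_1^\epsilon(\hat f_\pm,w_1,w_2)$ is holomorphic in $w_1$ whenever $\RE(w_1)>0$; the $w_1$-poles of $W_\lambda$ arising from $\tilde K_{z/2}\bigl(\tfrac{w_1+3w_2-1}{2}\bigr)$ and those of $\hat G_{\lambda,q_2}\bigl(\tfrac{w_1+w_2-1}{2}\bigr)$ all lie in $\RE(w_1)\le1$ by the holomorphy already recorded for these objects; and the pole of the $s$-factor at $\RE(w_1)=\tfrac72-4\RE(s)$ lies strictly to the left of the shifted contour for every $s\in\Omega$, and for those $s\in\Omega$ with $\RE(s)>\tfrac58$ also strictly to the left of the original contour $\RE(w_1)=1$, so it never lies between the two. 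On the shifted contour the $(w_1,w_2)$-integral is absolutely and uniformly convergent for $s\in\Omega$ — the decay estimates above persist, the super-exponential decay of the $\Gamma$-factors dominating their growth along the vertical line — hence it defines a holomorphic function of $s$ on $\Omega$; as $\Omega$ was arbitrary, $Z_q^{\pm,0,n}(\hat f,\E_r,\hat L;s)$ is entire.

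The principal obstacle is this uniform control of the $(w_1,w_2)$-integrand needed both for the Fubini interchange and for holomorphy of the shifted-contour integral — combining the rapid decay of $\Sigma_1^\epsilon$ with the Stirling asymptotics of the Gamma factors inside $W_\lambda$, and keeping the lone pole $12s-12+3(w_1+w_2)=0$ off the contour. These verifications are, however, entirely parallel to the corresponding argument in \cite{H19}.
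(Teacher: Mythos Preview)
Your argument is correct and is exactly the natural route: feed the formula for $\hat\Theta_q^{(2),n}$ into the $\lambda$-integral, use the homogeneity $\Sigma_1^\epsilon(\hat f^{\lambda^{-3}},w_1,w_2)=\lambda^{3(w_1+w_2)}\Sigma_1^\epsilon(\hat f,w_1,w_2)$, evaluate the resulting $\lambda$-integral for $\RE(s)>\tfrac58$, and then shift the $w_1$-contour to the right for the entire continuation. The paper simply cites \cite{H19}, Lemma 17, whose proof proceeds in the same way, so your write-up is in line with the intended argument. One small wording point: the decay of the $\Gamma$-factors in $\tilde K_{z/2}$ is exponential (not super-exponential) and only in the combination $\IM(w_1)+3\IM(w_2)$, but this does not affect the argument since the rapid decay of $\Sigma_1^\epsilon$ in each variable separately already suffices for absolute convergence.
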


\begin{proof}
See \cite{H19} Lemma 17.
\end{proof}

\subsection{The constant term} In this section we obtain the residues of the twisted zeta functions which arise from the constant term by modifying the method of Shintani \cite{S72}.

\begin{lemma}
 We have
 \begin{align}
 & \Theta^{(1),c}_q(\E_r) = \frac{1}{3}\sum_{q_1q_2 = q} q_2^{-2}\prod_{p|q_1}(p^{-3}-p^{-5})\\ \notag \times
 &\left[(q_1^2 q_2)^{\frac{1-z}{3}}\zeta\left(\frac{1-z}{3} \right)\Sigma_2\left(\hat{f}, \frac{1-z}{3}\right) + \frac{\xi(z)}{\xi(z+1)}(q_1^2 q_2)^{\frac{1+z}{3}}\zeta\left(\frac{1+z}{3} \right)\Sigma_2\left(\hat{f}, \frac{1+z}{3}\right)  \right].
 \end{align}

\end{lemma}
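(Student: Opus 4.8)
The plan is to follow Shintani's evaluation of the singular--integral contribution of the forms of type $L_0(I)$ (\cite{S72}), as extended to the Eisenstein twist in \cite{HL20} and \cite{H19}, carrying the $\sN_q$ sieve along additively. Two ingredients feed in. From the Fourier development \eqref{eisenstein_fourier_dev}, writing $g = k_\theta a_t n_u$, the constant term contributes $\E_r^c(g^t) = t^{1+z} + \frac{\xi(z)}{\xi(z+1)}t^{1-z}$, a function of $t = t(g)$ alone; and from the evaluation of $\hat{\sN}_p(0,0,0,m)$ above, writing $\hat{\sN}_p(0,0,0,m) = (p^{-3}-p^{-5}) + p^{-2}\,\one(p^2\mid m)$ and expanding $\hat{\sN}_q = \prod_{p\mid q}\hat{\sN}_p$ yields the factorization
\[
 \hat{\sN}_q(0,0,0,m) = \sum_{q_1 q_2 = q} q_2^{-2}\,\one\!\left(q_2^{2}\mid m\right)\prod_{p\mid q_1}\bigl(p^{-3}-p^{-5}\bigr).
\]
Because $\hat f$ is left-$K$-invariant and $a_t n_u\cdot(0,0,0,c) = (0,0,0,c/t^{3})$, the summand $\hat f\bigl(g\cdot(0,0,0,m/q^2)\bigr)$ equals $\hat f(0,0,0,m/(q^2 t^{3}))$, which on the substitution $m = q_2^{2}\ell$ becomes $\hat f(0,0,0,\ell/(q_1^{2}t^{3}))$. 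Hence the entire integrand depends on $\theta$ and $u$ only through $\sE(\psi,w;g)$: integration in $\theta$ is trivial by the left-$K$-invariance of $E(\,\cdot\,,g)$, and integration of $u$ over a period extracts the Eisenstein constant term,
\[
 \int_0^1 \sE(\psi,w;a_t n_u)\,du = \oint_{\RE(\alpha)=c} \frac{\psi(\alpha)}{w-\alpha}\Bigl(t^{1+\alpha} + \tfrac{\xi(\alpha)}{\xi(\alpha+1)}t^{1-\alpha}\Bigr)\,d\alpha,
\]
the $K$-Bessel modes dropping out with their factor $\cos 2\pi m u$. This reduces $\Theta^{(1),c}_q(\E_r)$ to $\frac{\xi(2)}{\psi(1)}\lim_{w\to 1^+}(w-1)$ applied to a single integral over $t\in(0,\infty)$ against that contour integral.

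The core step is to carry out this regularized limit exactly as in Shintani's proof of the auxiliary $w\to 0^+$ lemma recalled above, whose argument shifts the $\alpha$-contour according to the size of $t$. One works first in a half-plane of $\RE(w)$ where everything converges absolutely (using the estimate $\sE(\psi,w;g)\in C(G^1/\Gamma,\RE(w)-1)$), then shifts the $\alpha$-contour past the simple pole of $\xi(\alpha)$ at $\alpha = 1$, whose residue contributes $\frac{\psi(1)}{\xi(2)(w-1)}$ with no dependence on $t$; the remaining contour integral is bounded uniformly for $w$ near $1$, so the prefactor $(w-1)$ annihilates it as $w\to 1^+$. What survives is
\[
 \int_0^{\infty}\Bigl(t^{1+z}+\tfrac{\xi(z)}{\xi(z+1)}t^{1-z}\Bigr)\Bigl(\sum_{q_1 q_2 = q} q_2^{-2}\prod_{p\mid q_1}(p^{-3}-p^{-5})\sum_{\ell=1}^{\infty}\hat f\bigl(0,0,0,\tfrac{\ell}{q_1^{2}t^{3}}\bigr)\Bigr)\frac{dt}{t^{3}}.
\]
In this surviving integral the $t$-Mellin transform against $\hat f(0,0,0,\ell/(q_1^{2}t^{3}))$ is evaluated by the change of variables $v = \ell/(q_1^{2}t^{3})$ together with the homogeneity $\Sigma_2(\hat f^{\,t},s) = t^{-s}\Sigma_2(\hat f,s)$: the $t^{1+z}$ term produces $\tfrac13\Sigma_2(\hat f,\tfrac{1-z}{3})$ times a Dirichlet series in $\ell$ which, after absorbing $m = q_2^{2}\ell$, equals $\zeta(\tfrac{1-z}{3})$, and the $t^{1-z}$ term produces the mirror expression with $z\mapsto -z$ in the $\zeta$-value, the $\Sigma_2$-value and the $q$-powers, and the extra scalar $\frac{\xi(z)}{\xi(z+1)}$. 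Collecting the powers of $q_1$ and $q_2$ gives the two summands of the statement.

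The step I expect to be the main obstacle is precisely this last limit. The individual Mellin transforms do not converge --- the $t$-integral is critically balanced in the exponents of $t$ --- so one cannot interchange $\lim_{w\to 1^+}$ with the $t$-integration and must instead follow Shintani carefully in tracking which contour shifts, and hence which half-planes in $w$, are legitimate at each stage, and in verifying both that the non-constant Fourier modes of $E(\alpha,\,\cdot\,)$ genuinely drop out of the $u$-average and that the error built into the incomplete-Eisenstein regularization tends to $0$ as $w\to 1^+$.
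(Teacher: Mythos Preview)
Your plan follows the paper's essentially line for line: factor $\hat{\sN}_q(0,0,0,m)$ multiplicatively, use left-$K$-invariance of $\hat f$ and $N$-invariance of $(0,0,0,m)$ to reduce to a $t$-integral against the constant term of $\sE(\psi,w;\,\cdot\,)$, then extract the $w\to 1^+$ limit by contour manipulation and read off $\zeta\cdot\Sigma_2$ values. The overall structure is right.

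There is, however, a genuine gap at exactly the point you flagged. A single shift of the $\alpha$-contour left past $\alpha=1$ does pick up a $t$-independent residue $\frac{\psi(1)}{(w-1)\xi(2)}$ from the $\frac{\xi(\alpha)}{\xi(\alpha+1)}t^{1-\alpha}$ piece, but the ``surviving integral'' you write down afterwards is divergent: for large $t$ one has $\sum_{\ell\ge 1}\hat f(0,0,0,\ell/(q_1^2 t^3))\sim C\,q_1^2 t^3$ by Riemann-sum comparison, so the integrand is of size $t\cdot t^3\cdot t^{-3}=t$, and the Dirichlet series in $\ell$ that appears after the change of variable is $\sum_\ell \ell^{-(1-z)/3}$ with $\RE\bigl(\tfrac{1-z}{3}\bigr)=\tfrac13$, which does not converge to $\zeta\bigl(\tfrac{1-z}{3}\bigr)$ without further justification. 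Likewise, the shifted contour integral on $\RE\alpha=c'<1$ still carries the $t^{1+\alpha}$ term, and its $t$-integral is not uniformly bounded near $w=1$.

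The paper's fix is to split \emph{before} shifting. Multiply out the product $\bigl(t^{1+u}+\tfrac{\xi(u)}{\xi(u+1)}t^{1-u}\bigr)\bigl(t^{1+z}+\tfrac{\xi(z)}{\xi(z+1)}t^{1-z}\bigr)$ into four terms and move the $u$-contour in \emph{opposite} directions for the two halves: far left ($\RE u<-3$) for the $t^{1+u}$ pieces, far right ($\RE u>3$) for the $\tfrac{\xi(u)}{\xi(u+1)}t^{1-u}$ pieces. On each shifted contour the $t$-integral now converges absolutely and equals $\zeta\bigl(\tfrac{\mp u\mp z}{3}\bigr)\Sigma_2\bigl(\hat f,\tfrac{\mp u\mp z}{3}\bigr)$; the left-shifted pieces are visibly holomorphic at $w=1$ and die under $(w-1)$, while in the right-shifted pieces the limit picks up the pole at $u=1$ in the $u$-variable (via the residue of $\xi(u)$), yielding the stated $\zeta\bigl(\tfrac{1\mp z}{3}\bigr)\Sigma_2\bigl(\hat f,\tfrac{1\mp z}{3}\bigr)$ directly. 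The point is that the $\zeta$-values arise from the $u$-contour residue, not from summing a divergent series in $\ell$.
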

\begin{proof}
 We have 
 \begin{align}
  &\Theta^{(1),c}_q(\E_r) = \frac{\xi(2)}{\psi(1)} \lim_{w \to 1^+}(w-1) \sum_{q_1q_2 = q} \prod_{p|q_1} \left(p^{-3} - p^{-5}\right) q_2^{-2}\\
  \notag &\times \int_{G^1/\Gamma\cap N}\sE(\psi,w;g)\left(t^{1 + z} + \frac{\xi(z)}{\xi(1+z)} t^{1-z}\right) \sum_{m \in \zed} \hat{f}\left(g \cdot \left(0,0,0, \frac{m}{q_1^2q_2}\right)\right)dg.
 \end{align}
Since $\hat{f}$ is invariant under $k_\theta$ on the left and $\left(0,0,0, \frac{m}{q_1^2q_2}\right)$ is invariant under $n_u$, in the $KAN$ decomposition, integration over $K$ may be eliminated, while integration over $n_u$ selects the constant term from $\sE(\psi, w;g)$. For $1<x_0 < w$ write the integral as  
\begin{align}
& \int_0^\infty \left(\oint_{\RE u = x_0} \frac{t^{1+u} + \frac{\xi(u)}{\xi(u+1)} t^{1-u}}{w-u}\psi(u)\right)\\ \notag &\times \left(t^{1+z} + \frac{\xi(z)}{\xi(1+z)}t^{1-z}\right) \sum_{m=1}^\infty  \hat{f}\left(0,0,0,\frac{m}{q_1^2q_2 t^3} \right) \frac{dt}{t^3}.
\end{align}
This expresses the integral as
\begin{align}
 &\sum_{m=1}^\infty \int_0^\infty \left(\oint_{\RE u = x_0} \frac{\psi(u)}{w-u} \left[t^{u+z} + \frac{\xi(u) t^{-u+z} }{\xi(u+1)}+ \frac{\xi(z)t^{u-z}}{\xi(z+1)} + \frac{\xi(u)\xi(z)t^{-u-z}}{\xi(u+1)\xi(z+1)} \right]\right)\\\notag &\times\hat{f}\left(0,0,0, \frac{m}{q_1^2q_2 t^3} \right)\frac{dt}{t}\\
 \notag &=\frac{1}{3} \Biggl\{  \oint_{\substack{\RE u = x_1\\ x_1 < -3}} \frac{\psi(u)}{w-u} \\&\times \notag\left(\frac{\zeta\left(-\frac{u+z}{3} \right)}{(q_1^2 q_2)^{\frac{u+z}{3}}}\Sigma_2\left(\hat{f}, - \frac{u+z}{3}\right) + \frac{\xi(z)}{\xi(z+1)}\frac{\zeta\left(-\frac{u-z}{3} \right)}{(q_1^2q_2)^{\frac{u-z}{3}}}\Sigma_2\left(\hat{f}, -\frac{u-z}{3}\right) \right)du\\
 \notag & +\oint_{\substack{\RE u = x_2\\ x_2 > 3}} \frac{\psi(u)}{w-u}\frac{\xi(u)}{\xi(u+1)}\\&\times\notag \left((q_1^2q_2)^{\frac{u-z}{3}}\zeta\left(\frac{u-z}{3} \right)\Sigma_2\left(\hat{f}, \frac{u-z}{3}\right) + \frac{\xi(z)}{\xi(z+1)}(q_1^2q_2)^{\frac{u+z}{3}}\zeta\left(\frac{u+z}{3} \right)\Sigma_2\left(\hat{f}, \frac{u+z}{3}\right) \right)du\Biggr\}.
\end{align}
Only the second integral contributes to the limit as $w\to 1^+$ since the first is holomorphic in $w$ there.  Picking up the pole at $u=1$ in the second integral obtains the claim.

\end{proof}

\begin{lemma}\label{Theta_1_c_lemma}
 When $f$ is supported on $V_-$ the contribution to $Z^{\pm, 0}_q\left(\hat{f}, \E_r, \hat{L};s\right)$ from $\Theta^{(1),c}_q(\E_r)$ is
 \begin{align*}
&\sqrt{\pi}K_{\frac{z}{2}}(2)\sum_{q_1q_2 = q}q_2^{-2} \prod_{p|q_1} \left(p^{-3}-p^{-5} \right)\\&\times\Biggl[\frac{2^{\frac{z-1}{6}}3^{-1}\pi^{\frac{1+2z}{6}} (q_1^2q_2)^{\frac{1-z}{3}} \zeta\left(\frac{1-z}{3} \right)}{12s-11-z} \cos\left(\frac{\pi(1-z)}{6} \right)\frac{\Gamma\left(\frac{1-z}{3} \right)\Gamma\left(\frac{4-z}{6}\right)}{\Gamma\left(\frac{7-z}{6}\right)}\tilde{f}_D\left(\frac{11+z}{12}\right) \\& + \frac{\xi(z)}{\xi(1+z)}  
  \frac{2^{\frac{-z-1}{6}}3^{-1}\pi^{\frac{1-2z}{6}}  (q_1^2q_2)^{\frac{1+z}{3}}\zeta\left(\frac{1+z}{3} \right) }{12s-11+z} \cos\left(\frac{\pi(1+z)}{6} \right)\frac{\Gamma\left(\frac{1+z}{3} \right)\Gamma\left(\frac{4+z}{6}\right)}{\Gamma\left(\frac{7+z}{6}\right)}\tilde{f}_D\left(\frac{11-z}{12}\right) \Biggr].
 \end{align*}
When $f$ is supported on $V_+$, the contribution is
\begin{align*}
&\sqrt{\pi}K_{\frac{z}{2}}(2)\sum_{q_1q_2 = q}q_2^{-2} \prod_{p|q_1} \left(p^{-3}-p^{-5} \right)\\&\times\Biggl[\frac{ 2^{\frac{z-1}{6}}3^{\frac{z-7}{4}}\pi^{\frac{1+2z}{6}} (q_1^2q_2)^{\frac{1-z}{3}}\zeta\left(\frac{1-z}{3}\right)}{12s-11-z} \cos\left(\frac{\pi(1-z)}{6} \right)\frac{\Gamma\left(\frac{1-z}{3} \right)\Gamma\left(\frac{4-z}{6}\right)}{\Gamma\left(\frac{7-z}{6}\right)}\tilde{f}_D\left(\frac{11+z}{12}\right) \\& + \frac{\xi(z)}{\xi(1+z)}  
  \frac{2^{\frac{-z-1}{6}}3^{\frac{-z-7}{4}}\pi^{\frac{1-2z}{6}}(q_1^2q_2)^{\frac{1+z}{3}}\zeta\left(\frac{1+z}{3} \right) }{12s-11+z} \cos\left(\frac{\pi(1+z)}{6} \right)\frac{\Gamma\left(\frac{1+z}{3} \right)\Gamma\left(\frac{4+z}{6}\right)}{\Gamma\left(\frac{7+z}{6}\right)}\tilde{f}_D\left(\frac{11-z}{12}\right) \Biggr].
\end{align*}

\end{lemma}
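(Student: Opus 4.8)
The plan is to read this off from the preceding lemma (which evaluates $\Theta^{(1),c}_q(\E_r)$ in terms of the Mellin transforms $\Sigma_2$) together with Lemma~\ref{Sigma_2_lemma}. Recall that
\[
 Z^{\pm,0}_q\bigl(\hat f_\pm, \E_r, \hat L; s\bigr) = \int_0^1 \lambda^{12s-12}\,\sI_q\bigl(\hat f_\pm^{\lambda^{-3}}, \E_r\bigr)\,\frac{d\lambda}{\lambda},
\]
and that $\sI_q = \Theta^{(0)}_q + \Theta^{(1),c}_q + \Theta^{(1),n}_q + \Theta^{(2),c}_q + \Theta^{(2),n}_q$, so that the contribution to be computed is $\int_0^1 \lambda^{12s-12}\,\Theta^{(1),c}_q(\E_r)[\hat f_\pm^{\lambda^{-3}}]\,\frac{d\lambda}{\lambda}$. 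First I would substitute $\hat f = \hat f_\pm^{\lambda^{-3}}$ into the formula of the preceding lemma and use the homogeneity $\Sigma_2(g^t, w) = t^{-w}\Sigma_2(g, w)$ with $t = \lambda^{-3}$, giving $\Sigma_2\bigl(\hat f_\pm^{\lambda^{-3}}, \tfrac{1\mp z}{3}\bigr) = \lambda^{1\mp z}\Sigma_2\bigl(\hat f_\pm, \tfrac{1\mp z}{3}\bigr)$; all the other factors of $\Theta^{(1),c}_q$ (the finite sum over $q_1 q_2 = q$, the factor $q_2^{-2}\prod_{p\mid q_1}(p^{-3}-p^{-5})$, the $\zeta$-values $\zeta(\tfrac{1\mp z}{3})$, the powers $(q_1^2 q_2)^{\frac{1\mp z}{3}}$, and the leading $\tfrac13$) are $\lambda$-independent and come out of the integral.

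Next I would carry out the $\lambda$-integral term by term: for $\RE(s)$ large,
\[
 \int_0^1 \lambda^{12s-12}\,\lambda^{1\mp z}\,\frac{d\lambda}{\lambda} = \int_0^1 \lambda^{12s-12\mp z}\,d\lambda = \frac{1}{12s-11\mp z},
\]
and this rational function furnishes the meromorphic continuation, producing the denominators $12s-11\mp z$ of the statement (with simple poles at $s=\tfrac{11\pm z}{12}$). It then remains to insert the closed forms of $\Sigma_2(\hat f_\pm, \tfrac{1\mp z}{3})$ from Lemma~\ref{Sigma_2_lemma}. With $w=\tfrac{1\mp z}{3}$, the Bessel factor $K_{\frac{1-3w}{2}}(2)$ becomes $K_{z/2}(2)$ (its order is $\pm\tfrac z2$ and $K_\nu$ is even in $\nu$), and $\pi^{1-w}=\sqrt{\pi}\,\pi^{\frac{1\pm 2z}{6}}$, so that a $\sqrt\pi$ combines with $K_{z/2}(2)$ into the common prefactor $\sqrt\pi\,K_{z/2}(2)$; the surviving factors $2^{-w/2}$, $\cos(\tfrac{\pi w}{2})$, $\Gamma(w)\Gamma(\tfrac{1+w}{2})\big/\Gamma(1+\tfrac w2)$ and $\tilde{f}_D(1-\tfrac w4)$ turn into the factors $2^{\frac{z-1}{6}}$ (resp.\ $2^{\frac{-z-1}{6}}$), $\cos(\tfrac{\pi(1\mp z)}{6})$, $\Gamma(\tfrac{1\mp z}{3})\Gamma(\tfrac{4\mp z}{6})\big/\Gamma(\tfrac{7\mp z}{6})$ and $\tilde{f}_D(\tfrac{11\pm z}{12})$ appearing in the displays. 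For $\hat f_+$ one additionally keeps the factor $3^{\frac{3w}{4}-1}$ from Lemma~\ref{Sigma_2_lemma}; combined with the overall $\tfrac13 = 3^{-1}$ this produces the indicated power of $3$, while for $\hat f_-$ the $\tfrac13$ contributes the bare $3^{-1}$.

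Assembling the $\lambda$-independent constants with the factor $\tfrac{1}{12s-11\mp z}$ gives exactly the two displayed formulas. This is essentially a computation: all the substance lies in the preceding lemma and in Lemma~\ref{Sigma_2_lemma}, and the only points requiring care are the bookkeeping of the $2$-, $3$- and $\pi$-powers and of the arguments of $\Gamma$, $\zeta$, $\cos$, $\tilde{f}_D$ under the substitution $w=\tfrac{1\mp z}{3}$, together with the remark that the $\lambda$-integral converges only for $\RE(s)>1$, so that the displayed equality is to be understood as one of meromorphic continuations — consistent with the relation $\sim$ appearing in Proposition~\ref{pole_proposition}.
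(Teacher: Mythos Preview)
Your proposal is correct and follows essentially the same route as the paper's own proof: substitute the preceding lemma's expression for $\Theta^{(1),c}_q$ with $\hat f$ replaced by $\hat f_\pm^{\lambda^{-3}}$, use the homogeneity $\Sigma_2(\hat f^{\lambda^{-3}},w)=\lambda^{3w}\Sigma_2(\hat f,w)$, evaluate the elementary $\lambda$-integral to produce the denominators $12s-11\mp z$, and then insert Lemma~\ref{Sigma_2_lemma} (noting $K_{\pm z/2}=K_{z/2}$) to obtain the stated constants. The paper does exactly this, treating $V_-$ in detail and remarking that $V_+$ differs only by the extra factor $3^{\frac{3w}{4}-1}$ from Lemma~\ref{Sigma_2_lemma}.
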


\begin{proof}
We show only the negative discriminant case, since the positive is multiplied by a factor $3^{\frac{3\left(\frac{1\pm z}{3}\right)}{4}-1} = 3^{\frac{\pm z-3}{4}}$, see Lemma \ref{Sigma_2_lemma}.  The negative discriminant case is 
 \begin{align*}
  &\sum_{q_1q_2 = q}q_2^{-2} \prod_{p|q_1} \left(p^{-3}-p^{-5} \right)\int_0^1 \frac{d \lambda}{\lambda}\lambda^{12s-12} \frac{1}{3}\biggl[(q_1^2q_2)^{\frac{1-z}{3}}\zeta\left(\frac{1-z}{3}\right)\Sigma_2\left(\hat{f}^{\lambda^{-3}}, \frac{1-z}{3}\right) \\&+ \frac{\xi(z)}{\xi(1+z)}(q_1^2q_2)^{\frac{1+z}{3}}\zeta\left(\frac{1 + z}{3} \right)\Sigma_2\left(\hat{f}^{\lambda^{-3}}, \frac{1+z}{3} \right) \biggr]\\
  &=\sum_{q_1q_2 = q}q_2^{-2} \prod_{p|q_1} \left(p^{-3}-p^{-5} \right) \\&\times\frac{1}{3} \left[\frac{(q_1^2q_2)^{\frac{1-z}{3}}\zeta\left(\frac{1-z}{3} \right)\Sigma_2\left(\hat{f}, \frac{1-z}{3} \right)}{12s -11-z}  +\frac{\xi(z)}{\xi(1+z)}\frac{(q_1^2q_2)^{\frac{1+z}{3}}\zeta\left(\frac{1+z}{3} \right)\Sigma_2\left(\hat{f}, \frac{1+z}{3} \right)}{12s-11+z}   \right]\\
  &=\sum_{q_1q_2 = q}q_2^{-2} \prod_{p|q_1} \left(p^{-3}-p^{-5} \right)\\&\times\frac{1}{3}\Biggl[\frac{(q_1^2q_2)^{\frac{1-z}{3}}\zeta\left(\frac{1-z}{3} \right)2^{\frac{z-1}{6}}\pi^{\frac{2+z}{3}}}{12s-11-z} \cos\left(\frac{\pi(1-z)}{6} \right)\frac{\Gamma\left(\frac{1-z}{3} \right)\Gamma\left(\frac{4-z}{6}\right)}{\Gamma\left(\frac{7-z}{6}\right)}\tilde{f}_D\left(\frac{11+z}{12}\right) K_{\frac{z}{2}}(2)\\& + \frac{\xi(z)}{\xi(1+z)}  
  \frac{(q_1^2q_2)^{\frac{1+z}{3}}\zeta\left(\frac{1+z}{3} \right)2^{\frac{-z-1}{6}}\pi^{\frac{2-z}{3}}}{12s-11+z} \cos\left(\frac{\pi(1+z)}{6} \right)\frac{\Gamma\left(\frac{1+z}{3} \right)\Gamma\left(\frac{4+z}{6}\right)}{\Gamma\left(\frac{7+z}{6}\right)}\tilde{f}_D\left(\frac{11-z}{12}\right) K_{\frac{z}{2}}(2)\Biggr].
 \end{align*}

\end{proof}

This contributes the first set of poles in Proposition \ref{pole_proposition}.

\begin{lemma}
 We have
\begin{align}
&\Theta_q^{(2),c}(\E_r)=\frac{1}{2}\sum_{q_1q_2=q} q_2^{-2} \prod_{p|q_1} \left(p^{-1}-p^{-3} \right)\\
\notag &\left[q_1^{-2-z}\zeta(3+z) \int_{x = (x_2,x_3,x_4)}f_0(x) |x_2|^{2+z} + \frac{\xi(z)}{\xi(1+z)}q_1^{-2+z}\zeta(3-z)\int_{x=(x_2, x_3, x_4)} f_0(x)|x_2|^{2-z} \right].
\end{align}

\end{lemma}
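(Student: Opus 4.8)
The plan is to mimic the computation of $\Theta^{(1),c}_q(\E_r)$ in the previous lemma, the one genuinely new point being that the singular vectors $(0,0,3m,n)\in\hat L_0(II)$ are not fixed by the unipotent group $N$, so the $u$-integral in the Iwasawa decomposition no longer merely extracts the constant term of the incomplete Eisenstein series. First I would insert the evaluation of $\hat\sN_p(0,0,3m,n)$, which can be written $\one(p\mid m)\bigl[(p^{-3}-p^{-5})+p^{-2}\one(p^2\mid m)\one(p^2\mid n)\bigr]$; multiplying over $p\mid q$ and expanding,
\[
 \hat\sN_q(0,0,3m,n)=\one(q\mid m)\sum_{q_1q_2=q}q_2^{-2}\one(q_2^2\mid m)\one(q_2^2\mid n)\prod_{p\mid q_1}(p^{-3}-p^{-5}).
\]
Since $q$ is squarefree the constraints $q\mid m$ and $q_2^2\mid m$ combine to $q_1q_2^2\mid m$; after the substitution $m\mapsto q_1q_2^2m$, $n\mapsto q_2^2n$ this reduces $\Theta^{(2),c}_q(\E_r)$ to $\sum_{q_1q_2=q}q_2^{-2}\prod_{p\mid q_1}(p^{-3}-p^{-5})$ times
\[
 \frac{\xi(2)}{\psi(1)}\lim_{w\to1^+}(w-1)\int_{G^1/\Gamma\cap N}\sE(\psi,w;g)\,\E_r^c(g^t)\sum_{m\geq1}\sum_{n\in\zed}\hat f\Bigl(g\cdot\bigl(0,0,\tfrac{3m}{q_1},\tfrac{n}{q_1^2}\bigr)\Bigr)dg.
\]

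Next I would carry out the $\theta$- and $u$-integrals using $g=k_\theta a_t n_u$. Left $K$-invariance of $\hat f$ and of $E(z,\cdot)$ trivializes the $\theta$-integral. The key observation is that $n_u\cdot(0,0,c,d)=(0,0,c,cu+d)$, so $\sum_{n\in\zed}\hat f\bigl(a_t n_u\cdot(0,0,\tfrac{3m}{q_1},\tfrac{n}{q_1^2})\bigr)$ is $1$-periodic in $u$ with Fourier frequencies in $3mq_1\zed$; pairing this against the Fourier expansion of $\sE(\psi,w;a_t n_u)$ and integrating over $u\in[0,1)$ leaves the product of the two zero modes together with contributions of the nonzero modes of $\sE$ at frequencies $3mq_1l$, $l\neq0$. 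The latter involve only the $K$-Bessel part $\frac{2t}{\xi(z+1)}\eta_{z/2}(|k|)K_{z/2}(2\pi|k|t^2)$ of the Eisenstein series, which is holomorphic near $z=1$ and decays rapidly in the cusp; after the $t$-integration they produce an expression holomorphic in $w$ at $w=1$, hence annihilated by $(w-1)\lim_{w\to1^+}$, exactly as in the proof that $\Theta^{(1),n}_q(\E_r)=0$. A Poisson summation (telescoping) argument evaluates the zero mode,
\[
 \int_0^1\sum_{n\in\zed}\hat f\Bigl(a_t n_u\cdot\bigl(0,0,\tfrac{3m}{q_1},\tfrac{n}{q_1^2}\bigr)\Bigr)du=q_1^2\,t^3\int_{-\infty}^\infty\hat f\Bigl(0,0,\tfrac{3m}{q_1t},y\Bigr)dy,
\]
and it is this extra factor $q_1^2$ that turns $\prod_{p\mid q_1}(p^{-3}-p^{-5})$ into $\prod_{p\mid q_1}(p^{-1}-p^{-3})$.

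What remains is a one-dimensional integral of the same shape handled for $\Theta^{(1),c}_q$. Writing $\E_r^c(g^t)=t^{1+z}+\tfrac{\xi(z)}{\xi(1+z)}t^{1-z}$ and the constant term of $\sE$ as $\oint_{\RE(u)=c}\tfrac{\psi(u)}{w-u}\bigl(t^{1+u}+\tfrac{\xi(u)}{\xi(u+1)}t^{1-u}\bigr)du$, I would expand into the four monomials $t^{2\pm u\pm z}$; the factor $t^3$ from the previous step cancels the $t^{-3}$ in the measure, and the substitution $x=3m/(q_1t)$ together with the definition of $\Sigma_3$ identifies the $t$-integral of $q_1^2\sum_{m\geq1}\int\hat f(0,0,\tfrac{3m}{q_1t},y)dy$ against $t^{1\pm u\pm z}$ with $3^{3\pm u\pm z}q_1^{-1\mp u\mp z}\zeta(-3\mp u\mp z)\Sigma_3(\hat f,-3\mp u\mp z)$. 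The two monomials carrying $\tfrac{\xi(u)}{\xi(u+1)}$ require $\RE(u)$ large; shifting the $u$-contour right past $u=w$ and using $\psi(w)\to\psi(1)$, $(w-1)\xi(w)\to1$, $\xi(w+1)\to\xi(2)$ produces, in the limit, the boundary value at $u=1$, while the remaining two monomials give a remainder holomorphic in $w$ near $1$. Collecting terms,
\[
 \Theta^{(2),c}_q(\E_r)=\sum_{q_1q_2=q}q_2^{-2}\prod_{p\mid q_1}(p^{-1}-p^{-3})\Bigl[q_1^{-2-z}3^{2+z}\zeta(-2-z)\Sigma_3(\hat f,-2-z)+\tfrac{\xi(z)}{\xi(1+z)}q_1^{-2+z}3^{2-z}\zeta(-2+z)\Sigma_3(\hat f,-2+z)\Bigr],
\]
and finally the functional equation $\zeta(s)=\pi^{s-1/2}\tfrac{\Gamma((1-s)/2)}{\Gamma(s/2)}\zeta(1-s)$ together with Lemma~\ref{Sigma_3_lemma} gives $3^{2+z}\zeta(-2-z)\Sigma_3(\hat f,-2-z)=\tfrac12\zeta(3+z)\int_{x=(x_2,x_3,x_4)}f_0(x)|x_2|^{2+z}$, and likewise with $z$ replaced by $-z$; the $\tfrac12$ is the constant in $\Sigma_3(\hat f,s)=3^s\pi^{-s+1/2}\tfrac{\Gamma(s/2)}{2\Gamma((1-s)/2)}\int f_0|x_2|^{-s}$. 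Substituting yields the asserted formula.

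I expect the main obstacle to be the $u$-integral of the second step. Because the singular vectors of $\hat L_0(II)$ are not $N$-fixed, one cannot simply replace $\sE(\psi,w;g)$ by its constant term as in the $\Theta^{(1),c}_q$ case, and the care required is in verifying that the nonzero Fourier modes of the incomplete Eisenstein series genuinely drop out after the regularizing limit and in extracting the zero mode with its factor $q_1^2$; the rest is the mechanical analogue of the computation for $\Theta^{(1),c}_q$ and of the arguments in \cite{H19}.
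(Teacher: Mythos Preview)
Your approach is correct and is essentially the same as the paper's: you unfold using the evaluation of $\hat{\sN}_q(0,0,3m,n)$, separate the zero and nonzero Fourier modes of the incomplete Eisenstein series via the $u$-periodicity (the paper phrases this as averaging $E(v,a_t n_{(u-n)/3mq_1})$ over $0\le n<3mq_1$, which is the same thing), discard the nonzero modes by shifting the $v$-contour past $v=1$, and evaluate the zero mode via $\Sigma_3$ and the functional equation of $\zeta$. A couple of cosmetic slips to clean up: the product of the two constant-term monomials is $t^{2\pm u\pm z}$ rather than $t^{1\pm u\pm z}$, and in the $K$-Bessel piece you wrote $z$ where the spectral variable of $\sE$ (call it $v$) is meant; neither affects the argument or the final formula.
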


\begin{proof}
 Calculate
 \begin{align}
 &\notag \Theta^{(2),c}_q(\E_r) = \frac{\xi(2)}{\psi(1)} \lim_{w \to 1^+} (w-1) \int_{G^1/\Gamma\cap N} \sE(\psi, w;g)\\&\notag\times\left[t^{1+z} + \frac{\xi(z)}{\xi(z+1)}t^{1-z} \right]\sum_{m=1}^\infty \sum_{n \in \zed} \hat{\sN}_q(0,0,3m,n) \hat{f}\left(g \cdot \left(0,0, \frac{3m}{q^2}, \frac{n}{q^2}\right)\right)dg
 \end{align}
The integral may be written
\begin{align}
&\sum_{q_1q_2=q} q_2^{-2} \prod_{p|q_1} \left(p^{-3}-p^{-5} \right)\\&
\times \int_{G^1/\Gamma \cap N} \sE(\psi, w;g)\notag\left[t^{1+z} + \frac{\xi(z)}{\xi(z+1)}t^{1-z}\right] \sum_{m=1}^\infty \sum_{n \in \zed} \hat{f}\left(g \cdot \left(0,0,\frac{3m}{q_1}, \frac{n}{q_1^2} \right)\right)dg.
\end{align}
Using that $\hat{f}$ is left $K$ invariant, write this as 
\begin{align}
&\sum_{q_1q_2=q} q_2^{-2} \prod_{p|q_1} \left(p^{-3}-p^{-5} \right)\\& \notag \times \int_0^\infty \left[t^{-2 + z} + \frac{\xi(z)}{\xi(z+1)}t^{-2-z} \right]dt \int_0^1 du \\\notag &\times \sum_{m=1}^\infty \sum_{0 \leq n <3mq_1} \hat{f}\left(a_t \cdot \left(0,0, \frac{3m}{q_1}, \frac{3mq_1 u + n}{q_1^2}\right)\right) \oint_{\substack{\RE v = x_0\\ 4 < x_0 < \RE w}}\frac{E(v, a_t n_u) \psi(v)}{w-v} dv\\
 \notag &= \sum_{q_1q_2=q} q_2^{-2} \prod_{p|q_1} \left(p^{-3}-p^{-5} \right)\\&\notag\times
 \int_0^\infty \left[t^{-2 + z} + \frac{\xi(z)}{\xi(z+1)}t^{-2-z} \right]dt \int_{-\infty}^\infty du \\\notag &\times \sum_{m=1}^\infty  \hat{f}\left(a_t \cdot \left(0,0, \frac{3m}{q_1}, \frac{u}{q_1^2}\right)\right)\frac{1}{3mq_1}\sum_{0 \leq n <3mq_1} \oint_{\substack{\RE v = x_0\\ 4 < x_0 < \RE w}}\frac{E(v, a_t n_{\frac{u-n}{3mq_1}}) \psi(v)}{w-v} dv.
\end{align}
The sum over $n$ selects the Fourier coefficients of $E(v, \cdot)$ which are divisible by $3mq_1$.  Split the remaining coefficients into the constant term and the non-constant term.  The constant term contributes
\begin{align}
 &\sum_{q_1q_2=q} q_2^{-2} \prod_{p|q_1} \left(p^{-3}-p^{-5} \right)\\\notag&\times \int_0^\infty \left[t^{-2 + z} + \frac{\xi(z)}{\xi(z+1)}t^{-2-z}  \right]dt \int_{-\infty}^\infty du\\
 \notag &\times \sum_{m=1}^\infty \hat{f}\left(0,0, \frac{3m}{ q_1 t}, \frac{u}{ q_1^2 t^3} \right) \oint_{\substack{\RE v = x_0\\ 4 < x_0 < \RE w}} \frac{t^{1+v} + \frac{\xi(v)}{\xi(v+1)} t^{1-v}}{w-v} \psi(v) dv   .
\end{align}
Split the last contour integral into two parts corresponding to $t^{1+v}$ and $t^{1-v}$.  In the $t^{1+v}$ part, shift the integral leftward arbitrarily to show that this has no singularity at $w=1$.  This reduces to the $t^{1-v}$ part, which we write as
\begin{align}
 &\sum_{q_1q_2=q} q_2^{-2} \prod_{p|q_1} \left(p^{-1}-p^{-3} \right)\\&\notag \times \oint_{\substack{\RE v = x_0\\ 4 < x_0 < \RE w}} \frac{\xi(v)\psi(v)}{\xi(v+1)(w-v)} \int_0^\infty\left[t^{3 + z-v} +\frac{\xi(z)}{\xi(z+1)} t^{3 -z-v} \right]\frac{dt}{t}\\
 \notag &\times \int_{-\infty}^\infty du\sum_{m=1}^\infty \hat{f}\left(0,0,\frac{3m}{q_1t},u \right)
\\ \notag
&=\sum_{q_1q_2=q} q_2^{-2} \prod_{p|q_1} \left(p^{-1}-p^{-3} \right)\\
\notag & \times \oint_{\substack{\RE v = x_0\\ 4 < x_0 < \RE w}} \frac{\xi(v)\psi(v)}{\xi(v+1)(w-v)} \\&\times \notag \biggl[\zeta(v-3-z) \left(\frac{q_1}{3} \right)^{v-3-z}\Sigma_3(\hat{f},v-3-z)\\&\notag + \frac{\xi(z)}{\xi(z+1)} \zeta(v-3+z)\left(\frac{q_1}{3} \right)^{v-3+z}\Sigma_3(\hat{f},v-3+z) \biggr].\end{align}
The contribution to $\Theta^{(2),c}(\E_r)$ comes from the pole at $v=1$ and obtains
\begin{align}&\sum_{q_1q_2=q} q_2^{-2} \prod_{p|q_1} \left(p^{-1}-p^{-3} \right)\\
\notag &\left[\zeta(-2-z)\left(\frac{q_1}{3} \right)^{-2-z}\Sigma_3(\hat{f}, -2-z) + \frac{\xi(z)}{\xi(z+1)} \zeta(-2 + z)\left(\frac{q_1}{3} \right)^{-2+z} \Sigma_3(\hat{f}, -2 + z) \right].
\end{align}
Combine this with Lemma \ref{Sigma_3_lemma} and the functional equation of the Riemann zeta function to obtain the claimed quantity in the lemma,
\begin{align}
&\frac{1}{2}\sum_{q_1q_2=q} q_2^{-2} \prod_{p|q_1} \left(p^{-1}-p^{-3} \right)\\
\notag &\left[q_1^{-2-z}\zeta(3+z) \int_{x = (x_2,x_3,x_4)}f_0(x) |x_2|^{2+z} + \frac{\xi(z)}{\xi(1+z)}q_1^{-2+z}\zeta(3-z)\int_{x=(x_2, x_3, x_4)} f_0(x)|x_2|^{2-z} \right].
\end{align}

The contribution of the non-constant terms of the Fourier series for $E(v,\cdot)$ is
\begin{align} &\sum_{q_1q_2=q} q_2^{-2} \prod_{p|q_1} \left(p^{-1}-p^{-3} \right)\\
 \notag&\times \int_0^\infty dt \int_{-\infty}^\infty \oint_{\substack{\RE v = x_0\\ 4 < x_0 < w}} \frac{4}{\xi(v+1)} \frac{\psi(v)}{w-v}\\
 \notag &\times \left[t^{2+z} + \frac{\xi(z)}{\xi(z+1)} t^{2-z} \right] \sum_{\ell,m=1}^\infty \eta_{\frac{v}{2}} (3\ell m)K_{\frac{v}{2}}(6\pi \ell m  t^2) \cos(2\pi \ell t^3 q_1^2 u) \hat{f}\left(0,0,\frac{3m}{q_1 t}, u\right).
\end{align}
Since the Fourier transform $\hat{f}$ is Schwarz class, and the $K$ Bessel function has exponential decay in large variable, the $v$ integral may be passed to the left of the 1 line, which proves that this term is holomorphic at $w=1$, hence does not contribute to $\Theta^{(2),c}(\E_r).$
\end{proof}

\begin{lemma}\label{Theta_2_c_lemma}
 In the case that $f$ is supported on $V_-$, $\Theta^{(2),c}_q$ contributes  
 \begin{align*}
 &\sqrt{\pi} K_{\frac{z}{2}}(2)\sum_{q_1q_2=q} q_2^{-2}\prod_{p|q_1}(p^{-1}-p^{-3})\\&\times \left[\frac{\zeta(3+z)q_1^{-2-z}2^{\frac{-5-z}{2}}}{12s - 15-3z}\tilde{f}_D\left(\frac{5+z}{4} \right) +\frac{\xi(z)}{\xi(1+z)}\frac{\zeta(3-z)q_1^{-2+z}2^{\frac{-5+z}{2}}}{12s - 15+3z}\tilde{f}_D\left(\frac{5-z}{4} \right)\right].
 \end{align*}
to $Z^{\pm, 0}\left(\hat{f}, \E_r, \hat{L};s\right)$.
 In the case that $f$ is supported on $V_+$, the contribution is
\begin{align*}
 &\sqrt{\pi} K_{\frac{z}{2}}(2)\sum_{q_1q_2=q} q_2^{-2}\prod_{p|q_1}(p^{-1}-p^{-3})\\ &\times\left[\frac{\zeta(3+z)q_1^{-2-z}2^{\frac{-5-z}{2}}3^{\frac{1+z}{4}}}{12s - 15-3z}\tilde{f}_D\left(\frac{5+z}{4} \right) +\frac{\xi(z)}{\xi(1+z)}\frac{\zeta(3-z)q_1^{-2+z}2^{\frac{-5+z}{2}}3^{\frac{1-z}{4}}}{12s - 15+3z}\tilde{f}_D\left(\frac{5-z}{4} \right)\right].
\end{align*}

\end{lemma}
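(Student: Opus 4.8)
The plan is to deduce Lemma \ref{Theta_2_c_lemma} from the immediately preceding lemma, which evaluates $\Theta^{(2),c}_q(\E_r)$ exactly, together with the defining relation
\begin{equation*}
 Z^{\pm, 0}_q\left(\hat{f}_{\pm}, \E_r, \hat{L}; s \right) = \int_0^1 \lambda^{12s-12}\,\sI_q\left(\hat{f}_{\pm}^{\lambda^{-3}}, \E_r\right) \frac{d\lambda}{\lambda}
\end{equation*}
and the splittings $\sI_q = \Theta^{(0)}_q + \Theta^{(1)}_q + \Theta^{(2)}_q$, $\Theta^{(2)}_q = \Theta^{(2),c}_q + \Theta^{(2),n}_q$ with $\Theta^{(0)}_q = 0$. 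The piece of $Z^{\pm,0}_q$ coming from $\Theta^{(2),c}_q$ is thus $\int_0^1 \lambda^{12s-12}\,\Theta^{(2),c}_q(\E_r)[\hat{f}_{\pm}^{\lambda^{-3}}]\,\frac{d\lambda}{\lambda}$, and the whole task is to unwind the $\lambda$--scaling in the preceding lemma's formula and carry out this Mellin integral.

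First I would record how the integrals $\int_{x=(x_2,x_3,x_4)} f_0(x)|x_2|^{2\pm z}$ appearing in the preceding lemma transform under $\hat{f}\mapsto \hat{f}_{\pm}^{\lambda^{-3}}$. The cleanest route applies the homogeneity $\Sigma_3(f^t,s)=t^{-s-1}\Sigma_3(f,s)$ to the intermediate form of that proof, where the contribution is $\sum_{q_1q_2=q}q_2^{-2}\prod_{p|q_1}(p^{-1}-p^{-3})\big[\zeta(-2\mp z)(q_1/3)^{-2\mp z}\Sigma_3(\hat{f},-2\mp z)\big]$ (summed over the two signs, with the $\xi(z)/\xi(z+1)$ factor on the $-z$ term); taking $t=\lambda^{-3}$ gives $\Sigma_3(\hat{f}_{\pm}^{\lambda^{-3}},-2\mp z)=\lambda^{-3\mp 3z}\,\Sigma_3(\hat{f}_{\pm},-2\mp z)$. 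Equivalently, since $\widehat{f_{\pm}^{\lambda^3}}=\lambda^{-12}\hat{f}_{\pm}^{\lambda^{-3}}$ the inverse transform of $\hat{f}_{\pm}^{\lambda^{-3}}$ is $\lambda^{12}f_{\pm}^{\lambda^3}$, so a change of variables replaces $\int_x f_0(x)|x_2|^{2\pm z}$ by $\lambda^{-3\mp 3z}\int_x f_{\pm,0}(x)|x_2|^{2\pm z}$. Either way, the net power of $\lambda$ in the integrand becomes $12s-15\mp 3z$, and
\begin{equation*}
 \int_0^1 \lambda^{12s-15\mp 3z}\,\frac{d\lambda}{\lambda} = \frac{1}{12s-15\mp 3z}
\end{equation*}
for $\RE(s)$ large, continued meromorphically; this is the source of the poles at $s=\tfrac{5\pm z}{4}$ recorded in Proposition \ref{pole_proposition}. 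At this stage the $\Theta^{(2),c}_q$ contribution equals $\tfrac12\sum_{q_1q_2=q}q_2^{-2}\prod_{p|q_1}(p^{-1}-p^{-3})$ times $\tfrac{q_1^{-2-z}\zeta(3+z)}{12s-15-3z}\int_x f_{\pm,0}(x)|x_2|^{2+z} + \tfrac{\xi(z)}{\xi(1+z)}\tfrac{q_1^{-2+z}\zeta(3-z)}{12s-15+3z}\int_x f_{\pm,0}(x)|x_2|^{2-z}$.

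Finally I would substitute Lemma \ref{Phi_0_lemma} with $s=2\pm z$: for $V_-$ it gives $\int_x f_{-,0}(x)|x_2|^{2\pm z} = 2^{\frac{-3\mp z}{2}}\,\tilde{f}_D\!\big(\tfrac{5\pm z}{4}\big)\sqrt{\pi}\,K_{\pm z/2}(2)$, and for $V_+$ one multiplies by $3^{\frac{1\pm z}{4}}$. Using $K_{-z/2}(2)=K_{z/2}(2)$ to pull the common factor $\sqrt{\pi}K_{z/2}(2)$ to the front and absorbing the prefactor via $\tfrac12\cdot 2^{\frac{-3\mp z}{2}}=2^{\frac{-5\mp z}{2}}$ lands exactly on the two displayed formulas. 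I do not expect a genuine obstacle here: every ingredient is the preceding lemma, Lemma \ref{Phi_0_lemma}, or an elementary change of variables and one--dimensional integral. The only delicate point is keeping the $\lambda$--homogeneity consistent --- in particular the $\lambda^{12}$ Jacobian of the four--dimensional Fourier transform against the $\lambda^{-3}$ in $\hat{f}_{\pm}^{\lambda^{-3}}$ --- so that the denominators emerge as $12s-15\mp 3z$ and match the pole locations in Proposition \ref{pole_proposition}.
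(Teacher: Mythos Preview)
Your proposal is correct and follows essentially the same approach as the paper's proof: both compute $\int_0^1 \lambda^{12s-12}\,\Theta^{(2),c}_q[\hat{f}_{\pm}^{\lambda^{-3}}]\,\frac{d\lambda}{\lambda}$ by invoking the scaling $\Sigma_3(f^t,s)=t^{-s-1}\Sigma_3(f,s)$ at $t=\lambda^{-3}$ to extract the factor $\lambda^{-3\mp 3z}$, integrate to obtain the denominators $12s-15\mp 3z$, and then substitute Lemma~\ref{Phi_0_lemma} (using $K_{-z/2}=K_{z/2}$ and $\tfrac12\cdot 2^{(-3\mp z)/2}=2^{(-5\mp z)/2}$). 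Your bookkeeping matches the paper's line by line.
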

\begin{proof}
Apply Lemma \ref{Sigma_3_lemma} to find
 \begin{align*}
 & \sum_{q_1q_2=q} q_2^{-2}\prod_{p|q_1} (p^{-1}-p^{-3})\int_0^1 \frac{d\lambda}{\lambda}\lambda^{12s-12}\\&\times \Biggl[\zeta(-2-z) \left(\frac{3}{q_1}\right)^{2+z} \Sigma_3\left(\hat{f}^{\lambda^{-3}},-2-z\right)+ \frac{\xi(z)}{\xi(1+z)}\zeta(-2+z)\left(\frac{3}{q_1}\right)^{2-z} \Sigma_3\left(\hat{f}^{\lambda^{-3}}, -2+z\right)\Biggr]\\
 &=\sum_{q_1q_2=q} q_2^{-2}\prod_{p|q_1} (p^{-1}-p^{-3})\\
 &\times \Biggl[\frac{\zeta(-2-z)\left(\frac{3}{q_1}\right)^{2+z}\Sigma_3\left(\hat{f}, -2-z\right)}{12s-15-3z} + \frac{\xi(z)}{\xi(1+z)}\frac{\zeta(-2+z)\left(\frac{3}{q_1}\right)^{2-z}\Sigma_3\left(\hat{f}, -2+z\right)}{12s-15+3z}\Biggr]\\
 &= \frac{1}{2}\sum_{q_1q_2=q} q_2^{-2}\prod_{p|q_1} (p^{-1}-p^{-3})\\
 &\times\left[\frac{\zeta(3+z)q_1^{-2-z}}{12s-15-3z}\int f_0(x)|x_2|^{2+z} +\frac{\xi(z)}{\xi(1+z)}\frac{\zeta(3-z)q_1^{-2+z}}{12s-15+3z}\int f_0(x)|x_2|^{2-z} \right].
 \end{align*}
In the case of $V_-$, by Lemma \ref{Phi_0_lemma} this is
\begin{align*}
 &\sqrt{\pi} K_{\frac{z}{2}}(2)\sum_{q_1q_2=q}q_2^{-2}\prod_{p|q_1}(p^{-1}-p^{-3})\\&\times\left[\frac{\zeta(3+z)q_1^{-2-z}2^{\frac{-5-z}{2}}}{12s - 15-3z}\tilde{f}_D\left(\frac{5+z}{4} \right) +\frac{\xi(z)}{\xi(1+z)}\frac{\zeta(3-z)q_1^{-2+z}2^{\frac{-5+z}{2}}}{12s - 15+3z}\tilde{f}_D\left(\frac{5-z}{4} \right)\right].
\end{align*}
In the case of $V_+$, this is
\begin{align*}
& \sqrt{\pi} K_{\frac{z}{2}}(2)\sum_{q_1q_2=q}q_2^{-2}\prod_{p|q_1}(p^{-1}-p^{-3})\\&\times\left[\frac{\zeta(3+z)q_1^{-2-z}2^{\frac{-5-z}{2}}3^{\frac{1+z}{4}}}{12s - 15-3z}\tilde{f}_D\left(\frac{5+z}{4} \right) +\frac{\xi(z)}{\xi(1+z)}\frac{\zeta(3-z)q_1^{-2+z}2^{\frac{-5+z}{2}}3^{\frac{1-z}{4}}}{12s - 15+3z}\tilde{f}_D\left(\frac{5-z}{4} \right)\right].
\end{align*}
\end{proof}

\section{Reducible twisted Shintani $\sL$-functions}

Introduce reducible versions of the zeta functions as follows, the superscript $r$ indicating sums are restricted to reducible forms,
\begin{equation}
 Z^{\pm, r}_q(f, \E_r, L;s) = \int_{G^+/\Gamma} \chi(g)^s \E_r(g^{-1}) {\sum_{x \in L }}^r \sN_q(x) f(g\cdot x) dg
\end{equation}
and
\begin{align}
 \sL^{+, r}_q(\E_r, s) &= \sum_{m=1}^\infty \frac{1}{m^s}  {\sum_{i=1}^{h(m), r}}  \frac{\sN_q(x_{i,m})\E_r(g_{i,m})}{|\Gamma(i,m)|}\\
 \notag
 \sL^{-,r}_q(\E_r, s) &= \sum_{m=1}^\infty \frac{1}{m^s} {\sum_{i=1}^{h(-m), r}}  \sN_q(x_{i,-m})\E_r(g_{i,-m}).
\end{align}
As in the previous Section, with the same choice of $f_G$ and $f_D$, and with the same proof
\begin{equation}
  Z^{\pm,r}_q(f_{\pm},\E_r, L;s) = \frac{\sqrt{\pi}K_{\frac{z}{2}}(2)}{12} \sL^{\pm,r}_q(\E_r,s)\tilde{f}_D(s).
\end{equation}.

Recall that in the description of the region $\sR$ which is a fundamental domain for reducible forms modulo $\Gamma$  (see \cite{S75} pp.\ 45-46), $a = 0$, the first non-zero coefficient $b$ is assumed to be positive and $0 \leq c < 2|b|$.    Dropping the restriction $b>0$ and introducing a factor of $\frac{1}{2}$ to compensate, then using the $K$-invariance in the last line, \begin{align}
 Z^{\pm, r}_q(f_{\pm}, \E_r, L;s) &= \frac{1}{2}\int_{G^+/\Gamma} \chi(g)^s \E_r(g^t) \sum_{\gamma \in \Gamma/\Gamma\cap N}{\sum_{x =(0,b,c,d) \in \zed^4  }} \frac{\sN_q(x)f_{\pm}(g\gamma \cdot x)}{1 + 2 \one(c^2 -4bd = \square)}  dg \\
 &\notag = \frac{1}{2}\int_{G^+/\Gamma \cap N} \chi(g)^s \E_{r}(g^t) \sum_{x = (0,b,c,d)\in \zed^4} \frac{\sN_q(x)f_{\pm}(g \cdot x)}{1 + 2 \one(c^2 -4bd = \square)}dg\\
 &\notag = \frac{1}{2}\int_{B^+/B^+_\zed } \chi(g)^s \E_{r}(g^t) \sum_{x = (0,b,c,d)\in \zed^4} \frac{\sN_q(x)f_{\pm}(g \cdot x)}{1 + 2 \one(c^2 -4bd = \square)}dg.
\end{align}
Note that in making these manipulations, we find a representation $x = (0,b,c,d)$ for a given class of reducible forms, but this representative need not have corresponding group element $g_x$ corresponding to the choice $g_{i,m}$ representing the class, for instance, $(g_x^{-1})^t$ may not lie in a Siegel set, although this will be the case in the crucial situation of interest corresponding to quadratic fields.  The choice of representative does not matter when evaluating the Eisenstein series $\E_r$, which is $\Gamma$-invariant, but matters when making the pairing with the constant or non-constant terms $\E_r^c, \E_r^n$, so we have to track the dependence.

Set 
\begin{align}
 Z_{1,q}^{\pm, r}(f_{\pm}, \E_r, L; s) &= \frac{1}{2}\int_{B^+/B_\zed^+}\chi(g)^s \E_r(g^t)\sum_{x = (0,b,c,d)} \sN_q(x) f_{\pm}(g\cdot x),\\ \notag
 Z_{\square, q}^{\pm, r}(f_{\pm}, \E_r, L;s) &= \frac{1}{3} \int_{B^+/B_{\zed}^+} \chi(g)^s \E_r(g^t) \sum_{\substack{x = (0,b, c, d) \in \zed^4\\ c^2 - 4bd = \square}} \sN_q(x)f_{\pm}(g \cdot x) dg.
\end{align}
Thus \[Z^{\pm, r}_q(f_{\pm}, \E_r, L; s) = Z_{1,q}^{\pm, r}(f_{\pm}, \E_r, L; s) - Z_{\square,q}^{\pm, r}(f_{\pm}, \E_r, L;s).\]
Separate the Eisenstein series into its constant term and non-constant term, indicating the constant term parts with a subscript $c$, e.g.
\begin{align}
  Z_{1,c,q}^{\pm, r}(f_{\pm}, \E_r, L; s)& = \frac{1}{2}\int_{B^+/B_\zed^+}\chi(g)^s \E_r^c(g^t)\sum_{x = (0,b,c,d)} \sN_q(x) f_{\pm}(g\cdot x)\\
\notag
Z_{\square,c,q}^{\pm, r}(f_{\pm}, \E_r, L; s)& = \frac{1}{3}\int_{B^+/B_\zed^+}\chi(g)^s \E_r^c(g^t)\sum_{\substack{x = (0,b,c,d)\\ c^2 - 4bd = \square}} \sN_q(x) f_{\pm}(g\cdot x)\\
\notag Z_{c,q}^{\pm, r}(f_{\pm}, \E_r, L; s) &= Z_{1,c,q}^{\pm, r}(f_{\pm}, \E_r, L; s) - Z_{\square, c, q}^{\pm, r}(f_{\pm}, \E_r, L; s)
\end{align}

In this work we will avoid the analysis of the non-constant part of the Eisenstein series on reducible forms, instead just proving that is makes a small contribution.  The following lemma collects the representation of the constant term contribution.
\begin{lemma}\label{orbital_zeta_constant_term_reducible}
 The orbital zeta function for the reducible terms and constant term of the Eisenstein series has the representation
 \begin{align}
  Z_{1,c, q}^{\pm, r}(f_{\pm}, \E_r, L; s) &= \frac{1}{12} \sum_{\substack{x=(0,b,c,d) \in \zed^4\\ 0 \leq c < 2b\\ \pm \Disc(x) >0 }} \frac{ \sN_q(x)\E^c_r\odot f_G(g_x)}{|\Disc(x)|^s}\tilde{f}_D(s),\\
\notag Z_{\square,c, q}^{\pm, r}(f_{\pm}, \E_r, L; s) &= \frac{1}{18} \sum_{\substack{x=(0,b,c,d) \in \zed^4\\ 0 \leq c < 2b\\ \pm \Disc(x) >0\\ c^2-4bd = \square }} \frac{ \sN_q(x)\E^c_r\odot f_G(g_x)}{|\Disc(x)|^s}\tilde{f}_D(s),\\
 Z_{c, q}^{\pm, r}(f_{\pm}, \E_r, L; s) &= \frac{1}{12} \sum_{\substack{x=(0,b,c,d) \in \zed^4\\ 0 \leq c < 2b\\ \pm \Disc(x) >0 }} \frac{ \sN_q(x)\E^c_r\odot f_G(g_x)}{|\Disc(x)|^s(1 + 2\one(c^2 - 4bd = \square))}\tilde{f}_D(s) \notag \\
\notag Z_{q}^{\pm, r}(f_{\pm}, \E_r, L; s) &=\frac{\sqrt{\pi} K_{\frac{z}{2}}(2)}{12}  \sum_{\substack{x=(0,b,c,d) \in \zed^4\\ 0 \leq c < 2b\\ \pm \Disc(x) >0 }} \frac{\sN_q(x)\E_r(g_x)}{|\Disc(x)|^s(1 + 2 \one (c^2-4bd = \square))} \tilde{f}_D(s).
 \end{align}

\end{lemma}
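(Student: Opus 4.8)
The plan is to prove the three constant‑term identities by the same unfolding argument that establishes the factorization $Z^{\pm,r}_q(f_\pm,\E_r,L;s)=\tfrac{\sqrt\pi K_{z/2}(2)}{12}\,\sL^{\pm,r}_q(\E_r,s)\,\tilde f_D(s)$, but carried out with the constant term $\E_r^c$ in place of the full Eisenstein series and with the convolution left unevaluated; the last identity (for $Z_q^{\pm,r}$ itself) then follows by running the same computation with $\E_r$ for $\E_r^c$ and invoking the eigenvalue lemma $\E_r\odot f_G=\sqrt\pi K_{z/2}(2)\,\E_r$, and is in any case equivalent to the already‑stated factorization once $\sL^{\pm,r}_q$ is written as a sum over Shintani's reducible representatives. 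Throughout, the degenerate forms $(0,b,c,d)$ with $\Disc(x)=0$ contribute nothing, since $f_\pm$ is supported on the open orbit $V_\pm$.

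Starting from $Z_{1,c,q}^{\pm,r}(f_\pm,\E_r,L;s)=\tfrac12\int_{B^+/B^+_\zed}\chi(g)^s\,\E_r^c(g^t)\sum_{x=(0,b,c,d)}\sN_q(x)\,f_\pm(g\cdot x)\,dg$, I would unfold the $x$‑sum against the action of $B^+_\zed=\Gamma\cap N$, which is free on the nondegenerate forms and has orbit representatives $(0,b,c,d)$ with $0\le c<2|b|$. Folding the two signs of $b$ by the involution $x\mapsto-x$, under which $\sN_q(x)$ and $|\Disc(x)|$ are unchanged and $f_\pm(g\cdot x)$ is invariant (the latter because $-I\in K$ makes $f_\pm$ even on $V_\pm$), collapses the sum onto Shintani's region $\sR=\{(0,b,c,d):b>0,\ 0\le c<2b\}$ and absorbs the prefactor $\tfrac12$; it is here that one must also carry along, for square discriminant, the number of $\sR$‑representatives of a reducible class and the order‑$3$ stabilizers of the totally split forms, exactly as recorded in \cite{S75}, pp.~45--46. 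For $Z_{\square,c,q}^{\pm,r}$ the restriction to square discriminant together with the prefactor $\tfrac13$ in its definition combine to produce the coefficient $\tfrac1{18}$. Each term now reads $\sN_q(x)\int_{B^+}\chi(g)^s\,\E_r^c(g^t)\,f_\pm(g\cdot x)\,dg$ with $x\in\sR$, so that $g_x\in B^+$.

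For such a term, writing $x=g_x\cdot x_\pm$ gives $f_\pm(g\cdot x)=f_G(gg_x)\,f_D(\chi(g)\chi(g_x))$ with $\chi(g_x)=|\Disc(x)|$; factoring $B^+$ as the central ($\det$) direction times the Borel $B^1=B^+\cap\SL_2$, the central direction carries the Mellin transform and contributes $|\Disc(x)|^{-s}\tilde f_D(s)$ up to the overall normalization constant, leaving $\int_{B^1}\E_r^c(b^t)\,f_G(bg_x)\,db$. The key observation is that $b\mapsto\E_r^c(b^t)f_G(bg_x)=\E_r^c(b^{-1})f_G(bg_x)$ is left‑$K$‑invariant on $G^1$ — by right‑$K$‑invariance of $\E_r^c$ together with left‑$K$‑invariance of $f_G$ — so this $B^1$‑integral coincides with the full pairing $\int_{G^1}\E_r^c(h^t)f_G(hg_x)\,dh=\E_r^c\odot f_G(g_x)$. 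Collecting constants yields the first displayed formula; the second is identical but with the sum restricted to square discriminant and coefficient $\tfrac1{18}$, and $Z_{c,q}^{\pm,r}=Z_{1,c,q}^{\pm,r}-Z_{\square,c,q}^{\pm,r}$ converts $\tfrac1{12}-\tfrac1{18}\one(c^2-4bd=\square)$ into $\tfrac1{12}\bigl(1+2\one(c^2-4bd=\square)\bigr)^{-1}$. The last formula is the same argument with $\E_r$ in place of $\E_r^c$, now using $\E_r\odot f_G(g_x)=\sqrt\pi K_{z/2}(2)\,\E_r(g_x)$.

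The main obstacle is precisely this bookkeeping in the unfolding step: aligning the $B^+_\zed$‑orbit representatives of $\{(0,b,c,d)\}$ with Shintani's normalized region $\sR$ while correctly combining the factor $\tfrac12$ from dropping the constraint $b>0$, the weight $1+2\one(c^2-4bd=\square)$ (which simultaneously records the multiplicity of $\sR$‑representatives of a reducible class and the order‑$3$ stabilizers of the totally split forms), and the normalizations of the $G^+$‑ and $B^+$‑Haar measures, so that the numerical constants come out as $\tfrac1{12}$ and $\tfrac1{18}$. A minor additional point is that $g_x$ need not be Siegel‑reduced — which is exactly why $\E_r^c\odot f_G(g_x)$ cannot be simplified and must be carried along as in the statement — but the left‑$K$‑invariance argument identifying the $B^1$‑integral with the $G^1$‑pairing is insensitive to this.
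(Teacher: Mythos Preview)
Your proposal is correct. The paper's proof environment for this lemma is in fact left empty, so you have supplied what the authors omitted: the argument is precisely the unfolding of Lemma~\ref{factorization_lemma} carried out with $\E_r^c$ in place of $\E_r$ (and the convolution left unevaluated since $\E_r^c$ is not a Hecke eigenfunction), together with the identification of the $B^1$-integral with the $G^1$-pairing via left-$K$-invariance. Your bookkeeping of the constants---$\tfrac12\cdot 2\cdot\tfrac{1}{12}=\tfrac{1}{12}$ for $Z_{1,c,q}^{\pm,r}$ and $\tfrac13\cdot 2\cdot\tfrac{1}{12}=\tfrac{1}{18}$ for $Z_{\square,c,q}^{\pm,r}$, and the identity $\tfrac{1}{12}-\tfrac{1}{18}\one(\square)=\tfrac{1}{12}(1+2\one(\square))^{-1}$---is exactly right.

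One small clarification on the sign-folding step: the involution $x\mapsto -x$ does not literally preserve the set of representatives $\{0\le c<2|b|\}$, but it does induce a bijection on $B^+_\zed$-orbits (since $-I$ is central and commutes with the $N$-action), and the integrand depends only on the orbit; this is what you are using, and it is fine.
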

\begin{proof}
 
\end{proof}

Indicate the action of $g \in \GL_2$ on binary cubic forms by $g_3 \cdot x$ and the action on binary quadratic forms by $g_2 \cdot x$. 
For $x = (b,c,d) \in \zed^3$, define $f_0(x) = f(0,x)$.  Write the integral as
\begin{align*}
 &Z_{1,q}^{\pm, r}(f_{\pm}, \E_r, L;s)\\ &= \frac{1}{2}\int_0^\infty \frac{d\lambda}{\lambda} \int_0^\infty \frac{dt}{t^3} \int_0^1 du \lambda^{12s} \E_{r}(n_u^{t}a_t) \sum_{x = (b,c,d)}\sN_q^r(x) f_{\pm,0}\left(\left(d_{\sqrt{\frac{\lambda^3}{t}}}a_t n_u\right)_2 \cdot x\right)\\
 &= \frac{1}{3}\int_0^\infty \frac{d\lambda}{\lambda} \int_0^\infty \frac{dt}{t^3} \int_0^1 du \lambda^{8s}t^{4s} \E_{r}(n_u^{t}a_t) \sum_{x = (b,c,d)}\sN_q^r(x)f_{\pm,0}\left(\left( d_{\lambda} a_t n_u\right)_2 \cdot x\right).
\end{align*}
 The constant term part is, 
\begin{align*}
 Z_{1,c, q}^{\pm, r} &= \frac{1}{3} \int_0^\infty \frac{d\lambda}{\lambda} \int_0^\infty \frac{dt}{t^3} \int_0^1 du \lambda^{8s}\left(t^{4s+1+z} + \frac{\xi(z)}{\xi(1+z)}t^{4s+1-z} \right)\sum_{x = (b,c,d)}\sN_q^r(x) f_{\pm,0}((d_\lambda a_t n_u)_2 \cdot x). 
\end{align*}

\begin{lemma}
 When $f$ is supported on $V_-$, the constant term function $Z_{1,c,q}^{\pm, r}$ has a pair of simple poles at $\frac{5}{4}+\frac{z}{4}$ and $\frac{5}{4}-\frac{z}{4}$, with residues
 \begin{align*}
  \frac{1}{12} \zeta(3+ z)2^{\frac{-5- z}{2}} \prod_{p|q}\left(\frac{1}{p^2}+\frac{1}{p^{3+z}} - \frac{1}{p^{5+z}} \right)\tilde{f}_D\left(\frac{5+ z}{4} \right)\sqrt{\pi}K_{\frac{z}{2}}(2), \qquad &s = \frac{5}{4} + \frac{z}{4},\\
  \frac{\xi(z)}{\xi(1+z)}\frac{1}{12} \zeta(3- z)2^{\frac{-5+ z}{2}}\prod_{p|q}\left(\frac{1}{p^2}+\frac{1}{p^{3-z}} - \frac{1}{p^{5-z}} \right) \tilde{f}_D\left(\frac{5- z}{4} \right)\sqrt{\pi}K_{\frac{z}{2}}(2), \qquad &s = \frac{5}{4} - \frac{z}{4}.
 \end{align*}
 When $f$ is supported on $V_+$, the residues are 
  \begin{align*}
  \frac{1}{12} \zeta(3+ z)2^{\frac{-5- z}{2}}3^{\frac{1+ z}{4}} \prod_{p|q}\left(\frac{1}{p^2}+\frac{1}{p^{3+z}} - \frac{1}{p^{5+z}} \right)\tilde{f}_D\left(\frac{5+ z}{4} \right)\sqrt{\pi}K_{\frac{z}{2}}(2), \qquad &s = \frac{5}{4} + \frac{z}{4},\\
  \frac{\xi(z)}{\xi(1+z)}\frac{1}{12} \zeta(3- z)2^{\frac{-5+ z}{2}}3^{\frac{1- z}{4}}\prod_{p|q}\left(\frac{1}{p^2}+\frac{1}{p^{3-z}} - \frac{1}{p^{5-z}} \right) \tilde{f}_D\left(\frac{5- z}{4} \right)\sqrt{\pi}K_{\frac{z}{2}}(2), \qquad &s = \frac{5}{4} - \frac{z}{4}.
 \end{align*}
Besides these two poles, the function is holomorphic in $\RE(s)>\frac{3}{4}$ and has the representation there
\begin{align*}
Z_{1,c,q}^{\pm, r} &=  \frac{1}{3}\left(Z_{q,r}\left(f_{\pm,0}, 2s+\frac{1}{2} + \frac{z}{2}, s - \frac{1}{4}-\frac{z}{4}\right) + \frac{\xi(z)}{\xi(1+z)}Z_{q,r}\left(f_{\pm,0}, 2s+\frac{1}{2}-\frac{z}{2}, s - \frac{1}{4} + \frac{z}{4}\right) \right).
\end{align*}

\end{lemma}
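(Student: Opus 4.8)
The plan is to recognise $Z_{1,c,q}^{\pm,r}$ as a short linear combination of two values of the quadratic orbital zeta function $Z_{q,r}$ introduced above, and then to read its analytic behaviour off the decomposition of $Z_{q,r}$ established earlier in this section (the analogue of \cite{S75}, Lemma~4). First I will start from the formula for $Z_{1,c,q}^{\pm,r}$ displayed just before this lemma, in which the constant term of $\E_r$ has contributed the two monomials $t^{4s+1+z}$ and $\tfrac{\xi(z)}{\xi(1+z)}t^{4s+1-z}$, and compare it with the Iwasawa-coordinate form of $Z_{q,r}$. In coordinates $g=d_\lambda a_t n_u$ of $B^+$ one has $\chi_1(g)=\lambda^2 t^2$, $\chi(g)=\lambda^4$, the Haar measure on $B^+/B_\zed^+$ equals $\tfrac{d\lambda}{\lambda}\tfrac{dt}{t^3}\,du$ with $u\in[0,1)$ in the normalisation of the previous section, and $(d_\lambda a_t n_u)_2\cdot x=\rho(d_\lambda a_t n_u)\cdot x$. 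Matching $\lambda^{2s_1+4s_2}t^{2s_1}$ against $\lambda^{8s}t^{4s+1\pm z}$ forces $s_1=2s+\tfrac12\pm\tfrac z2$ and $s_2=s-\tfrac14\mp\tfrac z4$, which yields at once the asserted representation
\begin{multline*}
 Z_{1,c,q}^{\pm,r}=\frac13\,Z_{q,r}\Bigl(f_{\pm,0},\,2s+\tfrac12+\tfrac z2,\,s-\tfrac14-\tfrac z4\Bigr)\\
 +\frac{\xi(z)}{3\,\xi(1+z)}\,Z_{q,r}\Bigl(f_{\pm,0},\,2s+\tfrac12-\tfrac z2,\,s-\tfrac14+\tfrac z4\Bigr).
\end{multline*}
Since $f_{\pm,0}$ is supported on $V_{2,\pm}$ it vanishes on the singular forms of $L_r$, so the hypothesis of that decomposition is met.

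Next I will transport the pole structure through the substitution. The decomposition writes $Z_{q,r}(f,s_1,s_2)$ as a sum of the entire integrals $Z_{q,r}^+$ and $\hat Z_{q,r}^+$, the term $\tfrac18\sum_{q_1q_2=q}q_2^{-2}\prod_{p|q_1}p^{-s_1}(1-p^{-2})\tfrac{\zeta(s_1)}{s_2-1}(\Phi_++\Phi_-)(s_1-1,0)$, and a term equal to $\tfrac{\Sigma(\hat f,s_1-1)}{8s_1+8s_2-12}$ times a double Dirichlet series in $s_1$. By Lemma~\ref{Phi_0_lemma} the factor $(\Phi_++\Phi_-)(f_{\pm,0},s_1-1,0)$ is entire, while $\Sigma(\hat f,\cdot)$ and the Dirichlet series are holomorphic throughout $\RE(s)>\tfrac34$; hence the only singularities lie along $s_2=1$, along $s_1+s_2=\tfrac32$, and along $s_1=1$ (coming from $\zeta(s_1)$ and from the pole of the Dirichlet series). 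Under the substitution these three loci become $s=\tfrac54\pm\tfrac z4$, $s=\tfrac5{12}\mp\tfrac z{12}$, and $s=\tfrac14\mp\tfrac z4$; for $z$ in a fixed strip about the imaginary axis the latter two have real part below $\tfrac34$, so in $\RE(s)>\tfrac34$ the only poles are the two simple ones at $s=\tfrac54\pm\tfrac z4$, and the representation above furnishes the holomorphic continuation elsewhere.

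Then I will compute the residues. At $s=\tfrac54+\tfrac z4$ only the first summand is singular; there $s_2=1$ and $s_1=3+z$, and since $\tfrac{ds_2}{ds}=1$ the residue is $\tfrac13$ times the residue of $Z_{q,r}$ along $s_2=1$, namely $\tfrac13\cdot\tfrac18\sum_{q_1q_2=q}q_2^{-2}\prod_{p|q_1}p^{-(3+z)}(1-p^{-2})\,\zeta(3+z)\,(\Phi_++\Phi_-)(f_{\pm,0},2+z,0)$. Because $f_{\pm,0}$ is supported on a single orbit, exactly one of $\Phi_\pm$ survives, and Lemma~\ref{Phi_0_lemma} with parameter $2+z$ evaluates it as $2^{(-3-z)/2}\,\tilde{f}_D\!\bigl(\tfrac{5+z}{4}\bigr)\sqrt{\pi}\,K_{\frac z2}(2)$ when $f$ lives on $V_-$, and the same with an extra factor $3^{(1+z)/4}$ when it lives on $V_+$. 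Combining this with $\tfrac18\sum_{q_1q_2=q}q_2^{-2}\prod_{p|q_1}p^{-(3+z)}(1-p^{-2})=\tfrac18\prod_{p|q}\bigl(p^{-2}+p^{-3-z}-p^{-5-z}\bigr)$ and $\tfrac13\cdot\tfrac18\cdot 2^{(-3-z)/2}=\tfrac1{12}\cdot 2^{(-5-z)/2}$ reproduces the stated residue at $s=\tfrac54+\tfrac z4$. The residue at $s=\tfrac54-\tfrac z4$ comes the same way from the second summand, carrying the factor $\tfrac{\xi(z)}{\xi(1+z)}$, with $s_1=3-z$, parameter $2-z$ in Lemma~\ref{Phi_0_lemma}, and $K_{-\frac z2}(2)=K_{\frac z2}(2)$; this gives the remaining pair of residues, with $3^{(1-z)/4}$ in place of $3^{(1+z)/4}$ in the $V_+$ case.

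The only step that demands real care is the second one: confirming that no further pole migrates into $\RE(s)>\tfrac34$. This rests on knowing the exact pole locations of every factor of the $Z_{q,r}$ decomposition — in particular of $\Sigma(\hat f,\cdot)$ and of the auxiliary double Dirichlet series — and following them through the affine change of variables $s_1=2s+\tfrac12\pm\tfrac z2$, $s_2=s-\tfrac14\mp\tfrac z4$. A secondary but essential point is pinning down the overall constant $\tfrac13$, which relies on the convention fixed above that our $B^+$-Haar measure is half of Shintani's.
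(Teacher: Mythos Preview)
Your proof is correct and follows the same approach as the paper: identify $Z_{1,c,q}^{\pm,r}$ with a linear combination of values of the quadratic orbital zeta function $Z_{q,r}$, invoke the decomposition of $Z_{q,r}$ (the analogue of \cite{S75}, Lemma~4), and read off the poles and residues via Lemma~\ref{Phi_0_lemma}. Your treatment is in fact more careful than the paper's, which simply asserts that ``all but the second line is holomorphic in $\RE(s)>\tfrac34$,'' whereas you explicitly track the loci $s_1+s_2=\tfrac32$ and $s_1=1$ under the substitution; one small quibble is that the factor $\tfrac13$ arises from the Jacobian of the change of scaling variable $\lambda\mapsto\sqrt{\lambda^3/t}$ performed just before the lemma, rather than directly from the Haar-measure normalisation.
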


\begin{proof}
 Recall that, with $Z(f, s_1, s_2)$ the orbital zeta function of the space of binary quadratic forms as in Shintani \cite{S75},
 \begin{align*}
   Z_{1,c,q}^{\pm, r} &= \frac{1}{3} \int_0^\infty \frac{d\lambda}{\lambda} \int_0^\infty \frac{dt}{t^3} \int_0^1 du \lambda^{8s}\left(t^{4s+1+z} + \frac{\xi(z)}{\xi(1+z)}t^{4s+1-z} \right)\sum_{x = (b,c,d)}\sN_q^r(x) f_{\pm,0}((d_\lambda a_t n_u)_2 \cdot x)\\
 &= \frac{1}{3}\left(Z_{q,r}\left(f_{\pm,0}, 2s+\frac{1}{2} + \frac{z}{2}, s - \frac{1}{4}-\frac{z}{4}\right) + \frac{\xi(z)}{\xi(1+z)}Z_{q,r}\left(f_{\pm,0}, 2s+\frac{1}{2}-\frac{z}{2}, s - \frac{1}{4} + \frac{z}{4}\right) \right). 
 \end{align*}
Recall 
\begin{align*}
 Z_{q,r}(f,s_1, s_2) &= Z_{q,r}^+(f, s_1, s_2) + \hat{Z}_{q,r}^+\left(\hat{f},s_1, \frac{3}{2}-s_1-s_2\right)\\& + \frac{1}{8}\sum_{q_1q_2 = q} q_2^{-2} \prod_{p|q_1} p^{-s_1} \left(1 - \frac{1}{p^2}\right) \frac{\zeta(s_1)}{s_2-1} (\Phi_+ + \Phi_-)(s_1-1, 0) \\
 &+\frac{ \Sigma\left(\hat{f}, s_1-1\right)}{8s_1+8s_2 -12} \sum_{q_1q_2q_3 = q'} \prod_{p|q_1}\left(\frac{1}{p} + \frac{1}{p^2} - \frac{1}{p^3} \right)\prod_{p|q_2} \left(\frac{1}{p} - \frac{1}{p^3} \right)\prod_{p|q_3} \left(\frac{1}{p^3}-\frac{1}{p^4} \right)\\&\times\sum_{\substack{\ell \in \zed \setminus \{0\}\\ (\ell, q_2q_3)=1}} \sum_{\substack{b=1\\ (b,q_3)=1}}^\infty \frac{\omega_{2,q}(\ell, b)\phi(q_2b) q^{2s_1}}{(q_1^2q_2^3b^2|\ell|)^{s_1}}.
\end{align*}
Substituting $s_1 = 2s + \frac{1}{2} +\frac{z}{2}$, $s_2 = s-\frac{1}{4}-\frac{z}{4}$, all but the second line is holomorphic in $\RE(s)> \frac{3}{4}$.  The second line has a pole at $s = \frac{5+z}{4}$ where $s_2=1$ which obtains the claimed simple pole.  The second term is similar.

To obtain the claimed formulae, note that Lemma \ref{Phi_0_lemma} establishes that
\begin{align*}
 \Phi_+\left(f_{+,0}, 2\pm z, 0\right) &= 2^{\frac{-3\mp z}{2}}\tilde{f}_D\left(\frac{5\pm z}{4} \right)\sqrt{\pi} K_{\frac{z}{2}}(2),\\
 \Phi_-\left(f_{-,0}, 2\pm z, 0\right) &= 3^{\frac{1\pm z}{4}}2^{\frac{-3\mp z}{2}}\tilde{f}_D\left(\frac{5\pm z}{4} \right)\sqrt{\pi} K_{\frac{z}{2}}(2).
\end{align*}

\end{proof}
The poles and residues in this lemma at $s = \frac{5\pm z}{4}$ match those from Lemma \ref{Theta_2_c_lemma}.

\begin{lemma}
 The sum over square discriminants,
 \[
  Z_{q,c,\square}^{\pm, r} = \frac{2}{9} \int_0^\infty \frac{d\lambda}{\lambda} \int_0^\infty \frac{dt}{t^3} \int_0^1 du \lambda^{8s} \left(t^{4s+1+z} + \frac{\xi(z)}{\xi(1+z)}t^{4s+1-z}\right) \sum_{\substack{x = (b,c,d)\\ \Disc(x) = \square}}\sN_q^r(x) f_{\pm,0}((d_\lambda a_t n_u)_2 \cdot x)
 \]
is holomorphic in $\RE(s) > \frac{3}{4}$.
\end{lemma}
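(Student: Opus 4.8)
The plan is to follow the computation of the preceding lemma verbatim, now with the sum restricted to square discriminants, and then to read off holomorphy from Shintani's factorization of the orbital zeta function $Z_{q,r,\square}$ on binary quadratic forms.

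First I would identify the displayed integral with a linear combination of two copies of $Z_{q,r,\square}$ at shifted arguments. Comparing the integrand with the definition of $Z_{q,r,\square}(f,s_1,s_2)$ together with $\chi_1(d_\lambda a_t n_u)=\lambda^2t^2$, $\chi(d_\lambda a_t n_u)=\lambda^4$ and the Haar measure on $B^+/B_\zed^+$, one matches the exponents $\lambda^{8s}t^{4s+1+z}$ (resp.\ $\lambda^{8s}t^{4s+1-z}$) against $\lambda^{2s_1+4s_2}t^{2s_1}$, which forces $s_1=2s+\tfrac12+\tfrac z2$, $s_2=s-\tfrac14-\tfrac z4$ (resp.\ $s_1=2s+\tfrac12-\tfrac z2$, $s_2=s-\tfrac14+\tfrac z4$). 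Exactly as in the preceding lemma one obtains
\begin{align*}
Z_{q,c,\square}^{\pm, r} &= \frac{2}{9}\Biggl( Z_{q,r,\square}\!\left(f_{\pm,0},\,2s+\tfrac12+\tfrac z2,\,s-\tfrac14-\tfrac z4\right) \\
&\qquad\qquad + \frac{\xi(z)}{\xi(1+z)}\,Z_{q,r,\square}\!\left(f_{\pm,0},\,2s+\tfrac12-\tfrac z2,\,s-\tfrac14+\tfrac z4\right)\Biggr),
\end{align*}
where $\xi(z)/\xi(1+z)$ is a fixed nonzero constant, since $z=i\gamma$ with $\gamma\neq 0$ and $\zeta$ vanishes on neither $\RE(s)=0$ nor $\RE(s)=1$.

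Next I would invoke the factorization $Z_{q,r,\square}(f,s_1,s_2)=\tfrac12\,\Xi_q(s_1,s_2)\,\Phi_+(f,s_1-1,s_2-1)$ recalled above and check that, for $\sigma:=\RE(s)>\tfrac34$, both factors are holomorphic at the arguments appearing above. The real parts of those arguments are $\sigma_1=2\sigma+\tfrac12>2$ and $\sigma_2=\sigma-\tfrac14>\tfrac12$, so $\sigma_1>1$, $2\sigma_2>1$ and $2\sigma_1+2\sigma_2-1=6\sigma-\tfrac12>4>1$; hence the estimate for $\Xi_q$ applies and exhibits $\Xi_q(s_1,s_2)$ as an absolutely convergent Dirichlet series, in particular holomorphic, throughout $\RE(s)>\tfrac34$. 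Likewise $\Phi_+(f_{\pm,0},s_1-1,s_2-1)=\int_{V_{2,+}}f_{\pm,0}(x)|x_1|^{s_1-1}|\Disc(x)|^{s_2-1}\,dx$ is an absolutely convergent integral in this range, since $\RE(s_1-1)=2\sigma-\tfrac12>1$ and $\RE(s_2-1)=\sigma-\tfrac54>-\tfrac12$, which is comfortably inside the region of convergence for Schwarz-class $f_{\pm,0}$ (the only loci to examine are the codimension-one walls $\{x_1=0\}$, $\{\Disc=0\}$, and their codimension-two intersection $\{x_1=x_2=0\}$, all controlled since both exponents exceed $-1$ and the Schwarz decay of $f_{\pm,0}$ kills the tail). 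Therefore $Z_{q,r,\square}(f_{\pm,0},s_1,s_2)$ is holomorphic for $\RE(s)>\tfrac34$, and by the displayed identity so is $Z_{q,c,\square}^{\pm,r}$.

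The only genuinely delicate point is the precise region of absolute convergence of the binary quadratic integral $\Phi_+$ near the stratum $\{x_1=x_2=0\}$ and the verification of the hypotheses of the $\Xi_q$ estimate; but since the statement asks only for holomorphy in the wide half-plane $\RE(s)>\tfrac34$, where $\RE(s_1-1)$ is bounded below by $1$ and all three inequalities governing $\Xi_q$ hold with room to spare, this is routine, and the argument is otherwise a direct bookkeeping exercise parallel to the preceding lemma.
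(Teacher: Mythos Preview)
Your proof is correct and follows essentially the same route as the paper: identify $Z_{q,c,\square}^{\pm,r}$ with the same linear combination of $Z_{q,r,\square}(f_{\pm,0},s_1,s_2)$ at the two shifts $s_1=2s+\tfrac12\pm\tfrac z2$, $s_2=s-\tfrac14\mp\tfrac z4$, invoke the factorization $Z_{q,r,\square}=\mathrm{const}\cdot\Xi_q\cdot\Phi_+$, and then check holomorphy of each factor for $\RE(s)>\tfrac34$. The only difference is in how $\Phi_+$ is handled. The paper simply notes that, since $f_D$ has compact support in $\bR^+$, one may take $f_{\pm,0}$ to be supported away from the singular set $\{x_1=0\}\cup\{\Disc=0\}$, making $\Phi_+(f_{\pm,0},s_1-1,s_2-1)$ entire and sidestepping any local analysis; you instead argue absolute convergence directly from the exponents and decay of $f_{\pm,0}$. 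Your variant is fine (indeed $f_{\pm,0}$ vanishes to infinite order on the singular set because $\chi(g)=\lambda^{12}$ lies in the compact support of $f_D$ and $f_G$ decays exponentially as $t\to 0$ or $t\to\infty$), though calling $f_{\pm,0}$ ``Schwarz class'' is slightly loose; the paper's shortcut is cleaner and avoids the stratification discussion you flag at the end.
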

\begin{proof}
We have,
\[
Z_{q,c,\square}^{\pm, r} =\frac{2}{9} Z_{q,\square}\left(f, 2s + \frac{1}{2} + \frac{z}{2}, s-\frac{1}{4} - \frac{z}{4}\right) +\frac{2}{9} \frac{\xi(z)}{\xi(1+z)}Z_{q,\square}\left(f, 2s + \frac{1}{2} -\frac{z}{2}, s - \frac{1}{4} +\frac{z}{4}\right),
\]
where 
\[
 Z_{q,\square}(f, s_1, s_2) = \frac{1}{4}\Xi_q(s_1, s_2) \Phi_+(f, s_1-1, s_2-1).
\]
By restricting the support of $f$ away from the singular set, assume $\Phi_+(f, s_1-1, s_2-1)$ is entire.  We have
\[
 \Xi_q(s_1, s_2) = \frac{1}{2}\sum_{q_1q_2=q} \sum_{\substack{m,n = 1\\\GCD(m, q_2)=1}}^\infty \frac{A(4q_1m_1,q_2^2n^2)}{(q_1m)^{s_1}(q_2n)^{2s_2}}
\]
is holomorphic in $\RE(s)> \frac{3}{4}$.

\end{proof}

 Combining the above results proves that the reducible orbital zeta function is meromorphic in $\RE(s)> \frac{3}{4}$, with simple poles at $\frac{5\pm z}{4}$.

\section{The sieve, and Proof of Theorem \ref{main_theorem}}

Define
 \[
  N_{3,\pm}'(\E_r, F, X) = \sum_{m \in \zed \setminus \{0\}} F\left(\frac{\pm m}{X}\right) {\sum_{i = 1}^{h(m)}}^* \frac{\E_r(g_{i,m})}{|\Gamma(i,m)|}
 \]
 where the $*$ restricts summation to forms which are irreducible and maximal at all primes $p$.
 By inverse Mellin transform and M\"{o}bius inversion sifting to maximal orders,
\[
 N_{3,\pm}'(\E_r, F, X) = \sum_{q} \mu(q) \oint_{\RE(s) = 4} \hat{F}(s) X^s \left(\sL_q^{\pm}(\E_r, s) - \sL_q^{\pm, r}(\E_r, s) \right)ds.
\]
Apply $x \frac{d}{dx}$ twice to $F$ to obtain $f_D$, whose Mellin transform then satisfies $s^2 \tilde{F}(s) = \tilde{f}_D(s)$.  Recalling the expressions for $\sL$ in terms of the orbital zeta functions, we obtain
\[
 N_{3,\pm}'(\E_r, F, X) =\frac{12}{\sqrt{\pi}K_{\frac{z}{2}}(2)}\sum_{q} \mu(q) \oint_{\RE(s) = 4} X^s \left(Z_q^{\pm}(f,\E_r, s) - Z_q^{\pm, r}(f,\E_r, s) \right)\frac{ds}{s^2}
\]
 
 \begin{lemma}
  The Weyl sums from Theorem \ref{main_theorem} satisfy 
  \[
   N_{3,\pm}'(\E_r, F, X) = \frac{1}{2} N_{3, \pm}'(\E_r, F, X) + O\left(X^{\frac{7}{12}} \right)
  \]
as $X \to \infty$.
 \end{lemma}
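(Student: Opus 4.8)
The displayed identity as typeset is vacuous, so I read the intended statement as the comparison between the field Weyl sum of Theorem~\ref{main_theorem} and the form sum $N_{3,\pm}'$, namely $S^{\pm}(\E_r,F,X)=\tfrac12 N_{3,\pm}'(\E_r,F,X)+O(X^{7/12})$. The plan is to push the sum $N_{3,\pm}'$ through the Delone--Fadeev--Gan--Gross--Savin correspondence between maximal irreducible binary cubic forms and rings of integers of cubic fields, tracking $\SL_2(\bZ)$-versus-$\GL_2(\bZ)$ multiplicities, and then to bound the contribution of the cubic fields that are not of type $S_3$, i.e.\ the cyclic ones.

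First I would regroup the $\SL_2(\bZ)$-equivalence classes of maximal irreducible integral binary cubic forms appearing in $N_{3,\pm}'$ according to their $\GL_2(\bZ)$-class; by the cited correspondence each $\GL_2(\bZ)$-class is a cubic field $K$ with $\pm\Disc K>0$, and the $\GL_2(\bZ)$-stabilizer of a form is $\mathrm{Aut}(\mathcal{O}_K)$. Since a nontrivial element of $\GL_2(\bZ)$ fixing a nonzero binary cubic form has order $3$ (the matrix $-I$ negates a cubic form) and hence determinant $1$, this stabilizer lies in $\SL_2(\bZ)$, so the $\GL_2(\bZ)$-class breaks into exactly $[\GL_2(\bZ):\SL_2(\bZ)]=2$ classes over $\SL_2(\bZ)$, each with $|\Gamma(i,m)|=|\mathrm{Aut}(\mathcal{O}_K)|$. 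The two $\SL_2(\bZ)$-class representatives give lattice shapes differing by an orientation-reversing isometry; since $\E_r$ is invariant under this reflection (this follows from $\E_r(g)=\E_r((g^{-1})^{t})$ together with the reflection symmetry of the real analytic Eisenstein series), both produce the value $\E_r(\Lambda_K)$. Hence $K$ contributes $2\,\E_r(\Lambda_K)/|\mathrm{Aut}(\mathcal{O}_K)|$ to $N_{3,\pm}'$; for $S_3$ fields, where $\mathrm{Aut}(\mathcal{O}_K)$ is trivial, this is $2\E_r(\Lambda_K)$, which assembles into $2S^{\pm}$.

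It remains to estimate the complementary contribution, coming from cyclic cubic fields, for which $\mathrm{Aut}(\mathcal{O}_K)\cong C_3\subset\SL_2(\bZ)$ and so $K$ contributes $\tfrac23\E_r(\Lambda_K)$. Because $\mathrm{Gal}(K/\bQ)\cong C_3$ acts on $\Lambda_K$ by an order-$3$ rotation, and the only rank-$2$ lattice admitting such a rotation is the hexagonal one, $\Lambda_K$ is the fixed hexagonal point $\rho$ and $\E_r(\Lambda_K)=\E_r(\rho)=O(1)$. Cyclic cubic fields are totally real, so they enter $N_{3,+}'$ only; for the minus sign the passage to $S^-$ is exact. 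Finally their discriminants are perfect squares, so the number with $|\Disc|\le CX$ is $\ll_\epsilon X^{1/2+\epsilon}$ (each conductor $f\le(CX)^{1/2}$ hosting $\ll_\epsilon f^\epsilon$ such fields), whence the cyclic contribution is $\ll|\E_r(\rho)|\,\|F\|_\infty\,X^{1/2+\epsilon}\ll X^{7/12}$. Adding the two pieces gives $N_{3,\pm}'(\E_r,F,X)=2S^{\pm}(\E_r,F,X)+O(X^{7/12})$, the claimed identity.

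The step I expect to be the main obstacle is the form--ring bookkeeping: confirming that the $\GL_2(\bZ)$-stabilizer of a binary cubic form is genuinely $\mathrm{Aut}$ of the associated cubic ring (no spurious $\pm 1$, because $-I$ does not fix a nonzero form), correctly locating the factor $2$ from $\SL_2(\bZ)\subset\GL_2(\bZ)$, and verifying that $\E_r$ takes equal values on a shape and its mirror image so that the two $\SL_2(\bZ)$-orbits attached to a single field may be merged into one value $\E_r(\Lambda_K)$. The supporting inputs --- the hexagonality of the shape of a cyclic cubic field and the $O_\epsilon(X^{1/2+\epsilon})$ bound for the number of cyclic cubic fields of bounded discriminant --- are standard.
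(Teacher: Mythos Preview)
Your proof is correct and follows the same overall plan as the paper: identify the maximal irreducible $\SL_2(\bZ)$-classes with cubic fields via Delone--Faddeev, observe that each $S_3$ field is counted with weight $2$, and bound the cyclic cubic contribution separately. The paper's treatment of the cyclic piece is different, however: rather than using the structural fact that a cyclic cubic field has hexagonal shape (so $\E_r(\Lambda_K)=O(1)$), the paper simply invokes the general bound, established earlier from the coordinate description of $g\cdot x_\pm$, that any integral form of discriminant $\asymp X$ has representative $g_{i,m}$ with $t(g_{i,m})\gg X^{-1/12}$, hence $\E_r(g_{i,m})\ll X^{1/12}$; combined with the $O(X^{1/2})$ count of cyclic cubics this gives exactly $O(X^{7/12})$. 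Your route is sharper (it actually yields $O(X^{1/2+\epsilon})$) and additionally notes that cyclic cubics are totally real so the minus case is exact; the paper's bound is cruder but requires no input about the special shape of cyclic fields and explains the particular exponent $\tfrac{7}{12}$ appearing in the statement. Your more careful justification of the weight-$2$ bookkeeping (stabilizers of irreducible forms lie in $\SL_2(\bZ)$, and $\E_r$ is reflection-invariant) fills in details the paper leaves implicit.
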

\begin{proof}
 Those forms which are maximal at all primes $p$ and irreducible correspond to maximal orders in cubic fields.  There are $O(X^{\frac{1}{2}})$ cyclic cubics.  For a cyclic cubic represented by form $g_{i,m} \cdot x_{\pm}$, the value of $t$ in the Iwasawa decomposition is bounded by $X^{\frac{1}{12}}$.  Since the constant term of the Eisenstein series grows as $\ll t$, the cyclic cubics contribute $O\left(X^{\frac{7}{12}}\right)$.  The remaining cubic fields get counted with weight 2, which proves the lemma.
\end{proof}

We make a further adjustment, replacing the orbital zeta function in case of reducible forms with the version in which the pairing is only with the constant term of the Eisenstein series, making an error which will be estimated.  This gives
\[
 N_{3,\pm}''(\E_r, F, X) =\frac{12}{\sqrt{\pi}K_{\frac{z}{2}}(2)}\sum_{q} \mu(q) \oint_{\RE(s) = 4} X^s \left(Z_q^{\pm}(f,\E_r, s) - Z_{1,c,q}^{\pm, r}(f,\E_r, s) \right)\frac{ds}{s^2}
\]
The first stage in bounding this error is the following Lemma.

\begin{lemma}
The difference $|N_{3,\pm}'(\E_r, F, X) - N_{3,\pm}''(\E_r, F, X)| = O(1)$.
\end{lemma}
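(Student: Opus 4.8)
The plan is to unfold $N_{3,\pm}'-N_{3,\pm}''$ into an explicit sum over maximal reducible cubic forms and then exploit that these forms lie deep in the cusp. Subtracting the two Perron expressions, $N_{3,\pm}'-N_{3,\pm}''$ equals $-\frac{12}{\sqrt{\pi}K_{\frac{z}{2}}(2)}\sum_q\mu(q)\oint_{\RE(s)=4}X^s\bigl(Z_q^{\pm,r}(f,\E_r,s)-Z_{1,c,q}^{\pm,r}(f,\E_r,s)\bigr)\frac{ds}{s^2}$. Using the representations in Lemma \ref{orbital_zeta_constant_term_reducible} together with $\E_r^c\odot f_G+\E_r^n\odot f_G=\E_r\odot f_G=\sqrt{\pi}K_{\frac{z}{2}}(2)\,\E_r$, the integrand is the Dirichlet series
\begin{align*}
Z_q^{\pm,r}-Z_{1,c,q}^{\pm,r} &= \frac{\tilde f_D(s)}{12}\sum_{\substack{x=(0,b,c,d)\in\zed^4\\ 0\le c<2b,\ \pm\Disc(x)>0}}\frac{\sN_q(x)}{|\Disc(x)|^s}\\
&\qquad\times\frac{\E_r^n\odot f_G(g_x)-2\,\one(c^2-4bd=\square)\,\E_r^c\odot f_G(g_x)}{1+2\,\one(c^2-4bd=\square)}.
\end{align*}
Reversing the Mellin transform (with $s^2\tilde F(s)=\tilde f_D(s)$) and collapsing the sieve via $\sum_q\mu(q)\sN_q(x)=\one(x\text{ maximal at all }p)$ then turns this into
\begin{align*}
N_{3,\pm}'-N_{3,\pm}'' &= -\frac{1}{\sqrt{\pi}K_{\frac{z}{2}}(2)}\sum_{\substack{x=(0,b,c,d)\text{ maximal}\\ 0\le c<2b,\ \pm\Disc(x)>0}}\\
&\qquad\times\frac{\E_r^n\odot f_G(g_x)-2\,\one(c^2-4bd=\square)\,\E_r^c\odot f_G(g_x)}{1+2\,\one(c^2-4bd=\square)}\,F\!\left(\frac{|\Disc(x)|}{X}\right),
\end{align*}
the interchange of summations being legitimate since for each $x$ the $q$-sum is finite, $F$ has compact support so only $|\Disc(x)|\ll X$ occur, and the pairings are bounded (in fact, rapidly decaying, by the estimate below).

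The second step is to describe the maximal reducible forms. By the classification of non-maximal reducible forms, a form with $p\mid b$ is non-maximal at $p$; hence maximality forces $b=1$, so the maximal reducible forms in $\sR$ are exactly the $(0,1,c,d)$ with $c\in\{0,1\}$ and $D:=c^2-4d$ a fundamental discriminant, and for these $\Disc(x)=D$. For the canonical representative $g_x$ --- which in this quadratic-field situation does lie in a Siegel set, as recorded in the discussion of reducible forms --- the invariants $\chi_1(g_x)$ (the first coefficient of $(g_x)_2\cdot x_{\sgn D}$, namely $1$) and $\chi(g_x)$ (its discriminant $D$) give $\chi_1(g_x)\asymp1$ and $\chi(g_x)\asymp|D|$, whence $(g_x^{-1})^t=\nu_{x_0}a_{t_0}$ has $\theta_0=0$ and $t_0=t_0(x)\asymp|D|^{1/4}$; in particular $t_0(x)\ge\tfrac12$ for all but finitely many of these forms.

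With this in place the bound is immediate. A fundamental discriminant is a perfect square only for $D=1$, so for every maximal reducible form other than the one attached to $D=1$ we have $\one(c^2-4bd=\square)=0$, and Lemma \ref{non_constant_term_bound_convolution} (applicable since $\theta_0=0$ and $t_0(x)\ge\tfrac12$) bounds its contribution by $\ll_A t_0(x)^{-A}\ll_A|D|^{-A/4}$. Taking $A>4$ and using that there are at most two fundamental discriminants of a given absolute value, $\sum_D|D|^{-A/4}=O(1)$. The single $D=1$ term is $O(1)$, now using the complementary estimate $\E_r^c\odot f_G(g_x)=O(1)$ of the same lemma, and the finitely many remaining small-discriminant forms contribute $O(1)$; altogether $|N_{3,\pm}'-N_{3,\pm}''|=O(1)$. (The same input shows the Dirichlet series $\sum_q\mu(q)(Z_q^{\pm,r}-Z_{1,c,q}^{\pm,r})/\tilde f_D(s)$ is holomorphic and bounded in a half-plane $\RE(s)>-\delta$, so moving the contour past $s=0$ would even give the sharper $O(X^{-\delta})$.)

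I expect the main obstacle to be the second step: verifying that the canonical group element $g_x$ of a maximal reducible form sits in a Siegel set at cusp height $t_0(x)\asymp|\Disc(x)|^{1/4}$. This combines the arithmetic input $b=1$ with the reduction theory of binary quadratic forms --- routine in the imaginary quadratic (definite) case but requiring care when $\Disc(x)>0$, where reduction runs through cycles of reduced indefinite forms. Once the forms are known to sit at that height, the rapid decay of $\E_r^n\odot f_G$ from Lemma \ref{non_constant_term_bound_convolution} closes the argument.
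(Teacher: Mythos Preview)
Your argument is correct and follows essentially the same route as the paper: after unfolding the difference and sieving to maximal reducible forms, you identify these with the forms $(0,1,c,d)$ attached to quadratic fields (plus the single form for $\bQ^3$), observe that $(g_x^{-1})^t$ sits at cusp height $\asymp |D|^{1/4}$, and invoke Lemma~\ref{non_constant_term_bound_convolution} to get $\E_r^n\odot f_G(g_x)=O_A(|D|^{-A/4})$. The paper's proof is terser but uses exactly this mechanism, noting the representatives $(0,1,0,-D/4)$ and $(0,1,1,-(D-1)/4)$ and handling the $\bQ$ contribution separately; your explicit treatment of the $\one(c^2-4bd=\square)$ term is the same as the paper's ``contribution of $\bQ$'' remark. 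Your closing caveat about verifying the Siegel-set claim in the indefinite case is well taken --- the paper simply asserts it (see the parenthetical in the discussion preceding Lemma~\ref{orbital_zeta_constant_term_reducible}) --- but with $b=1$ and $c\in\{0,1\}$ the parabolic coordinate $u$ is forced to be bounded, which is what is needed.
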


\begin{proof}
The change affects only reducible forms.  Since we sieve to maximal forms still, these are in bijective correspondence with fields of degree less than 3, with an appropriate weight.  The quadratic fields are represented in $\sR$ by a form either $x=(0, 1, 0, \frac{-D}{4})$ or $x=(0,1,1, \frac{-(D-1)}{4})$ with trivial stabilizer, and these have element $g_x$ with $(g_x^{-1})^t$ having $t$ with $t$ of order $D^{\frac{1}{4}}$.  It follows that $\E_r \odot f_G (g_x) - \E_r^c \odot f_G(g_x) = \E_r^n \odot f_G(g_x) = O_A(D^{-A})$ so these forms contribute $O(1)$.  The contribution of $\bQ$ is also $O(1)$.
\end{proof}

\begin{lemma}
 The tail of the sieve satisfies the bound
 \[
  \sum_{q > Q} \mu(q)\oint_{\RE(s)=4} X^s (Z_q^{\pm}(f, \E_r,s) - Z_{1,c,q}^{\pm, r}(f, \E_r, s))\frac{ds}{s^2} \ll_r \frac{X^{\frac{13}{12}}}{Q^{1-\epsilon}} + X^{\frac{3}{4}+\epsilon}.
 \]
\end{lemma}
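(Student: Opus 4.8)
The plan is to move the line $\RE(s)=4$ leftward and estimate the resulting pieces with uniformity in $q$, following the scheme of \cite{H19}. First I would record the meromorphic structure. Combining the split functional equation (Lemma~\ref{split_functional_equation}), the factorization (Lemma~\ref{factorization_lemma}), Proposition~\ref{pole_proposition}, and the remark after Lemma~\ref{Theta_2_c_lemma}, the poles of $Z^{\pm,0}_q$ at $s=\frac{5\pm z}{4}$ are cancelled exactly by those of $Z^{\pm,r}_{1,c,q}$, so that $Z^\pm_q(f,\E_r,s)-Z^{\pm,r}_{1,c,q}(f,\E_r,s)$ is holomorphic in $\RE(s)>\frac34$ except for the two simple poles at $s=\frac{11\pm z}{12}$, whose residues, for $z=i\gamma$ purely imaginary, are $\ll_r\sum_{q_1q_2=q}q_2^{-2}q_1^{-3}(q_1^2q_2)^{1/3}\ll q^{-5/3+\epsilon}$ by Proposition~\ref{pole_proposition}. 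I would then decompose the difference via the functional equation as $Z^{\pm,+}_q(s)+\hat Z^{\pm,+}_q(1-s)+Z^{\pm,0,n}_q(s)+\big(Z^{\pm,0,c}_q(s)-Z^{\pm,r}_{1,c,q}(s)\big)$, the first three terms being entire (the first two by the rapid decay of $f,\hat f$ over $\chi(g)\ge1$ together with the Taniguchi--Thorne estimate $\sum_{|m|\le Y}\sum_i|\hat\sN_q(\hat x_{i,m})|\ll q^{-7+\epsilon}Y$ after unfolding by discriminant, the last by Lemma~\ref{Theta_n_lemma}), and the last carrying the two surviving poles.

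For the $X^{13/12}/Q^{1-\epsilon}$ term I would keep the absolutely convergent representation $Z^\pm_q=\frac{\sqrt\pi K_{\frac{z}{2}}(2)}{12}\sL^\pm_q(\E_r,s)\tilde f_D(s)$ and work on the line $\RE(s)=\frac{13}{12}+\epsilon$. This line sits to the right of the abscissa of convergence of the irreducible part of $\sL^\pm_q$, because on an irreducible lattice shape $\E_r(g_{i,m})\ll_r m^{1/12}$ (the constant term $t^{1\pm i\gamma}$ dominates, and the lower bound $t\gg m^{-1/12}$ on points of the Siegel set $\fS$ representing integral irreducible forms of discriminant $\asymp m$ forces the relevant $A$-coordinate to be $\ll m^{1/12}$), so that the uniformity estimate $\sum_{|m|\le Y}\sum_i\sN_q(x_{i,m})/|\Gamma(i,m)|\ll_\epsilon q^{-2+\epsilon}Y$ yields $|\sL^{\pm}_q(\E_r,s)|\ll_\epsilon q^{-2+\epsilon}$ there, with rapid decay in $\IM(s)$ from $\tilde f_D$; hence $X^{13/12+\epsilon}\sum_{q>Q}q^{-2+\epsilon}\ll X^{13/12+\epsilon}/Q^{1-\epsilon}$. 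The reducible remainder of $\sL^\pm_q$ minus its constant-term version on this line is the pairing of reducibles against $\E^n_r$, controlled by $\E^n_r\odot f_G(g_x)=O_{r,A}(t_0^{-A})$ from Lemma~\ref{non_constant_term_bound_convolution}, together with the $O(X^{1/2+\epsilon})$ square-discriminant correction, both harmless; and the residues at $s=\frac{11\pm z}{12}$ contribute only $\ll_r X^{11/12}\sum_{q>Q}q^{-5/3+\epsilon}\ll X^{11/12}/Q^{2/3-\epsilon}$, which lies inside $X^{13/12}/Q^{1-\epsilon}$ for $Q\le X^{1/2}$.

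For the $X^{3/4+\epsilon}$ term I would move the pole-carrying pieces $Z^{\pm,0,n}_q(s)$ and $Z^{\pm,0,c}_q(s)-Z^{\pm,r}_{1,c,q}(s)$ to the line $\RE(s)=\frac34+\epsilon$, where everything is holomorphic, crossing the poles at $s=\frac{11\pm z}{12}$ already accounted for above. There I would bound each summand in $q$: $Z^{\pm,0,n}_q$ from the double-contour formula of Lemma~\ref{Theta_n_lemma} after pushing the $w_1,w_2$-contours rightward and using the decay of $\hat G_{\lambda,q_2}$; $Z^{\pm,0,c}_q$ from the explicit shapes in Lemmas~\ref{Theta_1_c_lemma} and \ref{Theta_2_c_lemma}, which is $\ll_r q^{-5/3+\epsilon}$ off its poles; and $Z^{\pm,r}_{1,c,q}$ from its representation in Lemma~\ref{orbital_zeta_constant_term_reducible}, the bound $\E^c_r\odot f_G(g_x)=O_r(1)$ (the Gaussian convolution absorbing the linear cusp-growth of $\E^c_r$), and the reducible uniformity estimate $\sum_{\xi\bmod q^2}|\hat\sN^r_q(\xi)|=O(q^{1+\epsilon})$. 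Each decays in $q$ fast enough that $\sum_{q>Q}\oint_{\RE(s)=\frac34+\epsilon}|\cdots|\,|s|^{-2}\,|ds|=O_\epsilon(1)$, which with $|X^s|=X^{3/4+\epsilon}$ on this line gives the claimed $O_\epsilon(X^{3/4+\epsilon})$.

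The main obstacle will be the uniform-in-$q$ treatment of the reducible forms: these live in the cusp, where $\E_r$ grows linearly, so $\E_r(g_x)$ cannot be bounded directly, and it is precisely the subtraction of $Z^{\pm,r}_{1,c,q}$ (the pairing against the constant term $\E^c_r$) that excises the offending part. One must then verify that the residual pairing against $\E^n_r$ decays, that the square-discriminant subspace and any reducible representatives $g_x$ falling outside a Siegel set contribute negligibly, and that the count of reducible forms non-maximal at $q$ enjoys the uniformity $\ll_\epsilon q^{-2+\epsilon}Y$ needed for the sum over $q>Q$ to converge at the stated rate; everything else is routine shifting of contours and appeal to the explicit residue formulas of Section~\ref{twisted_zeta_function_section}.
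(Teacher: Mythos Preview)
Your proposal takes a genuinely different route from the paper. The paper does \emph{not} shift contours or invoke the split functional equation here at all: since $F$ (and hence $f_D$) has compact support, the integral $\oint_{\RE(s)=4}X^s(\cdots)\,ds/s^2$ simply unfolds to a finite sum over forms of discriminant $\asymp X$. The paper then bounds the automorphic weight attached to each class directly---$\E_r(g_{i,m})=O(X^{1/12})$ for irreducible classes (using $t\gg X^{-1/12}$ when $a\neq 0$), and $(\E_r-\E_r^c)\odot f_G(g_{x_0})$ for reducible classes, distinguishing whether the fundamental-domain representative has $a\neq 0$ (giving $O(X^{1/12})$) or $a=0$ (giving $O(1)$ via Lemma~\ref{non_constant_term_bound_convolution})---and then uses $\sN_q\neq 0\Rightarrow q^2\mid\Disc$ together with the class-count from \cite{H19}, Lemma~19. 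The $X^{3/4+\epsilon}$ term arises solely from the square-discriminant classes, for which the crude bound $O(X^{1/4})$ is used against a count $O(X^{1/2+\epsilon}/q^{1+\delta})$.

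Your $X^{13/12}/Q^{1-\epsilon}$ argument captures the same core idea (irreducible forms give $\E_r\ll m^{1/12}$; the reducible remainder is governed by $\E_r^n\odot f_G$), just phrased on the line $\RE(s)=13/12+\epsilon$ rather than at $\RE(s)=4$. That part is essentially sound, though you elide the case analysis on whether the fundamental-domain representative of a reducible class coincides with its $\sR$-representative, which the paper treats with some care.

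There is, however, a concrete error in your $X^{3/4+\epsilon}$ argument. You assert that $\E_r^c\odot f_G(g_x)=O_r(1)$ because ``the Gaussian convolution absorb[s] the linear cusp-growth of $\E_r^c$''. This is false: the Gaussian $f_G(hg_0)=f_G(n_{(u-x_0)/t^2}a_{t/t_0})$ localizes $t\approx t_0$, precisely where $\E_r^c\asymp t_0$. Indeed, since $\E_r\odot f_G=\Lambda\E_r$ and $\E_r^n\odot f_G(g_0)=O(t_0^{-A})$ for $(g_0^{-1})^t$ in a Siegel set, one has $\E_r^c\odot f_G(g_0)=\Lambda\E_r^c(g_0)+O(t_0^{-A})\asymp t_0$, which for reducible forms with $b=1$ is $\asymp m^{1/4}$. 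Lemma~\ref{non_constant_term_bound_convolution} only gives $\E_r^c\odot f_G=O(1)$ in the \emph{complement} of a Siegel set. Consequently your plan to bound $Z^{\pm,r}_{1,c,q}$ on $\RE(s)=\tfrac34+\epsilon$ via its Dirichlet-series representation breaks down: that series does not converge there, and controlling the analytic continuation uniformly in $q$ would require exactly the approximate functional equation for $Z_{q,r}$ (with all four pieces) that the paper develops for the complementary range $q<Q$---a nontrivial argument you have not supplied. The paper avoids this entirely by never leaving the region of absolute convergence and isolating the $X^{3/4+\epsilon}$ contribution as the square-discriminant locus.
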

\begin{proof}
Let $x = (a, b, c, d)$ be a form and let $g_x=d_\lambda n_u a_t k_\theta$ satisfy $g_x \cdot x_{\pm} = x$.  If $a \neq 0$ then $a \geq 1$ and it follows from the description of the action of $g$ on $x_{\pm}$ in coordinates that $t \gg X^{-\frac{1}{12}}$.  If $a = 0$ but $b \neq 0$ and then $b \geq 1$.  In this case it follows that $\theta = 0$ and $t \gg X^{-\frac{1}{4}}$. 

We handle separately the cases $x$ is irreducible and $x$ is reducible with either square or non-square discriminant.  

If $x$ is irreducible then $t \gg X^{-\frac{1}{12}}$.  It follows that $\E_r(g_x) = \E_r((g_x^{-1})^t) = O(X^{\frac{1}{12}})$.  There is no contribution from $Z_{1,c,q}$ which affects only reducible forms.  

Next consider the case $x$ is reducible with non-square discrimint.  In particular,  let $x_0$ for its one representative for the class in $\sR$.  Since the action on forms is by $\Gamma$, we may choose a representative for the class $x_1$ such that $(g_{x_1}^{-1})^t$ is in the standard fundamental domain for $\Gamma \backslash G^1$.  If $x_1$ has $a \neq 0$ then $t_{x_1} \gg X^{-\frac{1}{12}}$ which implies that $\E_r(g_x) = O(X^{\frac{1}{12}})$ in the class.  It follows that in this case $(\E_r - \E_r^c) \odot f_g(g_{x_0}) = O(X^{\frac{1}{12}})$ for the class, which is acceptable.  If instead $x_1$ has $a = 0$ then $b \neq 0$ and we may take $x_1$ to be the single representative of the class $x_0$ in $\sR$.  We have in this case $(\E_r - \E_r^c) \odot f_G(g_{x_1}) = O(1)$.

Finally consider the case the discriminant is square.  There are either one, say $x_1$ or three say $x_1, x_2, x_3$ representatives for the form $x$ in $\sR$.  Here we use just the easy bound $(\E_r - \E_r^c)\odot f_G(g_{x_i}), \E_r^c \odot f_G(g_{x_i}) = O(X^{\frac{1}{4}})$ since the number of forms with square discriminant of size less than $X$ is $O(X^{\frac{1}{2}})$, so that this contribution is lower order (see \cite{S75} p.38, where take $s_1 = 2s, s_2 = s$ in the generating function for classes with square discriminant, as the binary cubic form has discriminant $b^2 (c^2-4bd) = \chi_1^2 \chi$.  The resulting generating function is entire in $\RE(s)> \frac{1}{2}$.)
The lemma now follows as in  
 \cite{H19}, Lemma 19, except that we need a special handling of the case of square discriminants.  For these we will show that the number of forms of square discriminant less than $Y$ with $q^2$ dividing the discriminant is $O(\frac{Y^{\frac{1}{2}+\delta+\epsilon}}{q^{1+\delta}}),$ so that summed in $q>Q$ this is $O(Y^{\frac{1}{2} + \delta + \epsilon}/Q^\delta)$.  This claim follows from modifying the Dirichlet series $\Xi(s_1, s_2)$ of \cite{S75} p.38 to require $r^2|m^2n^2$. 
\end{proof}

Dropping the tail of the sieve leaves
\[
  N_{3,\pm}'''(\E_r, F, X) =\frac{12}{\sqrt{\pi}K_{\frac{z}{2}}(2)}\sum_{q<Q} \mu(q) \oint_{\RE(s) = 4} X^s \left(Z_q^{\pm}(f,\E_r, s) - Z_{1,c,q}^{\pm, r}(f,\E_r, s) \right)\frac{ds}{s^2}
\]
We now open the orbital zeta functions, splitting e.g. $Z_q^{\pm}$ into its constant term and non-constant term parts, and expressing $Z_q^{\pm}, Z_{1,c,q}^{\pm, r}$ in the approximate functional equation.

--Terms to bound-- $Z_q^{\pm, +}, \hat{Z}_q^{\pm, +}$ and the contributions to $Z_q^{\pm, 0}$ from $\Theta_q^{(2),n}, \Theta_q^{(1),c}, \Theta_q^{(2),c}$ and $Z_{q,r}^+, \hat{Z}_{q,r}^+$ and remaining terms from $Z_{1,c,q}^{\pm, r}$.

Recall that we have the approximate functional equation representation $Z_q^{\pm} = Z_q^{\pm, +} + \hat{Z}_q^{\pm, +} + Z_q^{\pm, 0}$.  We now put in bounds for the individual terms.
\begin{lemma}\label{Z_bound}
 We have the bound 
 \[
  \sum_{q < Q}\mu(q) \oint_{\RE(s)=4} X^s Z_q^{\pm,+}(f, \E_r, s) \frac{ds}{s^2} = O(X^\epsilon).
 \]

\end{lemma}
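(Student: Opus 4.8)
The plan is to exploit the compact support of $f_D$, which makes the truncation $\chi(g)\ge 1$ extremely restrictive, and then to use a conductor bound that collapses the sieve sum to a finite one.

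\textbf{Step 1.} First I would unfold $Z^{\pm,+}_q$ exactly as in the proof of Lemma \ref{factorization_lemma}, the only change being that, because of the restriction $\chi(g)=\lambda^{12}\ge 1$, the $\lambda$-integral now runs over $[1,\infty)$ rather than $(0,\infty)$. Separating the $G^1$-integral and evaluating it by the same convolution/eigenvalue identity used there, one obtains
\begin{equation*}
 Z^{\pm,+}_q(f,\E_r,s)=\frac{\sqrt{\pi}K_{\frac{z}{2}}(2)}{12}\sum_{m\ge 1}\frac{1}{m^s}\left(\int_{m}^{\infty}f_D(u)u^{s-1}\,du\right)\sum_{i}\frac{\sN_q(x_{i,m})\E_r(g_{i,m})}{|\Gamma(i,m)|},
\end{equation*}
with the usual modifications (no stabilizer weights, $h(-m)$ summands) in the $V_-$ case. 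If $\operatorname{supp}f_D\subset[a,b]$ with $0<a<b$, then $\int_{m}^{\infty}f_D(u)u^{s-1}\,du=\int_{\max(a,m)}^{b}f_D(u)u^{s-1}\,du$ vanishes for $m>b$; hence the display is a \emph{finite} sum of entire functions of $s$, each bounded on the line $\RE(s)=\sigma$ by a constant depending only on $f_D$ and $\sigma$. (In particular this re-proves that $Z^{\pm,+}_q$ is entire.)

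\textbf{Step 2.} Next I would truncate the sieve. The integral form $x_{i,m}$ of discriminant $m\ne 0$ represents a cubic ring sitting inside its maximal order with index equal to the conductor $\mathfrak f_{i,m}$, so $\mathfrak f_{i,m}^2\mid m$, and $x_{i,m}$ is non-maximal at $p$ precisely when $p\mid\mathfrak f_{i,m}$. Hence $\sN_q(x_{i,m})\ne 0$ forces $q\mid\mathfrak f_{i,m}$, so $q\le\sqrt{|m|}\le\sqrt b$, and moreover $\bigl|\sum_{q<Q}\mu(q)\sN_q(x_{i,m})\bigr|\le 2^{\omega(\mathfrak f_{i,m})}=O(1)$ uniformly in $Q$. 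In particular $Z^{\pm,+}_q\equiv 0$ for $q>\sqrt b$, so the sum $\sum_{q<Q}\mu(q)(\cdots)$ has at most a bounded number of nonzero terms, whatever the size of $Q$.

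\textbf{Step 3.} Finally, for each of the finitely many surviving pairs $(q,m)$, I would shift the contour in
\begin{equation*}
 \oint_{\RE(s)=4}\frac{X^s}{m^s\, s^2}\left(\int_{\max(a,m)}^{b}f_D(u)u^{s-1}\,du\right)ds
\end{equation*}
from $\RE(s)=4$ to $\RE(s)=\epsilon$, crossing no poles (the integrand is holomorphic for $\RE(s)>0$, its only singularity being the double pole of $s^{-2}$ at $s=0$). On the line $\RE(s)=\epsilon$ one has $|X^s m^{-s}|\le X^\epsilon$ while the remaining factor is $O(1)$ and $\int_{\RE(s)=\epsilon}|s|^{-2}\,|ds|<\infty$, so the integral is $O_\epsilon(X^\epsilon)$. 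Combining the $O(1)$-many such terms, weighted by $\mu(q)$ and by the absolutely bounded coefficients $\sN_q(x_{i,m})\E_r(g_{i,m})/|\Gamma(i,m)|$, gives the claimed bound. The only step requiring genuine care is Step 1 — carrying the cutoff $\chi(g)\ge 1$ correctly through the unfolding and the earlier convolution identity, so that the Mellin integral which emerges is really entire and supported on $m\le b$; everything afterward is a finite, elementary estimate with no analytic obstacle.
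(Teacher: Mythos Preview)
Your proof is correct and follows essentially the same approach as the paper: both use the compact support of $f_D$ to reduce the $m$-sum to finitely many terms, observe that $\sN_q(x_{i,m})\ne 0$ forces $q$ to divide the discriminant (you use the slightly sharper $q^2\mid m$ via the conductor) so that the $q$-sum is also finite, and then shift the contour to $\RE(s)=\epsilon$. Your Step~1 makes the truncated Mellin integral $\int_m^\infty f_D(u)u^{s-1}\,du$ explicit, which is a cleaner presentation than the paper's, but the argument is otherwise the same.
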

\begin{proof}
 Recall that $\Phi_q(x) \neq 0$ implies $q|\Disc(x)$. We open the definition of $Z_q^{\pm, +}(f, \E_r, s)$ and shift the contour to $\RE(s) = \epsilon$ to obtain the bound
 \begin{align}
  &\sum_{q < Q}\mu(q) \oint_{\RE(s) = \epsilon}X^s \int_{G^+/\Gamma, \chi(g) \geq 1}\chi(g)^s \E_r(g^{-1}) \sum_{x \in L}\Phi_q(x)f(g\cdot x)dg \frac{ds}{s^2}\\
  &\notag
  =\sum_{q < Q}\mu(q) \oint_{\RE(s) = \epsilon} X^s \sum_m \sum_{i = 1}^{h(\pm m)} \frac{\Phi_q(x_{i, \pm m})}{|\Gamma(i, \pm m)|}\int_{G^+, \chi(g) \geq 1} \chi(g)^s \E_r(g^{-1})f(g\cdot x_{i, \pm m}) dg \frac{ds}{s^2}. 
 \end{align}
The compact support of $f_D$ makes the sums over $q$ and $m$ sums of $O(1)$ terms, and the inner integral is also $O(1)$ as a result.
\end{proof}

\begin{lemma}\label{hat_Z_bound}
 We have the bound 
 \[
  \sum_{q < Q}\mu(q) \oint_{\RE(s)=4} X^s \hat{Z}_q^{\pm,+}(f, \E_r, s) \frac{ds}{s^2} = O(Q^{4 + \epsilon}X^\epsilon).
 \]

\end{lemma}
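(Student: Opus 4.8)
The plan is to move the $s$--contour to a line near $\RE(s)=0$, where $|X^s|=X^\epsilon$, and then to bound $\hat{Z}^{\pm,+}_q$ on that line by unfolding and invoking the character sum estimate for $\hat{\sN}_q$. Throughout, $\hat{Z}^{\pm,+}_q$ denotes the term $\hat{Z}^{\pm,+}_q(\hat{f}_\pm,\E_r,\hat{L};1-s)$ occurring in the split functional equation of Lemma~\ref{split_functional_equation}.

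First I would note that $\hat{Z}^{\pm,+}_q$ is entire in $s$: the cut $\chi(g)\ge 1$ together with the rapid decay of $\hat{f}_\pm$ makes the defining integral converge for every $s$. Hence the contour of the sieve expression may be shifted from $\RE(s)=4$ to $\RE(s)=\epsilon$ without crossing a pole --- in particular the pole of $s^{-2}$ at $s=0$ is avoided. On the new line $|X^s|=X^\epsilon$ and $\oint_{\RE(s)=\epsilon}|s|^{-2}\,|ds|=O(1)$, so it suffices to prove $|\hat{Z}^{\pm,+}_q|\ll q^{3+\epsilon}$ uniformly for $\RE(s)=\epsilon$; summing against $\mu(q)$ over $q<Q$ then yields the stated $O(Q^{4+\epsilon}X^\epsilon)$.

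To estimate $\hat{Z}^{\pm,+}_q$ I would unfold it over $\Gamma$--orbits of dual binary cubic forms, using the left $\Gamma$--invariance of $\E_r$ and of $\hat{\sN}_q$, so that
\begin{align*}
\hat{Z}^{\pm,+}_q(\hat{f}_\pm,\E_r,\hat{L};1-s)&=\sum_{m\ge 1}\sum_{i=1}^{\hat{h}(\pm m)}\frac{\hat{\sN}_q(\hat{x}_{i,\pm m})}{|\hat{\Gamma}(i,\pm m)|}\\
&\qquad\times\int_{\substack{G^+\\ \chi(g)\ge 1}}\chi(g)^{1-s}\E_r(g^{-1})\hat{f}_\pm\!\left(g\cdot\frac{\hat{x}_{i,\pm m}}{q^2}\right)dg.
\end{align*}
Since $\Disc\!\left(g\cdot\hat{x}_{i,\pm m}/q^2\right)=\chi(g)(\pm m)/q^8$ and $|\Disc(y)|\ll\|y\|^4$, the Schwarz decay of $\hat{f}_\pm$ confines the $g$--integral to $\chi(g)\ll q^8/m$, which together with $\chi(g)\ge 1$ restricts the sum to $m\ll q^8$. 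On this range one estimates the inner integral by Shintani's group integral formula, writing $\chi(g)=|\Disc(y)|/m$ for $y=g\cdot\hat{x}_{i,\pm m}$ and bounding $\E_r(g^{-1})$ by its at most linear growth in the cusp. The essential point --- and the main obstacle, by contrast with the cuspidal case of \cite{H19} where $\E_r$ is bounded --- is that a non--singular form $\hat{x}_{i,\pm m}$ satisfies $\|g\cdot\hat{x}_{i,\pm m}\|\to\infty$ as $g$ leaves compact subsets of $G^1$ in the $A$-- and $N$--directions, so that the decay of $\hat{f}_\pm$ dominates the Eisenstein growth and the $y$--integral converges, contributing a factor $\ll q^{O(1)}m^{-1+\epsilon}$. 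Feeding this into the sum over $m\ll q^8$ and applying the character sum estimate $\sum_{0<|m|\le Y}\sum_i|\hat{\sN}_q(\hat{x}_{i,m})|\ll_\epsilon q^{-7+\epsilon}Y$ by partial summation gives $|\hat{Z}^{\pm,+}_q|\ll q^{3+\epsilon}$ (indeed a smaller power suffices), which completes the argument. The whole scheme parallels \cite{H19}, the only genuinely new input being the control of the Eisenstein series in the cusp just described.
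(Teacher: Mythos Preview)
Your proposal is correct and follows essentially the same approach as the paper: shift the contour to $\RE(s)=\epsilon$, unfold over $\Gamma$--orbits on $\hat{L}\setminus\hat{L}_0$, truncate to $|m|\ll q^8$ via the decay of $\hat{f}_\pm$, bound the inner integral by controlling the cuspidal growth of $\E_r$, and finish with the Taniguchi--Thorne estimate $\sum_{|m|\le Y}\sum_i|\hat{\sN}_q(\hat{x}_{i,m})|\ll q^{-7+\epsilon}Y$. The only difference is one of precision at the inner integral: the paper records the explicit bound $\E_r\ll q^2/m^{1/4}$ (coming from the height of the class representative $\hat{g}_{i,m}$ in the fundamental domain after the $q^2$--rescaling) and tracks the exact powers $q^{8(1-s)}\cdot q^2\cdot q^{-7+\epsilon}=q^{3+\epsilon}$, whereas you write ``$q^{O(1)}m^{-1+\epsilon}$'' and assert the final exponent; making that step as explicit as the paper does would strengthen the write--up.
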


\begin{proof}
 We again open the definition of $\hat{Z}_q^{\pm, +}$ and shift the contour to $\RE(s) = \epsilon$ obtaining the bound
 \begin{align}
  &\sum_{q < Q}\mu(q) \oint_{\RE(s) = \epsilon}X^s\int_{G^+/\Gamma, \chi(g)\geq 1}\chi(g)^{1-s} \E_r(g^{-1})\sum_{x \in \hat{L} \setminus \hat{L}_0}\hat{\Phi}_q(x)\hat{f}\left(g \cdot \frac{x}{q^2}\right) dg \frac{ds}{s^2}\\
  &\notag =\sum_{q < Q}\mu(q)\oint_{\RE(s) = \epsilon} X^s \sum_{m \neq 0}\sum_{i = 1}^{\hat{h}(m)}\frac{\hat{\Phi}_q(\hat{x}_{i,m})}{|\hat{\Gamma}(i,m)|} \int_{G^+, \chi(g)>1} \chi(g)^{1-s}\E_r(g^{t})\hat{f}(g \cdot \hat{x}_{i,m}) dg \frac{ds}{s^2}\\
  &\notag= \sum_{q < Q} \mu(q) \oint_{\RE(s) = \epsilon}X^s q^{8(1-s)} \sum_{m \neq 0}\sum_{i=1}^{\hat{h}(m)} \frac{\hat{\Phi}_q(\hat{x}_{i,m})}{|\hat{\Gamma}(i,m)| |m|^{1-s}}\\&\notag \times\int_{G^+, \chi(g) \geq \frac{|m|}{q^8}}\chi(g)^{1-s} \E_r((\hat{g}_{i,m}^{-1})^t g^t) f(g \cdot x_{\sgn m}) dg \frac{ds}{s^2}.
 \end{align}
The rapid decay of $\hat{f}$ limits the sum to $m \ll q^8 X^\epsilon$.  For those sums included, bound the inner integral over $G^1$ against $\E_r$ by $|\Disc(x_{i,m})|^{\frac{1}{4}} \ll \frac{q^2}{m^{\frac{1}{4}}}$.  Now insert the bound $\sum_{|m| \leq Y} \sum_{i = 1}^{\hat{h}(m)}\frac{|\hat{\Phi}(\hat{x}_{i,m})|}{|\hat{\Gamma}(i,m)|} \ll Y q^{-7+\epsilon}$.  Putting together these bounds recovers the estimate $\ll Q^{4 + \epsilon}X^\epsilon$.
\end{proof}

The part of the singular terms attached to the non-constant part of the Eisenstein series have the following bound.
\begin{lemma}We have the bound
  \[
  \sum_{q < Q}\mu(q) \oint_{\RE(s)=4} X^s Z_q^{\pm, 0, n}(f, \E_r, s) \frac{ds}{s^2} = O(X^{\frac{1}{4}}(QX)^\epsilon).
 \]

\end{lemma}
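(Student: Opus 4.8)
The plan is to start from the explicit double contour integral representation of $Z_q^{\pm,0,n}$ furnished by Lemma \ref{Theta_n_lemma} --- a $(w_1,w_2)$-integral on $\RE(w_1,w_2)=(1,\tfrac12)$ of $\big(12s-12+3w_1+3w_2\big)^{-1}$ times $q_2^{2(w_1+w_2)}\,\Sigma_1^\epsilon(\hat f,w_1,w_2)\,W_\lambda(w_1,w_2)\,\hat G_{\lambda,q_2}\!\big(\tfrac{w_1+w_2-1}{2}\big)$, summed over $q_1q_2=q$ with weight $q_1^{-2}\prod_{p\mid q_2}(p^{-3}-p^{-5})$ --- and then to shift contours so that the $s$-line sits at $\RE(s)=\tfrac14$. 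First I would move the $w_1$-contour from $\RE(w_1)=1$ rightward to $\RE(w_1)=\tfrac52+\delta$ for a small fixed $\delta>0$, holding $\RE(w_2)=\tfrac12$ and $\RE(s)=4$; on the intervening strip each factor is holomorphic ($\Sigma_1^\epsilon$ needs $\RE(w_1)>0$; $W_\lambda$ needs $\RE(w_1+3w_2)>1$ and $\RE(w_2)<1$; $\hat G_{\lambda,q_2}$ needs $\RE(\tfrac{w_1+w_2-1}{2})>0$, and indeed $\ge\tfrac14$ throughout; and $\big(12s-12+3w_1+3w_2\big)^{-1}$ has no pole there when $\RE(s)=4$), and the integrand decays rapidly in $\IM w_1$ (super-polynomially from $\Sigma_1^\epsilon$, exponentially from the Gamma factors in $\tilde K_{\frac z2}$), so no pole is crossed and the horizontal segments vanish. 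With $w_1$ on this new contour the formula for $Z_q^{\pm,0,n}(s)$ is holomorphic in $\RE(s)>\tfrac14-\tfrac\delta4$, since $\RE\big(12s-12+3w_1+3w_2\big)=12\RE(s)-3+3\delta$ is positive there; hence, the $s^{-2}$ factor having only a pole at $s=0$, I may slide the $s$-contour from $\RE(s)=4$ down to $\RE(s)=\tfrac14$ without crossing any pole, the horizontal pieces vanishing by $s^{-2}$ together with the $O(|\IM s|^{-1})$ decay of $Z_q^{\pm,0,n}(s)$.

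On the contour $\RE(s)=\tfrac14$ I would estimate everything in absolute value. Here $|X^s|=X^{1/4}$, $\oint_{\RE(s)=1/4}|s|^{-2}|ds|=O(1)$, and on the contour $\RE\big(12s-12+3w_1+3w_2\big)=3\delta>0$, so the denominator is bounded away from zero and the $(w_1,w_2)$-integral converges absolutely to $O_{r,f_D,\delta}(1)$ times $q_2^{2\RE(w_1+w_2)}\,|\hat G_{\lambda,q_2}|=q_2^{6+2\delta}\,|\hat G_{\lambda,q_2}|$. The arithmetic input is the bound $|\hat G_{\lambda,q_2}(x)|\ll q_2^{-1-3\RE(x)+\epsilon}$, immediate from $|\eta_{\frac z2}(n)|\le d(n)\ll n^\epsilon$ and term-by-term summation of the defining series; at $\RE(x)=1+\tfrac\delta2$ this reads $q_2^{-4-\frac{3\delta}2+\epsilon}$, and combined with $q_2^{6+2\delta}$ and the prefactor $\prod_{p\mid q_2}(p^{-3}-p^{-5})\le q_2^{-3}$ (legitimate because $q_2\mid q$ is squarefree) the net $q_2$-dependence is $\ll q_2^{-1+\frac\delta2+\epsilon}$. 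Summing over $q=q_1q_2$ with the remaining factor $q_1^{-2}$,
\[
 \left|\sum_{q<Q}\mu(q)\oint_{\RE(s)=4}X^sZ_q^{\pm,0,n}(f,\E_r,s)\,\frac{ds}{s^2}\right|\ll_{r,\delta,\epsilon}X^{1/4}\Bigl(\sum_{q_1\ge1}q_1^{-2}\Bigr)\sum_{q_2<Q}q_2^{-1+\frac\delta2+\epsilon}\ll X^{1/4}Q^{\frac\delta2+\epsilon},
\]
and taking $\delta=2\epsilon$ and relabelling $\epsilon$ gives the asserted $O\big(X^{1/4}(QX)^\epsilon\big)$.

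The only step needing genuine care rather than routine bookkeeping is the interplay between the two shifts through the factor $q_2^{2(w_1+w_2)}$. Pushing $\RE(w_1)$ past $\tfrac52$ is forced: only for $\RE(w_1+w_2)>3$ does the zero of $\RE\big(12s-12+3w_1+3w_2\big)$ lie at $\RE(s)<\tfrac14$, so that the $s$-line can reach $\tfrac14$ rather than being blocked by the pole at $\RE(s)=\tfrac58$ present on the original $w$-contour (picking up that pole would produce an inadmissible $X^{5/8}$ term). But this push inflates $q_2^{2(w_1+w_2)}$, and one must check --- as above --- that the decay of $\hat G_{\lambda,q_2}$ together with $\prod_{p\mid q_2}(p^{-3}-p^{-5})$ exactly absorbs it, leaving $\sum_{q_2}q_2^{-1+\frac\delta2+\epsilon}$ convergent for each fixed $Q$; one cannot instead create room by moving $w_2$ toward $1$, since $W_\lambda$ has a pole at $w_2=1$. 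Once the contour positions are fixed everything else is elementary --- Stirling for the Gamma factors, the divisor bound for $\eta_{\frac z2}$, and geometric and $p$-adic summation --- and the argument runs as in \cite{H19}.
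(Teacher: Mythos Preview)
Your argument is correct and is essentially the approach intended by the paper, which defers to \cite{H19}, Lemma 22; you have supplied the details of that reference using exactly the ingredients the paper has set up (Lemma \ref{Theta_n_lemma}, the bounds on $\Sigma_1^\pm$, $W_\lambda$, and the explicit $q_2$-dependence of $\hat G_{\lambda,q_2}$). One minor remark: the horizontal segments in the $s$-shift vanish already from the $s^{-2}$ factor alone, since after moving $\RE(w_1)$ to $\tfrac52+\delta$ the function $Z_q^{\pm,0,n}(s)$ is merely bounded on the strip (the lower bound $|12s-12+3w_1+3w_2|\ge 3\delta$ holds uniformly in $\IM s$ but gives no decay), not $O(|\IM s|^{-1})$; this does not affect the argument.
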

\begin{proof}
 See Lemma 22 of \cite{H19}, where the proof is the same.
\end{proof}
The part of the singular terms attached to the constant part of the Eisenstein series are polar terms, and we will handle these all together with the other polar terms from $Z_{1,c,q}^{\pm,r}$ after treating the remaining terms.

Recall the expression \[Z_{1,c,q}^{\pm,r} = \frac{1}{3}\left(Z_{q,r}\left(f_{\pm, 0}, 2s + \frac{1}{2} + \frac{z}{2}, s-\frac{1}{4}-\frac{z}{4} \right) + \frac{\xi(z)}{\xi(1+z)}Z_{q,r}\left(f_{\pm, 0}, 2s + \frac{1}{2} -\frac{z}{2}, s-\frac{1}{4} + \frac{z}{4} \right)\right).\]

\begin{lemma} We have the bound
\[
 \sum_{q < Q} \mu(q) \oint_{\RE(s) = 4} X^s Z_{q,r}^+\left(f_{\pm, 0}, 2s + \frac{1}{2} + \frac{z}{2}, s-\frac{1}{4} - \frac{z}{4}\right)\frac{ds}{s^2} = O\left(X^{\frac{1}{4}+\epsilon}\right).
\]

\end{lemma}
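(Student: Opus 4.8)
The plan is to adapt the proof of Lemma~\ref{Z_bound} to the space of binary quadratic forms, carrying the argument one step further since here a genuine power of $X$, rather than $O(X^\epsilon)$, must be produced. First I would open $Z_{q,r}^+(f_{\pm,0}, s_1, s_2)$ by unfolding the combined sum--integral over the nonsingular forms in $L_r$ into a sum over $B_\zed^+$-orbit representatives $x_i=(b_i,c_i,d_i)$ with $b_i>0$, $0\le c_i<2b_i$, each written as $x_i=\rho(g_i)x_{\pm}$ with $g_i\in B^+$ and $\chi_1(g_i)\asymp b_i$, $\chi(g_i)\asymp|D_i|$, where $D_i=c_i^2-4b_id_i$. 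After the change of variables $g\mapsto gg_i^{-1}$ this gives
\[
 Z_{q,r}^+(f_{\pm,0}, s_1, s_2)= c_\pm\sum_i\frac{\sN_{q,r}(x_i)}{|\Stab(x_i)|}\,b_i^{-s_1}|D_i|^{-s_2}\,J_\pm\bigl(s_1,s_2;\chi(g_i)\bigr),
\]
where $J_\pm(s_1,s_2;C)=\int_{B^+,\,\chi(g)>C}\chi_1(g)^{s_1}\chi(g)^{s_2}f_{\pm,0}(\rho(g)x_\pm)\,dg$ and $c_\pm$ is a harmless constant.

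Next I would estimate $J_\pm$ along the line $s_1=2s+\tfrac12+\tfrac z2$, $s_2=s-\tfrac14-\tfrac z4$, where $2s_1+4s_2=8s$. Writing $g=d_\mu a_tn_u$, so that $\chi_1(g)=\mu^2t^2$ and $\chi(g)=\mu^4$, the identity from the proof of Lemma~\ref{Phi_0_lemma} gives $f_{\pm,0}(\rho(g)x_\pm)=(\mathrm{const})\cdot f_D(\mu^8t^4)\exp(-t^2-t^{-2}-u^2t^{-2})$; integrating out $u$ and using that $f_D$ has compact support (so $t\asymp\mu^{-2}$ on the support, hence $e^{-t^{-2}}\le e^{-c\mu^4}$) one finds, for $C\ge1$, that $J_\pm(s_1(s),s_2(s);C)$ is entire in $s$ and
\[
 \bigl|J_\pm(s_1(s),s_2(s);C)\bigr|\ll e^{-cC}e^{O(\RE s)}
\]
uniformly on vertical strips, the exponent of $\mu$ in the bounding integral collapsing to $-1$ independently of $\RE s$.

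Then I would substitute this expansion into $\oint_{\RE(s)=4}X^sZ_{q,r}^+(\cdots)\tfrac{ds}{s^2}$, move the contour inside the absolutely convergent $i$-sum, and shift each summand's contour. Since $X^sb_i^{-s_1}|D_i|^{-s_2}=(X/(b_i^2|D_i|))^s\,b_i^{-1/2-z/2}|D_i|^{1/4+z/4}$ and $s^{-2}$ has a double pole at $s=0$ while each summand is otherwise entire: for $b_i^2|D_i|>C_0 X$ the contour can be pushed to $+\infty$ and the term vanishes; for $b_i^2|D_i|\le X$ shifting to $\RE(s)=-\epsilon$ picks up the residue at $s=0$, of size $O(b_i^{-1/2}|D_i|^{1/4}(\log X)e^{-c|D_i|})$, plus a tail of the same size; the range $X<b_i^2|D_i|\le C_0 X$ is handled by shifting to $\RE(s)=\epsilon$. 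Because $J_\pm(\,\cdot\,;|D_i|)\ll e^{-c|D_i|}$ the $|D_i|$-sum collapses to $|D_i|=O(1)$, hence $b_i\ll X^{1/2}$, and for each $b$ and each admissible small $|D|$ the number of $c\in[0,2b)$ with $c^2-4bd=\pm|D|$ for some integer $d$ is $\ll b^\epsilon$ by the divisor bound for $c^2\equiv|D|\pmod{4b}$. This gives an $i$-sum of size $\ll X^\epsilon\sum_{b\le X^{1/2}}b^{-1/2+\epsilon}\ll X^{1/4+\epsilon}$. Finally, since $\sN_{q,r}(x_i)=1$ forces $q$ to be squarefree and composed of primes dividing $b_iD_i$, one has $\sum_{q<Q}|\mu(q)|\sN_{q,r}(x_i)\ll 2^{\omega(b_iD_i)}\ll b_i^\epsilon$, so the sum over $q<Q$ costs only $X^\epsilon$ and the bound $O(X^{1/4+\epsilon})$ follows.

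The hard part will be the two quantitative inputs: establishing the uniform exponential decay of $J_\pm$ in $C$ (together with the holomorphy and controlled growth needed to justify the contour shifts), which rests on seeing exactly how the compact support of $f_D$ and the Gaussian tail of $f_G$ interact in the $B^+$-coordinates; and the combinatorial estimate that, once the quadratic-form discriminant $|D|$ is held fixed and small while the cubic discriminant $b^2|D|$ ranges up to $X$, only $O(b^\epsilon)$ forms occur for each $b$. It is exactly this last point that replaces the naive bound of $X^{3/4}$ by $X^{1/4+\epsilon}$.
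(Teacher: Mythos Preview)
Your argument is correct and reaches the right bound, but it takes a route quite different from the paper's. The paper does not unfold to orbit representatives or isolate an orbital integral $J_\pm$; instead it keeps the full integral over $\lambda\ge 1,\,t,\,u$ and the sum over $(b,c,d)$, then uses directly that $f_{\pm,0}$ has compact support in the cubic discriminant $\lambda^8 t^4 b^2(c^2-4bd)$ together with Schwarz decay in the quadratic discriminant $\lambda^4(c^2-4bd)$ to truncate $\lambda$, $t^2b$, and $c^2-4bd$ each to $O(X^\epsilon)$. It then shifts the contour once to $\RE(s)=\tfrac14+\epsilon$ (no pole is crossed), substitutes $t'^2=t^2b$ so that $b$ appears as $b^{-2s-1/2}$, and counts by fixing $c$: since $|c^2-4bd|=O(X^\epsilon)$ and $b\mid(c^2-D)$ with $b>c/2$, the divisor bound gives $O(X^\epsilon)$ choices of $(b,d)$ for each $c$, and replacing $b^{-1-2\epsilon}$ by $(c/2)^{-1-2\epsilon}$ makes the $c$-sum converge.

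Your approach extracts the same information in a more structural way: the exponential decay of $J_\pm$ in $\chi(g_i)\asymp|D_i|$ is exactly the Schwarz decay in the quadratic discriminant, seen after unfolding. Your three-case contour argument (push to $+\infty$, to $-\epsilon$ past the pole at $s=0$, or to $+\epsilon$) is valid but heavier than necessary: a single shift to $\RE(s)=\tfrac14+\epsilon$, as the paper does, already produces the summable weight $b_i^{-1-2\epsilon}$ without crossing $s=0$. Your counting (fix $b$, count square roots of $D$ mod $4b$) is the mirror image of the paper's (fix $c$, count divisors of $c^2-D$); both rest on the same $O(n^\epsilon)$ input. The treatment of the $q$-sum via $q\mid bD$ and $2^{\omega(bD)}\ll(bD)^\epsilon$ is correct and matches what the paper uses implicitly.
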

\begin{proof}
 Open the definition of $Z_{q,r}^+$ so that we need to estimate
 \begin{align*}
& \sum_{q < Q} \mu(q)\oint_{\RE(s) = 4} X^s\int_1^\infty \frac{d\lambda}{\lambda} \int_0^\infty \frac{dt}{t^3} \int_{-\infty}^\infty du \lambda^{8s}t^{4s+1+z}\\&\times \sum_{\substack{x = (b,c,d)\\0 \leq c < 2|b| \\ c^2-4bd \neq 0}}\sN_q^r(x) f_{\pm,0}\left(\lambda^2\left(t^2 b,c+2ub ,\frac{d+ cu + bu^2}{t^2} \right)\right)\frac{ds}{s^2}\\
&= \sum_{q < Q} \mu(q)\oint_{\RE(s) = 4} X^s \int_1^\infty \frac{d\lambda}{\lambda} \int_0^\infty \frac{dt}{t} \int_{-\infty}^\infty du \lambda^{8s}t^{4s-1+z}\\&\times \sum_{\substack{x = (b,c,d)\\0 \leq c < 2|b| \\ c^2-4bd \neq 0}}\frac{1}{b} \sN_q^r(x) f_{\pm, 0}\left(\lambda^2 \left(t^2 b, 2u, \frac{bd- \frac{c^2}{4} + u^2}{t^2b}\right)\right)\frac{ds}{s^2}
. 
 \end{align*}
We have $\sN_q^r$ vanishes unless $q|b^2(c^2-4bd)$.  Using the rapid decay of $f_{\pm, 0}$, which is compact support in the discriminate of the associated binary cubic form $\lambda^8t^4 b^2(c^2-4bd)$ and Schwarz class in the discriminant $\lambda^4(c^2-4bd)$ we can truncate to $\lambda = O(X^\epsilon)$, $t^2b = O(X^\epsilon)$ and $c^2-4bd = O(X^\epsilon)$.  Now push the contour to $\RE(s) = \frac{1}{4}+\epsilon$.  Write $q = q_1 q_2$ with $q_1$ the largest factor of $q$ that divides $c^2 - 4bd$, so that $q_2|b$.  Make the change $(t')^2 = t^2b$, so that $b$ now appears as $\frac{1}{b^{2s+\frac{1}{2}}+4 \epsilon}$.  Given  $c$ there are $O(X^\epsilon)$ choices for $b, d$, so that, bounding $|b|>c$ and summing in $c$ we obtain the desired bound.  
\end{proof}

Remark: if we had split the integral at $\lambda \geq Y$ instead of $\lambda \geq 1$ in the approximate functional equation this trades off the length of this sum with the length of the dual sum.  In principle this could be done so that the two give an equal error term.

\begin{lemma}
 We have the bound
 \[
  \sum_{q < Q}\mu(q) \oint_{\RE(s) = 4} X^s \hat{Z}_{q,r}^+\left(\hat{f}_{\pm, 0}, 2s + \frac{1}{2} + \frac{z}{2}, \frac{5}{4} - 3s - \frac{z}{4}\right)  \frac{ds}{s^2} = O(X^{\frac{1}{4}+\epsilon}Q^{2+\epsilon}).
 \]

\end{lemma}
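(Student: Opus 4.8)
The plan is to follow the pattern of the preceding lemma on $Z_{q,r}^+$, now transposed to the Fourier-dual side. First I would open the definition of $\hat Z_{q,r}^+$, parametrize the $B^+/B_\zed^+$ integral by $g = d_\lambda a_t n_u$ with Shintani's Haar measure, decompose $\hat L_r'$ into $B_\zed^+$-orbits so that the quotient unfolds, and carry out the $\lambda$-integral over $\{\chi(g)>1\}$. One is then left with a $(t,u)$-integral of the dual theta sum $\sum_x \hat\sN_{q,r}(x)\,\hat f_{\pm,0}(\rho^*(g)\cdot\frac{x}{q^2})$, where the point $\rho^*(g)\cdot\frac{x}{q^2}$ has first coordinate proportional to $q^{-2}t^2 x_1$ and discriminant equal to $q^{-4}\Disc(x)$. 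Next I would shift the $s$-contour from $\RE s = 4$ to $\RE s = \frac14+\epsilon$, as in \cite{H19} and the preceding lemmas, recording on the way the residues at the poles met, which are of the form $s = \frac{5\pm z}{4}$ or $s=\frac34$; these polar contributions are exactly the ones carried through the bookkeeping of Section \ref{twisted_zeta_function_section} and combined with the other polar terms (among them those of $\Theta_q^{(2),c}$) in the proof of Theorem \ref{main_theorem}, where the poles at $\frac{5\pm z}{4}$ cancel between the reducible and irreducible sides.

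On the shifted contour $\RE s = \frac14+\epsilon$ I would bound the integrand directly. Since $\hat f_{\pm,0}$ is of Schwarz class and its relevant Mellin transforms reduce, by Lemma \ref{Phi_0_lemma}, to $\tilde f_D$ times $K$-Bessel factors which decay rapidly in vertical strips, the coordinates of $\rho^*(g)\cdot\frac{x}{q^2}$ are all forced to be $O(X^\epsilon)$. This confines $(t,u)$ to a region of measure $O(X^\epsilon)$ and restricts the contributing dual forms to those with $\|x\|\ll q^2 X^\epsilon$, in particular $|x_1|\ll q^2 X^\epsilon t^{-2}$ and $|\Disc(x)|\ll q^4 X^\epsilon$. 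Over this truncated box the weighted lattice sum $\sum_x|\hat\sN_{q,r}(x)|$ is at most the number of translates of a period of side $q^2$ meeting the box, namely $O(X^\epsilon)$, times $\sum_{\xi=(\xi_1,2\xi_2,\xi_3)\bmod q^2}|\hat\sN_q^r(\xi)| = O(q^{1+\epsilon})$, so it is $O(q^{1+\epsilon}X^\epsilon)$. Collecting the $X^s = X^{\frac14+\epsilon}$ from the contour, the $s^{-2}$ factor, and the elementary estimates for the $t$- and $u$-integrals over their $O(X^\epsilon)$-size support, each $q$ contributes $O(X^{\frac14+\epsilon}q^{1+\epsilon})$, and the trivial sum $\sum_{q<Q}\mu(q)$ then gives the stated $O(X^{\frac14+\epsilon}Q^{2+\epsilon})$.

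Two ancillary points require the same care as in the preceding lemmas and in \cite{H19}, Lemma 19: the failure of $B_\zed^+$-invariance of $\hat\sN_2^r$ on the forms $\ell(b^2,2bd,d^2)$ at $p=2$, which makes the orbit decomposition carry a representative-dependent weight contributing only to lower order; and the forms of square discriminant, whose number up to $Y$ is $O(Y^{1/2+\epsilon})$, so that a crude pointwise bound on that part of the sum is already admissible. The step I expect to be the main obstacle is the contour shift through $\RE s = \frac34$: one must verify that, after the $\lambda$-integration, the remaining $(t,u)$-integral of the dual theta sum continues holomorphically to $\RE s = \frac14+\epsilon$ apart from the explicit poles already identified, and this is exactly where the rapid decay of $\hat f_{\pm,0}$ in all three coordinates — not merely its integrability — must be exploited.
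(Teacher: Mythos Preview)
Your overall shape (open, shift the contour to $\RE s=\frac14+\epsilon$, truncate by Schwarz decay, then count) matches the paper's, but two points need correction.

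First, the function $\hat Z_{q,r}^+$ is entire: it is an integral over $\chi(g)>1$ of a Schwarz-class function, and since the argument of $\hat f_{\pm,0}$ scales like $\lambda^2/q^2$, the integrand decays rapidly as $\lambda\to\infty$ for every $s$. No poles at $\frac{5\pm z}{4}$ or $\frac34$ are crossed when you move the line of integration; those poles belong to the \emph{other} pieces of the split functional equation for $Z_{q,r}$, not to $\hat Z_{q,r}^+$ itself. So the bookkeeping of residues you describe is unnecessary here.

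Second, and more seriously, the $L^1$-over-a-period bound does not deliver $O(q^{1+\epsilon})$ per $q$. Your claim that the contributing dual forms sit in a box with $O(X^\epsilon)$ translates of a period of side $q^2$ fails: the third coordinate $d$ is constrained only through $|c^2-4bd|\ll q^4X^\epsilon$, so for $b$ of size $O(1)$ the admissible $d$ range has length $\sim q^4X^\epsilon$, i.e.\ $\sim q^2$ periods. Feeding that into $\sum_{\xi\bmod q^2}|\hat\sN_q^r(\xi)|=O(q^{1+\epsilon})$ yields $O(q^{3+\epsilon})$ per $q$ and hence $O(Q^{4+\epsilon})$ overall, which is too weak. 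The paper does not use the $L^1$ mass at all here. Instead it exploits the pointwise structure of $\hat\sN_q^r$: for each odd $p\mid q$ it classifies $x$ according to whether $p^2\mid b,c$ and $p\mid d$; or $p\mid b,c,d$ only; or $p\nmid b$ with $p^2\mid c^2-4bd$; or $p\nmid b$ with $p\,\|\,c^2-4bd$. This produces a factorization $q=q_1q_2q_3q_4$ with $|\hat\sN_q^r(x)|\ll q^\epsilon/(q_1q_2^3q_3^3q_4^4)$ together with the divisibility constraints $q_1^2q_2\mid c$ and $q_1^3q_2^2q_3^2q_4\mid c^2-4bd$, and it is these congruence conditions, combined with the $1/b$ weight and the discriminant truncation, that give the claimed saving. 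Your ancillary remarks about $p=2$ and square discriminants are not needed here either; the paper handles these uniformly through the same prime-by-prime decomposition.
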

\begin{proof}
 We again open the definition of $\hat{Z}_{q,r}^+$, so that we need to bound
 \begin{align*}
  &\sum_{q < Q}\mu(q) \oint_{\RE(s) = 4} X^s \int_1^\infty \frac{d\lambda}{\lambda} \int_0^\infty \frac{dt}{t^3} \int_{-\infty}^\infty du \lambda^{6-8s}t^{4s + 1 + z}\\&\times \sum_{\substack{x = (b,c,d)\\ 0 \leq c < 2|b|\\ c^2 - 4bd \neq 0}} \hat{\sN}_q^r(x) \hat{f}_{\pm, 0}\left(\frac{\lambda^2}{q^2}\left(t^2 b, c + 2ub, \frac{d + cu + bu^2}{t^2}\right) \right) \frac{ds}{s^2}\\
  &=\sum_{q < Q} \mu(q)\oint_{\RE(s) = 4} X^s \int_1^\infty \frac{d\lambda}{\lambda} \int_0^\infty \frac{dt}{t} \int_{-\infty}^\infty du \lambda^{6-8s}t^{4s-1+z}\\&\times \sum_{\substack{x = (b,c,d)\\0 \leq c < 2|b| \\ c^2-4bd \neq 0}}\frac{1}{b} \hat{\sN}_q^r(x) \hat{f}_{\pm, 0}\left(\frac{\lambda^2}{q^2} \left(t^2 b, 2u, \frac{bd- \frac{c^2}{4} + u^2}{t^2b}\right)\right)\frac{ds}{s^2}
. 
 \end{align*}
Write $q' = q/(q, 2^\infty)$ for the odd part of $q$. We now examine $\hat{\sN}_p^r(x)$ for odd $p | q$. Let $q_1$ be composed of those odd primes $p$ for which $p^2|b,c$ and $p|d$.  For these primes, $\hat{\sN}_p^r(x) = O\left(\frac{1}{p}\right)$.  Next let $q_2$ be composed of those odd primes $p$ for which $p^2$ does not divide both $b, c$ but $p|b,c,d$.  For these primes, $\hat{\sN}_p^r(x) = O\left(\frac{1}{p^3}\right)$.  Let $q_3$ be composed of those odd primes $p$ for which $p\nmid b$ but $p^2| c^2 -4bd$. For these primes $\hat{\sN}_p^r(x) = O\left(\frac{1}{p^3}\right)$.  Finally, let $q_4$ be composed of those odd primes $p$ for which $p\nmid b$ and $p\| c^2-4bd$.  For these $p$, $\hat{\sN}_p^r(x) = O\left(\frac{1}{p^4}\right).$ It follows that $|\hat{\sN}_q^r(x)| = O\left(\frac{q^\epsilon}{q_1q_2^3 q_3^3 q_4^4}\right)$. Also, $q_1^3 q_2^2 q_3^2 q_4 | c^2- 4bd$.  By the rapid decay of $\hat{f}$, $c^2 - 4bd = O(q^4 X^\epsilon)$ with negligible error.  Given fixed $c$, and a dyadic range $A \leq |c^2-4bd| < 2A$, $A \ll q^4 X^\epsilon$ there are $O\left(\frac{A X^\epsilon c^\epsilon}{q_1^3q_2^2q_3^2q_4} \right)$ ways of choosing $b,d$ to make the discriminant in the dyadic range.  We may also use $q_1^2 q_2 | c$ and $\frac{1}{c} > \frac{1}{2|b|}$ as we sum over $c$.  Shift the contour to $\RE(s) = \frac{1}{4}$ to obtain the bound $O(X^{\frac{1}{4}+\epsilon}Q^{2+\epsilon})$.
\end{proof}

We now treat the contributions of the constant terms, handling the residues of the poles at the end.  From the main zeta function there are two constant term contributions that have poles, and only the poles survive (the remaining line integral can be pushed arbitrarily far left, saving an arbitrary power of $X$), but in the reducible there are two line integrals to bound aside from the residue of the pole.  These are bounded in the following lemma.

\begin{lemma}
 The difference between the integral
 \begin{align*}
  &\sum_{q < Q}\mu(q) \oint_{\RE(s) = 4}X^s \sum_{q_1q_2 = q} q_2^{-2} \prod_{p|q_1}p^{-(2s + \frac{1+z}{2})} \left(1 - \frac{1}{p^2}\right) \frac{\zeta\left(2s + \frac{1}{2} + \frac{z}{2}\right)}{s- \frac{5}{4} - \frac{z}{4}}\\&\times\left(\Phi_+ + \Phi_- \right)\left(2s -\frac{1}{2} +\frac{z}{2}, 0\right) \frac{ds}{s^2}
 \end{align*}
and the residue of the pole at $\frac{5+z}{4}$ is $O(X^{\frac{1}{4} + \epsilon}Q^\epsilon)$.
\end{lemma}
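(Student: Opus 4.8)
The plan is to move the contour of integration from $\RE(s) = 4$ leftward to $\RE(s) = \frac14 + \epsilon$ and then bound the shifted integral absolutely. First I would record the analytic behaviour of the integrand. The product $\prod_{p \mid q_1} p^{-(2s + \frac{1+z}{2})}(1 - p^{-2})$ is finite, hence entire in $s$. By Lemma \ref{Phi_0_lemma}, for the test functions $f = f_{\pm,0}$ the quantity $(\Phi_+ + \Phi_-)\!\left(2s - \tfrac12 + \tfrac{z}{2}, 0\right)$ is an entire function of $s$ (it is built from $\tilde f_D$, which is entire since $f_D \in C_c^\infty(\bR^+)$, from $K_\nu(2)$, which is entire in $\nu$, and from elementary exponential factors). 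Recalling that $z$ is purely imaginary, in the half-plane $\RE(s) > \frac14$ the only singularity of the integrand is the simple pole at $s = \frac{5+z}{4}$ produced by the factor $\frac{1}{s - \frac54 - \frac{z}{4}}$; the pole of $\zeta\!\left(2s + \frac12 + \frac{z}{2}\right)$ lies at $s = \frac14 - \frac{z}{4}$, i.e.\ on the line $\RE(s) = \frac14$, and $\frac{1}{s^2}$ is regular there.

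Because $\tilde f_D$ decays faster than any polynomial on vertical lines, the horizontal segments at height $\pm T$ contribute nothing as $T \to \infty$, so shifting the contour from $\RE(s) = 4$ to $\RE(s) = \frac14 + \epsilon$ is legitimate and crosses exactly the pole at $s = \frac{5+z}{4}$. Hence the difference between the original integral and the residue at $s = \frac{5+z}{4}$ equals
\[
\sum_{q < Q} \mu(q) \oint_{\RE(s) = \frac14 + \epsilon} X^s \sum_{q_1 q_2 = q} q_2^{-2} \prod_{p \mid q_1} p^{-(2s + \frac{1+z}{2})}(1 - p^{-2})\,\frac{\zeta\!\left(2s + \frac12 + \frac{z}{2}\right)}{s - \frac54 - \frac{z}{4}}\,(\Phi_+ + \Phi_-)\!\left(2s - \tfrac12 + \tfrac{z}{2}, 0\right) \frac{ds}{s^2},
\]
and it remains to estimate this absolutely. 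On $\RE(s) = \frac14 + \epsilon$ one has $|X^s| = X^{\frac14 + \epsilon}$ and $\RE\!\left(2s + \frac12 + \frac{z}{2}\right) = 1 + 2\epsilon$, so $\zeta\!\left(2s + \frac12 + \frac{z}{2}\right)$ is bounded (by $\zeta(1 + 2\epsilon)$) and $\left|\prod_{p\mid q_1} p^{-(2s + \frac{1+z}{2})}(1 - p^{-2})\right| \le \prod_{p\mid q_1} p^{-(1 + 2\epsilon)} \le q_1^{-1}$; the factors $\frac{1}{s - \frac54 - \frac{z}{4}}$ and $\frac{1}{s^2}$ are bounded on this line, and the rapid decay of $\tilde f_D$ makes $(\Phi_+ + \Phi_-)\!\left(2s - \tfrac12 + \tfrac{z}{2}, 0\right)$ absolutely integrable along the contour. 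Thus the $s$-integral is $O_\epsilon(1)$, and since
\[
\sum_{q < Q} \sum_{q_1 q_2 = q} q_2^{-2} q_1^{-1} = \sum_{q < Q} q^{-2}\sigma(q) \ll \sum_{q < Q} q^{-1 + \epsilon} \ll_\epsilon Q^\epsilon,
\]
the whole expression is $O\!\left(X^{\frac14 + \epsilon} Q^\epsilon\right)$, as claimed.

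The one genuinely delicate point is obtaining the $q$-sum convergence with only a $Q^\epsilon$ loss: the line $\RE(s) = \frac14$ is blocked by the pole of $\zeta\!\left(2s + \frac12 + \frac{z}{2}\right)$, and shifting instead to $\RE(s) = \frac14 + \epsilon$ is precisely what produces the factor $p^{-(1 + 2\epsilon)}$ on each $p \mid q_1$, which renders $\sum_{q_1 q_2 = q} q_2^{-2} q_1^{-1} \ll q^{-1 + \epsilon}$ and the outer sum over squarefree $q < Q$ convergent; dropping the $\epsilon$ would make this sum diverge logarithmically. Everything else is routine contour estimation. Finally, I would note that the companion term of $Z_{1,c,q}^{\pm, r}$, obtained by $z \mapsto -z$ together with the factor $\frac{\xi(z)}{\xi(1+z)}$, is handled identically, with its pole now at $s = \frac{5-z}{4}$.
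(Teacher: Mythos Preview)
Your proof is correct and follows essentially the same approach as the paper: shift the contour to $\RE(s) = \frac14 + \epsilon$, pick up the pole at $s = \frac{5+z}{4}$, and bound the remaining integral using that the $q$-sum contributes $O(Q^\epsilon)$. The paper's proof records only these two steps; your version simply fills in the routine analytic details (entirety of $\Phi_+ + \Phi_-$, decay of $\tilde f_D$ on vertical lines, and the explicit bookkeeping $\sum_{q<Q}\sum_{q_1q_2=q}q_2^{-2}q_1^{-1} \ll Q^\epsilon$).
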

\begin{proof}
 Shift the contour to $\RE(s) = \frac{1}{4} + \epsilon$, passing the contribution of the pole.  Bound the sum over $Q$ by $Q^\epsilon$. 
\end{proof}

The remaining non-polar term is bounded as follows.
\begin{lemma}
 We have the bound
 \begin{align*}
  &\sum_{q <Q} \mu(q) \oint_{\RE(s) = 4} X^s \frac{\Sigma(\hat{f}_{\pm, 0}(2s -\frac{1}{2} + \frac{z}{2}))}{24s-10+2z}\sum_{q_1q_2q_3 = \frac{q}{(q, 2^\infty)}} \prod_{p|q_1}\left(\frac{1}{p} + \frac{1}{p^2} - \frac{1}{p^3}\right) \prod_{p|q_2}\left(\frac{1}{p} - \frac{1}{p^3}\right) \prod_{p|q_3}\left(\frac{1}{p^3}-\frac{1}{p^4}\right)\\
  &\times \sum_{\substack{\ell \in \zed \setminus \{0\}\\ (\ell, q_2q_3) = 1}} \sum_{\substack{b = 1\\ (b,q_3)=1}}^\infty \frac{\omega_{2,q}(\ell, b)\phi(q_2b)q^{4s + 1 + z}}{(q_1^2q_2^3b^2 |\ell|)^{2s + \frac{1+z}{2}}} = O(X^{\frac{5}{12}+\epsilon}Q^{\frac{2}{3}}).
 \end{align*}

\end{lemma}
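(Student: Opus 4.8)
The plan is to shift the contour from $\RE(s)=4$ to the line $\RE(s)=\tfrac5{12}-\delta$ for a small fixed $\delta\in\bigl(0,\tfrac16\bigr)$, picking up exactly one pole, and then to bound the resulting residue term and line integral by crude estimates. Write $D_q(s)$ for the portion of the displayed summand consisting of the triple sum over $q_1q_2q_3=q/(q,2)$ of the three Euler-product factors times the inner $(\ell,b)$-sum, so that the full integrand is $\mu(q)\,X^s\,\Sigma(\hat f_{\pm,0},2s-\tfrac12+\tfrac z2)\,(24s-10+2z)^{-1}\,D_q(s)$.

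First I would collect the analytic input. Since $q$ is squarefree the power of $2$ dividing $q$, and with it the factor $\omega_{2,q}$, is harmless: $|\omega_{2,q}(\ell,b)|\le1$ always, and the divisibility constraint it imposes when $2\mid q$ only removes terms. Using $\prod_{p\mid q_1}(p^{-1}+p^{-2}-p^{-3})\ll q_1^{\epsilon-1}$, $\prod_{p\mid q_2}(p^{-1}-p^{-3})\le q_2^{-1}$, $\prod_{p\mid q_3}(p^{-3}-p^{-4})\le q_3^{-3}$, and $\phi(q_2b)\le q_2 b$, the $\ell$- and $b$-sums converge absolutely (with bounds uniform in $\IM s$) for $\RE(s)>\tfrac14$, so $D_q(s)$ is holomorphic and bounded in every strip $\tfrac14+\eta\le\RE(s)\le4$ apart from a possible pole at $s=\tfrac{1-z}{4}$ from the $(\ell,b)$-sums, and one computes, uniformly for $\sigma=\RE(s)$ in a compact subinterval of $(\tfrac14,4]$ and uniformly in $t$,
\[
\sum_{q<Q}\bigl|D_q(\sigma+it)\bigr|\ll Q^{\epsilon}\,\Bigl(\sum_{q_1<Q}q_1^{-1}\Bigr)\Bigl(\sum_{q_2\ge1}q_2^{-2\sigma-\frac12}\Bigr)\Bigl(\sum_{q_3<Q}q_3^{4\sigma-2}\Bigr)\ll Q^{4\sigma-1+\epsilon}.
\]
Here the $q_1$-power in the numerator $q^{4s+1+z}$ cancels the $q_1^{4s+1+z}$ in the denominator, the $q_2$-powers combine to $q_2^{-2\sigma-\frac12}$ after $\phi(q_2b)\le q_2b$, and the $q_3$-power $q_3^{4\sigma+1}$ runs against the Euler factor $q_3^{-3}$. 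I would also observe that, after the substitution $u=\sqrt{|t|}\,v$, $\Sigma(\hat f_{\pm,0},2s-\tfrac12+\tfrac z2)$ is a Mellin transform in $s$ of a smooth, rapidly decaying function of $t$ on $[0,\infty)$, hence meromorphic in $s$ and holomorphic of rapid vertical decay in any region $\RE\bigl(2s-\tfrac12\bigr)>-\tfrac12$; this supplies the continuation beyond the half-plane $\RE(s)>\tfrac12$ asserted in the text and is what makes the contour shift legal.

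Given these facts, the only singularity of the integrand in the strip $\tfrac5{12}-\delta\le\RE(s)\le4$ is the simple pole of $(24s-10+2z)^{-1}$ at $s=\tfrac{5-z}{12}$, of residue $\tfrac1{24}$. Shifting and summing over $q<Q$ produces a residue term and a line integral on $\RE(s)=\tfrac5{12}-\delta$. For the residue term $|X^{(5-z)/12}|=X^{5/12}$, $\Sigma$ is bounded, and by the display $\sum_{q<Q}\bigl|D_q\bigl(\tfrac{5-z}{12}\bigr)\bigr|\ll Q^{4\cdot\frac5{12}-1+\epsilon}=Q^{2/3+\epsilon}$, giving $\ll X^{5/12+\epsilon}Q^{2/3}$. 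For the line integral, on $\RE(s)=\tfrac5{12}-\delta$ one has $|X^s|=X^{5/12-\delta}$, $|24s-10+2z|^{-1}\ll_\delta(1+|\IM s|)^{-1}$, the rapid vertical decay of $\Sigma$ makes the $\IM(s)$-integral converge, and $\sum_{q<Q}\bigl|D_q(\sigma+it)\bigr|\ll Q^{4(\frac5{12}-\delta)-1+\epsilon}=Q^{2/3-4\delta+\epsilon}$; hence the line integral is $\ll X^{5/12-\delta}Q^{2/3-4\delta+\epsilon}\ll X^{5/12+\epsilon}Q^{2/3}$. Adding the two contributions gives the claimed bound.

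The only genuinely delicate point is the meromorphic continuation of $\Sigma(\hat f_{\pm,0},\cdot)$ past $\RE=\tfrac12$ needed to move the contour across the pole at $\tfrac{5-z}{12}$; everything else reduces to crude upper bounds on the arithmetic factors (the three Euler products, $\phi(q_2b)$, $\omega_{2,q}$, the coprimality conditions $(\ell,q_2q_3)=1$ and $(b,q_3)=1$, and the power of $2$) and to the three elementary sums over $q_1,q_2,q_3$. I expect no further obstacle — in particular the exponent $\tfrac23$ in $Q$ is forced precisely by $\sum_{q_3<Q}q_3^{4\sigma-2}\asymp Q^{4\sigma-1}$ evaluated at the abscissa $\sigma=\tfrac5{12}$ of the pole.
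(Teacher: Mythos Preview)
Your argument is correct, but it does more work than needed. The paper's proof is a single sentence: shift the contour to $\RE(s)=\tfrac{5}{12}+\epsilon$ and observe that on that line the $q$-sum is bounded by $Q^{2/3}$. Since the pole of $(24s-10+2z)^{-1}$ sits exactly on $\RE(s)=\tfrac{5}{12}$ (recall $z$ is purely imaginary), stopping just to the right of it avoids any residue computation entirely; the bound $X^{5/12+\epsilon}Q^{2/3}$ then comes straight from the line integral.

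You instead shift to $\RE(s)=\tfrac{5}{12}-\delta$, cross the pole, and bound the residue and the remaining line integral separately. This is valid and yields the same estimate, because the residue contributes exactly $X^{5/12}$ times the $q$-sum evaluated at $\sigma=\tfrac{5}{12}$, which is the dominant term anyway; the shifted line integral is then strictly smaller. So your route is a legitimate alternative, just longer. Your estimate $\sum_{q<Q}|D_q(\sigma+it)|\ll Q^{4\sigma-1+\epsilon}$, driven by the $q_3$-sum $\sum_{q_3<Q}q_3^{4\sigma-2}$, is the same mechanism the paper uses.

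One point where you are actually more careful than the paper: you note that the contour shift requires $\Sigma(\hat f_{\pm,0},\,\cdot\,)$ to continue past the half-plane $\RE>\tfrac12$ stated in the text, since at $\sigma=\tfrac{5}{12}$ the argument $2s-\tfrac12+\tfrac z2$ has real part $\tfrac13<\tfrac12$. Your substitution $u=\sqrt{|t|}\,v$ handles this; the paper simply does not comment on it. This is a genuine, if small, improvement in rigor.
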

\begin{proof}
 Shift the contour to $\RE(s) = \frac{5}{12} + \epsilon$.  On this line the sum over $q$ is bounded by $Q^{\frac{2}{3}}$, which gives the bound claimed.
\end{proof}

It remains to evaluate the polar terms.  The poles at $\frac{5 +\pm z}{4}$ cancel, leaving a pole at $\frac{11 \pm z}{12}$ to evalluate.  This is evaluated in the following lemma.

\begin{lemma}
   Let $P_{Q, \pm}$ be the contribution of the poles, sieved for $q \leq Q$. We have
 \begin{align*}
  P_{Q,-} &= O\left(\frac{X^{\frac{11}{12}}}{Q^{\frac{2}{3} - \epsilon}} \right)+\frac{1}{3}\tilde{F}\left(\frac{11 + z}{12} \right)X^{\frac{11 + z}{12}} \zeta\left(\frac{1-z}{3} \right)2^{\frac{z-1}{6}}\pi^{\frac{1+2z}{6}} \cos\left(\frac{\pi(1-z)}{6} \right)\\&\times \frac{\Gamma\left(\frac{1-z}{3} \right)\Gamma\left(\frac{4-z}{6}\right)}{\Gamma\left(\frac{7-z}{6} \right)}\prod_p \left(1 - \frac{1}{p^{\frac{5+z}{3}}} - \frac{1}{p^{\frac{7+2z}{3}}} + \frac{1}{p^{\frac{13 + 2z}{3}}}\right)\\
  &+\frac{\xi(z)}{\xi(1+z)}\frac{1}{3}\tilde{F}\left(\frac{11 - z}{12} \right)X^{\frac{11 - z}{12}} \zeta\left(\frac{1+z}{3} \right)2^{\frac{-z-1}{6}}\pi^{\frac{1-2z}{6}} \cos\left(\frac{\pi(1+z)}{6} \right)\\&\times \frac{\Gamma\left(\frac{1+z}{3} \right)\Gamma\left(\frac{4+z}{6}\right)}{\Gamma\left(\frac{7+z}{6} \right)}\prod_p \left(1 - \frac{1}{p^{\frac{5-z}{3}}} - \frac{1}{p^{\frac{7-2z}{3}}} + \frac{1}{p^{\frac{13 - 2z}{3}}}\right)\\
  P_{Q,+} &= O\left(\frac{X^{\frac{11}{12}}}{Q^{\frac{2}{3} - \epsilon}} \right)+\tilde{F}\left(\frac{11 + z}{12} \right)X^{\frac{11 + z}{12}} \zeta\left(\frac{1-z}{3} \right)2^{\frac{z-1}{6}}3^{\frac{z-7}{4}}\pi^{\frac{1+2z}{6}} \cos\left(\frac{\pi(1-z)}{6} \right)\\&\times \frac{\Gamma\left(\frac{1-z}{3} \right)\Gamma\left(\frac{4-z}{6}\right)}{\Gamma\left(\frac{7-z}{6} \right)}\prod_p \left(1 - \frac{1}{p^{\frac{5+z}{3}}} - \frac{1}{p^{\frac{7+2z}{3}}} + \frac{1}{p^{\frac{13 + 2z}{3}}}\right)\\
  &+\frac{\xi(z)}{\xi(1+z)}\tilde{F}\left(\frac{11 - z}{12} \right)X^{\frac{11 - z}{12}} \zeta\left(\frac{1+z}{3} \right)2^{\frac{-z-1}{6}}3^{\frac{-z-7}{4}}\pi^{\frac{1-2z}{6}} \cos\left(\frac{\pi(1+z)}{6} \right)\\&\times \frac{\Gamma\left(\frac{1+z}{3} \right)\Gamma\left(\frac{4+z}{6}\right)}{\Gamma\left(\frac{7+z}{6} \right)}\prod_p \left(1 - \frac{1}{p^{\frac{5-z}{3}}} - \frac{1}{p^{\frac{7-2z}{3}}} + \frac{1}{p^{\frac{13 - 2z}{3}}}\right).
 \end{align*}

\end{lemma}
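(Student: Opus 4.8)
The plan is to obtain $P_{Q,\pm}$ as a finite sum of residues, evaluate each residue against the explicit formulas already established, and then complete the resulting arithmetic sum over $q$ to an Euler product. By definition $P_{Q,\pm}$ is $\frac{12}{\sqrt{\pi}K_{\frac z2}(2)}\sum_{q<Q}\mu(q)$ times the sum of the residues of $\oint_{\RE(s)=4}\frac{X^s}{s^2}\bigl(Z_q^{\pm}(f,\E_r,s)-Z_{1,c,q}^{\pm,r}(f,\E_r,s)\bigr)\,ds$ crossed on moving the contour left past $\RE(s)=\tfrac34$. First I would locate these poles. By Lemma \ref{split_functional_equation}, $Z_q^{\pm}=Z_q^{\pm,+}+\hat{Z}_q^{\pm,+}+Z_q^{\pm,0}$; the first two summands are entire, $Z_q^{\pm,0,n}$ is entire by Lemma \ref{Theta_n_lemma}, and $\Theta_q^{(1),n}=0$, so the only poles of $Z_q^{\pm}$ in this region come from $\Theta_q^{(1),c}$ and $\Theta_q^{(2),c}$. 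Lemma \ref{Theta_1_c_lemma} shows $\Theta_q^{(1),c}$ contributes simple poles only at $s=\tfrac{11\pm z}{12}$, while Lemma \ref{Theta_2_c_lemma} shows $\Theta_q^{(2),c}$ contributes simple poles only at $s=\tfrac{5\pm z}{4}$, whose residues — including their $q$-dependent Euler factors — are cancelled by the poles of $Z_{1,c,q}^{\pm,r}$ at $s=\tfrac{5\pm z}{4}$, the matching of residues being the remark recorded after the reducible constant-term lemma. Hence in the difference $Z_q^{\pm}-Z_{1,c,q}^{\pm,r}$ only the simple poles at $s=\tfrac{11\pm z}{12}$ survive.

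Next I would compute the residue at $s=\tfrac{11\pm z}{12}$. Writing $12s-11\mp z=12\bigl(s-\tfrac{11\pm z}{12}\bigr)$, the factor $\tfrac{1}{12s-11\mp z}$ contributes $\tfrac1{12}$, and the relation $s^2\tilde{F}(s)=\tilde{f}_D(s)$ replaces $\tilde{f}_D\bigl(\tfrac{11\pm z}{12}\bigr)/\bigl(\tfrac{11\pm z}{12}\bigr)^2$ by $\tilde{F}\bigl(\tfrac{11\pm z}{12}\bigr)$. The overall prefactor $\tfrac{12}{\sqrt{\pi}K_{\frac z2}(2)}$ then cancels the $\sqrt{\pi}K_{\frac z2}(2)\cdot\tfrac1{12}$ coming from Lemma \ref{Theta_1_c_lemma}, so that in the negative-discriminant case the pole at $\tfrac{11+z}{12}$ reduces to
\[
X^{\frac{11+z}{12}}\tilde{F}\!\left(\tfrac{11+z}{12}\right)\zeta\!\left(\tfrac{1-z}{3}\right)2^{\frac{z-1}{6}}3^{-1}\pi^{\frac{1+2z}{6}}\cos\!\left(\tfrac{\pi(1-z)}{6}\right)\frac{\Gamma\!\left(\frac{1-z}{3}\right)\Gamma\!\left(\frac{4-z}{6}\right)}{\Gamma\!\left(\frac{7-z}{6}\right)}\sum_{q<Q}\mu(q)\!\sum_{q_1q_2=q}\!q_2^{-2}\!\prod_{p\mid q_1}(p^{-3}-p^{-5})\,(q_1^2q_2)^{\frac{1-z}{3}},
\]
and the pole at $\tfrac{11-z}{12}$ gives the analogue with $z\mapsto-z$ and an extra factor $\tfrac{\xi(z)}{\xi(1+z)}$; the $3^{-1}$ is the $\tfrac13$ appearing in $P_{Q,-}$. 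In the positive-discriminant case the same computation applies with $3^{-1}$ replaced by $3^{\frac{z-7}{4}}$ (respectively $3^{\frac{-z-7}{4}}$), per Lemma \ref{Theta_1_c_lemma}.

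It then remains to evaluate $\sum_{q<Q}\mu(q)\sum_{q_1q_2=q}q_2^{-2}\prod_{p\mid q_1}(p^{-3}-p^{-5})(q_1^2q_2)^{\frac{1\mp z}{3}}$. Writing $q=q_1q_2$ with $q_1,q_2$ coprime and squarefree and using multiplicativity of $\mu$, the sum completed over all squarefree $q$ is the absolutely convergent Euler product $\prod_p\bigl(1-p^{\frac{1\mp z}{3}-2}-p^{\frac{2(1\mp z)}{3}-3}+p^{\frac{2(1\mp z)}{3}-5}\bigr)$, and simplifying the exponents turns this into $\prod_p\bigl(1-\tfrac{1}{p^{(5\pm z)/3}}-\tfrac{1}{p^{(7\pm 2z)/3}}+\tfrac{1}{p^{(13\pm 2z)/3}}\bigr)$, matching the products in the statement. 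Since $z$ is purely imaginary, each local factor is $1+O(p^{-5/3+\epsilon})$, hence the associated multiplicative coefficients are $O(q^{-5/3+\epsilon})$ and the tail $\sum_{q>Q}$ is $O(Q^{-2/3+\epsilon})$; multiplied by $X^{\frac{11\pm z}{12}}$ this produces the error term $O\bigl(X^{\frac{11}{12}}Q^{-2/3+\epsilon}\bigr)$. Collecting the explicit constants assembled above then yields the stated formulas for $P_{Q,-}$ and $P_{Q,+}$.

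The steps I expect to be the main obstacle are (i) confirming that the poles at $s=\tfrac{5\pm z}{4}$ genuinely cancel — i.e. that the residues of $\Theta_q^{(2),c}$ from Lemma \ref{Theta_2_c_lemma}, including the $q$-dependent factors $\prod_{p\mid q_1}(p^{-1}-p^{-3})$, agree term by term with those of $Z_{1,c,q}^{\pm,r}$ from the reducible constant-term lemma; and (ii) the bookkeeping of the many constants (the powers of $2$, $3$ and $\pi$, the $\Gamma$-quotients, the $\tfrac{\xi(z)}{\xi(1+z)}$ ratio, and the $\tfrac13$ versus $3^{\frac{\pm z-7}{4}}$ dichotomy between $V_-$ and $V_+$) together with the Euler-product exponent arithmetic, so that everything reproduces exactly the expressions in the statement.
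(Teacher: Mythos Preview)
Your proposal is correct and follows the same approach as the paper; in fact you supply considerably more detail than the paper's proof, which is the single sentence ``The error term is a result of summing $\sum_{q \geq Q} \frac{1}{q^{\frac{5}{3} - \epsilon}}$.'' Your verification that the $q$-factor $\sum_{q_1q_2=q}q_2^{-2}\prod_{p\mid q_1}(p^{-3}-p^{-5})(q_1^2q_2)^{\frac{1\mp z}{3}}$ is multiplicative with local size $O(p^{-5/3})$, so that the tail is $O(Q^{-2/3+\epsilon})$, is exactly the content of that sentence, and your Euler-product and residue bookkeeping correctly reproduces the main terms.
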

\begin{proof}
 The error term is a result of summing $\sum_{q \geq Q} \frac{1}{q^{\frac{5}{3} - \epsilon}}$.
\end{proof}

The proof of Theorem \ref{main_theorem} is the result of balancing the various error terms
\[
 O\left(X^{\frac{3}{4}+\epsilon} + \frac{X^{\frac{13}{12}}}{Q^{1-\epsilon}} + Q^{4+\epsilon}X^\epsilon + X^{\frac{1}{4}+\epsilon} Q^{2+\epsilon} + X^{\frac{5}{12} + \epsilon}Q^{\frac{2}{3}+\epsilon} + \frac{X^{\frac{11}{12}}}{Q^{\frac{2}{3}-\epsilon}}\right).
\]
The dominant terms are $\frac{X^{\frac{13}{12}+\epsilon}}{Q}$ and $Q^{4+\epsilon}$ which is optimal with $Q = X^{\frac{13}{60}}$ giving the claimed bound $O(X^{\frac{13}{15}+\epsilon})$.

\bibliographystyle{plain}

\end{document}